\documentclass{amsart}
\usepackage{marktext}
\usepackage{gimac}

\usepackage{algpseudocode, algorithm}
\usepackage{mdframed}
\usepackage{algorithmicx}
\usepackage{algpseudocode}

\newmdenv[
  linewidth=0.5pt,
  skipabove=10pt,
  skipbelow=10pt,
  backgroundcolor=gray!3,
  linecolor=gray!60,
  innertopmargin=8pt,
  innerbottommargin=8pt,
  roundcorner=4pt,
  nobreak=false  
]{algbox}

\newcommand{\tr}{\mathrm{Tr}}
\newcommand{\Bias}{\mathrm{\tilde Bias}}

\DeclareMathOperator{\dist}{dist}

\DeclarePairedDelimiter{\ip}{\langle}{\rangle}

\newtheorem{assumption}[theorem]{Assumption}
\begin{document}

\title[Rapid convergence of Langevin AIS]
{Rapid convergence of Langevin AIS for multimodal distributions on~$\R^d$}

\author[Son]{Seungjae Son}
\address{%
  Department of Mathematical Sciences, Carnegie Mellon University, Pittsburgh, PA 15213.
}
\email{seungjas@andrew.cmu.edu}

\begin{abstract}
  We study annealed importance sampling (AIS) driven by the overdamped Langevin
  diffusion, targeting a multimodal Gibbs measure~$\pi_\epsilon \propto e^{-U/\epsilon}$
  on~$\R^d$ at low temperature~$\epsilon$. For a potential~$U$ with nondegenerate
  critical points, finitely many wells of comparable mass, and at least quadratic
  growth at infinity, we show that AIS with a harmonically spaced temperature schedule
  produces a weighted empirical measure that approximates~$\pi_\epsilon$ to accuracy~$\delta$
  at a total cost of order~$\epsilon^{-2}\delta^{-2}$, up to logarithmic factors, and the
  exponents do not depend on the dimension. This is an exponential
  improvement over plain Langevin Monte Carlo, whose relaxation time obeys the
  Arrhenius law. The assumptions cover, in particular, the potentials of Gaussian
  mixtures, as long as their critical points are nondegenerate and their wells carry
  comparable mass. The analysis extends that of~\cite{AgarwalIyerEA26} from the
  torus~$\T^d$ to the noncompact state space~$\R^d$.
  The two new ingredients are a weighted~$L^\infty$ bound on the eigenfunctions of the
  Langevin generator, obtained by combining Agmon estimates at infinity with a local
  maximum principle near the critical points, and a Weyl-type eigenvalue count derived
  from the Cwikel--Lieb--Rozenblum and Lieb--Thirring inequalities. For potentials with
  more than two wells the eigenfunction bound requires an additional argument, which we
  give by constructing a perturbation of~$U$ that renders every saddle an effective
  local maximum.
\end{abstract}

\subjclass{%
  Primary:
    65C05,
    60J60,  
  Secondary:
    60J22, 
    65C40
  }

\keywords{Annealed importance sampling, Langevin dynamics, Gibbs measure, multimodal,
  metastability, spectral gap, eigenfunction bounds}
\maketitle
\section{Introduction}

In this section, we introduce the Langevin AIS algorithm and state our main result informally (Sections~\ref{ss:main-results} and~\ref{ss:informal}). We then place it in the context of the existing literature in Section~\ref{ss:related-work} and outline the plan of the paper in Section~\ref{ss:plan}.

\subsection{Main results}\label{ss:main-results}
Let~$\mathcal X$ be a configuration space and~$U \colon \mathcal X \to \R$ be an energy function.
Given~$\epsilon > 0$, the Gibbs measure with temperature~$\epsilon$ is defined by
\begin{equation}\label{e:piDef}
  \pi_\epsilon \defeq \frac{\tilde \pi_\epsilon}{Z_\epsilon}
  \quad\text{where}\quad
  \tilde \pi_\epsilon = e^{-U / \epsilon}
  \,,
  \quad\text{and}\quad
  Z_\epsilon = \int_{\mathcal X} \tilde \pi_\epsilon \, dx
  \,,
\end{equation}
where the last integral is carried out with respect to some reference measure on~$\mathcal X$.
We typically only consider the case where~$\mathcal X$ is the~$d$-dimensional Euclidean space~$\R^d$, or the~$d$-dimensional torus~$\T^d$.
In these cases the reference measure in~\eqref{e:piDef} is the Lebesgue measure.

Consider the overdamped Langevin equation
\begin{equation}\label{e:langevin}
  d X^\epsilon_t = -\grad U(X^\epsilon_t) \, dt + \sqrt{2\epsilon} \, dB_t
  \,,
\end{equation}
where~$B$ is a standard~$d$-dimensional Brownian motion.
It is well known (see for instance~\cite{Pavliotis14}) that the stationary distribution of the Langevin equation is~$\pi_\epsilon$, and the \emph{Langevin Monte Carlo (LMC)} algorithm obtains samples from~$\pi_\epsilon$ by simulating~\eqref{e:langevin} for a long time.

The disadvantage of LMC is that when~$U$ is not convex, the rate at which solutions to~\eqref{e:langevin} converge to the stationary distribution is exponentially small in~$1/\epsilon$.
This is the Arrhenius law~\cite{Arrhenius89}. Quantitatively, the spectral gap of the generator of~\eqref{e:langevin} is of order~$\exp(-\hat\gamma/\epsilon)$, where~$\hat\gamma$ is the energy barrier separating the wells of~$U$ (see~\eqref{e:gammaHatDef} below, and~\cite{Kolokoltsov00, MenzSchlichting14}).
We show that this slow rate of convergence can be overcome if one uses \emph{annealed importance sampling} (AIS)~\cite{Neal01}, or an autonormalized version of AIS.

\subsubsection{Langevin Annealed Importance Sampling}

For convenience, we first state the Langevin AIS algorithm.
\begin{algorithm}[H]
  \caption{Langevin Annealed Importance Sampling (Langevin AIS)}\label{a:aisLMC}
  \begin{algorithmic}[1]
    \item[\textbf{Tunable parameters:}]
    \Statex
      \begin{enumerate}[(1)]
	\item Target temperature~$\epsilon > 0$.
	\item Annealing schedule $K \in \N$ and~$\epsilon_1 > \cdots > \epsilon_{K+1} = \epsilon$
	\item LMC simulation time~$T$.
      \end{enumerate}
    \State Start with~$X_{0} \in \R^d$ arbitrary, and weight~$\tilde w_0 = 1$.
    \For{$k=1, \dots, K$}
      \State Simulate~\eqref{e:langevin} with temperature~$\epsilon_k$, starting from~$X_{k-1}$, and let~$X_k$ be the state after time~$T$.
      \State $\tilde w_{k} \gets \tilde w_{k-1} \frac{\tilde \pi_{\epsilon_{k+1}}(X_k)}{\tilde \pi_{\epsilon_k}(X_{k})}$.
	
    \EndFor
    \State \Return~$X_{K}$ and the \emph{unnormalized} weight~$\tilde w_K$.
  \end{algorithmic}
\end{algorithm}
\subsection{Informal statement of main result}\label{ss:informal}

We now state our main result informally. Its precise form is Theorem~\ref{t:main-theorem} in Section~\ref{ss:main-result}, and its extension to potentials with any finite number of wells is Theorem~\ref{t:main-theorem-multiwell} in Section~\ref{s:multi-well}.

\begin{theorem}[Informal]
    \label{t:main-theorem-intro}
    Let~$U\colon \R^d\to \R$ be a regular double-well potential with wells of equal depth (but not necessarily the same shape), whose growth at infinity is quadratic or steeper. Let~$\nu \in (0,1/8)$ and let~$\bar\epsilon \in (0, 1]$ be an initial temperature. Then there exist constants~$C_w(\nu), C_{T, 1}(\nu)$, and~$C_{T,2}(\nu, \bar\epsilon)$ such that the following holds. For any~$\underline\epsilon \in (0, \bar\epsilon/2)$ and~$\delta \in (0,1]$, we choose~$K, T, N$ such that
    \begin{align}
        \label{e:multiple-particles-Kchoice}
        K &= \floor*{ \frac{1}{\nu}\paren*{ \frac{1}{\underline\epsilon} - \frac{1}{\bar\epsilon}  } }\,,\\
        \label{e:multiple-particles-Tchoice}
        T &\geq   \max\Bigg\{C_{T, 1}\paren*{\frac{1}{\underline\epsilon} + \log K},\\
        &\qquad\qquad C_{T,2}\paren*{1 + \log\paren*{\frac{3}{\delta}} + 2\nu\max_{1\leq i\leq N} U(X_0^i)  }, \frac{\log 2}{3\nu}\Bigg\}\,,\\
        \label{e:multiple-particles-Nchoice}
        N &\geq \frac{9C_w}{\delta^2}\,,
    \end{align}
    and define the sequence of temperatures~$(\epsilon_k)_{k=1}^{K+1}$ such that
    \begin{equation}
        \bar\epsilon = \epsilon_1 > \epsilon_2 > \cdots > \epsilon_{K+1} = \underline\epsilon\,,
    \end{equation}
    and their inverse temperatures~$\set{1/\epsilon_k}_{k=1}^{K+1}$ are linearly spaced. Perform~$N$ independent runs of Algorithm~\ref{a:aisLMC} with these parameters and let~$(X_K^1, \tilde w_K^1)$, \ldots, $(X_K^N, \tilde w_K^N)$ be the resulting points and unnormalized weights. Define the empirical measure~$\mu_N$ by
    \begin{equation}\label{e:muN-wKi-def}
        \mu_N \defeq \sum_{i=1}^N w_K^i \delta_{X_K^i} \quad \text{where}\quad w_K^i = \frac{\tilde w_K^i}{\sum_{j=1}^N \tilde w_K^j}\,.
    \end{equation}
    Then for any bounded test function~$f$, we have 
    \begin{equation}\label{e:main-theorem-small-error}
       \norm*{ \ip*{f, \mu_N - \pi_{\epsilon_{K+1}}} }_{L^2(\P)} \leq \delta \norm{f}_\infty\,.
    \end{equation}
    Consequently, for any~$s > d/2$, there exists an explicit dimensional constant~$C_s$ (independent of~$\nu, \bar\epsilon, \underline\epsilon, \delta$) such that
    \begin{equation}\label{e:langevin-ais-convergence-sobolev}
        \norm*{\norm*{\mu_N-\pi_{\epsilon_{K+1}}}_{ H^{-s}(\R^d)}}_{L^2(\P)} \leq C_s \delta\,.
    \end{equation}
    Here, the notation~$\ip{f, \mu}$ used in~\eqref{e:main-theorem-small-error} is 
    \begin{equation}
        \ip{f, \mu} = \int_{\R^d} f\, d\mu\,,
    \end{equation}
    and~$\norm{\cdot}_{H^{-s}}$ denotes the inhomogeneous Sobolev norm with index~$-s$.
\end{theorem}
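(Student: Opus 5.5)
The strategy follows the torus analysis of~\cite{AgarwalIyerEA26}. We first reduce the multi-particle statement to a bound on a single run of Algorithm~\ref{a:aisLMC}, then prove that bound by tracking the law of the weighted chain through the annealing steps.

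For one run, let~$\mu_k$ denote the weighted measure $f \mapsto \E[\tilde w_k f(X_k)]$. Because~$\tilde w_k$ multiplies by $\tilde\pi_{\epsilon_{k+1}}/\tilde\pi_{\epsilon_k}$ after the $k$-th LMC stage, we have
\begin{equation*}
  \mu_k = \Bigl(\mu_{k-1} P^{\epsilon_k}_T\Bigr)\,\frac{\tilde\pi_{\epsilon_{k+1}}}{\tilde\pi_{\epsilon_k}}\,,
\end{equation*}
where~$P^{\epsilon_k}_T$ is the Langevin semigroup at temperature~$\epsilon_k$. If every stage mixed perfectly, so that $\mu_{k-1}P^{\epsilon_k}_T \propto \pi_{\epsilon_k}$, we would get exactly $\mu_K \propto \pi_{\epsilon_{K+1}}$ (Jarzynski/Neal identity).

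The key point is that perfect mixing is unnecessary; we only need local equilibration within wells. We decompose the density $h_k = d\mu_k / d\pi_{\epsilon_{k+1}}$ using the spectral decomposition of the generator $L_{\epsilon}$, which is self-adjoint on $L^2(\pi_\epsilon)$. There are two exponentially small eigenvalues (one per well), and the spectrum is otherwise bounded below by a constant $\lambda_* > 0$ uniform in~$\epsilon$ (the Witten Laplacian harmonic approximation). The plan has three steps.
\begin{enumerate}
  \item \emph{Mixing of fast modes.} After time $T \geq C_{T,1}(1/\underline\epsilon + \log K)$, the component of~$h$ orthogonal to the slow eigenfunctions is damped by $e^{-\lambda_* T}$. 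This has to beat the amplification caused by reweighting. We control the reweighting through the $L^\infty$ bound on~$h_k$ and the weighted $L^\infty$ bounds on eigenfunctions.
  \item \emph{Preservation of the slow mode.} The slow mode is essentially the indicator-like profile distinguishing the two wells. Since the wells have equal depth, reweighting from $\epsilon_k$ to $\epsilon_{k+1}$ with $1/\epsilon_{k+1} - 1/\epsilon_k = \nu$ changes the relative well masses only by a factor $1 + O(\nu\epsilon_k)$ per step. This follows from a Laplace expansion of the well masses with nondegenerate Hessians. Multiplying over the $K \approx 1/(\nu\underline\epsilon)$ steps gives an $O(1)$ bounded distortion, not an error that blows up.
  \item \emph{Weight variance.} From steps 1 and 2 we deduce $\E \tilde w_K^2 / (\E \tilde w_K)^2 \leq C_w$. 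We also obtain the bias estimate $\abs{\ip{f,\mu_K}/\mu_K(1) - \ip{f,\pi_{\epsilon_{K+1}}}} \leq (\delta/3)\norm{f}_\infty$. The dependence of the second term in~$T$ on $U(X_0)$ comes from the first step, where $X_0$ is a point mass: an initial stretch at temperature $\bar\epsilon$ is needed to bring $h_1$ into $L^2$, and this costs $\log$ of $\chi^2(\delta_{X_0}P_T \,\|\, \pi_{\bar\epsilon})$, which is at most $C + 2\nu U(X_0)$ after using quadratic growth.
\end{enumerate}

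To pass from one run to~$N$ runs, note that $\mu_N$ is a self-normalized importance sampling estimator. The standard bound for such estimators is
\begin{equation*}
  \norm{\ip{f,\mu_N - \pi}}_{L^2} \leq \text{bias} + 2\norm{f}_\infty\sqrt{\E\tilde w^2/((\E\tilde w)^2 N)}\,.
\end{equation*}
With $N \geq 9C_w/\delta^2$ this gives~\eqref{e:main-theorem-small-error}. The Sobolev bound~\eqref{e:langevin-ais-convergence-sobolev} then follows by duality. Write $\norm{\mu}_{H^{-s}}^2 = \int \abs{\hat\mu(\xi)}^2(1+\abs\xi^2)^{-s}\,d\xi$ and apply~\eqref{e:main-theorem-small-error} to $f = \cos(\xi\cdot x)$ and $f = \sin(\xi\cdot x)$. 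Integrating in~$\xi$ via Fubini gives $C_s^2 = \int(1+\abs\xi^2)^{-s}\,d\xi\cdot 2$, which is finite exactly when $s > d/2$.

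The main obstacle is step 1 on the noncompact space~$\R^d$. Here we need uniform-in-$\epsilon$ weighted $L^\infty$ bounds on the eigenfunctions of~$L_\epsilon$, so that reweighting by $e^{-\nu U}$ does not lose control at infinity. We also need an eigenvalue count below a fixed threshold that does not grow exponentially. For the count, we would conjugate to the Schr\"odinger form $-\epsilon\Delta + \abs{\grad U}^2/(4\epsilon) - \Delta U/2$ and apply Cwikel--Lieb--Rozenblum and Lieb--Thirring. For the $L^\infty$ bounds, we would combine Agmon estimates at infinity, which quadratic growth makes available, with a local maximum principle near the critical points.
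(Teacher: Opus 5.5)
You have the same ingredients as the paper: Agmon estimates plus a local maximum principle for weighted eigenfunction bounds, a CLR/Lieb--Thirring count, the near-indicator slow eigenfunction, the self-normalized splitting, and Fourier duality. The gap is in how you assemble them, mainly in the variance step.

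Tracking $\mu_k(f)=\E[\tilde w_k f(X_k)]$ controls only first moments, so ``from steps 1 and 2 we deduce'' a bound on $\E[\tilde w_K^2]/(\E\tilde w_K)^2$ has no mechanism behind it. The missing idea is that linear spacing makes $\tilde r_k^2=e^{-2\bar\nu U}$, with $\bar\nu=1/\epsilon_{k+1}-1/\epsilon_k$, the same function at every level. Hence $\tilde r_k^2\pi_{\epsilon_k}\propto\pi_{\epsilon_{k+2}}$, and the spectral expansion gives $P^{\epsilon_k}_T\tilde r_k^2\le\frac{Z_{\epsilon_{k+2}}}{Z_{\epsilon_k}}\big(1+\tilde r_k^{-2}\,\Theta_k\big)$, where $\Theta_k$ collects the slow- and fast-mode errors. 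The unbounded factor $\tilde r_k^{-2}(X_{k-1})$ cancels the previous increment $\tilde r_{k-1}^2(X_{k-1})$. This closes the recurrence $\E[\tilde w_k^2]\le\frac{Z_{\epsilon_{k+2}}}{Z_{\epsilon_k}}\big(\E[\tilde w_{k-1}^2]+\Theta_k\,\E[\tilde w_{k-2}^2]\big)$ and reduces everything to $\sum_k\Theta_k=O(1)$ (Lemma~\ref{l:squared-weight-bound}).

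Your Step~2 also fails to supply the slow-mode part of $\sum_k\Theta_k$. For the harmonic schedule, $\sum_{k\le K}\nu\epsilon_k\ge\frac12\log(\bar\epsilon/\underline\epsilon)$, so a per-step factor $1+O(\nu\epsilon_k)$ compounds to $(\bar\epsilon/\underline\epsilon)^{O(1)}$, and $C_w$ would blow up as $\underline\epsilon\to0$. What works is the following. By the near-indicator structure of $\psi_{2,\epsilon}$, the slow-mode error at level $k$ is $\abs{\pi_{\epsilon_{k+2}}(\Omega_1)-\pi_{\epsilon_k}(\Omega_1)}$ up to an exponentially small term. These are increments of a single function, so they sum to $O(1)$ by the bounded variation of $\epsilon\mapsto\pi_\epsilon(\Omega_1)$, see~\eqref{e:well-mass-BV}. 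Equivalently, the per-step change is $O(\epsilon_k-\epsilon_{k+1})=O(\nu\epsilon_k^2)$, which telescopes.

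Several of your other stated requirements are also off.

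\emph{Uniform eigenfunction bounds.} A uniform-in-$\epsilon$ weighted bound is available only for $\psi_{2,\epsilon}$, and it fails for some $i\ge3$. The eigenfunction coming from the harmonic oscillator at the saddle $0$ has eigenvalue of order one and, normalized in $L^2(\pi_\epsilon)$, size of order $e^{U(0)/(2\epsilon)}$ near $0$. So for $i\ge3$ one can only hope for $\norm{\psi_{i,\epsilon}}_V\le C_de^{\bar C_\nu\lambda_{i,\epsilon}/\epsilon}$, where $\norm{\psi}_V=\norm{\psi/V}_{L^\infty}$ and $V=e^{2\nu U}$. The $1/\underline\epsilon$ in $T$ is there precisely to beat this growth.

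\emph{Eigenvalue count.} The count must be polynomial in $\lambda$ for all $\lambda$, not only below a threshold, in order to sum the tail.

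\emph{No $L^\infty$ bound on $h_k$.} From a point-mass start, the weighted eigenfunction bounds give only $\abs{h_1(y)-c}\lesssim V(X_0)V(y)\sum_{i\ge2}e^{-\lambda_{i,\bar\epsilon}T}\norm{\psi_{i,\bar\epsilon}}_V^2$. To carry this weight through the levels you need a temperature-uniform drift condition $P^{\epsilon}_TV\le e^{-3\nu T}V+M$ (Lemma~\ref{l:V-lyapunov}), together with $\ip{V,\pi_{\epsilon_{K+1}}}\le C$.

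\emph{Bias.} The bias needs no mixing below the first level. Since $\pi_{\epsilon_k}$ is invariant for $P^{\epsilon_k}_T$, the bias reduces exactly to $\abs{P^{\epsilon_1}_Tg(X_0)-\ip{g,\pi_{\epsilon_1}}}$, where $g$ is the test function transported back through the levels. Fast-mode damping at every level is needed only for the variance, through the fast-mode part of $\Theta_k$, which is $\lesssim\sum_{i\ge3}e^{-\lambda_{i,\epsilon_k}T}\norm{\psi_{i,\epsilon_k}}_V^2\le 1/K$. That is where $\log K$ enters $T$.
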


\subsection{Relation to previous work}\label{ss:related-work}

Sampling from a Gibbs measure at low temperature is a central problem in statistical
physics, Bayesian statistics and theoretical computer science, and the obstruction
addressed here, metastability, is equally fundamental. When~$U$ has several wells and~$\epsilon$
is small, the Langevin diffusion~\eqref{e:langevin} spends an exponentially long time, of order
$\exp(\hat\gamma/\epsilon)$, in a neighborhood of one well before crossing to another.
Sharp forms of this statement go back to~\cite{Eyring35,Kramers40}. See~\cite{Kolokoltsov00}
for a semiclassical treatment, \cite{BovierEckhoffEA04,BovierGayrardEA05} for a potential-theoretic one,
and~\cite{MenzSchlichting14} for the Eyring--Kramers
formula for the optimal constants in the Poincar\'e and logarithmic Sobolev
inequalities.
Because of this metastability, the mixing time of the Langevin diffusion is exponentially large
in~$1/\epsilon$, and running Langevin Monte Carlo until the law of~$X^\epsilon_t$ is close
to~$\pi_\epsilon$ is prohibitively expensive at low temperatures. By contrast, when~$U$
is convex, or more generally when~$\pi_\epsilon$ satisfies a logarithmic Sobolev inequality with a constant of
order one, Langevin Monte Carlo and its discretizations converge rapidly~\cite{VempalaWibisono19}.

Annealing-based algorithms circumvent the obstruction by interpolating between a
high temperature, at which the target is nearly unimodal and mixing is fast, and the
low target temperature. Classical examples are parallel tempering~\cite{SwendsenWang86},
simulated tempering~\cite{MarinariParisi92,GeyerThompson95} and tempered
transitions~\cite{Neal96a}. Decomposition techniques give conditions for rapid mixing of
tempering chains~\cite{MadrasRandall02,WoodardSchmidlerEA09}, polynomial bounds are known
for simulated tempering Langevin Monte Carlo when the modes have the same
shape~\cite{LeeRisteskiEA18,GeLeeEA20}, and spectral gap bounds for parallel and simulated
tempering that are polynomial in~$1/\epsilon$ were obtained in~\cite{Son26}. Annealed
importance sampling was introduced in~\cite{Neal01} and is the basis of many
sequential Monte Carlo methods~\cite{DelMoralDoucetEA06,Liu08,ChopinOmiros20}. Error bounds
for sequential Monte Carlo on multimodal targets were obtained
in~\cite{Schweizer12,PaulinJasraEA19,MathewsSchmidler24,LeeSantanaGijzen24} under structural
assumptions on the target or on the transition kernels. Annealed Langevin Monte Carlo, which
uses neither importance weights nor resampling, has also been analyzed~\cite{GuoTaoEA25}.

More recently, explicit time-complexity bounds have been obtained for annealing algorithms
that only evaluate~$U$ and~$\grad U$. It is shown in~\cite{HanIyerEA26} that
annealed sequential Monte Carlo targeting a multimodal Gibbs measure on the
torus~$\T^d$ has time complexity of order~$\epsilon^{-4}$, and a bound of order~$\epsilon^{-2}$
for Langevin AIS, again on~$\T^d$, is proved in~\cite{AgarwalIyerEA26}. In both cases the
dependence on the accuracy~$\delta$ is of order~$\delta^{-2}$. The sequential Monte Carlo result
is extended in~\cite{Han25} to~$\R^d$ under the additional assumption that~$\Delta U$ is bounded. For a double
well with wells of equal depth the time complexity is of order~$\epsilon^{-7}$, and the
multi-well case is sketched. In all of these results the exponent of~$1/\epsilon$ does
not depend on the dimension~$d$. This is the relevant benchmark, because rejection
sampling from the uniform distribution on~$\T^d$ already has cost polynomial
in~$1/\epsilon$, namely of order~$\epsilon^{-d/2}$ per sample, as its acceptance
probability is~$Z_\epsilon\asymp\epsilon^{d/2}$. Moreover, none of these results
requires prior knowledge of the shape of~$U$, such as the number or the locations of
its wells. This is in contrast with analyses that assume a known decomposition of the
state space~\cite{MathewsSchmidler24} or known masses of the modes~\cite{LeeSantanaGijzen24}.

The present paper removes the compactness of the state space from the analysis
of~\cite{AgarwalIyerEA26}. Compared with~\cite{Han25}, our result concerns AIS rather
than sequential Monte Carlo and has the smaller exponent~$2$ in~$1/\epsilon$. It also
replaces the boundedness of~$\Delta U$ by the growth conditions of
Assumption~\ref{a:U-growth-control}, which allow superquadratic growth and, through
Remark~\ref{r:extension}, Gaussian mixtures with distinct covariance matrices. Finally, it
treats potentials with any finite number of wells, with saddles of arbitrary Morse
index, in full (Section~\ref{s:multi-well}). Removing compactness is not a formality. Two ingredients of the compact
theory are unavailable on~$\R^d$. The generator~$\mathcal L_\epsilon$ need not have
discrete spectrum, and the eigenfunctions need not be bounded. We recover both under
the growth conditions of Assumption~\ref{a:U-growth-control}, the first through a
Cwikel--Lieb--Rozenblum and Lieb--Thirring eigenvalue count (Lemma~\ref{l:ef-weighted-sum-Linfty-bds})
and the second through a combination of Agmon estimates, which control the eigenfunctions
at infinity (Lemma~\ref{l:weighted-eigen-vanish-infty}), with a local maximum principle
near the critical points (Lemma~\ref{l:eigenfunctions-weighted-Linfty-bdd}). A further
difficulty, absent when~$U$ has only two wells, arises for general multi-well potentials.
The comparison between the~$L^2(\pi_\epsilon)$ and~$L^2(dx)$ norms that underlies the
eigenfunction bound degrades at saddles lying far above the smallest barrier. Section~\ref{s:multi-well}
resolves this by a perturbation argument in the spirit of~\cite{MenzSchlichting14}
and~\cite{Son26}.

\subsection{Plan of the paper}\label{ss:plan}

In Section~\ref{s:main}, we state the precise assumptions (Section~\ref{ss:assumptions}) and the main results for double-well potentials, Proposition~\ref{p:single-particle-proposition} and Theorem~\ref{t:main-theorem} (Section~\ref{ss:main-result}). We also list the three key lemmas from which they follow (Section~\ref{ss:key-lemmas}). Lemma~\ref{l:lyapunov-weighted-Linftybds-and-their-sums} bounds the eigenfunctions of the generator in a weighted~$L^\infty$ norm, and Lemmas~\ref{l:bias-bound} and~\ref{l:squared-weight-bound} bound the bias and the second moment of the unnormalized weight in terms of these eigenfunction bounds.

In Section~\ref{s:ef-bounds}, we prove Lemma~\ref{l:lyapunov-weighted-Linftybds-and-their-sums}, splitting it into Lemmas~\ref{l:eigenfunctions-weighted-Linfty-bdd} and~\ref{l:ef-weighted-sum-Linfty-bds}. Most of the difficulties caused by the noncompactness of~$\R^d$ are dealt with there. In Section~\ref{s:bias-variance}, we prove the bias bound, Lemma~\ref{l:bias-bound}, in Section~\ref{ss:bias} and the variance bound, Lemma~\ref{l:squared-weight-bound}, in Section~\ref{ss:variance}. In Section~\ref{s:proof-main}, we combine the three key lemmas to prove Proposition~\ref{p:single-particle-proposition} and Theorem~\ref{t:main-theorem}.

In Section~\ref{s:multi-well}, we extend these results to potentials with any finite number of wells (Proposition~\ref{p:single-particle-proposition-multiwell} and Theorem~\ref{t:main-theorem-multiwell}). Only one step of Sections~\ref{s:ef-bounds}--\ref{s:proof-main} requires a new argument, namely a uniform bound on the low-lying eigenfunctions near the saddles of~$U$. We reduce this bound to Lemmas~\ref{l:lyapunov-uniform-boundedness} and~\ref{l:lyapunov-existence}, which we prove in Sections~\ref{ss:proof-uniform-bound} and~\ref{ss:proof-lyapunov-existence}, respectively. The latter relies on a perturbation of~$U$ near its saddles, Lemma~\ref{l:perturb-prop}, which we construct in Section~\ref{ss:higher-index}. Finally, Appendix~\ref{s:appendix} contains the proofs of Lemmas~\ref{l:dEF-small} and~\ref{l:partition-function-estimates}.

\section{Assumptions, main result and key lemmas}\label{s:main}

In this section, we state the assumptions and the main result of the paper for double-well potentials. The assumptions are given in Section~\ref{ss:assumptions} and the main result, Proposition~\ref{p:single-particle-proposition} and Theorem~\ref{t:main-theorem}, in Section~\ref{ss:main-result}. In Section~\ref{ss:key-lemmas}, we state the key lemmas from which the main result is deduced, Lemmas~\ref{l:lyapunov-weighted-Linftybds-and-their-sums}, \ref{l:bias-bound} and~\ref{l:squared-weight-bound}, and we explain how they are used.

\subsection{Assumptions}\label{ss:assumptions}

We begin by stating the assumptions that are required in the main theorem. The assumptions are very similar to those in~\cite{HanIyerEA26} except that here the domain is $\R^d$. The first assumption requires~$U$ to be a regular double-well function with nondegenerate critical points. 

\begin{assumption}\label{a:criticalpts}
 The function~$U \in C^6(\R^d,\R)$ has a nondegenerate Hessian at all of its critical points and has exactly two local minima, located at~$x_{\min, 1}$ and~$x_{\min, 2}$.
\end{assumption}

Without loss of generality, we normalize~$U$ so that
\begin{equation}\label{e:Upositive}
  0 = U(x_{\min, 1} )  \leq U( x_{\min, 2} )\,.
\end{equation}

Our next assumption concerns the saddle separating the two local minima.
The \emph{saddle height}~$\hat U$ between~$x_{\min,1}$ and~$x_{\min,2}$ is the lowest energy level that a continuous path joining the two minima must reach, namely
\begin{equation}\label{e:UHatDef}
    \hat{U} = \hat{U}(x_{\min,1},x_{\min,2}) \defeq \inf_\omega \sup\limits_{t\in [0,1]}U(\omega(t))
    .
\end{equation}
Here, $\omega \in C( [0, 1]; \mathbb R^d)$ is any path such that~$\omega(0) =x_{\min,1}$,~$\omega(1) = x_{\min,2}$.
\medskip

We also need to assume a nondegeneracy condition on the saddle.
\begin{assumption}\label{a: nondegeneracy}
   The saddle height between~$x_{\min,1}$ and~$x_{\min,2}$ is attained at a unique critical
   point~$s_{1,2}$, and this point has Morse index one, that is, exactly one eigenvalue
   of~$D^2U(s_{1,2})$ is negative and the remaining~$d-1$ eigenvalues are positive.
   After a translation we assume throughout that~$s_{1,2}=0$.
\end{assumption}

Whereas~$\hat U$ is an absolute energy level, the \emph{energy barrier}~$\hat\gamma$ is the energy \emph{increase} that must be paid to move from the shallower minimum~$x_{\min, 2}$ to the global minimum~$x_{\min, 1}$. Under Assumption~\ref{a: nondegeneracy} both quantities are expressed in terms of~$s_{1,2}$ by
\begin{equation}\label{e:gammaHatDef}
    \hat{\gamma}\defeq U(s_{1,2})-U(x_{\min,2})= U(0)-U(x_{\min, 2})
    \quad\text{and}\quad
    \hat U = U(s_{1,2})=U(0)\,.
\end{equation}

Finally, we require~$\pi_\epsilon$ to be genuinely multimodal throughout the temperature range of interest, in the sense that both \emph{basins of attraction} carry a non-negligible share of the mass.
The basin of attraction around~$x_{\min, i}$, denoted by~$\Omega_i$, is the set of all initial points for which the gradient flow of~$U$ eventually reaches~$x_{\min, i}$.
Precisely,~$\Omega_{i}$ is defined by
\begin{equation}\label{e:basin-def}
  \Omega_i
    \defeq
    \set[\Big]{
      y\in\R^d \st
      \lim_{t\to\infty}y_t=x_{\min,i}, \text{ where }\dot{y}_t=-\nabla U(y_t)
	\text{ with } y_0=y
      }
    ,
\end{equation}
and our multimodality condition is as follows.

\begin{assumption}\label{a:massRatioBound}
  There exist~$\kappa \in [0,1/2)$ and a constant~$C_m \geq 1$ such that
   \begin{equation}\label{e:massRatioBound}
     \min_{i\in \set{1,2}}\inf_{\substack{
       \epsilon\in (\kappa, 1]\\
     }}
      \pi_{\epsilon}(\Omega_i)
	\geq \frac{1}{C_m^2}
	.
   \end{equation}
\end{assumption}

The parameter~$\kappa$ in Assumption~\ref{a:massRatioBound} is the lowest temperature at which the mass condition is required. The case of interest is~$\kappa=0$, in which case~\eqref{e:massRatioBound} must hold at every temperature in~$(0,1]$. By~\cite[Lemma 4.4]{HanIyerEA26} this is automatic, with~$C_m$ depending only on~$U$, when the two wells have equal depth, that is when~$U(x_{\min,2})=0$. Allowing~$\kappa>0$ accommodates wells of unequal depth, for which the shallower well loses its mass as~$\epsilon\to0$ and~\eqref{e:massRatioBound} can only hold down to a positive temperature.

Since the state space~$\R^d$ is noncompact, we need further assumptions on~$U$ in order to guarantee that the Langevin diffusion mixes and that~$\mathcal L_\epsilon$ has discrete spectrum.

\begin{assumption}\label{a:U-growth-control}
There exists a constant~$C_L>0$ such that 
    \begin{align}
        \label{a:DU-blowup}
        \lim_{\abs{x}\to \infty}&\abs*{DU} = \infty\,,\\
        \label{a:DU^2-dominates-D^2U}
        \lim_{\abs{x}\to \infty} &\frac{\abs{D^2U}}{\abs{DU}^2} = 0 \,,\\
        \label{a:U-lyapunov}
        0\leq U &\leq C_L(1+\abs{DU}^2)\,.
    \end{align}
\end{assumption}

\begin{remark}[Extension of the results]\label{r:extension}
The assumption~\eqref{a:DU^2-dominates-D^2U} implies the existence of a constant~$C_\Delta$ such that
\begin{equation}
\tag{\ref*{a:DU^2-dominates-D^2U}*}
\label{a:DU-exp-growth-at-most}
    \abs{D^2 U} \leq C_\Delta (1+\abs{DU}^2)\,.
\end{equation}
Repeating the proof in this paper shows that the main theorem also holds under the weaker set of assumptions~\eqref{a:DU-blowup}, \eqref{a:DU-exp-growth-at-most}, and~\eqref{a:U-lyapunov}, in place of~\eqref{a:DU-blowup}--\eqref{a:U-lyapunov}, provided that we start the annealing from a \emph{sufficiently small} highest temperature. The smallness of the initial temperature guarantees that the Langevin diffusion~\eqref{e:langevin} also mixes at the highest temperature. Since the proof in this case follows along the same lines, we omit the details.

We also note that we assume~$U$ has double wells, instead of multiple wells, in Assumption~\ref{a:criticalpts} only for simplicity and clarity of presentation. The main result extends to potentials with multiple wells satisfying Assumptions~\ref{a:criticalpts-multiwell}, \ref{a:massRatioBound-multiwell}, and~\ref{a:U-growth-control}, at the cost of additional technical work to prove the uniform boundedness of the eigenfunctions. The proof is presented in Section~\ref{s:multi-well} below.
\end{remark}

\begin{remark}[Examples]
    Every potential~$U \geq 0$ such that, for some~$p\in [2,\infty)$,
    \begin{equation}\label{e:U-poly-growth}
        U \lesssim \abs{x}^p\,,\quad \abs{DU} \gtrsim \abs{x}^{p-1}\,,\quad\text{and}\quad \abs{D^2U}\lesssim \abs{x}^{p-2}
    \end{equation} 
    satisfies the conditions~\eqref{a:DU-blowup}--\eqref{a:U-lyapunov}. Here, for two non-negative functions~$f$ and~$g$, we write~$f\lesssim g$ (equivalently, $g \gtrsim f$) if there exists a constant~$C>0$ such that~$f(x) \leq C g(x)$ for all sufficiently large~$\abs{x}$. Indeed, \eqref{e:U-poly-growth} gives~$\abs{D^2U}/\abs{DU}^2 \lesssim \abs{x}^{-p} \to 0$ and $U \lesssim \abs{x}^p \lesssim \abs{DU}^{p/(p-1)} \lesssim 1+\abs{DU}^2$, where the last step uses~$p\geq 2$.

    In particular, any nonnegative potential~$U$ such that~$U(x) = \abs{x}^p$ for all sufficiently large~$\abs{x}$ and some~$p\in [2,\infty)$ satisfies~\eqref{e:U-poly-growth}. Moreover, the potential of a Gaussian mixture on~$\R^d$ with a common covariance matrix satisfies~\eqref{e:U-poly-growth} with~$p=2$. That is, suppose that
    \begin{equation}\label{e:U-potential-for-GM-with-same-covariance}
        \exp\paren*{-U(x)} = \sum_{j=1}^{N} \alpha_j \exp\paren*{-\frac{1}{2}\paren*{x-\mu_j}^T \Gamma^{-1}\paren*{x-\mu_j}}
    \end{equation} 
    for some~$N \in \N$, points~$(\mu_j)_{j=1}^{N} \subset \R^d$, a symmetric positive-definite matrix~$\Gamma \in \R^{d\times d}$, and weights~$\alpha_j \geq 0$ with~$\sum_{j=1}^{N} \alpha_j = 1$. Then~$U \geq 0$, and~$U$ satisfies~\eqref{e:U-poly-growth} with~$p=2$. Suppose, in addition, that the parameters are such that~$U$ satisfies Assumptions~\ref{a:criticalpts-multiwell} and~\ref{a:massRatioBound-multiwell}. This imposes conditions on the parameters. For instance, the means~$\mu_j$ must be sufficiently separated for~$U$ to have distinct wells. Then our main result holds for~$U$ in~\eqref{e:U-potential-for-GM-with-same-covariance}, as discussed in Remark~\ref{r:extension} above.
    
    For more general Gaussian mixtures with varying covariance matrices, where~$U$ satisfies
    \begin{equation}\label{e:U-potential-for-GM}
        \exp\paren*{-U(x)} = \sum_{j=1}^{N} \alpha_j \exp\paren*{-\frac{1}{2}\paren*{x-\mu_j}^T \Gamma_j^{-1}\paren*{x-\mu_j}}
    \end{equation} 
    with a collection~$(\mu_j, \Gamma_j)_{j=1}^{N}$, the growth condition~\eqref{e:U-poly-growth} generally fails with~$p=2$, since~$\abs{D^2U}$ can grow as fast as~$\abs{x}^2$. Nevertheless, such~$U \geq 0$ still satisfies
    \begin{equation}
        U \lesssim \abs{x}^2\,,\quad \abs{DU} \gtrsim \abs{x}\,,\quad\text{and}\quad \abs{D^2U}\lesssim \abs{x}^2\,,
    \end{equation}
    and hence~$\abs{D^2U} \lesssim 1+\abs{DU}^2$. Therefore~$U$ satisfies~\eqref{a:DU-blowup}, \eqref{a:DU-exp-growth-at-most}, and~\eqref{a:U-lyapunov}. If, in addition, the parameters are such that~$U$ satisfies Assumptions~\ref{a:criticalpts-multiwell} and~\ref{a:massRatioBound-multiwell}, then combining this with Remark~\ref{r:extension} shows that our main result holds for~$U$ in~\eqref{e:U-potential-for-GM}, provided that the initial highest temperature is sufficiently small.
\end{remark}

\begin{remark}[Role of the assumptions]
    We briefly explain why the conditions in Assumption~\ref{a:U-growth-control} are natural.
    
    First, the growth conditions~\eqref{a:DU-blowup} and~\eqref{a:DU^2-dominates-D^2U} guarantee that the Langevin generator~$\mathcal L_\epsilon$ has discrete spectrum (see, e.g.,~\cite{Pavliotis14, Kolokoltsov00}). They also give other desirable properties at every temperature, or at every sufficiently small temperature if we only assume~\eqref{a:DU-exp-growth-at-most}. These include the boundedness of the eigenfunctions in appropriate norms and Weyl's law, which are essential for obtaining the bias and variance bounds of the AIS sampler.

    Second, \eqref{a:U-lyapunov} is a Lyapunov-type condition. Under~\eqref{a:DU-blowup} and~\eqref{a:DU^2-dominates-D^2U} it is equivalent to the usual Lyapunov drift condition for the generator~$\mathcal L_\epsilon$ defined in~\eqref{e:generator-def} below, namely that there exist~$c, R > 0$ with~$\mathcal L_1 U \leq - c U$ on~$\set{\abs{x} > R}$. Similarly, under~\eqref{a:DU-blowup} and~\eqref{a:DU-exp-growth-at-most}, the assumption~\eqref{a:U-lyapunov} is equivalent to the existence of~$c, R > 0$ such that~$\mathcal L_\epsilon U \leq - cU$ on~$\set{\abs{x} > R}$ for all sufficiently small~$\epsilon$.

    Finally, we record a consequence of~\eqref{a:DU-blowup} that is used repeatedly below. There exist constants~$D_1, D_2 > 0$ such that
    \begin{equation}\label{e:U-coercive}
        U(x) \geq D_1\abs{x} - D_2\,, \qquad \forall x\in \R^d\,,
    \end{equation}
    as shown, e.g., in~\cite[Lemma 3.14]{MenzSchlichting14}. In particular~$U(x)\to\infty$ as~$\abs{x}\to\infty$, so all sublevel sets of~$U$ are compact.
\end{remark}

\subsection{Statement of the main result}\label{ss:main-result}

In this subsection, we state the main result of the paper, Theorem~\ref{t:main-theorem}, together with Proposition~\ref{p:single-particle-proposition}, from which it follows.

\begin{proposition}[Bias and variance bounds]\label{p:single-particle-proposition}
Make the Assumptions~\ref{a:criticalpts}--\ref{a:U-growth-control}. Then, for all~$\nu \in (0, 1/8)$ and~$\bar \epsilon \in (\kappa, 1]$, there exist constants~$C_w(\nu)
, C_{T, 1}(\nu)$, and~$C_{T,2}(\nu, \bar\epsilon)$ such that  the following holds. 
For any~$\underline\epsilon \in (\kappa, \bar\epsilon/2)$ and~$\delta\in (0,1]$, choose~$K$ and~$T$ such that 
    \begin{align}
        \label{e:single-particle-Kchoice}
        K &= \floor*{ \frac{1}{\nu}\paren*{ \frac{1}{\underline\epsilon} - \frac{1}{\bar\epsilon}  } }\,,\\
        \label{e:single-particle-Tchoice}
        T &\geq   \max\set*{C_{T, 1}\paren*{\frac{1}{\underline\epsilon} + \log K}, C_{T,2}\paren*{1 + \log\paren*{\frac{1}{\delta}} +  2\nu U(X_0)}, \frac{\log 2}{3\nu}}\,,
    \end{align} 
    and define the sequence of temperatures~$(\epsilon_k)_{k=1}^{K+1}$ such that
    \begin{equation}
        \bar\epsilon = \epsilon_1 > \epsilon_2 > \cdots > \epsilon_{K+1} = \underline\epsilon\,,
    \end{equation}
    and their inverse temperatures~$\set{1/\epsilon_k}_{k=1}^{K+1}$ are linearly spaced. Let~$\tilde w_K$ and~$X_K$ denote the output of Algorithm~\ref{a:aisLMC} run with these parameters.
    Then for any bounded test function~$f$, we have
    \begin{align}
        \label{e:single-particle-bias-bound}
        \abs*{\frac{\E\brak*{\tilde w_K f(X_K)}}{\E\brak*{\tilde w_K}} - \ip*{f, \pi_{\epsilon_{K+1}}}}&\leq  \delta \norm{f}_\infty \,,\\
        \label{e:single-particle-variance-bound}
        \var\paren*{\frac{\tilde w_K f(X_K)}{ \E\brak*{\tilde w_K} }} &\leq C_w \norm{f}_\infty^2\,.
    \end{align}
\end{proposition}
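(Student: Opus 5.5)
We plan to follow the strategy of~\cite{AgarwalIyerEA26}, with the eigenfunction bounds redone for~$\R^d$. Write~$P^{(k)}_T = e^{T\mathcal L_{\epsilon_k}}$ for the Langevin semigroup at temperature~$\epsilon_k$. Write~$h_k = \tilde\pi_{\epsilon_{k+1}}/\tilde\pi_{\epsilon_k} = \exp(-U(1/\epsilon_{k+1}-1/\epsilon_k))$, which is bounded by~$1$ since~$U\ge0$.

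The first step is an exact formula for the weighted expectation:
\begin{equation*}
  \E[\tilde w_K f(X_K)] = \bigl(P^{(1)}_T h_1 P^{(2)}_T h_2 \cdots P^{(K)}_T h_K f\bigr)(X_0)\,,
\end{equation*}
where each~$h_k$ acts as a multiplication operator. If each~$P^{(k)}_T$ were the projection onto~$\pi_{\epsilon_k}$, the product would telescope. It would give~$\E[\tilde w_K f(X_K)]/\E[\tilde w_K] = \ip{f,\pi_{\epsilon_{K+1}}}$ and~$\E[\tilde w_K] = Z_{\epsilon_{K+1}}/Z_{\epsilon_1}$ exactly.

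We therefore decompose each~$P^{(k)}_T = \Pi_k + R_k$, where~$\Pi_k g = \ip{g,\pi_{\epsilon_k}}$. We further split~$R_k$ into the contribution of the second eigenfunction~$\psi_{k}$, whose eigenvalue~$\lambda_k\approx e^{-\hat\gamma/\epsilon_k}$ is exponentially small, and the rest of the spectrum, which is damped by~$e^{-T\lambda_{3,k}}$ with~$\lambda_{3,k}$ of order one. This is the content of Lemmas~\ref{l:bias-bound} and~\ref{l:squared-weight-bound}. We expand the product and bound every cross term via the weighted~$L^\infty$ eigenfunction bounds of Lemma~\ref{l:lyapunov-weighted-Linftybds-and-their-sums}.

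For the bias~\eqref{e:single-particle-bias-bound}, the key observation is the following. With~$1/\epsilon$ spaced by~$\nu$, the ratio~$h_k$ changes the measure only by a factor controlled by~$e^{\nu U}$. By Assumption~\ref{a:massRatioBound}, the slow mode~$\psi_k$ nearly agrees with~$\psi_{k+1}$, up to an error of order~$\nu$ times constants in~$C_m$. Hence the slow component is not amplified through the chain: the mass split between the basins at level~$k$ is transported almost correctly to level~$k+1$. The fast components contribute at most~$K$ terms of size~$e^{-cT}\cdot(\text{eigenfunction sums})$. The Weyl-type count (Lemma~\ref{l:ef-weighted-sum-Linfty-bds}) makes this sum at most polynomial in~$1/\underline\epsilon$, so the choice~$T\ge C_{T,1}(1/\underline\epsilon+\log K)$ kills them. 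The initial point enters only at the first step, through~$P^{(1)}_T h_1$ evaluated at~$X_0$. The Lyapunov weight~$e^{\nu U}$ in the weighted bound produces the factor~$e^{2\nu U(X_0)}$. Since~$\bar\epsilon$ is fixed, the spectral gap at~$\epsilon_1$ is of order one, so~$T\ge C_{T,2}(1+\log(1/\delta)+2\nu U(X_0))$ makes this term at most~$\delta$. Dividing by~$\E[\tilde w_K]$ is harmless, because the same expansion shows~$\E[\tilde w_K]\ge \tfrac12 Z_{\epsilon_{K+1}}/Z_{\epsilon_1}$; the condition~$T\ge \log 2/(3\nu)$ is what we will use here.

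For the variance~\eqref{e:single-particle-variance-bound}, we repeat the expansion with~$h_k$ replaced by~$h_k^2$ and~$f$ by~$f^2$. The telescoping now produces the products~$Z_{\epsilon_{k+1}}^2/(Z_{\epsilon_k}Z_{\epsilon_{k+2}})$-type ratios. The partition function estimates (Lemma~\ref{l:partition-function-estimates}) bound them by~$1+O(\nu^2\epsilon_k^{-2}\cdot\epsilon_k^2)$ per step, so their product over~$K\sim 1/(\nu\underline\epsilon)$ steps stays bounded by~$C_w(\nu)$. The slow-mode error terms accumulate multiplicatively as~$\prod(1+C\nu)$-type factors, and we must check that these also stay bounded, using~$\nu<1/8$ so that~$h_k^2$ is still dominated by the Lyapunov weight.

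I expect the main obstacle to be exactly this uniform-in-$K$ control of the slow-mode propagation. It requires the sharp weighted~$L^\infty$ bound on~$\psi_k$ near the saddle and at infinity, uniformly in~$\epsilon_k$, so that the per-step errors are~$O(\nu\epsilon_k)$-summable rather than merely~$O(\nu)$. That is where Lemma~\ref{l:lyapunov-weighted-Linftybds-and-their-sums} is indispensable.
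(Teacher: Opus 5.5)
\textbf{There is a genuine gap, and it is where you expected it: the level-by-level control of the slow mode does not close at the rates you propose.}

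Since $K\approx 1/(\nu\underline\epsilon)$, a per-level relative error of order $\nu$ compounds to $(1+C\nu)^K\approx e^{C/\underline\epsilon}$. Even your target rate $O(\nu\epsilon_k)$ is too weak: $\sum_{k\le K}\nu\epsilon_k\approx\log(\bar\epsilon/\underline\epsilon)$, so $\prod_k(1+C\nu\epsilon_k)$ is a power of $1/\underline\epsilon$ and $C_w$ would depend on $\underline\epsilon$. The partition-function step has the same defect. A per-step factor $1+O(\nu^2)$ compounds to $e^{O(\nu/\underline\epsilon)}$, and Lemma~\ref{l:partition-function-estimates} only gives bounds up to the constant $C_Z$, so it cannot produce factors $1+o(1)$ anyway.

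The paper needs no such per-step estimates. Linear spacing makes $\tilde r_k^2=e^{-2\bar\nu U}=\bar V^{-1}$ the same at every level, with $\langle\tilde r_k^2,\pi_k\rangle=Z_{k+2}/Z_k$. The spectral decomposition then gives $P_k^T\tilde r_k^2\le (Z_{k+2}/Z_k)\bigl(1+\bar V(x)(\Theta_{1,k}+\Theta_{2,k})\bigr)$, and the factor $\bar V(X_{k-1})$ exactly cancels $\tilde r_{k-1}^2(X_{k-1})$. This yields the closed recurrence $E_k\le A_k(E_{k-1}+\Theta_kE_{k-2})$, hence $E_K\le M_1\exp(\bar C_Z\sum_k\Theta_k)\prod_kA_k$. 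Here $\prod_kA_k=Z_{K+1}Z_{K+2}/(Z_1Z_2)$ telescopes exactly and cancels against $\E[\tilde w_K]^2$ up to $\bar C_Z$.

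The slow-mode coefficient is $\Theta_{1,k}=|\langle\psi_{2,k},\pi_{k+2}\rangle|\,\|\psi_{2,k}\|_{\bar V}$. Lemma~\ref{l:lyapunov-weighted-Linftybds-and-their-sums} only makes the second factor $O(1)$; summability in $k$ comes from elsewhere. Lemma~\ref{l:dEF-small} gives $|\langle\psi_{2,k},\pi_{k+2}\rangle|\le Ce^{-\hat\gamma/(2\epsilon_k)}+2C_m|\pi_{\epsilon_{k+2}}(\Omega_1)-\pi_{\epsilon_k}(\Omega_1)|$. The bounded variation of $\epsilon\mapsto\pi_\epsilon(\Omega_1)$, \eqref{e:well-mass-BV}, then bounds the sum of these terms independently of $K$. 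Heuristically this is a per-level error $O(\epsilon_k-\epsilon_{k+2})=O(\nu\epsilon_k^2)$, not $O(\nu\epsilon_k)$. This BV input is the missing idea; Assumption~\ref{a:massRatioBound}, a lower bound on basin masses, cannot supply it.

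You also misplace the role of $T\ge C_{T,1}(1/\underline\epsilon+\log K)$. It is what makes $S_{v,k}(T)\le 1/K$, so that $\sum_k\Theta_{2,k}$ is bounded. The $1/\underline\epsilon$ is forced by the growth $e^{\bar C_\nu\lambda_{i,\epsilon}/\epsilon}$ of $\|\psi_{i,\epsilon}\|_V$, not by the Weyl count.

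For the bias, no propagation through the chain is needed at all, so expanding into cross terms is a detour. Since $\pi_k$ is invariant for $P_k^T$ and $\tilde r_k\tilde\pi_{\epsilon_k}=\tilde\pi_{\epsilon_{k+1}}$, one has exactly $\langle\tilde r_1\tilde f_2,\pi_1\rangle=(Z_{K+1}/Z_1)\langle f,\pi_{K+1}\rangle$, with $\tilde f_2=P_2^T\tilde r_2\cdots P_K^T\tilde r_Kf$. So the unnormalized bias equals the single first-level mixing error $P_1^Tg(X_0)-\langle g,\pi_1\rangle$ with $g=\tilde r_1\tilde f_2$, whatever happens at the intermediate levels. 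This is at most $V(X_0)\langle|g|V,\pi_1\rangle S_b(T)$.

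The step your sketch omits is showing $\langle|g|V,\pi_1\rangle\lesssim(Z_{K+1}/Z_1)\|f\|_\infty$ uniformly in $K$. This is not automatic, because $V=e^{2\nu U}$ is unbounded. The paper proves it, for $f\ge0$, using the temperature-uniform drift $P_\epsilon^TV\le e^{-3\nu T}V+M$ of Lemma~\ref{l:V-lyapunov}, carried through the levels by the same invariance identities. Summing the resulting geometric series is where $T\ge\log2/(3\nu)$ enters. The lower bound $\E[\tilde w_K]\ge\frac23Z_{K+1}/Z_1$ then follows from the bias bound with $f=1$ and $\delta\le1$, not from a separate expansion. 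After that, \eqref{e:single-particle-bias-bound} is a triangle inequality.
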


Proposition~\ref{p:single-particle-proposition} concerns a single run of Algorithm~\ref{a:aisLMC}. In practice, one performs~$N$ independent runs and weights their outputs by the normalized weights. Combining the bias and variance bounds of Proposition~\ref{p:single-particle-proposition}, we show that the resulting weighted empirical measure approximates~$\pi_{\epsilon_{K+1}}$ to accuracy~$\delta$ as soon as~$N$ is of order~$\delta^{-2}$.

\begin{theorem}[Rapid convergence of Langevin AIS on~$\R^d$]
    \label{t:main-theorem}
    Make the Assumptions~\ref{a:criticalpts}--\ref{a:U-growth-control}. Let~$\nu \in (0,1/8)$ and~$\bar\epsilon \in (\kappa, 1]$, and let~$C_w, C_{T, 1}$ and~$C_{T,2}$ be the constants from Proposition~\ref{p:single-particle-proposition}. 
    Then, for any~$\underline\epsilon \in (\kappa, \bar\epsilon/2)$ and~$\delta \in (0,1]$, we choose~$K, T, N$ as in~\eqref{e:multiple-particles-Kchoice}, \eqref{e:multiple-particles-Tchoice}, and~\eqref{e:multiple-particles-Nchoice}, respectively, and define the sequence of temperatures~$(\epsilon_k)_{k=1}^{K+1}$ such that
    \begin{equation}
        \bar\epsilon = \epsilon_1 > \epsilon_2 > \cdots > \epsilon_{K+1} = \underline\epsilon\,,
    \end{equation}
    and their inverse temperatures~$\set{1/\epsilon_k}_{k=1}^{K+1}$ are linearly spaced. Perform~$N$ independent runs of Algorithm~\ref{a:aisLMC} with these parameters and let~$(X_K^1, \tilde w_K^1)$, \ldots, $(X_K^N, \tilde w_K^N)$ be the resulting points and unnormalized weights. Define the empirical measure~$\mu_N$ by~\eqref{e:muN-wKi-def}.
    Then for any bounded test function~$f$, we have~\eqref{e:main-theorem-small-error}. 
    Consequently, for any~$s > d/2$, there exists an explicit dimensional constant~$C_s$ (independent of~$\nu, \bar\epsilon, \underline\epsilon, \delta$) such that~\eqref{e:langevin-ais-convergence-sobolev} holds. 
\end{theorem}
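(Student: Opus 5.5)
We deduce Theorem~\ref{t:main-theorem} from Proposition~\ref{p:single-particle-proposition} by a standard self-normalized importance sampling argument. Since~\eqref{e:multiple-particles-Tchoice} contains~$\log(3/\delta)$ and the maximum of~$U(X_0^i)$ over all runs, each run satisfies~\eqref{e:single-particle-Tchoice} with~$\delta$ replaced by~$\delta/3$. Hence each run obeys the bias bound~\eqref{e:single-particle-bias-bound} with accuracy~$\delta/3$ and the variance bound~\eqref{e:single-particle-variance-bound}.

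Write~$W = \E[\tilde w_K]$, $A_N = \frac1N\sum_i \tilde w_K^i f(X_K^i)/W$ and $B_N = \frac1N\sum_i \tilde w_K^i/W$, so that~$\ip{f,\mu_N} = A_N/B_N$. Let~$m_f = \E[\tilde w_K f(X_K)]/W$, and note that~$\E B_N = 1$. We use the decomposition
\begin{equation*}
  \frac{A_N}{B_N} - \ip{f,\pi_{\epsilon_{K+1}}}
  = \paren*{\frac{A_N}{B_N} - A_N} + \paren*{A_N - m_f} + \paren*{m_f - \ip{f,\pi_{\epsilon_{K+1}}}}\,.
\end{equation*}
The last term is at most~$\frac{\delta}{3}\norm{f}_\infty$ by the bias bound. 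The middle term has~$L^2(\P)$ norm at most~$\sqrt{C_w/N}\,\norm{f}_\infty$, by independence and~\eqref{e:single-particle-variance-bound}.

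For the first term, write~$\frac{A_N}{B_N} - A_N = \frac{A_N}{B_N}(1-B_N)$. Since the weights~$\tilde w_K^i$ are nonnegative, $\abs{A_N/B_N} = \abs{\ip{f,\mu_N}} \le \norm{f}_\infty$. Applying~\eqref{e:single-particle-variance-bound} with~$f\equiv 1$ gives~$\norm{1-B_N}_{L^2} \le \sqrt{C_w/N}$. So the first term is also at most~$\sqrt{C_w/N}\,\norm{f}_\infty$. This boundedness of the ratio is the one subtle point: it avoids any lower bound on~$B_N$.

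Summing the three bounds gives~$\bigl(\frac{\delta}{3} + 2\sqrt{C_w/N}\bigr)\norm{f}_\infty$. With~$N \ge 9C_w/\delta^2$ we have~$2\sqrt{C_w/N} \le \frac{2\delta}{3}$, so the total is at most~$\delta\norm{f}_\infty$, which is~\eqref{e:main-theorem-small-error}.

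For~\eqref{e:langevin-ais-convergence-sobolev} we use the Fourier representation
\begin{equation*}
  \norm{\mu}_{H^{-s}}^2 = \int (1+\abs{\xi}^2)^{-s}\abs{\hat\mu(\xi)}^2\,d\xi\,.
\end{equation*}
Taking expectations and using Fubini, it suffices to bound, for each fixed~$\xi$, the quantity~$\E\abs{\hat\mu_N(\xi)-\hat\pi(\xi)}^2$. Apply~\eqref{e:main-theorem-small-error} separately to the bounded test functions~$\cos(\xi\cdot x)$ and~$\sin(\xi\cdot x)$, each of sup norm~$1$. This gives~$\E\abs{\hat\mu_N(\xi)-\hat\pi(\xi)}^2 \le 2\delta^2$, up to the Fourier normalization constant. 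Therefore the expected squared norm is at most~$2\delta^2\int(1+\abs{\xi}^2)^{-s}d\xi$, and this integral is finite precisely because~$s>d/2$. This yields~$C_s$, depending only on~$d$ and~$s$.

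I expect no serious obstacle, since all the difficulty is absorbed by Proposition~\ref{p:single-particle-proposition}. The only care needed is the ratio term, handled as above via~$\abs{\ip{f,\mu_N}}\le\norm{f}_\infty$, and checking that the~$T$ condition transfers to each run with~$\delta/3$.
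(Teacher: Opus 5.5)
Your proposal is correct and follows essentially the same route as the paper. The paper's term $I_1$ is exactly your ratio term $\frac{A_N}{B_N}(1-B_N)$, controlled by $\abs{\ip{f,\mu_N}}\le\norm{f}_\infty$ together with the variance bound for $f\equiv1$; the paper merely merges your second and third terms through the identity $\E[(\frac1N\sum_i Y_i)^2]=\frac1N\var(Y_1)+(\E Y_1)^2$ rather than the triangle inequality, and your Sobolev step (testing against Fourier characters and integrating against $(1+\abs{x}^2)^{-s}$, which is finite since $s>d/2$) matches the paper's.
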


\begin{remark}[Time complexity]\label{r:time-complexity}
We measure the cost of the algorithm by the total time for which the Langevin diffusion~\eqref{e:langevin} is simulated. Theorem~\ref{t:main-theorem} uses~$N$ independent runs of Algorithm~\ref{a:aisLMC}, each of which simulates~\eqref{e:langevin} for time~$T$ at each of the~$K$ levels, so the total cost is~$NKT$. Choose~$K$ and~$N$ as in~\eqref{e:multiple-particles-Kchoice} and~\eqref{e:multiple-particles-Nchoice}, with equality in the latter up to rounding, and choose~$T$ with equality in~\eqref{e:multiple-particles-Tchoice}. Since~$K \leq 1/(\nu\underline\epsilon)$ and hence~$\log K \leq C/\underline\epsilon$, we obtain
\begin{equation}\label{e:time-complexity}
  NKT \leq C\paren*{\frac{1}{\underline\epsilon^{2}\delta^{2}} + \frac{1}{\underline\epsilon\,\delta^{2}}\paren*{\log\frac{1}{\delta} + \max_{1\leq i\leq N}U(X_0^i)}}
\end{equation}
for a constant~$C$ that depends on~$\nu$, $\bar\epsilon$, $d$ and~$U$, but not on~$\underline\epsilon$, $\delta$ or the initial points. In particular, if the initial points lie in a fixed bounded set, the total cost is of order~$\underline\epsilon^{-2}\delta^{-2}$, up to a factor logarithmic in~$1/\delta$, and the exponents of~$1/\underline\epsilon$ and~$1/\delta$ do not depend on the dimension~$d$. The bound~\eqref{e:time-complexity} also makes the dependence on the initial points explicit. It enters only through the term~$2\nu\max_{i}U(X_0^i)$ in~\eqref{e:multiple-particles-Tchoice}, which accounts for the time the Langevin diffusion at the initial temperature~$\bar\epsilon$ needs to forget its initial position, and the total cost grows only linearly in the initial energies.
\end{remark}

\subsection{Key lemmas}\label{ss:key-lemmas}

In this subsection we state the key lemmas from which Proposition~\ref{p:single-particle-proposition} and Theorem~\ref{t:main-theorem} are deduced. To this end, we first collect some facts about the spectral decomposition associated with the Langevin diffusion~\eqref{e:langevin}.

Let~$\mathcal L_\epsilon$ be the generator of~\eqref{e:langevin}, given by
\begin{equation}\label{e:generator-def}
  \mathcal L_\epsilon f = -\grad U \cdot \grad f + \epsilon \lap f
  \,,
\end{equation}
and let~$(P_{\epsilon, t})_{t\geq0}$ denote the associated Markov semigroup. For any test function~$f$ the function~$u_t \defeq P_{\epsilon,t}f$ satisfies the \emph{Kolmogorov backward equation} (see, for instance,~\cite[Chapter 2]{Pavliotis14}). That is, if
\begin{equation}
  u_t(x) = P_{\epsilon, t} f(x) \defeq \int_{\R^d} P_{\epsilon, t}(x, dy) f(y)
  \,,
\end{equation}
then
\begin{equation}\label{e:KBackward}
  \partial_t u = \mathcal L_\epsilon u
  \quad\text{and}\quad
  u_0 = f\,.
\end{equation}
Let~$L^2(\pi_\epsilon)$ denote the weighted~$L^2$ space with inner-product and norm defined by
\begin{equation}
  \ip{f, g}_{L^2(\pi_\epsilon)}
    \defeq \int_{\R^d} f g \, \pi_\epsilon \, dx
  \quad\text{and}\quad
  \norm{f}^2_{L^2(\pi_\epsilon)} \defeq 
    \int_{\R^d} \abs{f}^2 \pi_\epsilon \, dx
    \,,
\end{equation}
respectively.
It is well known (see for instance~\cite[Chapter 8]{Kolokoltsov00}) that the operator~$-\mathcal L_\epsilon$ is self-adjoint on the weighted space~$L^2(\pi_\epsilon)$ and has discrete spectrum consisting of nonnegative eigenvalues of finite multiplicity, accumulating only at~$+\infty$.
We denote the eigenvalues by
\begin{equation}\label{e:evalOrder}
  0 = \lambda_{1, \epsilon}
    < \lambda_{2, \epsilon}
    \leq \lambda_{3, \epsilon}
    \leq \cdots
    \,,
\end{equation}
and the corresponding~$L^2(\pi_\epsilon)$ normalized eigenfunctions by~$\psi_{1, \epsilon}$, $\psi_{2, \epsilon}$, etc.
We note that the smallest eigenvalue~$\lambda_{1, \epsilon}$ is~$0$, and the second smallest eigenvalue~$\lambda_{2, \epsilon}$ is strictly positive (i.e.\ $\mathcal L_\epsilon$ has a spectral gap).
The first key lemma bounds the eigenfunctions~$\psi_{i, \epsilon}$ in a weighted~$L^\infty$ norm, together with sums of these bounds damped by the factors~$\exp(-\lambda_{i,\epsilon}T)$. These sums are the quantities that control the bias and the variance of Langevin AIS in Lemmas~\ref{l:bias-bound} and~\ref{l:squared-weight-bound} below. On a compact state space the eigenfunctions are bounded, whereas on~$\R^d$ they may grow at infinity, and the function~$V$ in~\eqref{e:V-normV-def} compensates for this growth. This lemma is the main new ingredient of the paper, and we prove it in Section~\ref{s:ef-bounds}.

\begin{lemma}\label{l:lyapunov-weighted-Linftybds-and-their-sums}
Make the Assumptions~\ref{a:criticalpts}--\ref{a:U-growth-control}.
Fix any~$\nu \in (0, 1/8)$ and define
\begin{equation}\label{e:V-normV-def}
    V \defeq \exp(2\nu U) \quad\text{and}\quad \norm{\cdot}_V \defeq \norm{\cdot / V}_{L^\infty(\R^d)}\,.
\end{equation}
Then, the following holds.
\begin{enumerate}
    \item There exist constants~$\bar C_{\psi_2}, C_d$, and~$\bar C_\nu(\nu, \Lambda)$ such that for all~$\epsilon\in (\kappa,1]$ and~$i\geq 3$, we have
\begin{align}\label{e:second-ef-weighted-norm-unif-ub}
\norm*{\psi_{2, \epsilon}}_{V} &\leq \bar C_{\psi_2},\\
\norm*{\psi_{i, \epsilon}}_{V} &\leq C_d \exp\paren*{\bar C_\nu\lambda_{i, \epsilon}/\epsilon}.
\end{align}
    \item For any~$\Lambda_1,T_0>0$, there exists a constant~$\tilde C_\psi(\Lambda_1, T_0)\geq 1$ such that the following holds. Let~$\epsilon\in (\kappa, 1]$, and suppose that for some~$m\in\N$,
\begin{equation}
    \lambda_{i,\epsilon}\geq\Lambda_1,\qquad \forall i\geq m.
\end{equation}
Then, for every~$T$ satisfying
\begin{equation}\label{e:T-large-for-weighted-sum}
    T\geq \frac{2\bar C_\nu}{\epsilon}+T_0,
\end{equation}
we have
\begin{equation}
    \sum_{i=m}^\infty
    \exp\paren*{-\lambda_{i,\epsilon}T}
    \norm{\psi_{i,\epsilon}}_V^2
    \leq
    \tilde C_\psi
    \epsilon^{-1-\frac d2}
    \exp\paren*{-\Lambda_1\paren*{T-\frac{2\bar C_\nu}{\epsilon}}}\,.
\end{equation}
\end{enumerate}
\end{lemma}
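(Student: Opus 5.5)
The plan is to pass to the Schrödinger picture. Set $\phi_{i,\epsilon} = \psi_{i,\epsilon} e^{-U/(2\epsilon)} Z_\epsilon^{-1/2}$, so that $\norm{\phi_{i,\epsilon}}_{L^2(dx)} = 1$ and $\phi_{i,\epsilon}$ is an eigenfunction of the Witten Laplacian
\begin{equation}
  H_\epsilon = -\epsilon^2\Delta + W_\epsilon\,,
  \qquad
  W_\epsilon = \tfrac14\abs{\grad U}^2 - \tfrac{\epsilon}{2}\Delta U\,,
\end{equation}
with eigenvalue $\epsilon\lambda_{i,\epsilon}$. Then
\begin{equation}
  \abs{\psi_{i,\epsilon}}/V = Z_\epsilon^{1/2}\abs{\phi_{i,\epsilon}}\, e^{U/(2\epsilon) - 2\nu U}\,,
\end{equation}
and part (1) reduces to a weighted $L^\infty$ bound on $\phi_{i,\epsilon}$ with weight $e^{(1/(2\epsilon)-2\nu)U}$.

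\textbf{Far field.} By \eqref{a:DU-blowup} and \eqref{a:DU^2-dominates-D^2U}, $W_\epsilon \geq c\abs{\grad U}^2$ outside a large ball $B_R$, uniformly in $\epsilon\le1$. I would run an Agmon estimate with a weight $e^{\Phi/\epsilon}$, where $\Phi = (1/2-2\nu\epsilon)U$ is cut off near $B_R$. The eikonal-type inequality $\abs{\grad\Phi}^2 \leq (1-4\nu\epsilon)^2\abs{\grad U}^2/4 < W_\epsilon - \epsilon\lambda_{i,\epsilon}$ holds where $\abs{\grad U}^2$ dominates $\lambda_{i,\epsilon}$; here the margin $\nu<1/8$ absorbs the $\epsilon\Delta U$ term. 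This gives $e^{\Phi/\epsilon}\phi_{i,\epsilon}\in L^2$ at infinity, with constant controlled by $\norm{\phi_{i,\epsilon}}_{L^2}$ times $e^{C\lambda_{i,\epsilon}/\epsilon}$; the latter comes from the region where $\abs{\grad U}^2\lesssim\lambda_{i,\epsilon}$. Local elliptic regularity (De Giorgi/Moser on unit balls, using \eqref{a:U-lyapunov} to compare $U$ with $\abs{\grad U}^2$) upgrades this to $L^\infty$. The price is the factor $\exp(\bar C_\nu\lambda_{i,\epsilon}/\epsilon)$ and, via $Z_\epsilon\lesssim\epsilon^{d/2}$ together with Sobolev scaling at scale $\epsilon$, a polynomial factor in $\epsilon$ that I expect to absorb into $C_d$ (Lemma~\ref{l:weighted-eigen-vanish-infty}).

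\textbf{Bounded region.} Here the weight $e^{U/(2\epsilon)}$ is harmful away from the minima, so I would argue directly on $\psi_{i,\epsilon}$ rather than $\phi_{i,\epsilon}$ (Lemma~\ref{l:eigenfunctions-weighted-Linfty-bdd}). Since $(\mathcal L_\epsilon+\lambda_{i,\epsilon})\psi_{i,\epsilon}=0$, I would apply a local maximum principle on the sublevel set where $\grad U$ is nonzero, away from the critical points. The drift $-\grad U$ pushes mass into the wells, so $\abs{\psi_{i,\epsilon}}$ is controlled by its values near the critical points and on the outer boundary, up to a factor $e^{C\lambda_{i,\epsilon}T_{\mathrm{exit}}}$ with $T_{\mathrm{exit}}=O(1)$. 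Near each minimum, $\pi_\epsilon$ is comparable to Lebesgue measure at scale $\sqrt\epsilon$, so the $L^2(\pi_\epsilon)$ normalization and local elliptic estimates give $O(1)$ bounds. The saddle, being the unique index-one critical point, is handled the same way, since it is reached only through the drift from either side.

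For $\psi_{2,\epsilon}$ one has $\lambda_{2,\epsilon}\lesssim e^{-\hat\gamma/\epsilon}$, which yields the uniform bound $\bar C_{\psi_2}$ because $\lambda_{2,\epsilon}/\epsilon$ stays bounded. I expect the \textbf{main obstacle} to be matching these two regimes with constants uniform in $\epsilon\in(\kappa,1]$. In particular, controlling $\psi$ near the saddle by its well values, without losing a factor $e^{c/\epsilon}$, is the delicate point.

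\textbf{Part (2).} Using part (1),
\begin{equation}
  e^{-\lambda_{i,\epsilon}T}\norm{\psi_{i,\epsilon}}_V^2
  \leq C_d^2\, e^{-\lambda_{i,\epsilon}(T-2\bar C_\nu/\epsilon)}\,.
\end{equation}
Write $T' = T-2\bar C_\nu/\epsilon\geq T_0$ and split $e^{-\lambda T'}\le e^{-\Lambda_1 T'}e^{-(\lambda-\Lambda_1)_+T_0}$ for $\lambda\geq\Lambda_1$. It then suffices to show
\begin{equation}
  \sum_i e^{-\lambda_{i,\epsilon}T_0/2}\lesssim \epsilon^{-1-d/2}\,,
\end{equation}
a Weyl-type count (Lemma~\ref{l:ef-weighted-sum-Linfty-bds}). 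I would obtain it by writing $\#\set{i \st \lambda_{i,\epsilon}\le E}$ as the number of eigenvalues of $H_\epsilon$ below $\epsilon E$, and bounding this by Cwikel--Lieb--Rozenblum as $C\epsilon^{-d}\int (\epsilon E - W_\epsilon)_+^{d/2}\,dx$. Here $\set{W_\epsilon<\epsilon E}$ has volume of order $\epsilon^{d/2}(1+E)^{d/2}$, since $W_\epsilon$ is quadratic near critical points and grows at infinity; this yields a polynomial count in $E$ and $\epsilon^{-1}$. Alternatively, Lieb--Thirring bounds the trace sum directly. Summing against $e^{-ET_0/2}$ and collecting the powers of $\epsilon$ gives $\tilde C_\psi\epsilon^{-1-d/2}$, where the extra $\epsilon^{-1}$ comes from the $\epsilon$-scaling in the $E$-integration.
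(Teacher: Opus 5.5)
Your part~(2) is essentially the paper's argument: insert the part~(1) bounds, then count eigenvalues of the ground-state-transformed Schr\"odinger operator by Cwikel--Lieb--Rozenblum/Lieb--Thirring. Part~(1), however, has a genuine gap exactly where you suspect it, namely the $\epsilon$-uniform bound \eqref{e:second-ef-weighted-norm-unif-ub} on $\psi_{2,\epsilon}$ near the saddle for small $\epsilon$. The mechanism you give does not produce it.

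\begin{itemize}
\item Near the wells, \eqref{e:Zepsilon-estimate} gives $\pi_\epsilon\gtrsim\epsilon^{-d/2}$ on balls of radius $\sqrt\epsilon$. There the $L^2(\pi_\epsilon)$ normalization plus a local maximum principle does give $O(1)$.
\item At the saddle, $\pi_\epsilon$ is smaller by the factor $e^{-\hat\gamma/\epsilon}$. The same argument then yields only $\abs{\psi_{2,\epsilon}}\lesssim e^{\hat\gamma/(2\epsilon)}$, so the saddle is \emph{not} ``handled the same way''.
\item The remark that $\lambda_{2,\epsilon}/\epsilon$ stays bounded does not help. The bound $e^{\bar C_\nu\lambda_{i,\epsilon}/\epsilon}$ for $i\geq3$ is obtained by absorbing $\epsilon$-dependent factors into that exponential using $\lambda_{i,\epsilon}\geq\Lambda$. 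These factors are $\epsilon^{-d/2}$ and $\exp(\frac1\epsilon\max U)$ over the localization set, which contains the saddle and, by \eqref{a:U-lyapunov}, is only controlled by $e^{C_L(1+C\lambda)/\epsilon}$. When $\lambda_{2,\epsilon}\to0$, nothing absorbs them.
\item Your far-field Agmon constant has the same defect for $i=2$ whenever $\Delta U(0)\geq0$, which the assumptions allow. Then the saddle lies in the set where your positivity condition $\abs{D\Phi}^2<W_\epsilon-\epsilon\lambda_{2,\epsilon}$ fails, and the weight there is $e^{U(0)/\epsilon}$.
\item The natural rigorous form of your drift heuristic is an optional-stopping argument with a supersolution $e^{G/\epsilon}$, $Q(G)\leq-\theta\epsilon$. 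With $G\approx(\frac12+\delta)(U-L)$ this requires $\Delta U<0$ at the saddle, so in general one must first perturb $U$ there. That is the machinery of Section~\ref{s:multi-well}, and none of it is in your plan.
\end{itemize}

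The idea you are missing in the double-well case is Lemma~\ref{l:dEF-small}. It gives $\psi_{2,\epsilon}=a_{1,\epsilon}\one_{\Omega_1}+a_{2,\epsilon}\one_{\Omega_2}+r_\epsilon$ with $\abs{a_{i,\epsilon}}\leq C_m$ and $\norm{r_\epsilon}_{L^2(\pi_\epsilon)}\leq C_\gamma e^{-\gamma/\epsilon}$ for every $\gamma<\hat\gamma$. On $\Omega_*=B(0,2C_A\sqrt\epsilon)$, passing from $L^2(\pi_\epsilon)$ to $L^2(dx)$ costs $Z_\epsilon e^{\max_{\Omega_*}U/\epsilon}\lesssim\epsilon^{d/2}e^{\hat\gamma/\epsilon}$ in the squared norm, because here $U(0)-U(x_{\min,2})=\hat\gamma$. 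Taking $\gamma=3\hat\gamma/4$ gives $\norm{\psi_{2,\epsilon}}_{L^2(\Omega_*)}\lesssim\epsilon^{d/4}$. The local maximum principle at scale $\sqrt\epsilon$ then gives an $O(1)$ bound on $B(0,C_A\sqrt\epsilon)$.

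For this to suffice you must also localize the maximum of $\abs{g_{i,\epsilon}}$ to $\set{\abs{DU}^2\leq 2C_\Delta\epsilon+\lambda_{i,\epsilon}/\nu}$. Your exit-time factor $e^{C\lambda T_{\mathrm{exit}}}$ cannot do this for large $\lambda_{i,\epsilon}$, since $\E_x[e^{\lambda\tau}]=\infty$ once $\lambda$ exceeds the principal Dirichlet eigenvalue of the region. The paper avoids quantitative Agmon bounds altogether. By Lemma~\ref{l:weighted-eigen-vanish-infty}, $g_{i,\epsilon}=e^{-2\nu U}\psi_{i,\epsilon}$ vanishes at infinity, so $\abs{g_{i,\epsilon}}$ attains its maximum. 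At that point the zeroth-order coefficient $W_{i,\epsilon}$ of the equation for $g_{i,\epsilon}$ must be nonnegative, which forces the inclusion above. Since $U\geq0$ gives $\abs{g_{i,\epsilon}}\leq\abs{\psi_{i,\epsilon}}$, it then suffices to bound $\psi_{i,\epsilon}$ on that set.

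A minor point for part~(2): Cwikel--Lieb--Rozenblum needs $d\geq3$. For $d=1,2$ you must convert the Lieb--Thirring trace bound into a count by shifting the spectral parameter by one unit of $\lambda$. This shift is the actual source of the extra factor $\epsilon^{-1}$.
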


The next two lemmas bound the bias of Langevin AIS and the second moment of the unnormalized weight~$\tilde w_K$ in terms of the sums~$S_b(T)$ and~$S_{v,k}(T)$, defined in~\eqref{e:SbT-def} and~\eqref{e:Svk-def} below, which Lemma~\ref{l:lyapunov-weighted-Linftybds-and-their-sums} controls. To bound the bias, we combine a Lyapunov drift condition for~$V$ with the mixing of the Langevin diffusion at the first level, measured in norms weighted by~$V$ (Section~\ref{ss:bias}). To bound the variance, we use the linear spacing of the inverse temperatures to show that~$\tilde r_k^{-2}$ does not depend on~$k$ and can serve as a Lyapunov function at every level. This yields a closed recurrence for the second moments of~$\tilde w_k$ (Section~\ref{ss:variance}). In Section~\ref{s:proof-main}, we choose~$T$ so large that~$S_b(T)$ and~$S_{v,k}(T)$ are small by Lemma~\ref{l:lyapunov-weighted-Linftybds-and-their-sums}, and the main result follows.

\begin{lemma}[Bias bound]\label{l:bias-bound}
    Make the Assumptions~\ref{a:criticalpts}--\ref{a:U-growth-control}. Then, for all~$\nu \in (0, 1/4)$, there exists a constant~$C_B \geq 1$ such that the following holds.    
    Let~$\kappa < \underline\epsilon < \bar\epsilon \leq 1$, $K\in \N$, and
    \begin{equation}\label{e:Tlarge-bias}
        T \geq \frac{\log 2}{3\nu}\,.
    \end{equation}
    Define the sequence of temperatures~$(\epsilon_k)_{k=1}^{K+1}$ such that
    \begin{equation}
        \bar\epsilon = \epsilon_1 > \epsilon_2 > \cdots > \epsilon_{K+1} = \underline\epsilon\,,
    \end{equation}
    and their inverse temperatures~$\set{1/\epsilon_k}_{k=1}^{K+1}$ are linearly spaced.
    Let~$\tilde w_K$ and~$X_K$ denote the output of Algorithm~\ref{a:aisLMC} run with these parameters. For any bounded measurable function~$f$, we have
    \begin{equation}\label{e:unnormalized-bias-decay}
        \abs*{\E\brak*{\tilde w_K f(X_K)} - \frac{Z_{\epsilon_{K+1}}}{Z_{\epsilon_1}} \ip*{f, \pi_{\epsilon_{K+1}}}}\leq C_B \frac{Z_{\epsilon_{K+1}}}{Z_{\epsilon_1}}  V(X_0)  S_b(T) \norm{f}_\infty  \,,
    \end{equation}
    where~$S_b$ is defined as 
    \begin{equation}\label{e:SbT-def}
        S_b(T) \defeq \sum_{i=2}^\infty \norm{\psi_{i,\epsilon_1}}_V^2 \exp\paren*{-\lambda_{i, \epsilon_1}T}\,.
    \end{equation}
    Here, $V$ and~$\norm{\cdot}_V$ are defined as in~\eqref{e:V-normV-def}.
\end{lemma}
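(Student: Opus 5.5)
The plan is to reduce the bias to the mixing of the first Langevin stage, by combining an exact unbiasedness identity for AIS started at equilibrium with a weighted spectral expansion. Write $h \defeq 1/\epsilon_{k+1}-1/\epsilon_k$, which is independent of $k$ by the linear spacing. Write $r_k \defeq \tilde\pi_{\epsilon_{k+1}}/\tilde\pi_{\epsilon_k} = e^{-hU}$ and $P_k \defeq P_{\epsilon_k,T}$. By the Markov property,
\begin{equation*}
  \E\brak*{\tilde w_K f(X_K)} = P_1 g_1(X_0)\,,\qquad g_k \defeq r_k P_{k+1}\paren*{ r_{k+1} P_{k+2}\paren*{\cdots r_K f}}\,,
\end{equation*}
and for the nonnegative version we set $G_k \defeq r_k P_{k+1}(\cdots r_K)$. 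Each $P_k$ is self-adjoint in $L^2(\pi_{\epsilon_k})$ and preserves $\pi_{\epsilon_k}$, and $r_k\tilde\pi_{\epsilon_k} = \tilde\pi_{\epsilon_{k+1}}$. Telescoping these facts gives the exact identity $\ip{g_1,\pi_{\epsilon_1}} = \frac{Z_{\epsilon_{K+1}}}{Z_{\epsilon_1}}\ip{f,\pi_{\epsilon_{K+1}}}$. Hence the left-hand side of~\eqref{e:unnormalized-bias-decay} equals $\abs{(P_1 g_1 - \ip{g_1,\pi_{\epsilon_1}})(X_0)}$.

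Next I expand in eigenfunctions of $\mathcal L_{\epsilon_1}$:
\begin{equation*}
  (P_1 g_1 - \ip{g_1,\pi_{\epsilon_1}})(X_0) = \sum_{i\geq 2} e^{-\lambda_{i,\epsilon_1}T}\ip{g_1,\psi_{i,\epsilon_1}}_{L^2(\pi_{\epsilon_1})}\psi_{i,\epsilon_1}(X_0)\,.
\end{equation*}
I bound the two factors separately. First, $\abs{\psi_{i,\epsilon_1}(X_0)}\leq \norm{\psi_{i,\epsilon_1}}_V V(X_0)$. Second, since $\abs{g_1}\leq \norm{f}_\infty G_1$,
\begin{equation*}
  \abs{\ip{g_1,\psi_{i,\epsilon_1}}} \leq \norm{\psi_{i,\epsilon_1}}_V\norm{f}_\infty\int G_1 V\,d\pi_{\epsilon_1}\,.
\end{equation*}
This produces exactly $S_b(T)V(X_0)\norm{f}_\infty$, and it remains to prove
\begin{equation*}
  m_1 \defeq \int G_1 V\tilde\pi_{\epsilon_1}\,dx \leq C_B Z_{\epsilon_{K+1}}\,.
\end{equation*}

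For this I will use a Lyapunov recursion. Define $m_k \defeq \int G_k V\tilde\pi_{\epsilon_k}\,dx$, and take $m_{K+1} \defeq \int V\tilde\pi_{\epsilon_{K+1}}\,dx$, corresponding to $G_{K+1}\equiv 1$. Self-adjointness of $P_{k+1}$ in $\pi_{\epsilon_{k+1}}$ gives
\begin{equation*}
  m_k = \int (P_{k+1}V)\, G_{k+1}\,\tilde\pi_{\epsilon_{k+1}}\,dx\,.
\end{equation*}
The key drift input is
\begin{equation*}
  P_{\epsilon,T}V\leq e^{-3\nu T}V + C\,,
\end{equation*}
uniformly in $\epsilon\in(\kappa,1]$. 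Combined with~\eqref{e:Tlarge-bias}, which gives $e^{-3\nu T}\leq 1/2$, and with the exact mass identity $\int G_{k+1}\tilde\pi_{\epsilon_{k+1}}\,dx = Z_{\epsilon_{K+1}}$ (the same telescoping as before), this yields
\begin{equation*}
  m_k\leq \tfrac12 m_{k+1} + C Z_{\epsilon_{K+1}}\,.
\end{equation*}
Iterating, $m_1\leq m_{K+1} + 2CZ_{\epsilon_{K+1}}$, which is uniform in $K$. Finally, $m_{K+1} = \int e^{-U(1/\underline\epsilon - 2\nu)}\,dx \leq C Z_{\underline\epsilon}$. Since $\nu<1/4$ and $\underline\epsilon\leq 1$, the exponent $1/\underline\epsilon - 2\nu$ is at least $(1/2)/\underline\epsilon$. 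One can then compare with the partition function via Laplace asymptotics near the global minimum together with the coercivity~\eqref{e:U-coercive}, or cite the partition function estimates in the appendix.

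The main obstacle is the uniform drift bound. One computes
\begin{equation*}
  \mathcal L_\epsilon V = 2\nu V\paren*{-(1-2\nu\epsilon)\abs{DU}^2 + \epsilon\Delta U}\,.
\end{equation*}
With $\nu<1/4$ and $\epsilon\leq1$ we have $1-2\nu\epsilon\geq 1/2$. By~\eqref{a:DU^2-dominates-D^2U} the Laplacian term is absorbed for $\abs{x}$ large, and by~\eqref{a:DU-blowup} the term $\abs{DU}^2$ is eventually as large as needed. This gives $\mathcal L_\epsilon V\leq -3\nu V + C\mathbf 1_{B_R}$ with $R,C$ independent of $\epsilon$. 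Gronwall's inequality applied to $P_{\epsilon,t}V$, justified by localization and a stopping-time argument since $V$ is unbounded, then gives $P_{\epsilon,T}V\leq e^{-3\nu T}V + C/(3\nu)$. The delicate part is keeping $R$ and $C$ uniform in $\epsilon$ while verifying that the rate is exactly $3\nu$, so that the threshold $\log2/(3\nu)$ in~\eqref{e:Tlarge-bias} suffices.
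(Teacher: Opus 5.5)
Your proposal is correct and follows essentially the same route as the paper. You use the Markov-property telescoping to reduce the bias to the first-level quantity $|P_1^T g_1(X_0)-\langle g_1,\pi_{\epsilon_1}\rangle|$, then the $V$-weighted spectral bound to produce $S_b(T)V(X_0)$. Finally you run the Lyapunov recursion, via self-adjointness and the uniform drift $P_\epsilon^T V\le e^{-3\nu T}V+M$, and close it by comparing $\int V\,d\pi_{\underline\epsilon}$ with $Z_{2\underline\epsilon}/Z_{\underline\epsilon}$.

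The differences are only cosmetic:
\begin{itemize}
\item You dominate $|g_1|\le\|f\|_\infty G_1$ instead of splitting $f=f^+-f^-$.
\item You work with unnormalized integrals $m_k$ rather than the normalized inner products $\langle \tilde r_k\tilde f_{k+1},V\rangle_{L^2(\pi_k)}$.
\item Your recursion includes one extra, harmless step at level $K+1$.
\end{itemize}
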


\begin{lemma}[Variance bound]\label{l:squared-weight-bound}
    Make the Assumptions~\ref{a:criticalpts}--\ref{a:U-growth-control}. Then for all~$\nu \in (0,1/8)$,  there exist constants~$\bar C_Z, \bar C_\Theta > 0$ such that the following holds. 
    For any~$\kappa < \underline\epsilon < \bar\epsilon \leq 1$ such that $\underline \epsilon < \bar \epsilon / 2$, choose the number of levels~$K\in \N$ as 
    \begin{equation}\label{e:K-choice-variance}
        K = \floor*{ \frac{1}{\nu}\paren*{ \frac{1}{\underline\epsilon} - \frac{1}{\bar\epsilon}  } }\,,
    \end{equation}
    and let~$T >0$.
    Define the sequence of temperatures~$(\epsilon_k)_{k=1}^{K+1}$ such that
    \begin{equation}
        \bar\epsilon = \epsilon_1 > \epsilon_2 > \cdots > \epsilon_{K+1} = \underline\epsilon\,,
    \end{equation}
    and their inverse temperatures~$\set{1/\epsilon_k}_{k=1}^{K+1}$ are linearly spaced. Extend this sequence by one further level, setting~$1/\epsilon_{K+2}\defeq 2/\epsilon_{K+1}-1/\epsilon_K$, so that~$\set{1/\epsilon_k}_{k=1}^{K+2}$ is still linearly spaced. The level~$\epsilon_{K+2}$ plays no role in the algorithm. Let~$\tilde w_K$ and~$X_K$ denote the output of Algorithm~\ref{a:aisLMC} run with these parameters.
    Then, we have
    \begin{equation}\label{e:unnormalized-variance-ub}
        \E\brak*{\tilde w_K^2} \leq  \exp\paren*{\bar C_\Theta \paren*{1+\sum_{k=2}^K S_{v,k}(T)}   } \frac{Z_{\epsilon_{K+1}} Z_{\epsilon_{K+2}} }{Z_{\epsilon_1} Z_{\epsilon_2}}\paren*{1+\bar C_ZV(X_0)S_b(T)}\,,
    \end{equation}
    where~$V$ and~$S_b$ are defined as in~\eqref{e:V-normV-def} and~\eqref{e:SbT-def}, respectively, and~$S_{v,k}$ is defined as
    \begin{equation}\label{e:Svk-def}
        S_{v,k}(T) \defeq \sum_{i=3}^\infty \exp\paren*{-\lambda_{i, \epsilon_k}T} \norm{\psi_{i,\epsilon_k}}_V^2 \,.
    \end{equation}
\end{lemma}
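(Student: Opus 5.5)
The plan is to write the second moment as a backward recursion and propagate a Lyapunov-type bound through the levels. By linear spacing of the inverse temperatures, the incremental weight
\[
  \tilde r \defeq \frac{\tilde\pi_{\epsilon_{k+1}}}{\tilde\pi_{\epsilon_k}} = e^{-hU},
  \qquad h \defeq \frac{1}{\epsilon_{k+1}}-\frac{1}{\epsilon_k},
\]
does not depend on~$k$. Moreover $\tilde r^{2}\tilde\pi_{\epsilon_k} = \tilde\pi_{\epsilon_{k+2}}$ for $k\le K$, which is where the extra level~$\epsilon_{K+2}$ enters. By the Markov property, $\E[\tilde w_K^2] = g_0(X_0)$, where $g_K\equiv 1$ and $g_{k-1} = P_{\epsilon_k,T}(\tilde r^2 g_k)$. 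I would normalise by setting
\[
  g_k = \frac{Z_{\epsilon_{K+1}}Z_{\epsilon_{K+2}}}{Z_{\epsilon_{k+1}}Z_{\epsilon_{k+2}}}\,\phi_k .
\]
The identity $\ip{\tilde r^2 \phi,1}_{L^2(\pi_{\epsilon_k})} = \frac{Z_{\epsilon_{k+2}}}{Z_{\epsilon_k}}\ip{\phi,1}_{L^2(\pi_{\epsilon_{k+2}})}$ then shows that the constant mode of each step reproduces exactly the telescoping ratio $Z_{\epsilon_{K+1}}Z_{\epsilon_{K+2}}/(Z_{\epsilon_1}Z_{\epsilon_2})$. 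It remains to show that $\phi_0$ stays bounded by $\exp(\bar C_\Theta(1+\sum_k S_{v,k}))(1+\bar C_Z V(X_0)S_b)$.

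At each level $k\ge 2$ I would expand $P_{\epsilon_k,T}$ in the eigenbasis of $\mathcal L_{\epsilon_k}$ and split it into the rank-two part (modes $1,\psi_{2,\epsilon_k}$) and the tail $i\ge3$. For the tail, use $\abs{\psi_i(x)}\le \norm{\psi_i}_V V(x)$ together with
\[
  \abs{\ip{\tilde r^2\phi_k,\psi_i}_{L^2(\pi_{\epsilon_k})}} \le \norm{\psi_i}_V\,\norm{\phi_k}_{*}\int V\tilde r^2\,d\pi_{\epsilon_k}.
\]
Here $\norm{\cdot}_*$ is the sup-type norm being propagated; since $\nu<1/8$, $V\tilde r^{2} = e^{(2\nu-2h)U}$ with $h\ge\nu$ roughly, so $V\tilde r^2$ is still integrable and in fact $\le 1$. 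The tail therefore contributes at most $C S_{v,k}(T)\,V(x)\norm{\phi_k}_*$. The factor $V(x)$ is then absorbed at the next step, because the following multiplication by $\tilde r^2 = e^{-2hU}$ kills $V=e^{2\nu U}$. This is the sense in which $\tilde r^{-2}$ acts as a Lyapunov function: I would propagate the norm $\sup_x \abs{\phi_k}\tilde r^2$ rather than a plain sup norm. This yields a per-level factor $(1+CS_{v,k})$, whose product gives $\exp(C\sum_k S_{v,k})$. At the first level $k=1$, where $\epsilon_1=\bar\epsilon$, I would instead use all modes $i\ge2$, exactly as in Lemma~\ref{l:bias-bound}. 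This produces the factor $(1+\bar C_Z V(X_0)S_b(T))$ with no tilt available to absorb $V(X_0)$.

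The main obstacle is the $\psi_2$ mode at levels $k\ge2$. The eigenvalue $\lambda_{2,\epsilon_k}$ is exponentially small, so $e^{-\lambda_2 T}$ is not small, and a crude bound via $\bar C_{\psi_2}$ from Lemma~\ref{l:lyapunov-weighted-Linftybds-and-their-sums} would give a constant factor per level, hence $C^K$. My plan is to use positivity and the two-well structure. Up to errors controlled by Lemma~\ref{l:dEF-small}, $\psi_{2,\epsilon_k}$ is close to a function constant on each basin $\Omega_1,\Omega_2$, so the rank-two part acts on basin-averages as a $2\times2$ stochastic-like matrix. Its entries are controlled by the basin masses $\pi_{\epsilon}(\Omega_i)\ge C_m^{-2}$ (Assumption~\ref{a:massRatioBound}). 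I would therefore propagate $\max_i$ of the basin-averages of $\phi_k$, weighted by $\pi_{\epsilon_{k+2}}$, and show that it is nonincreasing up to multiplicative errors $(1+CS_{v,k})$. The errors from replacing $\psi_2$ by a step function, which should be exponentially small in $1/\epsilon_k$ and uniformly summable over $k$, go into the additive constant $\bar C_\Theta$ in the exponent. Making this comparison quantitatively uniform in $k$ is the step I expect to require the most care.
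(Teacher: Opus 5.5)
\textbf{There is a genuine gap, in the step you flag as delicate: your claim about how the basin values evolve is false, and the missing ingredient is the bounded-variation estimate \eqref{e:well-mass-BV}.}

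Your backward recursion and normalisation are correct. Propagating the tilted norm $\sup_x \tilde r^2\abs{\phi_k}$ for the modes $i\geq 3$ is also workable. The problem is the claim that the maximum of the basin values of $\phi_k$ is nonincreasing up to factors $1+CS_{v,k}$ and exponentially small errors. This fails even if you discard the modes $i\geq3$ and replace $\psi_{2,\epsilon_k}$ by an exact step function.

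Multiplying by $\tilde r^2$ changes the reference measure from $\pi_{\epsilon_k}$ to $\pi_{\epsilon_{k+2}}$. Write $m_{j,k}$ for the value of $\phi_k$ on $\Omega_j$ and $p_k=e^{-\lambda_{2,\epsilon_k}T}$. The idealized step is then
\[
  m_{j,k-1} = p_k\,\frac{\pi_{\epsilon_{k+2}}(\Omega_j)}{\pi_{\epsilon_k}(\Omega_j)}\,m_{j,k} + (1-p_k)\sum_{l}\pi_{\epsilon_{k+2}}(\Omega_l)\,m_{l,k}\,.
\]
Both measures have total mass one, so some ratio $\pi_{\epsilon_{k+2}}(\Omega_j)/\pi_{\epsilon_k}(\Omega_j)$ is at least $1$. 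Starting from $\phi_K\equiv1$, the maximum therefore rises already at the first step, by at least $p_K\abs{\pi_{\epsilon_{K+2}}(\Omega_1)-\pi_{\epsilon_K}(\Omega_1)}$.

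This increment is not exponentially small in $1/\epsilon_k$. For equal-depth wells of different shape it is generically of order $h\epsilon_k^2$. It is also unrelated to $S_{v,k}(T)$, and it recurs at each of the $K\approx 1/(\nu\underline\epsilon)$ levels.

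The mass lower bound $\pi_\epsilon(\Omega_j)\geq C_m^{-2}$ does not rescue this.
\begin{itemize}
\item For fixed $j$ the ratios telescope.
\item When $0<p_k<1$, however, the mixing term destroys that telescoping.
\item With masses oscillating in $\epsilon$, the recursion above can grow geometrically in $K$.
\end{itemize}

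What you need is \eqref{e:well-mass-BV}, the bounded variation of $\epsilon\mapsto\pi_\epsilon(\Omega_1)$. It gives
\[
  \sum_k\max_j\abs{\log\bigl(\pi_{\epsilon_{k+2}}(\Omega_j)/\pi_{\epsilon_k}(\Omega_j)\bigr)}\leq C_m^2C_{\mathrm{BV}}\,,
\]
so the product of per-level factors is bounded uniformly in $K$. This sum, not an exponentially small error, is what must go into $\bar C_\Theta$. With it your scheme is plausibly completable, though you must juggle two norms at once.

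The paper's route avoids tracking functions altogether by running the recursion forward:
\[
  \E[\tilde w_k^2]=\E[\tilde w_{k-1}^2\,P_{\epsilon_k,T}\tilde r_k^2(X_{k-1})]\,.
\]
The semigroup is then applied only to the fixed function $\tilde r_k^2=\bar V^{-1}$, where $\bar V=e^{2hU}$. The non-constant modes are bounded by $\frac{Z_{\epsilon_{k+2}}}{Z_{\epsilon_k}}\bar V(X_{k-1})(\Theta_{1,k}+\Theta_{2,k})$. The factor $\bar V(X_{k-1})$ cancels $\tilde r_{k-1}^2(X_{k-1})$ inside $\tilde w_{k-1}^2$, which closes the two-step recursion
\[
  E_k\leq A_k(E_{k-1}+\Theta_kE_{k-2})\,.
\]
There the $\psi_2$ coefficient $\abs{\ip{\psi_{2,\epsilon_k},\pi_{\epsilon_{k+2}}}}$ is split using Lemma~\ref{l:dEF-small}. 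One piece is exponentially small. The other is $2C_m\abs{\pi_{\epsilon_{k+2}}(\Omega_1)-\pi_{\epsilon_k}(\Omega_1)}$, and it is summed over $k$ using exactly \eqref{e:well-mass-BV}.
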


\section{Eigenfunction bounds (Proof of Lemma~\ref{l:lyapunov-weighted-Linftybds-and-their-sums})}\label{s:ef-bounds}

In this section, we prove Lemma~\ref{l:lyapunov-weighted-Linftybds-and-their-sums}. We split it into two parts, Lemma~\ref{l:eigenfunctions-weighted-Linfty-bdd}, which bounds each eigenfunction~$\psi_{i,\epsilon}$ in the norm~$\norm{\cdot}_V$, and Lemma~\ref{l:ef-weighted-sum-Linfty-bds}, which bounds the sums over~$i$. The proof of Lemma~\ref{l:eigenfunctions-weighted-Linfty-bdd} is based on the maximum principle. To apply it on all of~$\R^d$, we first show in Lemma~\ref{l:weighted-eigen-vanish-infty} that the weighted eigenfunctions vanish at infinity. We then collect in Lemmas~\ref{l:dEF-small}--\ref{l:partition-function-estimates} the spectral properties of~$\mathcal L_\epsilon$ and the estimates on the partition function that are needed near the critical points of~$U$, where we use the local maximum principle. Lemma~\ref{l:ef-weighted-sum-Linfty-bds} then follows from Lemma~\ref{l:eigenfunctions-weighted-Linfty-bdd} together with a bound on the number of eigenvalues of~$\mathcal L_\epsilon$. The proofs of Lemmas~\ref{l:dEF-small} and~\ref{l:partition-function-estimates} are deferred to Appendix~\ref{s:appendix}.

The first lemma shows that~$e^{-2\nu U}\psi_{i,\epsilon}$ vanishes at infinity, so that its absolute value attains its maximum. The proof applies the Agmon estimate to the Schr\"odinger operator obtained from~$\mathcal L_\epsilon$ by the ground-state transformation.

\begin{lemma}\label{l:weighted-eigen-vanish-infty}
    Make the Assumptions~\ref{a:criticalpts}, \ref{a: nondegeneracy}, and~\ref{a:U-growth-control}. Let~$\epsilon, \nu > 0$. Denote the~$i$-th eigenfunction of~$\mathcal L_\epsilon$ as~$\psi_{i, \epsilon}$ and define
    \begin{equation}
        g_{i, \epsilon} = \exp(-2\nu U)\psi_{i, \epsilon}\,.    
    \end{equation}
        Then~$g_{i, \epsilon}(x) \to 0$ as~$\abs{x} \to \infty$.
\end{lemma}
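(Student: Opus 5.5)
We pass to the Schr\"odinger picture via the ground-state transformation. Set $\phi_{i,\epsilon} \defeq e^{-U/(2\epsilon)}\psi_{i,\epsilon}$. Since $\psi_{i,\epsilon}\in L^2(\pi_\epsilon)$, we have $\phi_{i,\epsilon}\in L^2(dx)$. A direct computation shows that $-\epsilon^{-1}\mathcal L_\epsilon$ conjugates to
\begin{equation*}
  H_\epsilon \defeq -\lap + W_\epsilon, \qquad W_\epsilon \defeq \frac{\abs{\grad U}^2}{4\epsilon^2} - \frac{\lap U}{2\epsilon},
\end{equation*}
with $H_\epsilon \phi_{i,\epsilon} = (\lambda_{i,\epsilon}/\epsilon)\phi_{i,\epsilon}$. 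Then
\begin{equation*}
  g_{i,\epsilon} = \exp\paren*{\paren*{\tfrac{1}{2\epsilon} - 2\nu} U}\phi_{i,\epsilon},
\end{equation*}
so it suffices to show that $\phi_{i,\epsilon}$ decays pointwise faster than $e^{-aU}$ for some $a > \frac{1}{2\epsilon}-2\nu$. The argument will in fact give any $a<1/(2\epsilon)$, which is enough because $\nu>0$.

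\emph{Step 1: the potential is confining.} By \eqref{a:DU-blowup} and \eqref{a:DU^2-dominates-D^2U}, we have $\abs{\lap U}\le \sqrt d\,\abs{D^2U} = o(\abs{\grad U}^2)$. Hence $W_\epsilon \ge (1-\eta)\abs{\grad U}^2/(4\epsilon^2)$ outside a large ball $B_R$, for any $\eta>0$, and $W_\epsilon\to\infty$.

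\emph{Step 2: Agmon estimate.} Take the weight $\Phi = aU$ with $a<1/(2\epsilon)$, truncated at large values so that the standard argument is justified. The usual Agmon identity, obtained by testing the eigen equation against $e^{2\Phi}\chi^2\phi$, gives
\begin{equation*}
  \int \paren*{W_\epsilon - \lambda/\epsilon - \abs{\grad\Phi}^2}\,e^{2\Phi}\phi^2 \le C\int_{B_R} e^{2\Phi}\phi^2.
\end{equation*}
Since $\abs{\grad\Phi}^2 = a^2\abs{\grad U}^2 < (1-\eta)\abs{\grad U}^2/(4\epsilon^2)$ for small $\eta$, the bracket is positive and in fact $\ge c\abs{\grad U}^2$ outside a large ball. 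This yields $e^{aU}\phi\in L^2$, and even $\abs{\grad U}e^{aU}\phi\in L^2$. Removing the truncation by monotone convergence gives this uniformly.

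\emph{Step 3: upgrade to pointwise decay.} This is the main obstacle, since $L^2$ decay of $e^{aU}\phi$ does not by itself force pointwise vanishing. On a unit ball $B_1(x)$ with $\abs{x}$ large, $\phi$ solves $(-\lap + W_\epsilon - \lambda/\epsilon)\phi = 0$ with $W_\epsilon - \lambda/\epsilon \ge 0$. Hence $\abs{\phi}$ is subharmonic there (Kato's inequality), and the mean value inequality gives $\abs{\phi(x)}\le C\norm{\phi}_{L^2(B_1(x))}$. It remains to compare $e^{aU}$ across the ball. Using $a' \in (\frac1{2\epsilon}-2\nu, a)$, write
\begin{equation*}
  e^{a'U(x)}\abs{\phi(x)} \le C\,\sup_{y\in B_1(x)} e^{a'U(x) - aU(y)}\,\norm{e^{aU}\phi}_{L^2(B_1(x))}.
\end{equation*}
Because $U$ may oscillate by $\sup_{B_1(x)}\abs{\grad U}$ on the ball, I would instead use balls of radius $r(x) = 1/(1+\abs{\grad U(x)})$. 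By \eqref{a:DU^2-dominates-D^2U}, $\abs{\grad U}$ is comparable on $B_{r(x)}(x)$, so $U$ varies by $O(1)$ there. The rescaled mean value inequality then gives
\begin{equation*}
  \abs{\phi(x)}^2 \le C\,r^{-d}\int_{B_r(x)}\phi^2,
\end{equation*}
and the resulting factor $r^{-d} = (1+\abs{\grad U})^d$ is absorbed by the slack $e^{(a-a')U}$, since $U\ge D_1\abs{x}-D_2$ by \eqref{e:U-coercive} and $\abs{\grad U}$ grows at most like $e^{o(U)}$ along rays. If that last growth comparison proves awkward, I would use \eqref{a:U-lyapunov} directly: it gives $\abs{\grad U}\gtrsim\sqrt U$, and the reverse bound would come from integrating \eqref{a:DU^2-dominates-D^2U}. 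Finally, $\norm{e^{aU}\phi}_{L^2(B_r(x))}\to 0$ as $\abs{x}\to\infty$, being the tail of an $L^2$ function, so $g_{i,\epsilon}(x)\to0$.
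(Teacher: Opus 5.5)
Your argument is correct in outline, but it takes a different route from the paper. The paper quotes a pointwise Agmon bound $\abs{\phi(x)}\le C_\delta e^{-(1-\delta)\rho(x)}$, where $\rho$ is the Agmon distance to the classically allowed region. Its only work is the lower bound $\rho(x)\ge c_\epsilon\paren*{U(x)-\max_{\partial B}U}$, with $c_\epsilon$ slightly below $1/(2\epsilon)$, obtained by integrating $\abs{DU}\abs{\dot\gamma}\ge DU\cdot\dot\gamma$ along paths.

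You never introduce the Agmon metric. Instead:
\begin{itemize}
\item You use $aU$ itself as the weight. This is legitimate because $\abs{\grad(aU)}^2\le W_\epsilon-\lambda_{i,\epsilon}/\epsilon-c\abs{DU}^2$ near infinity, and this eikonal inequality is exactly what the paper's path estimate encodes.
\item You derive the $L^2$ bound yourself from the Agmon identity, with a truncated weight and a cutoff.
\item You do the $L^2$-to-pointwise step by hand. You use subharmonicity of $\abs{\phi}$ (or of $\phi^2$) on balls of radius $r(x)=1/(1+\abs{DU(x)})$, on which \eqref{a:DU^2-dominates-D^2U} makes $\abs{DU}$ comparable and $U$ vary by $O(1)$.
\end{itemize}
The paper's proof is shorter. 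Yours is self-contained and makes explicit the local step that the paper absorbs into the cited pointwise estimate. That step is not innocent here. Assumption~\ref{a:U-growth-control} allows, e.g., $U=e^{\abs{x}}$ near infinity, for which $U$ changes by a constant factor across unit balls. Unit balls would then cost a fixed fraction of the exponent, and your adapted radius is what avoids this.

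The one point you should justify differently is the absorption of $r^{-d}=(1+\abs{DU(x)})^d$ into $e^{2(a-a')U(x)}$, i.e.\ the bound $\abs{DU}\le C_\eta e^{\eta U}$ for every $\eta>0$. This bound is true, but integrating along rays does not give it. Along a ray, \eqref{a:DU^2-dominates-D^2U} bounds $\frac{d}{dt}\log\abs{DU}$ by $\eta\abs{DU}$, while $U$ only increases at rate $DU\cdot v$, which can be much smaller. Also, \eqref{a:U-lyapunov} only bounds $\abs{DU}$ from below, so it does not help here.

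Integrate instead along the normalized descent curve $\dot\gamma=-DU/\abs{DU}$, $\gamma(0)=x$. Choose $R_\eta$ so that $\abs{D^2U}\le\eta\abs{DU}^2$ and $\abs{DU}\ge1$ outside $B_{R_\eta}$, and let $m=\max_{\bar B_{R_\eta}}U$. As long as $U(\gamma)\ge m+1$, the curve stays outside $\bar B_{R_\eta}$. There $\abs{\frac{d}{ds}\log\abs{DU(\gamma)}}\le\eta\abs{DU(\gamma)}=-\eta\frac{d}{ds}U(\gamma)$, and $U(\gamma)$ reaches $m+1$ in finite arclength. Integrating gives $\abs{DU(x)}\le M_\eta e^{\eta U(x)}$ with $M_\eta=\max_{\set{U\le m+1}}\abs{DU}$, which is finite by \eqref{e:U-coercive}.

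Alternatively, you can bypass the issue by taking $\Phi=aU+\frac d4\log(1+\abs{DU}^2)$ in Step~2. Far out, $\abs{\grad\Phi}\le(a+\frac d2\eta)\abs{DU}$, so the same identity gives $(1+\abs{DU}^2)^{d/4}e^{aU}\phi\in L^2$. Since $\abs{DU}$ is comparable on $B_{r(x)}(x)$, this cancels $r^{-d}$ directly.
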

\begin{proof}
Fix any~$\epsilon > 0$. It is well known that~$\phi_{i, \epsilon} = \exp(-U/2\epsilon)\psi_{i,\epsilon}$ satisfies
\begin{equation}\label{e:schrodinger-def}
    \mathcal H_\epsilon \phi_{i, \epsilon} = \frac{\lambda_{i, \epsilon}}{\epsilon} \phi_{i, \epsilon}\,,\quad\text{where}\quad \mathcal H_\epsilon = -\Delta + V_\epsilon\quad\text{and}\quad V_\epsilon = \frac{1}{4\epsilon^2}\abs{DU}^2 - \frac{1}{2\epsilon}\Delta U\,.
\end{equation}
The Agmon estimate (see, e.g.,~\cite{Agmon82}) states that for a Schr\"{o}dinger operator~$\mathcal H=-\Delta + V$ whose potential~$V$ satisfies~$V \to \infty$ as~$\abs{x}\to\infty$, if~$\mathcal H\phi = \lambda \phi$, then for every~$\delta\in (0,1)$ there exists a constant~$C_\delta > 0$ such that, for all~$x\in \R^d$, 
\begin{equation}\label{e:agmon-pointwise-ub}
    \abs{\phi(x)} \leq C_\delta \exp\paren*{-(1-\delta)\rho(x)}\,, 
\end{equation}
where
\begin{align}
    \rho(x) &= \inf_{y \colon V(y) \leq \lambda} d_\mathrm{A}(x,y)\,,\\
    d_\mathrm{A}(x,y) &= \inf_{\gamma\in\Gamma(x,y)}\int_0^1 \sqrt{\paren{V-\lambda}_+(\gamma(t))}\,\abs*{\dot \gamma(t)}\,dt\,.
\end{align}    
Here~$\Gamma(x,y) \defeq \set*{\gamma\in C^1([0,1]; \R^d) \st \gamma(0)=x\,,\ \gamma(1)=y}$ denotes the set of~$C^1$ paths from~$x$ to~$y$.
By the assumptions~\eqref{a:DU-blowup} and~\eqref{a:DU^2-dominates-D^2U}, we see that~$V_\epsilon \to \infty$ as~$\abs{x} \to \infty$.
Moreover, the same assumptions imply that there exists~$R_{i, \epsilon} > 0$ such that for all~$x$ with~$\abs{x} > R_{i, \epsilon}$,
\begin{equation}
    \frac{\abs*{\Delta U + 2\lambda_{i, \epsilon}}}{\abs{DU}^2} \leq \nu_\epsilon \defeq \nu \wedge \frac{1}{4\epsilon}\,,
\end{equation}
and hence
\begin{equation}
    V_\epsilon - \frac{\lambda_{i, \epsilon}}{\epsilon} = \frac{1}{4\epsilon^2}\abs{DU}^2\paren*{1 - 2\epsilon\frac{\Delta U + 2\lambda_{i, \epsilon}}{\abs{DU}^2}} \geq \frac{1}{4\epsilon^2}\abs{DU}^2\paren*{1 - 2\epsilon \nu_\epsilon} > 0\,.
\end{equation}

Fix~$x$ and~$y$ with~$\abs{x} > R_{i,\epsilon}$ and~$V_\epsilon(y) \leq \lambda_{i, \epsilon} / \epsilon$, and let~$\gamma$ be a~$C^1$ path with~$\gamma(0)=x$ and~$\gamma(1)=y$. The previous display shows that~$V_\epsilon > \lambda_{i,\epsilon}/\epsilon$ outside~$B(0,R_{i,\epsilon})$, so necessarily~$\abs{y} \leq R_{i, \epsilon}$. We may therefore let~$t_0 \in (0,1]$ be the first time at which~$\abs{\gamma(t_0)}=R_{i, \epsilon}$.
Then, for any~$t\in [0, t_0)$, $\abs{\gamma(t)} > R_{i, \epsilon}$ and
\begin{align}
    \int_0^1 \sqrt{\paren{V_\epsilon-\lambda_{i,\epsilon}/\epsilon}_+}\abs*{\dot \gamma(t)}dt &\geq \frac{(1-2\epsilon \nu_\epsilon)^\frac{1}{2}}{2\epsilon} \int_0^{t_0} \abs*{DU(\gamma(t))} \abs{\dot \gamma(t)} dt \\
    &\geq \frac{(1-2\epsilon \nu_\epsilon)^\frac{1}{2}}{2\epsilon} \abs*{\int_0^{t_0}  DU(\gamma(t)) \cdot \dot\gamma(t)\, dt}\\
    \label{e:agmon-geodesic-lb}
    &\geq \frac{(1-2\epsilon \nu_\epsilon)^\frac{1}{2}}{2\epsilon} \paren*{U(x) - \max_{\partial B(0,R_{i, \epsilon})}U}\,.
\end{align}
The lower bound~\eqref{e:agmon-geodesic-lb} holds for every admissible~$y$ and~$\gamma$, and hence bounds~$\rho(x)$ from below. Thus the Agmon estimate~\eqref{e:agmon-pointwise-ub} implies that for any~$\delta \in (0,1)$, there exists~$C_{\delta, i, \epsilon, \nu} >0$ such that for all~$x$ with~$\abs{x} > R_{i, \epsilon}$,
\begin{equation}
    \abs{\phi_{i,\epsilon} (x)} \leq C_{\delta, i, \epsilon, \nu}\exp\paren*{-(1-\delta)(1-2\epsilon \nu_\epsilon)^\frac{1}{2} \frac{U(x)}{2\epsilon}} \,,
\end{equation}
and hence
\begin{equation}\label{e:g-ie-agmon-bound}
    \abs{\exp(-2\nu U(x)) \psi_{i,\epsilon} (x) } \leq C_{\delta, i, \epsilon, \nu}\exp\paren*{ \eta U(x) } \,,
\end{equation}
where
\begin{equation}
    \eta = \frac{1}{2\epsilon}\paren*{1-(1-\delta)(1-2\epsilon \nu_\epsilon)^\frac{1}{2}} -2\nu   \,.
\end{equation}

We can find~$\delta(\epsilon) > 0$ such that~$\eta < 0$ if and only if
\begin{equation}\label{e:condition-for-negative-eta}
    1-4\epsilon \nu < (1-2\epsilon \nu_\epsilon)^\frac{1}{2}\,,
\end{equation}
which is satisfied for every~$\nu > 0$. Indeed, if~$\nu\epsilon \geq 1/4$ then the left-hand side of~\eqref{e:condition-for-negative-eta} is nonpositive while the right-hand side is positive, so the inequality is immediate. If instead~$\nu\epsilon < 1/4$, then~$\nu_\epsilon \leq \nu$ and~$16\epsilon^2\nu^2 < 6\epsilon\nu$ give
\begin{equation}
    1-2\epsilon\nu_\epsilon \geq 1-2\epsilon\nu > 1-8\epsilon\nu + 16\epsilon^2\nu^2 = (1-4\epsilon\nu)^2\,,
\end{equation}
which is~\eqref{e:condition-for-negative-eta}. Fix such a~$\delta=\delta(\epsilon)$, so that~$\eta<0$. Then~\eqref{e:g-ie-agmon-bound} reads~$\abs{g_{i,\epsilon}(x)}\leq C_{\delta, i, \epsilon, \nu}\exp(\eta U(x))$ for all~$\abs{x}>R_{i,\epsilon}$. Since~$U(x)\to\infty$ as~$\abs{x}\to\infty$ by~\eqref{e:U-coercive} above, and~$\eta<0$, the right-hand side tends to~$0$, and the claim follows.
\end{proof}

Before proving Lemma~\ref{l:lyapunov-weighted-Linftybds-and-their-sums} we collect some facts from the spectral theory of~$\mathcal L_\epsilon$, together with estimates on the partition function~$Z_\epsilon$. The first of these says that the eigenfunctions associated with the low-lying eigenvalues are \emph{almost} indicator functions of the basins, up to an error that is exponentially small in~$L^2(\pi_\epsilon)$ (Lemma~\ref{l:dEF-small}). The second says that~$\lambda_{2,\epsilon}$ is exponentially small in~$1/\epsilon$, whereas all the remaining eigenvalues stay bounded away from~$0$ uniformly in~$\epsilon$ (Lemma~\ref{l:eigenvalue-properties}). To preserve the flow of the argument, we defer the proofs of Lemmas~\ref{l:dEF-small} and~\ref{l:partition-function-estimates} to the appendix.

\begin{lemma}\label{l:dEF-small}
    Make the Assumptions~\ref{a:criticalpts}--\ref{a:U-growth-control}.
    For any~$\gamma < \hat \gamma$, there exists~$C_\gamma$ such that for any~$\epsilon\in (\kappa, 1]$, the following holds. 
    Let
    \begin{equation}\label{e:EeFe-def}
        E_\epsilon=\mathrm{span}\set{\one_{\Omega_1},\one_{\Omega_2}}\quad\text{and}\quad F_\epsilon=\mathrm{span}\set{1,\psi_{2,\epsilon}}\,,
    \end{equation}
    and let~$P_{E_\epsilon}, P_{F_\epsilon}$ denote the orthogonal projections onto~$E_\epsilon, F_\epsilon$ in~$L^2(\pi_\epsilon)$, respectively. Then, the distance~$d(E_\epsilon, F_\epsilon)$ between~$E_\epsilon$ and~$F_\epsilon$, defined as 
    \begin{equation}\label{e:dEeFe-def}
        d(E_\epsilon,F_\epsilon)
        =
        \norm{(I-P_{E_\epsilon})P_{F_\epsilon}}_{L^2(\pi_\epsilon)\to L^2(\pi_\epsilon)}\,,
    \end{equation} 
    satisfies
    \begin{equation}\label{e:ef-almost-indicators}
        d(E_\epsilon,F_\epsilon)
        \leq
        C_\gamma\exp\paren*{-\frac{\gamma}{\epsilon}}\,,
    \end{equation}
    and consequently,
        \begin{equation}\label{e:eigen-residual-exp-small}
        \norm{(I-P_{E_\epsilon})\psi_{2,\epsilon}}_{L^2(\pi_\epsilon)} \leq C_\gamma \exp\paren*{-\frac{\gamma}{\epsilon}}\,.
    \end{equation}

    Moreover, there exist constants~$a_{1,\epsilon}, a_{2,\epsilon}$ such that, for~$i\in\set{1,2}$,
    \begin{equation}\label{e:PEe-decompose}
        P_{E_\epsilon}\psi_{2,\epsilon}
        =
        a_{1,\epsilon}\one_{\Omega_1}
        +
        a_{2,\epsilon}\one_{\Omega_2}\quad\text{and}\quad
        \abs{a_{i,\epsilon}}
        \leq
        C_m\,.
    \end{equation}
\end{lemma}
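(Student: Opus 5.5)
The plan is to use the standard quasimode / Helffer--Sjöstrand "distance between subspaces" argument. We build approximate eigenfunctions out of the basin indicators and control their Dirichlet energy by the exponentially small mass near the saddle. For $i\in\{1,2\}$, let $\chi_i$ be a smooth cutoff that equals $1$ on $\Omega_i$ away from a small neighbourhood of the separatrix and vanishes on the other basin. The transition region should be a thin neighbourhood of $\partial\Omega_1\cap\partial\Omega_2$. Since $s_{1,2}=0$ is the unique lowest point of the common boundary (Assumption~\ref{a: nondegeneracy}), on this region we have $U\geq \hat U-\eta$ for a small $\eta$ chosen according to $\gamma<\hat\gamma$.

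\textbf{Step 1: energy and variance estimates.} We bound the Dirichlet form
\[
\mathcal E_\epsilon(\chi_i)=\epsilon\int\abs{D\chi_i}^2\,d\pi_\epsilon\lesssim Z_\epsilon^{-1}e^{-(\hat U-\eta)/\epsilon}.
\]
The transition region is a bounded set: by~\eqref{e:U-coercive} the sublevel sets of $U$ are compact, so only the part of the separatrix below a high level matters, and the part above that level is truncated into the cutoff at no cost. Next we bound $Z_\epsilon$ below by Laplace's method near the deepest relevant minimum. Using Assumption~\ref{a:massRatioBound}, we get $\norm{\chi_i}^2_{L^2(\pi_\epsilon)}\geq \pi_\epsilon(\Omega_i)-o(1)\geq C_m^{-2}/2$. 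This makes the Rayleigh quotients of $\chi_i-\ip{\chi_i,1}$ at most $C e^{-(\hat\gamma-2\eta)/\epsilon}$, because $Z_\epsilon\gtrsim \epsilon^{d/2}e^{-U(x_{\min,2})/\epsilon}\pi_\epsilon(\Omega_2)^{\cdots}$, which is again made rigorous via the mass assumption. The same cutoff construction also shows $\norm{\chi_i-\one_{\Omega_i}}_{L^2(\pi_\epsilon)}\leq Ce^{-\gamma/\epsilon}$, since the difference lives where $U\geq\hat U-\eta$.

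\textbf{Step 2: spectral projection.} We invoke the uniform lower bound $\lambda_{3,\epsilon}\geq\Lambda>0$ for $\epsilon\in(\kappa,1]$ (the content of Lemma~\ref{l:eigenvalue-properties}, which I would prove first if needed, via Kolokoltsov/Menz--Schlichting Poincaré inequalities on each basin). For any $u$ with $\norm u=1$, the spectral theorem gives $\norm{(I-P_{F_\epsilon})u}^2\leq \mathcal E_\epsilon(u)/\lambda_{3,\epsilon}$. Applying this to $u\in\mathrm{span}\{\chi_1,\chi_2\}$ shows that $\tilde E=\mathrm{span}\{\chi_i\}$ satisfies $d(\tilde E,F_\epsilon)\leq Ce^{-\gamma/\epsilon}$. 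Here we use that the Gram matrix of the $\chi_i$ is uniformly invertible, being nearly $\mathrm{diag}(\pi_\epsilon(\Omega_i))$ with entries $\geq C_m^{-2}/2$. Since both spaces are two-dimensional and the distance is small, the standard lemma gives $d(F_\epsilon,\tilde E)=d(\tilde E,F_\epsilon)$. Finally we replace $\tilde E$ by $E_\epsilon$ using $\norm{\chi_i-\one_{\Omega_i}}\leq Ce^{-\gamma/\epsilon}$, which yields~\eqref{e:ef-almost-indicators}. Then~\eqref{e:eigen-residual-exp-small} is immediate, since $\psi_{2,\epsilon}\in F_\epsilon$ has unit norm.

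\textbf{Step 3: the coefficients.} The identity $P_{E_\epsilon}\psi_{2,\epsilon}=\sum a_{i,\epsilon}\one_{\Omega_i}$ holds with $a_{i,\epsilon}=\ip{\psi_{2,\epsilon},\one_{\Omega_i}}/\pi_\epsilon(\Omega_i)$, because $\Omega_1,\Omega_2$ are disjoint and cover $\R^d$ up to a null set (the stable manifolds of saddles have measure zero). Cauchy--Schwarz then gives $\abs{a_{i,\epsilon}}\leq \pi_\epsilon(\Omega_i)^{-1/2}\leq C_m$.

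\textbf{Main obstacle.} I expect the main difficulty to be Step 1 on noncompact $\R^d$ for $\epsilon$ not small, i.e.\ making the constants uniform over all of $(\kappa,1]$. For $\epsilon$ bounded below the claim is trivial, because $d\leq 1$ and we can enlarge $C_\gamma$. So the real content is the small-$\epsilon$ regime, where Laplace asymptotics and the bounded transition region suffice.
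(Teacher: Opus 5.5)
Your Step 3 is the paper's own argument for \eqref{e:PEe-decompose}. For \eqref{e:ef-almost-indicators} the paper only cites \cite[Chapter~8, Proposition~2.2]{Kolokoltsov00}, so a self-contained quasimode proof would be a reasonable alternative. However, Steps 1 and 2 each drop a square root, and the exponent you claim does not follow.

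In the only nontrivial case $\kappa=0$, Assumption~\ref{a:massRatioBound} forces $U(x_{\min,2})=0$, so $\hat U=\hat\gamma$ and $Z_\epsilon\gtrsim\epsilon^{d/2}$. (For $\kappa>0$ the lemma is trivial, as you note.)
\begin{itemize}
\item Since $\chi_i-\one_{\Omega_i}$ is bounded and supported in the transition layer, $\norm{\chi_i-\one_{\Omega_i}}_{L^2(\pi_\epsilon)}\le\pi_\epsilon(\text{layer})^{1/2}\lesssim\epsilon^{-d/4}e^{-(\hat\gamma-\eta)/(2\epsilon)}$, not $Ce^{-\gamma/\epsilon}$.
\item The spectral inequality controls the \emph{square}, $\norm{(I-P_{F_\epsilon})u}^2\le\mathcal E_\epsilon(u)/\lambda_{3,\epsilon}$. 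Your energy bound therefore gives only $d(\tilde E,F_\epsilon)\lesssim e^{-(\hat\gamma-2\eta)/(2\epsilon)}$.
\end{itemize}
Carried out correctly, your argument proves \eqref{e:ef-almost-indicators} and \eqref{e:eigen-residual-exp-small} only for $\gamma<\hat\gamma/2$.

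This loss cannot be repaired by a better argument, because for $\gamma\in(\hat\gamma/2,\hat\gamma)$ the bound is false. Take $d=1$ and $U(x)=(x^2-1)^2/4$. It satisfies all the assumptions with $\kappa=0$, $C_m=\sqrt2$ and $\hat\gamma=1/4$.
\begin{itemize}
\item $\psi_{2,\epsilon}$ is odd, so $\psi_{2,\epsilon}(0)=0$ and $P_{E_\epsilon}\psi_{2,\epsilon}=a_\epsilon\,\mathrm{sgn}$, with $a_\epsilon^2=1-\norm{(I-P_{E_\epsilon})\psi_{2,\epsilon}}^2\to1$ by the corrected bound.
\item A sign function smoothed over $\abs{x}<\sqrt\epsilon$ is a test function giving $\lambda_{2,\epsilon}\lesssim e^{-1/(4\epsilon)}$.
\item Since $U\le1/4$ on $[0,1]$ and $Z_\epsilon\asymp\sqrt\epsilon$, this forces $\int_0^{c\sqrt\epsilon}\abs{\psi_{2,\epsilon}'}^2\,dx\lesssim\epsilon^{-1/2}$. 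Hence $\abs{\psi_{2,\epsilon}}\le1/2$ on $(0,c\sqrt\epsilon)$ for a small fixed $c$.
\item Therefore $\norm{(I-P_{E_\epsilon})\psi_{2,\epsilon}}_{L^2(\pi_\epsilon)}^2\gtrsim c\sqrt\epsilon\,e^{-1/(4\epsilon)}/Z_\epsilon\gtrsim e^{-\hat\gamma/\epsilon}$.
\end{itemize}
So the left side of \eqref{e:eigen-residual-exp-small} is at least of order $e^{-\hat\gamma/(2\epsilon)}$. The reason is structural: the eigenfunction must cross from $a_{1,\epsilon}$ to $a_{2,\epsilon}$ in a $\sqrt\epsilon$-layer at the saddle, where $\pi_\epsilon$ has density about $\epsilon^{-d/2}e^{-\hat\gamma/\epsilon}$.

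The correct statement has $e^{-\gamma/(2\epsilon)}$ in place of $e^{-\gamma/\epsilon}$, which is exactly what your method yields. Compare the form of \eqref{e:second-exp-small}; a factor $2$ was presumably lost in translating the cited semiclassical result. Be aware that the paper later applies the lemma with $\gamma=3\hat\gamma/4$ to bound $\psi_{2,\epsilon}$ on $B_3$. That step relies on the unattainable rate and would need a different argument.
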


The next lemma is a direct consequence of~\cite[Chapter 8, Proposition 2.2]{Kolokoltsov00}, which holds on~$\R^d$ under Assumption~\ref{a:U-growth-control}, so we do not reproduce its proof.

\begin{lemma}[{\cite[Chapter 8, Proposition 2.2]{Kolokoltsov00}}]\label{l:eigenvalue-properties}
    Make the Assumptions~\ref{a:criticalpts}--\ref{a:U-growth-control}. Then there exists~$\Lambda>0$ such that, for any~$\epsilon \in (\kappa, 1]$ and~$i\geq 3$, 
    \begin{equation}\label{e:third-O(1)-away}
        \lambda_{i, \epsilon} \geq \Lambda\,.
    \end{equation}
    Moreover, for any~$\gamma < \hat\gamma$, there exists~$\tilde C$ such that for any~$\epsilon \in (\kappa, 1]$,
    \begin{equation}\label{e:second-exp-small}
        \lambda_{2,\epsilon} \leq \tilde C \exp\paren*{-\frac{\gamma}{2\epsilon}}\,.
    \end{equation}
\end{lemma}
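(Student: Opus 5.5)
The statement has two parts. The upper bound~\eqref{e:second-exp-small} follows from a variational argument with an explicit test function. The uniform lower bound~\eqref{e:third-O(1)-away} comes from a semiclassical IMS localization for small~$\epsilon$, combined with a continuity argument on the remaining compact range of temperatures.

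\emph{Upper bound on~$\lambda_{2,\epsilon}$.} We use the min–max characterization
\begin{equation*}
\lambda_{2,\epsilon}=\inf_{f\perp 1}\frac{\epsilon\int|\nabla f|^2\,d\pi_\epsilon}{\var_{\pi_\epsilon}(f)}\,.
\end{equation*}
Fix~$\eta>0$ small and construct a smooth~$\chi$ with~$0\le\chi\le1$, equal to~$1$ on~$\Omega_1\cap\{U\le\hat U-\eta\}$ and to~$0$ on~$\Omega_2\cap\{U\le \hat U-\eta\}$. By Assumption~\ref{a: nondegeneracy}, $\hat U$ is the lowest level connecting the two minima and is attained only at~$s_{1,2}=0$. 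Hence, for~$\eta$ small, the two components of~$\{U<\hat U-\eta\}$ containing the minima are at positive distance apart, and we may place~$\nabla\chi$ inside~$\{U\ge\hat U-\eta\}$ with~$|\nabla\chi|$ bounded independently of~$\epsilon$.

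The Dirichlet energy then satisfies
\begin{equation*}
\epsilon\int|\nabla\chi|^2\,d\pi_\epsilon\le C e^{-(\hat U-\eta)/\epsilon}/Z_\epsilon .
\end{equation*}
Laplace's method near~$x_{\min,2}$ gives~$Z_\epsilon\ge c\,\epsilon^{d/2}e^{-U(x_{\min,2})/\epsilon}$. For the variance, Assumption~\ref{a:massRatioBound} gives mass at least~$C_m^{-2}$ in each basin. The parts of the basins above level~$\hat U-\eta$ carry exponentially small mass relative to~$Z_\epsilon$, so~$\var(\chi)\ge c>0$ for~$\epsilon$ small; for~$\epsilon$ of order one we simply absorb constants. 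Together these give
\begin{equation*}
\lambda_{2,\epsilon}\le C\epsilon^{-d/2}e^{-(\hat\gamma-\eta)/\epsilon}\,.
\end{equation*}
Choosing~$\eta$ so that~$\hat\gamma-\eta>\gamma/2$ and absorbing the polynomial factor yields~\eqref{e:second-exp-small} (in fact with~$\gamma$ in place of~$\gamma/2$).

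\emph{Lower bound on~$\lambda_{3,\epsilon}$ for small~$\epsilon$.} We pass to the Schrödinger picture~\eqref{e:schrodinger-def}, in which~$\lambda_{i,\epsilon}$ are the eigenvalues of~$\epsilon\mathcal H_\epsilon=-\epsilon\Delta+\frac{1}{4\epsilon}|DU|^2-\frac12\Delta U$.
\begin{enumerate}
\item Take a quadratic partition of unity~$\sum_j\theta_j^2=1$ with one function supported in a ball of radius~$\epsilon^{2/5}$ around each critical point and one function covering the rest. The IMS formula costs an error~$O(\epsilon\cdot\epsilon^{-4/5})=o(1)$.
\item Away from the critical points the potential satisfies~$\frac1{4\epsilon}|DU|^2-\frac12\Delta U\ge c\epsilon^{-1/5}$. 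Near the critical points this uses nondegeneracy. At infinity it uses~\eqref{a:DU-blowup} and~\eqref{a:DU^2-dominates-D^2U}, which give~$|DU|^2\ge 4|D^2U|$ outside a fixed ball, uniformly in~$\epsilon\le1$.
\item Near each critical point the operator is a harmonic oscillator plus~$o(1)$ error. At a minimum its spectrum is~$\{0\}\cup[\mu_{\min},\infty)$, where~$\mu_{\min}$ is the smallest Hessian eigenvalue. At the index-one saddle its ground energy is~$|\mu_-|>0$.
\end{enumerate}
By min–max, for~$\epsilon\le\epsilon_0$ at most two eigenvalues lie below~$\Lambda_0=\frac12\min(\mu_{\min},|\mu_-|)$, which gives~$\lambda_{3,\epsilon}\ge\Lambda_0$.

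\emph{Remaining range~$\epsilon\in[\epsilon_0,1]$.} For each such~$\epsilon$ we have~$\lambda_{3,\epsilon}\ge\lambda_{2,\epsilon}>0$, since the spectrum is discrete and~$0$ is simple by irreducibility of the diffusion. The map~$\epsilon\mapsto\lambda_{3,\epsilon}$ is continuous. To see this, note that after the ground-state transform the quadratic forms are comparable uniformly in~$\epsilon\in[\epsilon_0,1]$, with common form domain controlled by~$|DU|^2$, so min–max values vary continuously. Taking the minimum over this compact interval gives a positive constant, and~$\Lambda$ is the smaller of this constant and~$\Lambda_0$.

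The main obstacle is the IMS step on the noncompact~$\R^d$. One must verify that the outer localized piece has a uniformly large potential, in particular that~$\Delta U$ does not spoil positivity far out; Assumption~\ref{a:U-growth-control} is used exactly here. One must also check the continuity-in-$\epsilon$ claim, for which I would rely on the uniform form comparison just described.
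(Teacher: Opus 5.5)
Your argument is correct, but it is not the paper's route. The paper gives no proof: it imports \cite[Chapter 8, Proposition 2.2]{Kolokoltsov00} and asserts that it remains valid on $\R^d$ under Assumption~\ref{a:U-growth-control}. You prove the lemma from scratch in three pieces:
\begin{itemize}
\item a Rayleigh-quotient test function adapted to the two components of $\{U\le \hat U-\eta\}$, for \eqref{e:second-exp-small};
\item the Simon/Helffer--Sj\"ostrand harmonic approximation via IMS localization at scale $\epsilon^{2/5}$, for \eqref{e:third-O(1)-away} at small $\epsilon$;
\item a form-comparison continuity argument on $[\epsilon_0,1]$.
\end{itemize}

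What your route buys is transparency. It shows exactly where Assumption~\ref{a:U-growth-control} enters: the outer IMS piece has a large potential at infinity, and the bound $\int |DU|^2\phi^2\lesssim q_\epsilon(\phi)+\|\phi\|^2$ makes the forms uniformly comparable. It gives $\Lambda$ explicitly in terms of Hessian eigenvalues. It shows that the bound on $\lambda_{i,\epsilon}$ for $i\ge 3$ does not use Assumption~\ref{a:massRatioBound}. It also yields the sharper rate $e^{-\gamma/\epsilon}$ in place of $e^{-\gamma/(2\epsilon)}$. The citation is shorter, but it leaves to the reader the check that the reference's hypotheses transfer to the noncompact setting. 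Moreover, semiclassical statements of this kind are asymptotic in $\epsilon$, and the paper leaves the passage to the full range $(\kappa,1]$ implicit; your continuity step makes it explicit.

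Two small repairs are needed.
\begin{itemize}
\item Assumption~\ref{a:criticalpts} permits critical points besides $x_{\min,1}$, $x_{\min,2}$ and $s_{1,2}$, namely saddles of any index and local maxima. Step~3 must treat these as well. At a critical point with Hessian eigenvalues $\mu_j$, the local harmonic ground energy is $\sum_{\mu_j<0}|\mu_j|>0$. So $\Lambda_0$ should be half the minimum of these quantities over all non-minimal critical points and of the smallest Hessian eigenvalue at the two minima.
\item Since $|\Delta U|\le d\,|D^2U|$, positivity of the potential at infinity needs $|DU|^2\ge c_d|D^2U|$ with a $d$-dependent constant, or simply $\epsilon$ small. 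This is harmless, because the IMS step is only used for $\epsilon\le\epsilon_0$.
\end{itemize}
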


Finally, we need two-sided bounds on the partition function~$Z_\epsilon$.

\begin{lemma}\label{l:partition-function-estimates}
    Make the Assumptions~\ref{a:criticalpts}--\ref{a:U-growth-control}. Then, there exists a constant~$C_Z(d,U, C_m)$ such that for any~$\epsilon \in (\kappa, 1]$,
    \begin{equation}\label{e:Zepsilon-estimate}
        C_Z^{-1}\epsilon^\frac{d}{2} \leq Z_\epsilon \leq C_Z \epsilon^\frac{d}{2}\exp\paren*{-\frac{U(x_{\min, 2})}{\epsilon}}\,.
    \end{equation}
\end{lemma}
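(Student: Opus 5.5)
The plan is a Laplace-method estimate for $Z_\epsilon=\int_{\R^d}e^{-U/\epsilon}\,dx$. I would split $\R^d$ into a compact sublevel set $\set{U\le M}$, for a fixed large $M$, and its complement. By the coercivity bound~\eqref{e:U-coercive} the sublevel set is compact, and by Assumption~\ref{a:criticalpts} it contains only finitely many critical points, all nondegenerate. Throughout, $\epsilon\in(\kappa,1]$ ranges over a bounded interval, so it suffices to prove the bounds for $\epsilon\le\epsilon_0$ small and treat $\epsilon\in[\epsilon_0,1]$ by continuity and positivity of $Z_\epsilon$, which is finite by~\eqref{e:U-coercive}.

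For the lower bound, I would use the Taylor expansion at the global minimum $x_{\min,1}$, where $U=0$ and $D^2U\ge c I$. On a ball $B(x_{\min,1},r)$ we have $U(x)\le C\abs{x-x_{\min,1}}^2$. Restricting the integral to this ball gives
\begin{equation*}
  Z_\epsilon\ge\int_{B(x_{\min,1},r)}e^{-C\abs{y}^2/\epsilon}\,dy\ge C^{-1}\epsilon^{d/2}
\end{equation*}
for $\epsilon\le1$, after substituting $y=\sqrt\epsilon z$. This lower bound is harmless.

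For the upper bound as written, the prefactor $\exp(-U(x_{\min,2})/\epsilon)$ is at most $1$, but the bound claims it anyway. I therefore need to use Assumption~\ref{a:massRatioBound}. Since $\pi_\epsilon(\Omega_2)\ge C_m^{-2}$, we get $Z_\epsilon\le C_m^2\int_{\Omega_2}e^{-U/\epsilon}$. It then suffices to show
\begin{equation*}
  \int_{\Omega_2}e^{-U/\epsilon}\,dx\le C\epsilon^{d/2}e^{-U(x_{\min,2})/\epsilon}.
\end{equation*}
Because $x_{\min,2}$ is the only minimum in $\Omega_2$, on the compact set $\overline{\Omega_2}\cap\set{U\le M}$ I would prove a quadratic lower bound
\begin{equation*}
  U(x)-U(x_{\min,2})\ge c\min\paren[\big]{\abs{x-x_{\min,2}}^2,1}.
\end{equation*}
Near $x_{\min,2}$ this follows from nondegeneracy. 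Away from it, I would use that the infimum of $U-U(x_{\min,2})$ over the closure of the basin minus a small ball is positive. This holds because any point of the closure of $\Omega_2$ flows down to $x_{\min,2}$ or lies on the stable manifold of a saddle, whose height exceeds $U(x_{\min,2})$, and every other critical point lies strictly above $U(x_{\min,2})$ by uniqueness of the minimum in the basin. A Gaussian integral then gives $C\epsilon^{d/2}e^{-U(x_{\min,2})/\epsilon}$ from this region, plus $e^{-(U(x_{\min,2})+c)/\epsilon}$, which is even smaller.

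The tail $\set{U>M}$ is handled with~\eqref{e:U-coercive}. There $U\ge M/2+U/2$, so
\begin{equation*}
  \int_{\set{U>M}}e^{-U/\epsilon}\le e^{-M/(2\epsilon)}\int e^{-U/2}\le Ce^{-M/(2\epsilon)}.
\end{equation*}
For $M>2U(x_{\min,2})+2$ this is at most $C\epsilon^{d/2}e^{-U(x_{\min,2})/\epsilon}$ uniformly in $\epsilon\le1$.

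The main obstacle I anticipate is the uniform positivity of $U-U(x_{\min,2})$ on $\overline{\Omega_2}$ away from $x_{\min,2}$, since basins need not be closed and their boundaries contain saddles. I would argue via the saddle height $\hat U>U(x_{\min,2})$: points on $\partial\Omega_2$ have $U\ge\hat U$ along their limit critical points, hence $U\ge$ the height of the lowest critical point other than the minimum. If this step proves delicate, an alternative is to avoid the mass assumption entirely when $U(x_{\min,2})/\epsilon$ is bounded. The mass assumption is only needed when $\kappa=0$, where the wells have comparable depth and $U(x_{\min,2})$ is forced to be $0$ via~\cite[Lemma 4.4]{HanIyerEA26}-type reasoning.
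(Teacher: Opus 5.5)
Your overall structure matches the paper's. The lower bound comes from the well at $x_{\min,1}$: the paper integrates over $\Omega_1$, you over a small ball, which amounts to the same thing. The upper bound comes from $Z_\epsilon=\pi_\epsilon(\Omega_2)^{-1}\int_{\Omega_2}e^{-U/\epsilon}\,dx$ together with Assumption~\ref{a:massRatioBound}, followed by a Laplace estimate on the basin. The paper simply quotes the Laplace asymptotics for $\int_{\Omega_i}e^{-U/\epsilon}\,dx$ and handles $[\epsilon_0,1]$ by continuity, whereas you prove the one-sided bound by hand. Your Gaussian computation and your tail estimate on $\set{U>M}$ are correct.

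The problem is the step you flag yourself, the uniform gap $\inf\set{U(x)-U(x_{\min,2}) : x\in\Omega_2\setminus B(x_{\min,2},r)}>0$. The claim is true, but the reasons you give for it are not.
\begin{itemize}
\item Points of $\partial\Omega_2$ need not satisfy $U\geq\hat U$. The saddle assumption only concerns $s_{1,2}$, and $\partial\Omega_2$ can also contain the stable manifold of a saddle $c$ whose two unstable branches both descend into $\Omega_2$. An example is $x_{\min,2}$ and $c$ lying on a ring-shaped valley around a local maximum. Then $U(c)$ can be anywhere in $(U(x_{\min,2}),\hat U)$.
\item The claim that every other critical point lies strictly above $U(x_{\min,2})$ is false globally. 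A saddle on $\partial\Omega_1\setminus\partial\Omega_2$ can lie below $U(x_{\min,2})$ when $U(x_{\min,2})>0$.
\item For critical points on $\partial\Omega_2$ that claim is true, but it does not follow from uniqueness of the minimum in the basin without a further argument.
\end{itemize}

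A short argument that avoids saddles and closures altogether is energy dissipation along the gradient flow.
\begin{itemize}
\item Pick $r$ so small that $\abs{DU(x)}\geq\tfrac{\lambda}{2}\abs{x-x_{\min,2}}$ on $B(x_{\min,2},r)$, where $\lambda>0$ is the smallest eigenvalue of $D^2U(x_{\min,2})$.
\item Take $p\in\Omega_2\setminus B(x_{\min,2},r)$. Its trajectory $y_t$ converges to $x_{\min,2}$, so it contains an arc of length at least $r/2$ inside the annulus $\set{r/2\leq\abs{x-x_{\min,2}}\leq r}$. On this annulus $\abs{DU}\geq\lambda r/4$.
\item Since $\tfrac{d}{dt}U(y_t)=-\abs{DU(y_t)}\,\abs{\dot y_t}$ and $U(y_t)$ decreases to $U(x_{\min,2})$, you get $U(p)-U(x_{\min,2})\geq\lambda r^2/8$.
\end{itemize}
This works directly on $\Omega_2$, which is all your integral needs, so there is no need to pass to $\overline{\Omega_2}$. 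With this in place your proof is complete. It also makes explicit the separation hypothesis that the paper's appeal to the Laplace method uses without stating it.
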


We now split Lemma~\ref{l:lyapunov-weighted-Linftybds-and-their-sums} into two parts, proved in Lemmas~\ref{l:eigenfunctions-weighted-Linfty-bdd} and~\ref{l:ef-weighted-sum-Linfty-bds} respectively.

Lemma~\ref{l:eigenfunctions-weighted-Linfty-bdd} is the first part of Lemma~\ref{l:lyapunov-weighted-Linftybds-and-their-sums} and the core of this section. Using the maximum principle, we reduce the bound on~$e^{-2\nu U}\psi_{i,\epsilon}$ to a bound on a sublevel set of~$\abs{DU}^2$, which for small~$\epsilon$ lies close to the critical points of~$U$. There we apply the local maximum principle together with the normalization of~$\psi_{i,\epsilon}$ in~$L^2(\pi_\epsilon)$. For~$i=2$ we also use that~$\psi_{2,\epsilon}$ is close to a combination of the indicator functions of the basins (Lemma~\ref{l:dEF-small}), and this yields a bound that is uniform in~$\epsilon$. For~$i\geq3$ the same argument yields a bound that grows like~$\exp(\bar C_\nu\lambda_{i,\epsilon}/\epsilon)$.

\begin{lemma}\label{l:eigenfunctions-weighted-Linfty-bdd}
Make the Assumptions~\ref{a:criticalpts}--\ref{a:U-growth-control}.
Fix any~$\nu \in (0, 1/8)$ and define~$V$ and~$\norm{\cdot}_V$ as in~\eqref{e:V-normV-def}.
Then, there exist constants~$\bar C_{\psi_2}, C_d$, and~$\bar C_\nu(\nu, \Lambda)$ such that for all~$\epsilon\in (\kappa,1]$ and~$i\geq 3$, we have
\begin{align}
\label{e:second-ef-weightedLinfty}
\norm*{\psi_{2, \epsilon}}_{V} &\leq \bar C_{\psi_2},\\
\label{e:higher-ef-weightedLinfty}
\norm*{\psi_{i, \epsilon}}_{V} &\leq C_d \exp\paren*{\bar C_\nu\lambda_{i, \epsilon}/\epsilon}.
\end{align}
\end{lemma}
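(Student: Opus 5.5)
Our plan is to study $g = g_{i,\epsilon} = e^{-2\nu U}\psi_{i,\epsilon}$, which by Lemma~\ref{l:weighted-eigen-vanish-infty} vanishes at infinity, so $\abs{g}$ attains its maximum (if $g\equiv0$ there is nothing to prove). Writing $\psi = e^{2\nu U} g$ and substituting into $\mathcal L_\epsilon \psi = -\lambda\psi$ gives
\begin{equation*}
  \epsilon\Delta g + (4\nu\epsilon - 1)\grad U\cdot\grad g + c_\epsilon\, g = 0\,,\qquad
  c_\epsilon \defeq 2\nu(2\nu\epsilon-1)\abs{DU}^2 + 2\nu\epsilon\Delta U + \lambda\,.
\end{equation*}
Since $\nu<1/8$ and $\epsilon\le1$, we have $2\nu\epsilon - 1\le -3/4$. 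By \eqref{a:DU^2-dominates-D^2U} and the continuity of $U$, there is a constant $M$, independent of $\epsilon\in(0,1]$, such that $c_\epsilon<0$ on the set $\set{\abs{DU}^2 > M(\epsilon+\lambda)}$. At a positive maximum (or negative minimum) of $g$ we have $\grad g = 0$ and $\Delta g\le 0$, so $c_\epsilon g\ge 0$ there, which forces $c_\epsilon \ge 0$. Hence the extremum lies in the sublevel set $A \defeq \set{\abs{DU}^2\le M(\epsilon+\lambda)}$.

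Next we localize $A$. By Assumption~\ref{a:U-growth-control}, $A$ is bounded. For small $\epsilon+\lambda$, nondegeneracy of the critical points gives $\abs{DU}\gtrsim \dist(x,\mathrm{Crit})$, so $A$ lies in balls of radius $r \approx C\sqrt{\epsilon+\lambda}$ around the three critical points $x_{\min,1}, x_{\min,2}, 0$. For $\lambda\ge\Lambda$ it suffices to use that $A$ lies in a compact set $\set{U\le C(1+\lambda)}$, where \eqref{a:U-lyapunov} gives $U\le C_L(1+M(\epsilon+\lambda))$. On such a region we apply the local maximum principle (Moser/De Giorgi, or simply interior $L^\infty$--$L^2$ estimates after rescaling $x=x_c+\sqrt\epsilon y$). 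In the rescaled variable the equation for $\psi$ has bounded drift $\grad U(x_c+\sqrt\epsilon y)/\sqrt\epsilon$ and zero-order term $\lambda/\epsilon$. This yields
\begin{equation*}
  \sup_{B(x_c,\sqrt\epsilon)}\abs{\psi} \le C e^{C\lambda/\epsilon}\,\epsilon^{-d/4}\norm{\psi}_{L^2(B(x_c,2\sqrt\epsilon))}\,.
\end{equation*}
We then convert to $L^2(\pi_\epsilon)$ via $\norm{\psi}_{L^2(dx,B)}^2\le Z_\epsilon e^{\max_B U/\epsilon}\norm{\psi}^2_{L^2(\pi_\epsilon)}$. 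Lemma~\ref{l:partition-function-estimates} cancels the $\epsilon^{d/2}$ factor, and $\max_B U - U(x_{\min,2}) \lesssim U(x_c) + \epsilon$. At the minima this is harmless. For large $\lambda$ the polynomial factors and the $U$-growth are absorbed into $e^{\bar C_\nu\lambda/\epsilon}$, using $\lambda\ge\Lambda$ and the factor $e^{-2\nu U}$ on the compact set. This gives \eqref{e:higher-ef-weightedLinfty}.

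The main obstacle is the saddle for $i=2$, where the naive conversion loses $e^{\hat U/(2\epsilon)}$, and also the minimum $x_{\min,1}$ when $U(x_{\min,2})>0$. Here we use Lemma~\ref{l:dEF-small}: $\psi_2 = a_1\one_{\Omega_1}+a_2\one_{\Omega_2} + h$ with $\norm{h}_{L^2(\pi_\epsilon)}\le C_\gamma e^{-\gamma/\epsilon}$. Near each minimum $x_{\min,j}$, which lies in the interior of $\Omega_j$, we apply the local maximum principle to $\psi_2 - a_j$, which solves the same equation with a source $-\lambda_2 a_j$ that is exponentially small by Lemma~\ref{l:eigenvalue-properties}. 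This bounds $\psi_2$ by $C_m + o(1)$. Near the saddle we cannot subtract a constant, so we instead use the weight: on $B(0,r)$ the factor $e^{-2\nu U}\le e^{-2\nu\hat U + o(1)}$ does not help directly. Our first attempt is a maximum principle for $\psi_2$ itself on a neighbourhood $N$ of the saddle intersected with $\set{U\le \hat U+\eta}$. There $\abs{\psi_2}$ is controlled by its values on the part of $\partial N$ inside the two basins, which are bounded via the estimate above propagated along the gradient flow, while $\lambda_2$ is exponentially small. Choosing $\gamma$ close to $\hat\gamma$ makes the error term $e^{(\hat U-\gamma)/(2\epsilon)}$ bounded when the wells have comparable depth. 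We expect this saddle step to require the most care.
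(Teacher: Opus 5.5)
Your treatment of $i\ge 3$ follows the paper: the maximum-principle reduction of $g$ to $\{|DU|^2\le M(\epsilon+\lambda)\}$, the local maximum principle, the conversion from $L^2(dx)$ to $L^2(\pi_\epsilon)$, and absorption via $\lambda\ge\Lambda$ and \eqref{a:U-lyapunov}. Your single set $A$ is a harmless simplification of the paper's $A_\mathrm{small}$/$A_\mathrm{large}$ split. Two technical points need fixing there. First, a ball of fixed radius $\sqrt\epsilon$ around a point of $A$ need not stay where $|DU|^2\lesssim 1+\lambda$ and $U\lesssim 1+\lambda$, so the radius must shrink with $\lambda$. The paper uses radius $\epsilon/(2\sqrt{1+\lambda/\nu})$ together with \eqref{e:DU-tan-growth}. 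Second, after the rescaling $x=x_c+\sqrt\epsilon y$ the zero-order coefficient is $\lambda$, not $\lambda/\epsilon$.

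The genuine gap is $i=2$ at the saddle, which is the heart of the lemma and which you leave as a ``first attempt.'' The proposed maximum principle on $N\cap\{U\le\hat U+\eta\}$ does not work as stated, for three reasons.
\begin{itemize}
\item Part of the boundary of that set lies on $\{U=\hat U+\eta\}$, where $\psi_2$ is not controlled.
\item ``Propagating'' the bounds near the minima along the gradient flow is not an available estimate.
\item The zero-order term $+\lambda_2\psi_2$ has the wrong sign for the classical maximum principle. You would need a positive supersolution and an exponential-moment argument, which is what Lemma~\ref{l:lyapunov-uniform-boundedness} has to do, at considerable cost, in the multi-well case.
\end{itemize}

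The missing observation is that you never need to subtract a constant. Apply the local maximum principle to $\psi_2$ itself on $B(0,C_A\sqrt\epsilon)$. This bounds $\sup|\psi_2|$ by $C\epsilon^{-d/4}\|\psi_2\|_{L^2(\Omega_*)}$ with $\Omega_*=B(0,2C_A\sqrt\epsilon)$. Then split $\psi_2=P_{E_\epsilon}\psi_2+(I-P_{E_\epsilon})\psi_2$ inside that $L^2(dx)$ norm.
\begin{itemize}
\item The first piece is $a_{1,\epsilon}\one_{\Omega_1}+a_{2,\epsilon}\one_{\Omega_2}$ with $|a_{i,\epsilon}|\le C_m$. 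It contributes at most $C_m|\Omega_*|^{1/2}\sim\epsilon^{d/4}$; the jump across the separatrix is irrelevant to an $L^2$ bound.
\item Only the residual must be converted to $L^2(\pi_\epsilon)$. The cost is $Z_\epsilon e^{\max_{\Omega_*}U/\epsilon}\le C\epsilon^{d/2}e^{\hat\gamma/\epsilon}$, provided you use the sharp bound $Z_\epsilon\le C_Z\epsilon^{d/2}e^{-U(x_{\min,2})/\epsilon}$ of Lemma~\ref{l:partition-function-estimates} rather than $Z_\epsilon\lesssim\epsilon^{d/2}$.
\item Lemma~\ref{l:dEF-small} with $\gamma=3\hat\gamma/4$ gives $\|(I-P_{E_\epsilon})\psi_2\|^2_{L^2(\pi_\epsilon)}\le Ce^{-3\hat\gamma/(2\epsilon)}$. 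The product is therefore $C\epsilon^{d/2}e^{-\hat\gamma/(2\epsilon)}$.
\end{itemize}
Since only $\gamma>\hat\gamma/2$ is needed, no ``comparable depth'' condition appears beyond Assumption~\ref{a:massRatioBound}, which already enters through the $Z_\epsilon$ bound. With that same sharp $Z_\epsilon$ bound the naive conversion already works at both minima, so $x_{\min,1}$ is not an obstacle either.
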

\begin{proof}
Fix~$\nu \in (0,1/8)$ and~$\epsilon\in (\kappa, 1]$, and define
\begin{equation}
    g_{i,\epsilon} = \exp(-2\nu U)\psi_{i,\epsilon}.
\end{equation}
The proof consists of two steps. First, we use the maximum principle to reduce the problem to estimating~$g_{i,\epsilon}$ on a suitable sublevel set of~$\abs{DU}^2$. We then estimate this set separately in the cases~$i=2$ and~$i\geq3$. In each of these two cases we treat small and large~$\epsilon$ separately. We fix a threshold~$\tilde \epsilon \in (0,1]$ and prove the statement first for~$\epsilon \in (\kappa, \tilde \epsilon]$ and then for~$\epsilon \in (\tilde\epsilon, 1]$. We start with an arbitrary~$\tilde\epsilon \in (0,1]$ and decrease it finitely many times in the course of the proof.

We begin with the reduction step. A direct computation shows that~$g_{i,\epsilon}$ satisfies
\begin{equation}
    \epsilon\Delta g_{i,\epsilon} + \paren*{4\epsilon\nu-1} DU\cdot Dg_{i,\epsilon} + W_{i,\epsilon}g_{i,\epsilon} = 0\,,
\end{equation}
where
\begin{equation}
    W_{i,\epsilon} = 2\epsilon\nu \Delta U + 2\nu(2\epsilon\nu -1)\abs{DU}^2 + \lambda_{i,\epsilon}\,.
\end{equation}
By Lemma~\ref{l:weighted-eigen-vanish-infty} the function~$\abs{g_{i,\epsilon}}$ vanishes at infinity, so it attains its maximum at some point~$x_\ast\in\R^d$. At such a point~$Dg_{i,\epsilon}(x_\ast)=0$ and~$\Delta\abs{g_{i,\epsilon}}(x_\ast)\leq 0$, and the displayed equation forces~$W_{i,\epsilon}(x_\ast)\geq0$. (If~$g_{i,\epsilon}\equiv0$ there is nothing to prove.) Moreover, as pointed out in Remark~\ref{r:extension}, \eqref{a:DU^2-dominates-D^2U} implies~\eqref{a:DU-exp-growth-at-most}. Hence, if we choose~$\tilde \epsilon \leq 1/(4C_\Delta)$ then for all~$\epsilon\in (\kappa, \tilde\epsilon]$ and~$\nu \in(0,1/8)$,
we have
\begin{align}
    W_{i, \epsilon} &\overset{\eqref{a:DU-exp-growth-at-most}}{\leq} 2C_\Delta\epsilon\nu + 2\nu(C_\Delta\epsilon + 2\nu - 1)\abs{DU}^2 + \lambda_{i, \epsilon} \\
    &\leq 2C_\Delta \epsilon \nu + 2\nu\paren*{2\nu - \frac{3}{4}}\abs{DU}^2 + \lambda_{i, \epsilon}\\
    &\leq 2C_\Delta \epsilon\nu - \nu \abs{DU}^2 + \lambda_{i, \epsilon}\,,
\end{align}
where the first inequality also uses~$\epsilon\leq1$ and~$\abs{\Delta U}\leq d\abs{D^2U}$ (the factor~$d$ being absorbed into~$C_\Delta$), the second uses~$C_\Delta\epsilon\leq1/4$, and the third uses~$\nu\leq1/8$. This yields
\begin{equation}\label{e:maximum-set-in-DU^2-sublevel}
    \set*{W_{i,\epsilon}\geq0}
    \subset
    \set*{\abs*{DU}^2 \leq 2C_\Delta\epsilon+\frac{\lambda_{i,\epsilon}}{\nu}}
    \defeq A_\mathrm{small}.
\end{equation}
Moreover, for any~$\epsilon \in (\tilde\epsilon, 1]$, we use the fact that assumption~\eqref{a:DU^2-dominates-D^2U} implies the existence of some constant~$c \geq 1$ such that for all~$\abs{x} > c$, 
\begin{equation}
    \abs{\Delta U} \leq \frac{1}{4}\abs{DU}^2\,,
\end{equation}
so that on~$\set*{\abs{x} > c}$ we have, for any~$\nu \in (0, 1/8)$,
\begin{align}
    W_{i, \epsilon} &\leq 2\nu \abs*{\Delta U} + 2\nu (2\nu - 1) \abs{DU}^2 + \lambda_{i, \epsilon}\\
    &\leq 2\nu \paren*{2\nu - \frac{3}{4}}\abs{DU}^2 + \lambda_{i, \epsilon}\\
    &\leq -\nu \abs{DU}^2 + \lambda_{i, \epsilon}\,.
\end{align}
Thus, we have
\begin{equation}\label{e:maximum-set-in-DU^2-sublevel-big}
    \set*{W_{i, \epsilon} \geq 0} \subset B(0, c) \cup \set*{\abs*{DU}^2 \leq  \frac{\lambda_{i, \epsilon}}{\nu}} \defeq A_\mathrm{large} \,.
\end{equation}

We now bound~$g_{i, \epsilon}$ on~$A_\mathrm{small}$ and on~$A_\mathrm{large}$, for small and for large~$\epsilon$ respectively.
 
\medskip

\noindent\textbf{The case $i=2$, small~$\epsilon \in (\kappa, \tilde\epsilon]$.}
We prove the existence of a constant~$\bar C_{\psi_2}$ such that
\begin{equation}\label{e:small-epsilon-i2}
    \sup_{\epsilon \in (\kappa, \tilde\epsilon]} \norm{g_{2, \epsilon}}_{L^\infty(\R^d)} \leq \bar C_{\psi_2}\,.
\end{equation}
We begin with the observation that the set~$A_\mathrm{small}$ is contained in a small neighborhood of the critical points of~$U$. More precisely, we apply~\eqref{e:second-exp-small} at~$\gamma = \hat\gamma / 2$ to obtain that for every~$\epsilon\in(\kappa,1]$,
\begin{equation}\label{e:second-ev-exp-small}
\lambda_{2,\epsilon}
\leq
\tilde C\exp\paren*{-\hat\gamma/(4\epsilon)}
\leq
\frac{4\tilde C}{\hat\gamma}\epsilon.
\end{equation}
Combining this estimate with~\eqref{e:maximum-set-in-DU^2-sublevel} gives
\begin{equation}\label{e:A-subset-tildeA}
    A_\mathrm{small}
    \subset
    \set*{\abs{DU}^2 \leq \paren*{2C_\Delta + \frac{4\tilde C}{\nu\hat\gamma}}\epsilon}
    \defeq \tilde A\,.
\end{equation}
Hence, there exists a constant~$C_A>0$ such that, for sufficiently small~$\epsilon$,
\begin{equation}\label{e:tildeA-subset-crtpts}
    \tilde A \subset B_1 \cup B_2 \cup B_3\,,
\end{equation}
where
\begin{equation}
    B_1=B(x_{\min,1},C_A\sqrt\epsilon)\,,\quad
    B_2=B(x_{\min,2},C_A\sqrt\epsilon)\,,\quad
    B_3=B(0,C_A\sqrt\epsilon)\,.
\end{equation}

We first control~$\psi_{2,\epsilon}$ on~$B_1\cup B_2$. The proof of~\cite[Lemma 9.8, Equation (9.37)]{HanIyerEA26} extends verbatim from the compact setting~$\T^d$ to~$\R^d$, and therefore there exists a constant~$C_{\psi_2,1}$ such that
\begin{equation}\label{e:ef-B1B2-bound}
    \sup_{\epsilon\in (\kappa,\tilde \epsilon]}
    \norm{\psi_{2,\epsilon}}_{L^\infty(B_1\cup B_2)}
    \leq
    C_{\psi_2,1}\,.
\end{equation}
It therefore remains to obtain a uniform bound on~$B_3$.

To this end we apply the local maximum principle~\cite[Corollary 9.21]{GilbargTrudinger01}. Here and in each later application of that corollary, we write~$\lambda_*, \Lambda_*, \nu_*, \gamma_*, n_*, p_*$ and~$\Omega_*$ for the quantities denoted~$\lambda, \Lambda, \nu, \gamma, n, p$ and~$\Omega$ in~\cite[Chapter 9]{GilbargTrudinger01}, so as to keep them distinct from the notation of this paper. We take
\begin{gather}
    R=C_A\sqrt\epsilon\,,\quad
    \Omega_*=B(0,2C_A\sqrt\epsilon)\,,\quad
    \lambda_*=\Lambda_*=\epsilon\,,\quad
    \gamma_*=1\,,\quad
    n_*=p_*=2\,,\\
    \mathcal L=\mathcal L_\epsilon\,,\quad
    \nu_*=\norm{U}_{C^2(B(0,2))}/\epsilon\,,\quad
    u=\psi_{2,\epsilon}\,,\quad
    f=-\lambda_{2,\epsilon}\psi_{2,\epsilon}\,.
\end{gather}
This yields
\begin{equation}\label{e:B3-lmp}
    \norm{\psi_{2,\epsilon}}_{L^\infty(B_3)}
    \leq
    C\paren*{\epsilon^{-d/4}+\epsilon^{-1/2}\lambda_{2,\epsilon}}
    \norm{\psi_{2,\epsilon}}_{L^2(\Omega_*)}\,.
\end{equation}

It remains to estimate the~$L^2(\Omega_*)$ norm of~$\psi_{2, \epsilon}$. 
Using~\eqref{e:PEe-decompose}, we obtain
\begin{align}
    \norm{\psi_{2,\epsilon}}_{L^2(\Omega_*)}
    &\leq
    \norm{(I-P_{E_\epsilon})\psi_{2,\epsilon}}_{L^2(\Omega_*)}
    +
    \norm{P_{E_\epsilon}\psi_{2,\epsilon}}_{L^2(\Omega_*)}\\
    \label{e:Omega-L^2-decompose}
    & \leq 
    \norm{(I-P_{E_\epsilon})\psi_{2,\epsilon}}_{L^2(\Omega_*)}
    +
    C_m\abs{\Omega_*}^{1/2}\,.
\end{align}
Moreover, 
\begin{align*}
    \norm{(I-P_{E_\epsilon})\psi_{2,\epsilon}}_{L^2(\Omega_*)}^2
    &\leq
    Z_\epsilon
    \exp\paren*{\frac{1}{\epsilon}\max_{\Omega_*} U}
    \norm{(I-P_{E_\epsilon})\psi_{2,\epsilon}}_{L^2(\pi_\epsilon)}^2
    \\
    &\leq
    C_Z \epsilon^{d/2}
    \exp\paren*{\frac{U(0)-U(x_{\min,2})}{\epsilon}}
    \norm{(I-P_{E_\epsilon})\psi_{2,\epsilon}}_{L^2(\pi_\epsilon)}^2\\
    \refstepcounter{equation}\tag{\theequation}\label{e:orthogonal-estimate}
    &\overset{\eqref{e:gammaHatDef}}{=}
    C_Z \epsilon^{d/2}
    \exp\paren*{\frac{\hat \gamma}{\epsilon}}
    \norm{(I-P_{E_\epsilon})\psi_{2,\epsilon}}_{L^2(\pi_\epsilon)}^2
\end{align*}
where the second inequality uses the estimate for~$Z_\epsilon$ in~\eqref{e:Zepsilon-estimate}, together with the Taylor expansion
\begin{equation}
    U(x)
    \leq
    U(0)+2C_A^2\norm{U}_{C^2(B(0,1))}\epsilon\,,
    \qquad x\in\Omega_*.
\end{equation}
Choosing~$\gamma = 3\hat\gamma/4$ in Lemma~\ref{l:dEF-small} and combining~\eqref{e:eigen-residual-exp-small} with~\eqref{e:Omega-L^2-decompose} and~\eqref{e:orthogonal-estimate}, we obtain
\begin{equation}
    \norm{\psi_{2,\epsilon}}_{L^2(\Omega_*)}
    \leq
    C\epsilon^{d/4}
    \paren*{1+\exp\paren*{-\frac{\hat\gamma}{4\epsilon}}}.
\end{equation}
Substituting this estimate into~\eqref{e:B3-lmp}, together with~\eqref{e:second-ev-exp-small}, yields the existence of a constant~$C_{\psi_2,2}$ such that
\begin{equation}\label{e:ef-B3-bound}
    \sup_{\epsilon\in(\kappa,\tilde\epsilon]}
    \norm{\psi_{2,\epsilon}}_{L^\infty(B_3)}
    \leq
    C_{\psi_2,2}.
\end{equation}

Finally, by~\eqref{e:maximum-set-in-DU^2-sublevel}, \eqref{e:A-subset-tildeA}, and~\eqref{e:tildeA-subset-crtpts}, together with~\eqref{e:ef-B1B2-bound}, \eqref{e:ef-B3-bound}, and the non-negativity of~$U$, we obtain
\begin{equation}
    \norm{g_{2,\epsilon}}_{L^\infty(\R^d)}
    \leq
    \norm{g_{2,\epsilon}}_{L^\infty(B_1\cup B_2\cup B_3)}
    \leq
    \norm{\psi_{2,\epsilon}}_{L^\infty(B_1\cup B_2\cup B_3)}
    \leq
    C_{\psi_2,1}\vee C_{\psi_2,2}
    \defeq
    \bar C_{\psi_2}.
\end{equation}
This completes the proof of~\eqref{e:small-epsilon-i2}.
\medskip

\noindent\textbf{The case $i=2$, large~$\epsilon \in (\tilde\epsilon, 1]$.}
We prove the existence of a constant~$\bar C_{\psi_2}$ such that
\begin{equation}\label{e:big-epsilon-i2}
    \sup_{\epsilon \in (\tilde\epsilon, 1]} \norm{g_{2, \epsilon}}_{L^\infty(\R^d)} \leq \bar C_{\psi_2}\,,
\end{equation}
by bounding~$g_{2, \epsilon}$ on~$A_\mathrm{large}$.
By the definition~\eqref{e:maximum-set-in-DU^2-sublevel-big} of~$A_\mathrm{large}$ and the bound~\eqref{e:second-ev-exp-small}, which gives~$\lambda_{2,\epsilon}\leq\tilde C e^{-\hat\gamma/(4\epsilon)}\leq4\tilde C/\hat\gamma$ for~$\epsilon\leq1$, the set~$A_\mathrm{large}$ is contained in~$B(0,c)\cup\set{\abs{DU}^2\leq4\tilde C/(\nu\hat\gamma)}$, and this set is bounded by~\eqref{a:DU-blowup}. Hence there is an~$\epsilon$-independent constant~$R_\nu \geq 1$ such that
\begin{equation}\label{e:Abig-O(1)-ball-inclusion}
    A_\mathrm{large} \subset B(0,c) \cup \set*{\abs{DU}^2 \leq \frac{4\tilde C}{\nu\hat\gamma}} \subset B(0, R_{\nu})\,.
\end{equation}

For any~$x\in B(0, R_\nu)$, we apply the local maximum principle~\cite[Corollary 9.21]{GilbargTrudinger01} with 
\begin{gather}
    R=\epsilon\,,\quad 
    B = B(x, 2\epsilon)\,, \quad
    \Omega_*=B(0,3R_\nu)\,,\quad
    \lambda_*=\Lambda_*=\epsilon\,,\quad
    \gamma_*=1\,,\quad
    n_*=p_*=2\,,\\
    \mathcal L=\mathcal L_\epsilon\,,\quad
    \nu_*=\norm{U}_{C^1(\Omega_*)}^2/\epsilon^2\,,\quad
    u=\psi_{2,\epsilon}\,,\quad
    f=-\lambda_{2,\epsilon}\psi_{2,\epsilon}\,,
\end{gather}
and obtain
\begin{align}
    \abs*{\psi_{2,\epsilon}(x)}^2 &\leq C\paren*{\epsilon^{-d} + \lambda_{2, \epsilon}^2 } \norm{\psi_{2, \epsilon}}_{L^2(B)}^2\\
    &\overset{\eqref{e:second-ev-exp-small}}{\leq} C(\tilde \epsilon^{-d} + 1)\norm{\psi_{2, \epsilon}}_{L^2(B)}^2\\
    &\leq C Z_\epsilon\exp\paren*{ \frac{1}{\epsilon} \max_{\Omega_*} U }  (\tilde \epsilon^{-d} + 1) \int_{B}\psi_{2, \epsilon}^2 \pi_\epsilon(x) \, dx\\
    &\leq CZ_1 \exp\paren*{\frac{1}{\tilde \epsilon} \max_{\Omega_*} U}(\tilde \epsilon^{-d} + 1) = C(\tilde\epsilon, \nu)\,.
\end{align}
We chose arbitrary~$x\in B(0, R_\nu)$ so combining this with~\eqref{e:maximum-set-in-DU^2-sublevel-big} and~\eqref{e:Abig-O(1)-ball-inclusion} yields~\eqref{e:big-epsilon-i2}.

\noindent\textbf{The case $i\geq 3$, small~$\epsilon \in (\kappa, \tilde\epsilon]$.}
We prove the existence of constants~$C_d, \bar C_\nu > 0$ such that for any~$\epsilon \in (\kappa, \tilde \epsilon]$, 
\begin{equation}\label{e:small-epsilon-i3}
    \norm{g_{i, \epsilon}}_{L^\infty(\R^d)} \leq C_d\exp(\bar C_\nu \lambda_{i, \epsilon} / \epsilon)\,.
\end{equation}

As in the case of~$i = 2$ and small~$\epsilon$, we prove~\eqref{e:small-epsilon-i3} by bounding~$g_{i, \epsilon}$ on the set~$A_\mathrm{small}$.
First, for~$\epsilon\leq 1/(2C_\Delta)$, define
\begin{equation}\label{e:Rie-A1-A2-def}
    R_{i,\epsilon}\defeq 1+\frac{\lambda_{i,\epsilon}}{\nu},
    \qquad
    A_1\defeq\set{\abs{DU}^2\leq R_{i,\epsilon}},
    \qquad
    A_2\defeq\set{\abs{DU}^2\leq16R_{i,\epsilon}}.
\end{equation}
Then~\eqref{e:maximum-set-in-DU^2-sublevel} gives
\begin{equation}
    A_\mathrm{small}\subset A_1\subset A_2.
\end{equation}
We next show that, for every~$x\in A_\mathrm{small}$, a ball of radius~$\bar R_{i,\epsilon}$ around~$x$ is contained in~$A_2$, where~$\bar R_{i, \epsilon}$ shall be chosen soon.

To this end, we first record a consequence of~\eqref{a:DU-exp-growth-at-most}. For any~$x,y\in\R^d$,
\begin{equation}\label{e:DU-tan-growth}
    \tan^{-1}\paren*{\abs{DU(y)}}
    \leq
    \tan^{-1}\paren*{\abs{DU(x)}} + C_\Delta\abs{y-x}.
\end{equation}
Indeed, if~$v\in S^{d-1}$ and~$f(t)=\abs{DU(x+tv)}$, then
\begin{equation}
    \abs{f'(t)}
    \leq
    \abs{D^2U(x+tv)}
    \overset{\eqref{a:DU-exp-growth-at-most}}{\leq}
    C_\Delta\paren*{1+f(t)^2},
\end{equation}
and solving this differential inequality yields~\eqref{e:DU-tan-growth} with the choices~$v=(y-x)/\abs{y-x}$ and~$t=\abs{y-x}$.

Choose~$c'>0$ such that
\begin{equation}\label{e:c-def}
    \inf_{x\in[1,\infty)}
    x\paren*{\tan^{-1}(4x)-\tan^{-1}(x)}
    \geq c',
\end{equation}
and decrease~$\epsilon$ further if necessary so that~$\epsilon\leq c'/C_\Delta$. Define
\begin{equation}
    \bar R_{i,\epsilon}
    =
    \frac{\epsilon}{2\sqrt{R_{i,\epsilon}}}.
\end{equation}
Let~$x\in A_1$ and~$y\in B(x,2\bar R_{i,\epsilon})$. Then~\eqref{e:DU-tan-growth} gives
\begin{equation}
    \tan^{-1}\paren*{\abs{DU(y)}}
    \overset{\eqref{e:DU-tan-growth}}{\leq}
    \tan^{-1}\paren*{\sqrt{R_{i,\epsilon}}}
    +\frac{C_\Delta\epsilon}{\sqrt{R_{i,\epsilon}}}
    \overset{\eqref{e:c-def},\,R_{i,\epsilon}\geq1}{\leq}
    \tan^{-1}\paren*{4\sqrt{R_{i,\epsilon}}}.
\end{equation}
Therefore~$y\in A_2$, and hence
\begin{equation}
    B(x,2\bar R_{i,\epsilon})\subset A_2\,,
    \qquad\text{for every }x\in A_\mathrm{small}\,.
\end{equation}

Now we are ready to apply the local maximum principle. Fixing~$x\in A_\mathrm{small}$ and applying~\cite[Corollary 9.21]{GilbargTrudinger01} with
\begin{gather}
    R=\bar R_{i,\epsilon}\,,\quad B=B(x,2R)\,,\quad \Omega_*=A_2\,,\quad \lambda_*=\Lambda_*=\epsilon\,,\quad \gamma_*=1\,,\quad n_*=p_*=2\,,\\
    \mathcal L=\mathcal L_\epsilon\,,\quad \nu_*=16R_{i,\epsilon}/\epsilon^2\,,\quad
    u=\psi_{i,\epsilon}\,,\quad
    f=-\lambda_{i,\epsilon}\psi_{i,\epsilon}\,,
\end{gather}
we obtain, for some~$\epsilon$-independent constants~$C_d$ and~$\bar C_\nu(\nu,\Lambda)$ (in the final step we use~\eqref{e:third-O(1)-away} in the form~$1\leq\lambda_{i,\epsilon}/\Lambda$ to absorb the factors~$\epsilon^{-d}$, $R_{i,\epsilon}^{d/2}$ and~$\exp(16C_L/\epsilon)$ into~$\exp(\bar C_\nu\lambda_{i,\epsilon}/\epsilon)$),
\begin{align}
    \abs{\psi_{i,\epsilon}(x)}^2
    &\leq
    C_d\paren*{\frac{1}{\abs{B}}+\frac{\lambda_{i,\epsilon}^2}{4R_{i,\epsilon}}}
    \int_B\abs{\psi_{i,\epsilon}(y)}^2\,dy\\
    &\leq
    C_d Z_\epsilon
    \exp\paren*{\frac{1}{\epsilon}\max_B U}
    \paren*{\frac{1}{\abs{B}}+\frac{\nu\lambda_{i,\epsilon}}{4}}
    \int_B\abs{\psi_{i,\epsilon}(y)}^2\pi_\epsilon(y)\,dy\\
    &\overset{\eqref{a:U-lyapunov},\,B\subset\Omega_*,\,\eqref{e:Zepsilon-estimate}}{\leq}
    C_d\epsilon^{\frac d2}
    \exp\paren*{\frac{C_L}{\epsilon}\paren*{1+16R_{i,\epsilon}}}
    \paren*{\epsilon^{-d}R_{i,\epsilon}^{\frac d2}
    +\frac{\nu\lambda_{i,\epsilon}}{4}}\\
    &\overset{\eqref{e:third-O(1)-away}}{\leq}
    C_d\exp\paren*{\bar C_\nu\frac{\lambda_{i,\epsilon}}{\epsilon}}\,.
\end{align}

Since~$x\in A_\mathrm{small}$ was arbitrary, we conclude that
\begin{equation}
    \norm*{\psi_{i,\epsilon}}_{L^\infty(A_\mathrm{small})}
    \leq
    C_d\exp\paren*{\bar C_\nu\frac{\lambda_{i,\epsilon}}{\epsilon}}.
\end{equation}
Finally, because~$U\geq0$, we have~$\abs{g_{i,\epsilon}}\leq\abs{\psi_{i,\epsilon}}$, and therefore
\begin{equation}
    \norm*{g_{i,\epsilon}}_{L^\infty(\R^d)}
    \leq
    \norm*{g_{i,\epsilon}}_{L^\infty(A_\mathrm{small})}
    \leq
    \norm*{\psi_{i,\epsilon}}_{L^\infty(A_\mathrm{small})}
    \leq
    C_d\exp\paren*{\bar C_\nu\frac{\lambda_{i,\epsilon}}{\epsilon}}.
\end{equation}
This completes the proof for~\eqref{e:small-epsilon-i3}.

\noindent\textbf{The case $i\geq 3$, large~$\epsilon \in (\tilde \epsilon, 1]$.}
We prove the existence of constants~$C_d, \bar C_\nu > 0$ such that for any~$\epsilon \in (\tilde \epsilon, 1]$, 
\begin{equation}\label{e:big-epsilon-i3}
    \norm{g_{i, \epsilon}}_{L^\infty(\R^d)} \leq C_d\exp(\bar C_\nu \lambda_{i, \epsilon} / \epsilon)\,.
\end{equation}

As in the case of~$i = 2$ and large~$\epsilon$, we prove~\eqref{e:big-epsilon-i3} by bounding~$g_{i, \epsilon}$ on the set~$A_\mathrm{large}$. In particular, we bound~$g_{i, \epsilon}$ on~$B(0, c)$ and~$A_0\defeq \set*{\abs{DU}^2 \leq \lambda_{i, \epsilon} / \nu}$, separately.

For any~$x\in B(0, c)$, we apply the local maximum principle~\cite[Corollary 9.21]{GilbargTrudinger01} with 
\begin{gather}
    R=\epsilon\,,\quad 
    B = B(x, 2\epsilon)\,, \quad
    \Omega_*=B(0, 3c)\,,\quad
    \lambda_*=\Lambda_*=\epsilon\,,\quad
    \gamma_*=1\,,\quad
    n_*=p_*=2\,,\\
    \mathcal L=\mathcal L_\epsilon\,,\quad
    \nu_*=\norm{U}_{C^1(\Omega_*)}^2/\epsilon^2\,,\quad
    u=\psi_{i,\epsilon}\,,\quad
    f=-\lambda_{i,\epsilon}\psi_{i,\epsilon}\,,
\end{gather}
and obtain
\begin{align}
    \abs*{\psi_{i,\epsilon}(x)}^2 &\leq C\paren*{\epsilon^{-d} + \lambda_{i, \epsilon}^2 } \norm{\psi_{i, \epsilon}}_{L^2(B)}^2\\
    &\leq C(\tilde \epsilon^{-d} + \lambda_{i, \epsilon}^2)\norm{\psi_{i, \epsilon}}_{L^2(B)}^2\\
    &\leq C Z_\epsilon\exp\paren*{ \frac{1}{\epsilon} \max_{\Omega_*} U }  (\tilde \epsilon^{-d} + \lambda_{i, \epsilon}^2) \int_{B}\psi_{i, \epsilon}^2 \pi_\epsilon(x) \, dx\\
    &\leq CZ_1 \exp\paren*{\frac{1}{\tilde \epsilon} \max_{\Omega_*} U}(\tilde \epsilon^{-d} + \lambda_{i, \epsilon}^2) \\
    \label{e:psi-ie-bound-on-O(1)-ball}
    &\overset{\eqref{e:third-O(1)-away}}{\leq} C(\tilde\epsilon, \Lambda)\lambda_{i, \epsilon}^2\,.
\end{align}

Now we bound~$g_{i, \epsilon}$ on~$A_0$. Define~$R_{i, \epsilon}, A_1, A_2$ as in~\eqref{e:Rie-A1-A2-def} and we see that
\begin{equation}
    A_0 \subset A_1 \subset A_2\,.
\end{equation}
We first note that for any~$x\in A_0$, it holds that $B\paren*{x, 2\tilde R_{i, \epsilon}} \subset A_2$, where
\begin{equation}
    \tilde R_{i, \epsilon} = \frac{\tilde\epsilon}{2\sqrt{R_{i, \epsilon}}}\,.
\end{equation} 
Indeed, after decreasing~$\tilde \epsilon$ if necessary so that~$\tilde \epsilon \leq  c'/C_\Delta$ where~$c'$ is defined as in~\eqref{e:c-def}, we observe that for any~$x\in A_1$ and~$y \in B(x, 2\tilde R_{i, \epsilon})$,  
\begin{equation}
    \tan^{-1}\paren*{\abs{DU(y)}}
    \overset{\eqref{e:DU-tan-growth}}{\leq}
    \tan^{-1}\paren*{\sqrt{R_{i,\epsilon}}}
    +\frac{C_\Delta\tilde\epsilon}{\sqrt{R_{i,\epsilon}}}
    \overset{\eqref{e:c-def},\,R_{i,\epsilon}\geq1}{\leq}
    \tan^{-1}\paren*{4\sqrt{R_{i,\epsilon}}}\,,
\end{equation}
and hence, $y\in A_2$. Moreover, we also note that on~$A_2$, 
\begin{equation}
    \frac{\abs{DU}^2}{\epsilon^2} \leq \frac{16R_{i, \epsilon}}{\epsilon^2} \leq \frac{16R_{i, \epsilon}}{\tilde\epsilon^2}\,.
\end{equation}
Now we are ready to apply the local maximum principle.  Fixing~$x\in A_0$ and applying~\cite[Corollary 9.21]{GilbargTrudinger01} with
\begin{gather}
    R=\tilde R_{i,\epsilon}\,,\quad B=B(x,2R)\,,\quad \Omega_*=A_2\,,\quad \lambda_*=\Lambda_*=\epsilon\,,\quad \gamma_*=1\,,\quad n_*=p_*=2\,,\\
    \mathcal L=\mathcal L_\epsilon\,,\quad \nu_*=16R_{i,\epsilon}/\tilde\epsilon^2\,,\quad
    u=\psi_{i,\epsilon}\,,\quad
    f=-\lambda_{i,\epsilon}\psi_{i,\epsilon}\,,
\end{gather}
we obtain, for some~$\epsilon$-independent constants~$C_d$ and~$\bar C_\nu(\nu,\Lambda, \tilde\epsilon)$,
\begin{align}
    \abs{\psi_{i,\epsilon}(x)}^2
    &\leq
    C_d\paren*{\frac{1}{\abs{B}}+\frac{\lambda_{i,\epsilon}^2}{4R_{i,\epsilon}} \paren*{\frac{\tilde\epsilon}{\epsilon}}^2}
    \int_B\abs{\psi_{i,\epsilon}(y)}^2\,dy\\
    &\overset{\tilde\epsilon \leq \epsilon}{\leq}
    C_d Z_\epsilon
    \exp\paren*{\frac{1}{\epsilon}\max_B U}
    \paren*{\frac{1}{\abs{B}}+\frac{\nu\lambda_{i,\epsilon}}{4}}
    \int_B\abs{\psi_{i,\epsilon}(y)}^2\pi_\epsilon(y)\,dy\\
    &\overset{\eqref{a:U-lyapunov},\,B\subset\Omega_*}{\leq}
    C_d Z_1
    \exp\paren*{\frac{C_L}{\epsilon}\paren*{1+16R_{i,\epsilon}}}
    \paren*{\tilde\epsilon^{-d}R_{i,\epsilon}^{\frac d 2}
    +\frac{\nu\lambda_{i,\epsilon}}{4}}\\
    \label{e:psi-ie-bound-on-sublevel}
    &\overset{\eqref{e:third-O(1)-away}}{\leq}
    C_d\exp\paren*{\bar C_\nu\frac{\lambda_{i,\epsilon}}{\epsilon}}.
\end{align}
Combining~\eqref{e:maximum-set-in-DU^2-sublevel-big}, \eqref{e:psi-ie-bound-on-O(1)-ball}, and~\eqref{e:psi-ie-bound-on-sublevel} yields~\eqref{e:big-epsilon-i3}, which completes the proof.

\end{proof}

The second part of Lemma~\ref{l:lyapunov-weighted-Linftybds-and-their-sums} follows from Lemma~\ref{l:eigenfunctions-weighted-Linfty-bdd} once we count the eigenvalues of~$\mathcal L_\epsilon$. Since these eigenvalues are, up to the factor~$\epsilon$, those of the Schr\"odinger operator obtained by the ground-state transformation, we bound their number by the Cwikel--Lieb--Rozenblum inequality when~$d\geq3$ and by the Lieb--Thirring inequality when~$d=1,2$.

\begin{lemma}\label{l:ef-weighted-sum-Linfty-bds}
Make the Assumptions~\ref{a:criticalpts}--\ref{a:U-growth-control}. Fix any~$\nu \in (0, 1/8)$ and define~$V$ and~$\norm{\cdot}_V$ as in~\eqref{e:V-normV-def}. Then, for any~$\Lambda_1,T_0>0$, there exists a constant~$\tilde C_\psi$ such that the following holds. Let~$\epsilon\in (\kappa, 1]$, and suppose that for some~$m\in\N$,
\begin{equation}\label{e:lambda-large-from-mth}
    \lambda_{i,\epsilon}\geq\Lambda_1\,,\qquad \forall i\geq m.
\end{equation}
Then, for every~$T$ satisfying
\begin{equation}\label{e:T-large-for-sum-bound}
    T\geq \frac{2\bar C_\nu}{\epsilon}+T_0,
\end{equation}
we have
\begin{equation}\label{e:S-general-bound}
    \sum_{i=m}^\infty
    \exp\paren*{-\lambda_{i,\epsilon}T}
    \norm{\psi_{i,\epsilon}}_V^2
    \leq
    \tilde C_\psi
    \epsilon^{-1-\frac d2}
    \exp\paren*{-\Lambda_1\paren*{T-\frac{2\bar C_\nu}{\epsilon}}}\,.
\end{equation}
\end{lemma}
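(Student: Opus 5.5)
The plan is to insert the pointwise bound of Lemma~\ref{l:eigenfunctions-weighted-Linfty-bdd} term by term and then control the resulting series by counting eigenvalues. For $i\geq 3$, Lemma~\ref{l:eigenfunctions-weighted-Linfty-bdd} gives $\norm{\psi_{i,\epsilon}}_V^2\leq C_d^2\exp(2\bar C_\nu\lambda_{i,\epsilon}/\epsilon)$. If $m\leq 2$, the finitely many terms with $i<3$ are bounded directly by $\bar C_{\psi_2}^2$ and $\norm{1}_V^2\leq 1$, and they are absorbed using $\lambda_{i,\epsilon}\geq\Lambda_1$. This gives
\begin{equation*}
  \sum_{i\geq m}e^{-\lambda_{i,\epsilon}T}\norm{\psi_{i,\epsilon}}_V^2
  \leq C\sum_{i\geq m}\exp\paren*{-\lambda_{i,\epsilon}\paren*{T-\tfrac{2\bar C_\nu}{\epsilon}}}
  \leq C e^{-\Lambda_1 T'}\sum_{i\geq m}e^{-(\lambda_{i,\epsilon}-\Lambda_1)T'}\,,
\end{equation*}
where $T'\defeq T-2\bar C_\nu/\epsilon\geq T_0$. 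Since $\lambda_{i,\epsilon}\geq\Lambda_1$ for $i\geq m$, the remaining sum is at most
\begin{equation*}
  \sum_{i}e^{-(\lambda_{i,\epsilon}-\Lambda_1)_+T_0}
  \leq e^{\Lambda_1T_0}\sum_i e^{-\lambda_{i,\epsilon}T_0}\,.
\end{equation*}
The whole problem is therefore reduced to showing that the trace $\sum_i e^{-\lambda_{i,\epsilon}T_0}$ is bounded by $C(T_0)\epsilon^{-1-d/2}$, uniformly in $\epsilon\in(\kappa,1]$.

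To bound this trace, I will write it as a Stieltjes integral against the counting function, $\int_0^\infty T_0e^{-\lambda T_0}N_\epsilon(\lambda)\,d\lambda$, where $N_\epsilon(\lambda)=\#\set{i\st\lambda_{i,\epsilon}\leq\lambda}$. By the ground-state transformation in Lemma~\ref{l:weighted-eigen-vanish-infty}, $N_\epsilon(\lambda)$ equals the number of eigenvalues of $\mathcal H_\epsilon=-\Delta+V_\epsilon$ that are at most $\lambda/\epsilon$. This in turn is the number of negative eigenvalues of $-\Delta+(V_\epsilon-\lambda/\epsilon-1)$, the extra $-1$ being a harmless shift. When $d\geq3$, the Cwikel--Lieb--Rozenblum inequality bounds this count by $C_d\int(V_\epsilon-\lambda/\epsilon-1)_-^{d/2}\,dx$. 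When $d=1,2$, I will instead compare with the Lieb--Thirring bound on $\sum_j(\mu-\mu_j)_+^{\gamma}$ taken at a slightly higher level $\mu=2\lambda/\epsilon+1$. Eigenvalues below $\lambda/\epsilon$ each contribute at least $(\lambda/\epsilon+1)^\gamma$, so dividing gives a count bounded by an integral of the same phase-space type.

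It remains to estimate the phase-space integral. On the set where it is nonzero, $V_\epsilon\leq \lambda/\epsilon+1$. Using~\eqref{a:DU-exp-growth-at-most}, $\abs{\Delta U}\leq C_\Delta(1+\abs{DU}^2)$, so
\begin{equation*}
  V_\epsilon\geq \frac{\abs{DU}^2}{4\epsilon^2}-\frac{C_\Delta}{2\epsilon}\paren*{1+\abs{DU}^2}\,.
\end{equation*}
For small $\epsilon$ this confines the relevant region to $\set{\abs{DU}^2\leq C(\epsilon+\lambda\epsilon)}$, and there $(V_\epsilon-\lambda/\epsilon-1)_-\leq C(1+\lambda)/\epsilon$. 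For large $\epsilon$ I will use~\eqref{a:DU^2-dominates-D^2U} instead, as in the proof of Lemma~\ref{l:eigenfunctions-weighted-Linfty-bdd}. The volume of this sublevel set is polynomial in $1+\lambda$ by~\eqref{a:DU-blowup} together with~\eqref{a:U-lyapunov} and~\eqref{e:U-coercive}. Crudely, the set lies in $\set{U\leq C_L(1+C(1+\lambda))}$, whose radius is at most linear in $\lambda$. This yields $N_\epsilon(\lambda)\leq C(1+\lambda)^{p}\epsilon^{-d/2}$ for some fixed $p$, and integrating against $T_0e^{-\lambda T_0}$ gives $C(T_0)\epsilon^{-d/2}$. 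The stated exponent $\epsilon^{-1-d/2}$ leaves an additional factor $\epsilon^{-1}$ of slack, which is available to cover the $d=1,2$ shift and the threshold $\tilde\epsilon$.

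The main obstacle is the uniform Weyl-type count near the critical points. There $V_\epsilon$ can be as negative as $-C/\epsilon$, because of the term $-\Delta U/(2\epsilon)$. I must check that the negative part of the potential contributes only $\epsilon^{-d/2}$ times a power of $1+\lambda$, rather than something worse. My first attempt is to take the Taylor bound $\abs{DU}\gtrsim\dist(x,\text{crit})$ near the nondegenerate critical points, which shows that the set $\set{\abs{DU}^2\leq C\epsilon(1+\lambda)}$ has volume $\lesssim(\epsilon(1+\lambda))^{d/2}$ near those points. Multiplying by $((1+\lambda)/\epsilon)^{d/2}$ gives an $O((1+\lambda)^d)$ contribution there. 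Away from the critical points, the polynomial-in-$\lambda$ volume bound above handles the rest.
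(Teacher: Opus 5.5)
Your proposal is correct and follows essentially the paper's route. You insert the bounds of Lemma~\ref{l:eigenfunctions-weighted-Linfty-bdd}, reduce to the heat trace $\int e^{-\lambda T_0}\,N_\epsilon(d\lambda)$, and bound $N_\epsilon$ through the ground-state transformation, with Cwikel--Lieb--Rozenblum for $d\geq 3$ and Lieb--Thirring for $d=1,2$ (the latter losing a factor $\epsilon^{-1}$), using that the relevant set lies in a ball of radius $O(1+\lambda)$ on which the negative part of the shifted potential is $O((1+\lambda)/\epsilon)$. Your closing ``obstacle'' near the critical points is already settled by that crude estimate, which is all the paper uses, so the Taylor refinement there and the small/large-$\epsilon$ split are optional.
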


\begin{proof}
    Let~$\nu, \Lambda_1, T_0, \epsilon, m, T$ be as in the statement of the lemma. 
    By Lemma~\ref{l:eigenfunctions-weighted-Linfty-bdd} and~\eqref{e:T-large-for-sum-bound}, we have
    \begin{align}
        \sum_{i=m}^\infty \exp\paren*{-\lambda_{i, \epsilon}T} \norm{\psi_{i, \epsilon}}_V^2 &\leq \sum_{i \colon \lambda_{i, \epsilon} \geq \Lambda_1} \exp\paren*{-\lambda_{i, \epsilon}T} \norm{\psi_{i, \epsilon}}_V^2 \\
        &\overset{\eqref{e:second-ef-weightedLinfty}, \eqref{e:higher-ef-weightedLinfty}}{\leq} \max\set{\bar C_{\psi_2}, C_d}^2\sum_{i \colon \lambda_{i, \epsilon} \geq \Lambda_1} \exp\paren*{\lambda_{i, \epsilon}\paren*{\frac{2\bar C_\nu}{\epsilon} - T   }} \\
        \label{e:weighted-ef-sum-to-weyl}
        &\overset{\eqref{e:T-large-for-sum-bound}}{\leq} \max\set{\bar C_{\psi_2}, C_d}^2\exp\paren*{-\Lambda_1 \paren*{T-\frac{2\bar C_\nu}{\epsilon} - T_0 } }\\
        &\qquad\qquad\qquad\qquad\qquad \times \int_{\Lambda_1}^\infty \exp(-\lambda T_0) N_\epsilon(d\lambda)\,,
    \end{align}
    where~$N_\epsilon(d\lambda)$ denotes the measure induced by the eigenvalue counting function
    \begin{equation}
        N_\epsilon(\lambda)
        \defeq
        \#\set*{i\geq1 \colon \lambda_{i,\epsilon}\leq\lambda}\,.
    \end{equation}
    Moreover, if for any~$t\in (0,1]$, we define~$\lambda_t > 0$ as the constant such that
    \begin{equation}\label{e:lambdat-def}
        \set*{\lambda \geq 0 \st \exp(-\lambda T_0) \geq t} = [0, \lambda_t] \,,
    \end{equation}
    then
    \begin{align}\label{e:weyl-int-ub}
        \int_{\Lambda_1}^\infty \exp(-\lambda T_0) N_\epsilon(d\lambda) = \int_0^1 N_\epsilon \paren*{[\Lambda_1, \lambda_t]} dt \leq \int_0^1  \one_{\set*{\lambda_t \geq \Lambda_1}} N_\epsilon(\lambda_t)dt \,.
    \end{align}
    Now we estimate~$N_\epsilon(\lambda)$ for any~$\lambda \geq \Lambda_1$.
    If we define~$N(\lambda, \mathcal L)$ as the Weyl counting measure of a general self-adjoint operator~$\mathcal L$ 
    \begin{equation}
        N(\lambda, \mathcal L) = \#\set*{i \geq 1: \lambda_i (\mathcal L) \leq \lambda} = \sum_{\lambda_i(\mathcal L) \leq \lambda} \one\,,
    \end{equation}
    then the property~\eqref{e:schrodinger-def} implies that for any~$\lambda \in \R$,
    \begin{equation}\label{e:weyl-translations}
        N_\epsilon (\lambda) = N(\lambda, \mathcal L_\epsilon) = N(\lambda / \epsilon, \mathcal H_\epsilon) = N (0, \mathcal H_\epsilon - (\lambda / \epsilon ) I ) = N(0, -\Delta - \bar V_{\epsilon, \lambda})\,,
    \end{equation}
    where
    \begin{equation}
        \bar V_{\epsilon,\lambda} \defeq \lambda  / \epsilon -V_\epsilon\,,
    \end{equation}
    and~$\mathcal H_\epsilon, V_\epsilon$ are defined as in~\eqref{e:schrodinger-def}.
    Applying the Cwikel--Lieb--Rozenblum inequality~\cite[Theorem 4.31]{FrankLaptevEA23} yields that for any~$d \geq 3$, there exists a constant~$L_{0, d}$ such that for any~$\lambda \geq 0$,
    \begin{equation}\label{e:negative-eigenvalue-bound}
        N(0, -\Delta - \bar V_{\epsilon, \lambda})  \leq L_{0, d} \int_{\R^d}  \paren*{\bar V_{\epsilon, \lambda}}_+^\frac{d}{2}dx \,.
    \end{equation}
    Now, we estimate the integration on the right hand side of the inequality, for any given~$\lambda \geq \Lambda_1$.
    We first note that 
    \begin{equation}\label{e:Vbar-positive-set-simplify}
        \set*{\bar V_{\epsilon, \lambda} \geq 0}=\set*{\epsilon V_\epsilon \leq \lambda} = \set*{\frac{1}{4\epsilon}\abs{DU}^2 - \frac{1}{2}\Delta U \leq \lambda } \,.
    \end{equation}
    Then, by the assumption~\eqref{a:DU^2-dominates-D^2U}, there exists~$c>0$ such that
    \begin{equation}
        \abs{\Delta U} \leq \frac{1}{4}\abs{DU}^2 \quad\text{on}\quad \set*{\abs{x} > c}\,,
    \end{equation}
    and combining this with~$\epsilon \leq 1$, $\lambda \geq \Lambda_1$, and~\eqref{a:U-lyapunov} yields
    \begin{align}
        \set*{\frac{1}{4\epsilon}\abs{DU}^2 - \frac{1}{2}\Delta U \leq \lambda } &\subset B(0,c) \cup \set*{\frac{1}{8}\abs{DU}^2 \leq \lambda}  \\
        &\subset \set*{\abs{DU}^2 \leq \max\set*{8\lambda, \norm{U}_{C^1(B(0,c))}^2}}\\
        \label{e:barV-positive-in-DU-growth}
        &\overset{\lambda \geq \Lambda_1}{\subset} \set*{\abs{DU}^2 \leq C(\Lambda_1) \lambda}\\
        \label{e:confining-U-set}
        &\overset{\eqref{a:U-lyapunov}, \lambda \geq \Lambda_1}{\subset} \set*{ U \leq C(\Lambda_1)\lambda}\,,
    \end{align}
    where~$C(\Lambda_1)$ is a constant depending on~$\Lambda_1$ that might vary line by line. 
    Combining the coercivity estimate~\eqref{e:U-coercive} with~\eqref{e:Vbar-positive-set-simplify} and~\eqref{e:confining-U-set} yields that for any~$\lambda \geq \Lambda_1$,
    \begin{equation}\label{e:confining-U-in-a-ball}
        \set*{\bar V_{\epsilon, \lambda} \geq 0 } \subset \set*{ \abs{x} \leq C(\Lambda_1) \lambda}\,.
    \end{equation}
    Moreover, on the set~$\set*{\bar V_{\epsilon, \lambda} \geq 0}$, using~\eqref{a:DU-exp-growth-at-most} and~$\lambda \geq \Lambda_1$ yields
    \begin{align}
        \paren*{\bar V_{\epsilon, \lambda}}_+ = \bar V_{\epsilon, \lambda} = \frac{2\lambda + \Delta U}{2\epsilon} - \frac{1}{4\epsilon^2}\abs{DU}^2 &\overset{\eqref{a:DU-exp-growth-at-most}}{\leq} \frac{2\lambda + C_\Delta\paren*{1+\abs{DU}^2}}{2\epsilon} \\
        \label{e:barV-ub-on-positive-set}
        &\overset{\eqref{e:barV-positive-in-DU-growth}}{\leq} C(\Lambda_1) \frac{\lambda}{\epsilon}\,.
    \end{align}
    Combining this with~\eqref{e:weyl-translations}, \eqref{e:negative-eigenvalue-bound}, and~\eqref{e:confining-U-in-a-ball}, we obtain that for any~$\lambda \geq \Lambda_1$,
    \begin{equation}\label{e:weyl-higher-d-ub}
        N_\epsilon(\lambda) \leq C(\Lambda_1) L_{0, d} \epsilon^{-\frac{d}{2}} \lambda^\frac{3d}{2}\,.
    \end{equation}
    For~$d=1, 2$, we apply the Lieb--Thirring inequality~\cite[Theorem 4.38]{FrankLaptevEA23} to deduce that there exists a constant~$L_{1, d}$ such that for any~$\lambda \geq 0$,
    \begin{align}\label{e:Tr-bound-LT}
        \tr(-\Delta - \bar V_{\epsilon, \lambda})_- \leq L_{1, d} \int_{\R^d} \paren*{\bar V_{\epsilon, \lambda}}_+^{1+\frac{d}{2}} dx \,.
    \end{align}
    Here we use the notation 
    \begin{equation}
        \tr(A)_- = \sum_{j} \paren*{\mu_j}_- \,,
    \end{equation}
    where~$A$ is a self-adjoint, lower semibounded operator, $(\mu_j)$ denotes its negative eigenvalues, and~$(x)_- \defeq \max\set{-x,0}$.
    Applying~\cite[Lemma 1.39]{FrankLaptevEA23}, we obtain
    \begin{align}
        \tr(-\Delta - \bar V_{\epsilon, \lambda})_- &= \int_0^\infty N(-\tau, -\Delta - \bar V_{\epsilon, \lambda}) d \tau \geq \int_{\epsilon^{-1}-1}^{\epsilon^{-1}} N(-\tau, -\Delta - \bar V_{\epsilon, \lambda}) d \tau \\
        \label{e:Tr-weyl-relation}
        &\geq N(-\epsilon^{-1}, -\Delta - \bar V_{\epsilon, \lambda}) = N(0, -\Delta - \bar V_{\epsilon, \lambda - 1}) \overset{\eqref{e:weyl-translations}}{=} N_\epsilon(\lambda-1)\,.
    \end{align}
    Combining~\eqref{e:confining-U-in-a-ball}, \eqref{e:barV-ub-on-positive-set}, \eqref{e:Tr-bound-LT}, and~\eqref{e:Tr-weyl-relation} yields that for any~$\lambda \geq \Lambda_1$, 
    \begin{equation}
        N_\epsilon(\lambda-1) \leq C(\Lambda_1) L_{1,d}\epsilon^{-1-\frac{d}{2}}\lambda^{1+\frac{3d}{2}}\,,
    \end{equation}
    and hence
    \begin{equation}\label{e:weyl-lower-d-ub}
        N_{\epsilon}(\lambda) \leq C(\Lambda_1) L_{1,d}\epsilon^{-1-\frac{d}{2}}\paren*{1+\lambda}^{1+\frac{3d}{2}} \leq \tilde L_{1,d}(\Lambda_1) \epsilon^{-1-\frac{d}{2}}\lambda^{1+\frac{3d}{2}}\,.
    \end{equation}
    Combining~\eqref{e:weyl-higher-d-ub} and~\eqref{e:weyl-lower-d-ub}, we obtain that for any~$d \geq 1$, there exists a constant~$C(\Lambda_1, d)$ such that for any~$\lambda \geq \Lambda_1$,
    \begin{equation}
        N_\epsilon(\lambda) \leq C\epsilon^{-1-\frac{d}{2}}\lambda^{1+\frac{3d}{2}}\leq C\epsilon^{-1-\frac{d}{2}}\int_0^\infty \one_{\set*{\bar\lambda \leq \lambda}} \bar\lambda^\frac{3d}{2}    d\bar\lambda\,,
    \end{equation}
    and hence, combining this with~\eqref{e:weyl-int-ub}, exchanging the order of integration by Tonelli's theorem, and using the identity~$\abs{\set{t\in(0,1) \colon \lambda_t \geq \bar\lambda}} = \exp(-\bar\lambda T_0)$, we obtain
    \begin{equation}
        \int_{\Lambda_1}^\infty \exp(-\lambda T_0) N_\epsilon(d\lambda) \overset{\eqref{e:lambdat-def}}{\leq} C\epsilon^{-1-\frac{d}{2}}\int_0^\infty \bar\lambda^\frac{3d}{2}   \exp(-\bar\lambda T_0) d\bar\lambda = C(\Lambda_1, d, T_0)\epsilon^{-1-\frac{d}{2}}\,.
    \end{equation}
    Combining this with~\eqref{e:weighted-ef-sum-to-weyl} immediately proves~\eqref{e:S-general-bound}, which completes the proof.
\end{proof}

\section{Bias and variance bounds (Proof of Lemmas~\ref{l:bias-bound} and~\ref{l:squared-weight-bound})}\label{s:bias-variance}

In this section, we prove the bias bound, Lemma~\ref{l:bias-bound}, in Section~\ref{ss:bias} and the variance bound, Lemma~\ref{l:squared-weight-bound}, in Section~\ref{ss:variance}. The overall structure of both proofs is motivated by~\cite{AgarwalIyerEA26}, where Langevin AIS is analyzed on~$\T^d$. On~$\R^d$ the eigenfunctions are unbounded, and both proofs instead rest on Harris-type estimates in weighted norms. For a Lyapunov function~$W$, we use the spectral decomposition to bound the Langevin semigroup from~$L^1(W\pi_\epsilon)$ to the space normed by~$\norm{\cdot}_W$, defined as in~\eqref{e:V-normV-def}, treating the slowly decaying mode~$\psi_{2,\epsilon}$ separately when the temperature is low. For the bias, we take~$W=V$ and apply this bound at the first level (see~\eqref{e:first-level-mixing}). We must then estimate the~$L^1(V\pi_{\epsilon_1})$ norm of the function obtained by transporting the test function through the levels of the algorithm. To this end, we establish in Lemma~\ref{l:V-lyapunov} a Lyapunov drift condition for~$V$, uniformly in the temperature, and use it to pass from each level to the next (see~\eqref{e:r1f2-to-r2f3}). For the variance, we take the Lyapunov function~$W=\tilde r_k^{-2}$, which does not depend on~$k$ because the inverse temperatures are linearly spaced. Since~$\tilde r_k^2$ has norm one in~$L^1(\tilde r_k^{-2}\pi_{\epsilon_k})$, this gives the bound on~$P_k^T\tilde r_k^2$ in Lemma~\ref{l:Theta1k-Theta2k-bounds}, in which the error terms are multiplied by~$\tilde r_k^{-2}(x)$. Since~$\tilde r_{k-1}=\tilde r_k$, this factor evaluated at~$X_{k-1}$ cancels the squared incremental weight~$\tilde r_{k-1}^2(X_{k-1})$ of the previous level, and the second moments of~$\tilde w_k$ satisfy the closed recurrence~\eqref{e:second-recrurrence}.

From now on a temperature schedule~$(\epsilon_k)_k$ is fixed, and we abbreviate
\begin{equation}\label{e:index-abbrev}
  Z_k \defeq Z_{\epsilon_k}\,,\quad
  \pi_k \defeq \pi_{\epsilon_k}\,,\quad
  P_k^t \defeq P_{\epsilon_k, t}\,,\quad
  \psi_{i,k} \defeq \psi_{i,\epsilon_k}\,,\quad
  \lambda_{i,k} \defeq \lambda_{i,\epsilon_k}\,.
\end{equation}
We also write
\begin{equation}\label{e:rk-def}
  \tilde r_k \defeq \frac{\tilde\pi_{\epsilon_{k+1}}}{\tilde\pi_{\epsilon_k}}
  = \exp\paren*{-\paren*{\frac{1}{\epsilon_{k+1}}-\frac{1}{\epsilon_k}}U}
\end{equation}
for the incremental weight appearing in Algorithm~\ref{a:aisLMC}, so that~$\tilde w_K = \prod_{k=1}^K \tilde r_k(X_k)$.

\subsection{Proof of Lemma~\ref{l:bias-bound}}\label{ss:bias}

We first establish a Lyapunov drift condition for~$V$, which holds uniformly in the temperature.

\begin{lemma}[Lyapunov drift]\label{l:V-lyapunov}
    Make Assumption~\ref{a:U-growth-control}. Then, for all~$\nu \in (0,1/4)$, there exists a constant~$M(\nu) > 0$ such that for all~$\epsilon \in (0,1]$ and~$T \geq 0$, 
    \begin{equation}\label{e:V-lyapunov}
        P_\epsilon^T V \leq \exp\paren*{-3 \nu T} V + M(\nu)\,,
    \end{equation}
    where~$V$ is defined as in~\eqref{e:V-normV-def}.
\end{lemma}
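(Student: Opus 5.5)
We establish a pointwise drift inequality $\mathcal L_\epsilon V \leq -3\nu V + 3\nu M$ on all of $\R^d$, uniformly in $\epsilon\in(0,1]$, and then integrate it along the semigroup. Since $V = e^{2\nu U}$, a direct computation gives
\begin{equation}
  \mathcal L_\epsilon V = V\paren*{-2\nu\abs{DU}^2 + 2\epsilon\nu \Delta U + 4\epsilon\nu^2\abs{DU}^2}
  = 2\nu V\paren*{-(1-2\epsilon\nu)\abs{DU}^2 + \epsilon\Delta U}\,.
\end{equation}
Because $\epsilon\leq1$ and $\nu<1/4$, we have $1-2\epsilon\nu\geq 1/2$. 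By~\eqref{a:DU^2-dominates-D^2U} there is $c\geq1$ such that $\abs{\Delta U}\leq d\abs{D^2U}\leq \frac14\abs{DU}^2$ for $\abs{x}>c$. On that region, using $\epsilon\leq 1$,
\begin{equation}
  \mathcal L_\epsilon V \leq 2\nu V\paren*{-\tfrac12 + \tfrac14}\abs{DU}^2 = -\tfrac{\nu}{2}\abs{DU}^2 V\,.
\end{equation}
By~\eqref{a:DU-blowup}, after enlarging $c$ (depending only on $U$) we may assume $\abs{DU}^2\geq 6$ for $\abs{x}>c$, which gives $\mathcal L_\epsilon V\leq -3\nu V$ there. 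On the compact ball $B(0,c)$, the quantity $\mathcal L_\epsilon V + 3\nu V$ is bounded above by $2\nu e^{2\nu\max_{B(0,c)}U}\paren*{\norm{\Delta U}_{L^\infty(B(0,c))}+3/2}$. This bound is uniform in $\epsilon\in(0,1]$ because the negative gradient term can simply be dropped. Call this bound $3\nu M(\nu)$. Hence $\mathcal L_\epsilon V\leq -3\nu V + 3\nu M$ on $\R^d$.

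To pass to the semigroup, set $u(t) = P_\epsilon^t V(x)$. Formally $\partial_t u = P^t_\epsilon\mathcal L_\epsilon V \leq -3\nu u + 3\nu M$, and Gr\"onwall gives $u(t)\leq e^{-3\nu t}V(x) + M(1-e^{-3\nu t})\leq e^{-3\nu t}V + M$, which is~\eqref{e:V-lyapunov}.

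The only technical point is justifying this step, since $V$ is unbounded and $P^t_\epsilon V$ need not be a priori finite. I would handle this with a standard localization. Let $\tau_R$ be the exit time of $X^\epsilon$ from $B(0,R)$. Apply It\^o's formula to $e^{3\nu (t\wedge\tau_R)}V(X_{t\wedge\tau_R})$. The stopped local martingale is a true martingale because $V$ and $DV$ are bounded on the ball, so
\begin{equation}
  \E_x\brak*{e^{3\nu(t\wedge\tau_R)}V(X_{t\wedge\tau_R})}\leq V(x) + M\paren*{e^{3\nu t}-1}\,.
\end{equation}
Non-explosion follows from the same drift bound together with $V\to\infty$. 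Letting $R\to\infty$ with Fatou's lemma then yields the bound for $\E_x[V(X_t)]$. I expect this localization step to be the main obstacle, though it is routine. The drift computation itself is straightforward once the uniform choice of $c$ is made.
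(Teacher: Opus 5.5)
Your proof is correct and follows the paper's argument: the same computation of $\mathcal L_\epsilon V$, the same split into $\{\abs{x}>c\}$ (where $\abs{\Delta U}$ is dominated by $\abs{DU}^2$ and $\abs{DU}$ is large) and the ball $B(0,c)$, and then Dynkin's formula plus Gr\"onwall. The differences are cosmetic: you bound $V$ on the ball by continuity instead of via~\eqref{a:U-lyapunov}, and you write out the localization and non-explosion argument that the paper compresses into a single appeal to Dynkin's formula.
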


\begin{proof}
    Direct computation shows that
    \begin{align}
        \mathcal L_\epsilon V &= 2\nu V \paren*{\paren*{2\epsilon\nu - 1} \abs{DU}^2 + \epsilon \Delta U}\\
        \label{e:LV-bound-without-epsilon}
        &\leq 2\nu V \paren*{(2\nu-1)\abs{DU}^2 + \abs{\Delta U}}\,.
    \end{align}
    By assumptions~\eqref{a:DU-blowup} and~\eqref{a:DU^2-dominates-D^2U}, there exists~$c>0$ such that 
    \begin{equation}
        \abs{\Delta U} \leq \frac{1}{8}\abs{DU}^2 \quad\text{and}\quad \abs{DU} \geq 2 \quad\text{on}\quad \set*{\abs{x} > c}\,.
    \end{equation}
    Combining this with~\eqref{e:LV-bound-without-epsilon} yields that for all~$\nu \in (0, 1/4)$, 
    \begin{equation}\label{e:Lyapunov-ub-outside}
        \mathcal L_\epsilon V \leq -3\nu V \quad\text{on}\quad \set{\abs{x} > c}\,.
    \end{equation}
    On the other hand, if we denote~$\tilde M  \defeq \norm{U}_{C^2(B(0,c))}$ then on the set~$B(0,c)$,
    \begin{align}
        \mathcal L_\epsilon V \overset{\eqref{e:LV-bound-without-epsilon}, \nu < 1/4}{\leq} 2\nu V \abs{\Delta U} &\leq 2\tilde M  \nu V \\
        &= -3\nu V + \paren*{2\tilde M+3}\nu V \\
        \label{e:Lyapunov-ub-inside}
        &\overset{\eqref{a:U-lyapunov}}{\leq} -3\nu V + \paren*{2\tilde M+3}\nu \exp\paren*{2\nu C_L\paren*{1+\tilde M^2}}\,.
    \end{align}
    Combining~\eqref{e:Lyapunov-ub-outside} and~\eqref{e:Lyapunov-ub-inside}, we obtain
    \begin{equation}
        \mathcal L_\epsilon V \leq -3\nu V + \paren*{2\tilde M+3}\nu \exp\paren*{2\nu C_L\paren*{1+\tilde M^2}} \quad\text{on}\quad \R^d\,.
    \end{equation}
    Since~$\mathcal L_\epsilon V \leq -3\nu V + b$ with~$b \defeq (2\tilde M+3)\nu\exp(2\nu C_L(1+\tilde M^2))$, Dynkin's formula together with Gr\"onwall's inequality gives~$P_\epsilon^T V \leq e^{-3\nu T}V + b/(3\nu)$, which is~\eqref{e:V-lyapunov} with 
    \begin{equation}
        M(\nu) = \paren*{\frac{2}{3}\tilde M +1}\exp\paren*{2\nu C_L\paren*{1+\tilde M^2}}\,,
    \end{equation}
    which completes the proof. 
\end{proof}

Lemma~\ref{l:bias-bound} now follows by combining the Lyapunov drift condition of Lemma~\ref{l:V-lyapunov} with the argument of~\cite[Lemma 4.1]{AgarwalIyerEA26}, which evaluates the expectation by repeated use of the Markov property.

\begin{proof}[Proof of Lemma~\ref{l:bias-bound}]
    For notational convenience, we define
    \begin{equation}\label{e:bias-def}
    \Bias_{i}(f)  \defeq 
    \abs*{\E\left[\tilde w_i f(X_i)\right]
    - \frac{Z_{i+1}}{Z_1}\ip{f,\pi_{i+1}}}\,.
    \end{equation}
    We notice
    \begin{equation}
        \Bias_{K}(f) \leq \Bias_{K}(f^+) + \Bias_{K}(f^-)    \,,
    \end{equation}
    and hence it suffices to establish~\eqref{e:unnormalized-bias-decay} for non-negative functions~$f$.  
    Thus, without loss of generality, we subsequently assume that $f \ge 0$.
    
    We observe that the mean of the estimator can be rewritten as
    \begin{align}\label{e:mean-estimator}
    \E\brak*{\tilde w_K f(X_K)} 
    &= \E\brak*{\tilde w_{K-1} (\tilde r_K f)(X_K)} \\
    &= \E\brak*{\tilde w_{K-1} P_K^T (\tilde r_K f)(X_{K-1})}\,.
    \end{align}
    On the other hand, since~$\pi_K$ is invariant for~$P_K^T$, the corresponding exact mean satisfies
    \begin{equation}\label{e:true-mean-change}
    \frac{Z_K}{Z_1} 
    \ip{P_K^T (\tilde r_K f), \pi_K}
    = \frac{Z_K}{Z_1} \ip{\tilde r_K f, \pi_K}
    = \frac{Z_{K+1}}{Z_1} \ip{f, \pi_{K+1}}\,.
    \end{equation}
    
    Combining~\eqref{e:mean-estimator}, \eqref{e:true-mean-change}, and the definition of the bias in~\eqref{e:bias-def}, we obtain
    \[
    \Bias_{K}(f) 
    = \Bias_{K-1}\!\paren*{P_K^T (\tilde r_K f)}.
    \]
    Iterating this identity down to level~$1$ yields
    \begin{equation}\label{e:biasM0-bias20}
    \Bias_{K}(f) = \Bias_{0}\paren{\tilde f_1},
    \end{equation}
    where
    \begin{equation}
    \tilde f_i \defeq \mathcal S_i \mathcal S_{i+1}\cdots \mathcal S_K f,
    \qquad
    \mathcal S_k h \defeq P_k^T (\tilde r_k h)\,.
    \end{equation}
     This implies that
    \begin{equation}\label{e:bias-reduce-to-first-level}
        \abs*{\E\left[\tilde w_K f(X_K)\right]
    - \frac{Z_{K+1}}{Z_1}\ip{f,\pi_{K+1}}} = \abs*{P_1^T \paren{\tilde r_1 \tilde f_2} (X_0) - \ip{\tilde r_1\tilde f_2, \pi_1}}\,.
    \end{equation}
    Then, using the spectral decomposition for~$P_1$ and
    \begin{equation}
        \abs*{\ip*{g, \psi_{i, 1}}_{L^2(\pi_1)} \psi_{i, 1}} = \abs*{\ip*{gV, \psi_{i, 1}V^{-1}}_{L^2(\pi_1)} \psi_{i, 1}V^{-1} V} \leq \ip*{\abs{gV}, \pi_1} \norm{\psi_{i, 1}}_V^2 V\,,
    \end{equation}
    we obtain that for any bounded measurable function~$g$ and~$T \geq 0$, 
    \begin{align}
        \abs*{P_1^T g (x) - \ip*{g, \pi_1}} &= \abs*{\sum_{i=2}^\infty \exp\paren*{-\lambda_{i, 1}T}  \ip*{g, \psi_{i, 1}}_{L^2(\pi_1)} \psi_{i, 1}} \\
        \label{e:first-level-mixing}
        &\leq V(x) \ip*{\abs*{gV}, \pi_1} \sum_{i=2}^\infty \norm{\psi_{i,1}}_V^2 \exp\paren*{-\lambda_{i, 1}T} = V(x) \ip*{\abs*{gV}, \pi_1} S_b \,.
    \end{align}
    To apply~\eqref{e:first-level-mixing} to~$g=\tilde r_1\tilde f_2$, it only remains to estimate~$\ip{ \tilde r_1 \tilde f_2 , V}_{L^2(\pi_1)}$.
    Observe
    \begin{align}
        \ip{\tilde r_1 \tilde f_2, V}_{L^2(\pi_1)} &= \frac{Z_2}{Z_1}\ip{\tilde f_2, V}_{L^2(\pi_2)} = \frac{Z_2}{Z_1}\ip{P_2^T \paren{\tilde r_2 \tilde f_3}, V}_{L^2(\pi_2)}\\
        &= \frac{Z_2}{Z_1}\ip{\tilde r_2 \tilde f_3, P_2^T V}_{L^2(\pi_2)} \\
        &\overset{\eqref{e:V-lyapunov}}{\leq} \frac{Z_2}{Z_1}\paren{\exp\paren*{-3\nu T}   \ip{\tilde r_2 \tilde f_3, V}_{L^2(\pi_2)}    + M \ip{\tilde r_2 \tilde f_3, \pi_2}}\\
        \label{e:r1f2-to-r2f3}
        &\leq \frac{Z_2}{Z_1}\exp\paren*{-3\nu T}   \ip{\tilde r_2 \tilde f_3, V}_{L^2(\pi_2)}    + M\frac{Z_{K+1}}{Z_1} \ip*{f, \pi_{K+1}}\,.
    \end{align}
    In the last inequality, we used
    \begin{equation}\label{e:reversible-bias-equiv}
        \frac{Z_2}{Z_1}\ip{\tilde r_2 \tilde f_3, \pi_2} = \frac{Z_3}{Z_1}\ip{\tilde r_3 \tilde f_4, \pi_3} = \ldots = \frac{Z_K}{Z_1}\ip*{\tilde r_K f, \pi_K} = \frac{Z_{K+1}}{Z_1}\ip*{f, \pi_{K+1}}\,.
    \end{equation}
    Arguing similarly to~\eqref{e:r1f2-to-r2f3} and using~\eqref{e:reversible-bias-equiv} yield that for any~$k=2, \ldots, K-1$, 
    \begin{equation}
        \ip{\tilde r_k \tilde f_{k+1}, V}_{L^2(\pi_k)} \leq \frac{Z_{k+1}}{Z_k}\exp\paren*{-3\nu T}   \ip{\tilde r_{k+1} \tilde f_{k+2}, V}_{L^2(\pi_{k+1})}    + M\frac{Z_{K+1}}{Z_k} \ip*{f, \pi_{K+1}}\,,
    \end{equation}
    where~$\tilde f_{K+1} \defeq f$. 
    Solving this recurrence relation, we obtain
    \begin{equation}\label{e:first-level-mean-decomposition}
        \ip{\tilde r_1 \tilde f_2, V}_{L^2(\pi_1)} \leq I_1 + I_2\,,
    \end{equation}
    where
    \begin{align}\label{e:I1-bias-def}
        I_1 &= \frac{Z_{K+1}}{Z_1}\exp\paren[\Big]{-3(K-1)\nu T} \ip*{fV, \pi_{K+1}}\,,\\
        \label{e:I2-bias-def}
        I_2 &= M\frac{Z_{K+1}}{Z_1}\paren*{\sum_{k=0}^{K-2}\exp\paren*{-3 k\nu T}} \ip*{f, \pi_{K+1}}\\
        &\leq M\frac{Z_{K+1}}{Z_1} \frac{\ip*{f, \pi_{K+1}}}{1-\exp\paren*{-3\nu T }}\,.
    \end{align}
    For any~$\nu \in (0,1/4)$, we have~$2\nu < 1/(2\bar\epsilon) < 1/(2\underline\epsilon)$ and hence
    \begin{equation}\label{e:V-integrable}
        \ip*{V, \pi_{K+1}} = \frac{1}{Z_{\underline\epsilon}} \int_{\R^d} \exp\paren*{\paren*{2\nu - \frac{1}{\underline\epsilon}} U}\,dx \leq \frac{1}{Z_{\underline\epsilon}} \int_{\R^d} \exp\paren*{- \frac{U}{2\underline\epsilon}}\,dx  = \frac{Z_{2\underline\epsilon}}{Z_{\underline\epsilon}}\,.
    \end{equation}
    Since~$2\underline\epsilon < \bar\epsilon \leq 1$, the estimate~\eqref{e:Zepsilon-estimate} may be applied both at~$\epsilon=\underline\epsilon$ and at~$\epsilon=2\underline\epsilon$, and doing so gives
    \begin{equation}
        \ip{V, \pi_{K+1}} \leq \bar C_d = C_Z^2 2^\frac{d}{2}\,,
    \end{equation}
    and this, together with the definition of~$I_1$ in~\eqref{e:I1-bias-def}, entails
    \begin{equation}\label{e:bias-I1-bound}
        I_1 \leq \bar C_d \frac{Z_{K+1}}{Z_1} \norm{f}_\infty\,.
    \end{equation}
    Moreover, the choice of~$T$ in~\eqref{e:Tlarge-bias} and the definition of~$I_2$ in~\eqref{e:I2-bias-def} ensures
    \begin{equation}\label{e:bias-I2-bound}
        I_2 \leq 2M\frac{Z_{K+1}}{Z_1}\norm{f}_\infty\,.
    \end{equation}
    Combining~\eqref{e:bias-reduce-to-first-level}, \eqref{e:first-level-mixing}, \eqref{e:first-level-mean-decomposition}, \eqref{e:bias-I1-bound}, and~\eqref{e:bias-I2-bound} concludes the proof for~\eqref{e:unnormalized-bias-decay}.
\end{proof}

\subsection{Proof of Lemma~\ref{l:squared-weight-bound}}\label{ss:variance}

In this subsection, we prove Lemma~\ref{l:squared-weight-bound}. Conditioning on the previous level, we derive a recurrence in~$k$ for the second moment of the weight~$\tilde w_k$ and then solve it. The key input is the following bound on~$P_k^T\tilde r_k^2$, whose proof we postpone to the end of this subsection.

\begin{lemma}\label{l:Theta1k-Theta2k-bounds}
    Under the same setting as in Lemma~\ref{l:squared-weight-bound}, there exist constants~$\bar C_Z$, $C_\gamma$, $C_\mathrm{geom}$ and~$C_\mathrm{BV}$, independent of~$\kappa, \underline\epsilon$ and~$\bar\epsilon$, such that for any~$k \in \set{1, \ldots, K}$,
    \begin{equation}\label{e:A_k-lb}
        \frac{Z_k}{Z_{k+2}} \leq \bar C_Z\,,
    \end{equation}
    and there exist terms~$\Theta_{1,k}$ and~$\Theta_{2,k}$ such that the bound
    \begin{equation}\label{e:PT-Theta12}
        P_k^T \tilde r_k^2 (x) \leq \frac{Z_{k+2}}{Z_k}\paren*{1 +  \tilde r_k^{-2}(x) \paren*{\Theta_{1, k}  + \Theta_{2, k}}}\,,
    \end{equation}
    holds, where
   \begin{align}
        \label{e:Theta1-ub}
        \sum_{k=2}^K \Theta_{1,k} &\leq \bar C_{\psi_2} C_\gamma \paren*{C_\mathrm{geom} + C_\mathrm{BV}}\,,\\
        \label{e:Theta2-ub}
        \Theta_{2, k} &\leq \bar C_Z S_{v, k}(T)\,.
    \end{align}
    Here, $\bar C_{\psi_2}$ and $S_{v,k}$ are defined as in~\eqref{e:second-ef-weighted-norm-unif-ub} and~\eqref{e:Svk-def}, respectively.
\end{lemma}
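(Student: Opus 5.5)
The plan is to work with the spectral decomposition of~$P_k^T$ in~$L^2(\pi_k)$, using the weight~$\tilde r_k^{-2}$ in place of~$V$. The starting point is an algebraic consequence of the linear spacing. If~$h\defeq 1/\epsilon_{k+1}-1/\epsilon_k$ denotes the common step, then~$\tilde r_k^2=\exp(-2hU)=\tilde\pi_{\epsilon_{k+2}}/\tilde\pi_{\epsilon_k}$, which gives
\begin{equation*}
  \ip{\tilde r_k^2,\pi_k}=\frac{Z_{k+2}}{Z_k}\,.
\end{equation*}
This identity is why the extra level~$\epsilon_{K+2}$ is introduced.

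The bound~\eqref{e:A_k-lb} should follow from Lemma~\ref{l:partition-function-estimates}:
\begin{equation*}
  \frac{Z_k}{Z_{k+2}}\leq C_Z^2\paren*{\frac{\epsilon_k}{\epsilon_{k+2}}}^{d/2}
  \quad\text{and}\quad
  \frac{\epsilon_k}{\epsilon_{k+2}}=1+2h\epsilon_k\,.
\end{equation*}
I will check that~$h\epsilon_k$ stays bounded; this holds because~$h\leq 2\nu$, by the choice of~$K$ and~$\underline\epsilon<\bar\epsilon/2$. I also need to verify that~$\epsilon_{K+2}$ remains in the range where~\eqref{e:Zepsilon-estimate} applies. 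Since~$K\leq(1/\underline\epsilon-1/\bar\epsilon)/\nu$, we also have~$h\geq\nu$, so~$\tilde r_k^{-2}=e^{2hU}\geq V$. Consequently~$\norm{\psi}_{\tilde r_k^{-2}}\leq\norm{\psi}_V$ for every~$\psi$.

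Next I expand
\begin{equation*}
  P_k^T\tilde r_k^2=\frac{Z_{k+2}}{Z_k}+\sum_{i\geq2}e^{-\lambda_{i,k}T}\ip{\tilde r_k^2,\psi_{i,k}}_{L^2(\pi_k)}\psi_{i,k}\,.
\end{equation*}
For~$i\geq3$ I use the same duality as in~\eqref{e:first-level-mixing}, but with weight~$W=\tilde r_k^{-2}$:
\begin{equation*}
  \abs{\ip{g,\psi}\psi(x)}\leq\ip{\abs{gW},\pi_k}\norm{\psi}_W^2W(x)\,.
\end{equation*}
With~$g=\tilde r_k^2$ we get~$\ip{gW,\pi_k}=1$. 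Factoring out~$Z_{k+2}/Z_k$ and using~\eqref{e:A_k-lb} together with~$\norm{\cdot}_W\leq\norm{\cdot}_V$ gives~$\Theta_{2,k}\leq\bar C_Z S_{v,k}(T)$.

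The mode~$i=2$ defines~$\Theta_{1,k}$, and this is where summability in~$k$ is needed. I bound~$e^{-\lambda_{2,k}T}\leq1$ and~$\abs{\psi_{2,k}(x)}\leq\bar C_{\psi_2}V(x)\leq\bar C_{\psi_2}\tilde r_k^{-2}(x)$ by Lemma~\ref{l:eigenfunctions-weighted-Linfty-bdd}. It then remains to bound
\begin{equation*}
  \frac{Z_k}{Z_{k+2}}\abs{\ip{\tilde r_k^2,\psi_{2,k}}_{L^2(\pi_k)}}=\abs{\ip{\psi_{2,k},\pi_{k+2}}}\,.
\end{equation*}
Since~$\ip{\psi_{2,k},\pi_k}=0$, I write~$\psi_{2,k}=a_{1,\epsilon_k}\one_{\Omega_1}+a_{2,\epsilon_k}\one_{\Omega_2}+\rho_k$ using Lemma~\ref{l:dEF-small}. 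This yields
\begin{equation*}
  \abs{\ip{\psi_{2,k},\pi_{k+2}}}\leq C_m\sum_{j=1,2}\abs{\pi_{k+2}(\Omega_j)-\pi_k(\Omega_j)}+\abs{\ip{\rho_k,\pi_{k+2}}}+\abs{\ip{\rho_k,\pi_k}}\,.
\end{equation*}
The residual terms are controlled by Cauchy--Schwarz. I use~$\norm{\rho_k}_{L^2(\pi_k)}\leq C_\gamma e^{-\gamma/\epsilon_k}$, and for the~$\pi_{k+2}$ term the density~$\pi_{k+2}/\pi_k=\tilde r_k^2 Z_k/Z_{k+2}$ has bounded second moment in~$L^2(\pi_k)$, again by the partition-function estimates. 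Because~$1/\epsilon_k=1/\bar\epsilon+(k-1)h$ with~$h\geq\nu$, the sum~$\sum_k e^{-\gamma/\epsilon_k}$ is dominated by a geometric series, which gives~$C_{\mathrm{geom}}$.

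The main obstacle is the first term, namely showing that the total variation of~$\epsilon\mapsto\pi_\epsilon(\Omega_j)$ over~$(\kappa,1]$ is bounded by a constant~$C_{\mathrm{BV}}$, since the increments~$\abs{\pi_{k+2}(\Omega_j)-\pi_k(\Omega_j)}$ telescope into at most twice this variation. I would first try a Laplace expansion of~$\int_{\Omega_j}e^{-U/\epsilon}$ around~$x_{\min,j}$. This gives~$\pi_\epsilon(\Omega_j)=F_j(\epsilon)$, where~$F_j$ is a ratio of functions of the form~$e^{-U(x_{\min,j})/\epsilon}\epsilon^{d/2}(c_j+O(\epsilon))$ whose remainders have bounded~$\epsilon$-derivatives, using~$U\in C^6$. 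This would make~$F_j$ Lipschitz on~$(0,1]$, with the exponentially decaying factor harmless when the depths differ. If that fails, I would fall back on~$\frac{d}{d\beta}\pi_\beta(\Omega_j)=-\mathrm{Cov}_{\pi_\beta}(U,\one_{\Omega_j})$ and estimate the covariance sharply via the same expansion.
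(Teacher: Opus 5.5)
Your proposal is correct and follows the paper's proof essentially step by step. It uses the same bound on $Z_k/Z_{k+2}$ via $h\le 2\nu$, and the same spectral expansion of $P_k^T\tilde r_k^2$ with weight $\tilde r_k^{-2}\ge V$ to produce $\Theta_{2,k}$. It also splits the mode $i=2$ through Lemma~\ref{l:dEF-small} into an exponentially small residual, summed geometrically, plus increments of $\pi_\epsilon(\Omega_j)$. The only difference is that the paper does not prove the total-variation bound; it imports it as \eqref{e:well-mass-BV} from \cite[Lemma 8.2]{HanIyerEA26}, stated over all of $(0,1]$. That range is what you need in any case, since $\epsilon_{K+2}$ may lie below $\kappa$. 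For the same reason, in \eqref{e:A_k-lb} you only need the lower bound $Z_\epsilon\ge C_Z^{-1}\epsilon^{d/2}$ at $\epsilon_{K+2}$, and this holds for every $\epsilon\in(0,1]$.
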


\begin{proof}[Proof of Lemma~\ref{l:squared-weight-bound}]
We first prove that~$K$ is well-defined.
Indeed, using~$\underline\epsilon < \bar\epsilon / 2$, $\bar\epsilon \leq 1$ and~$\nu < 1/2$, we obtain
\begin{equation}\label{e:K-well-def}
    \frac{1}{\nu} \paren*{ \frac{1}{\underline\epsilon} - \frac{1}{\bar\epsilon}} \geq  \frac{1}{\nu\bar\epsilon} \geq 2\,,
\end{equation}
which implies that~$K \in \N$ is well-defined.  
We also observe that
\begin{equation}
K \leq \frac{1}{\nu} \paren*{ \frac{1}{\underline\epsilon} - \frac{1}{\bar\epsilon}}\,,
\end{equation}
and hence, 
\begin{equation}\label{e:V-gamma-def}
    \bar \nu \defeq \frac{1}{\epsilon_{k+1}}-\frac{1}{\epsilon_k} = \frac{1}{K}\paren*{ \frac{1}{\underline\epsilon} - \frac{1}{\bar\epsilon}} \geq \nu\,.
\end{equation}
Since the inverse temperatures~$(1/\epsilon_k)_{k=1}^{K+1}$ are linearly spaced, \eqref{e:V-gamma-def} holds with any~$k\in \set{1, \ldots, K}$. Throughout this subsection we also use the additional level~$\epsilon_{K+2}$ of Lemma~\ref{l:squared-weight-bound}. By~\eqref{e:V-gamma-def},
\begin{equation}\label{e:epsK2-def}
    \frac{1}{\epsilon_{K+2}} = \frac{1}{\epsilon_{K+1}} + \bar\nu\,,
\end{equation}
so that~$Z_{\epsilon_{K+2}}$ and~$\tilde r_{K+1}$ are well defined. The level~$\epsilon_{K+2}$ plays no role in the algorithm and enters only through the identity~\eqref{e:Vinv-SD} below.

If we define 
\begin{equation}\label{e:barV-def}
    \bar V = \exp\paren*{2\bar \nu U}\,,
\end{equation}
then for any~$k \in \set{1, \ldots, K}$,
\begin{equation}\label{e:tilderk-barV}
    \tilde r_k^2 = \exp\paren*{-2\paren*{\frac{1}{\epsilon_{k+1}} - \frac{1}{\epsilon_k}}U} \overset{\eqref{e:V-gamma-def}}{=} \exp\paren*{-2\bar\nu U} = \bar V^{-1}\,.
\end{equation}
Now, we observe that for any~$k\in\set{2, \ldots, K}$, conditioning on level~$k-1$ in the expectation of~$\tilde w_k^2$ and using~\eqref{e:PT-Theta12} and~\eqref{e:tilderk-barV} yield
\begin{align}
    \E\brak*{\tilde w_k^2} &= \E\brak*{\paren*{\prod_{j=1}^{k-1} \bar V^{-1}(X_j)} P_k^T \tilde r_k^2 \paren*{X_{k-1}}}\\
    &\overset{\eqref{e:PT-Theta12}, \eqref{e:tilderk-barV}}{\leq} \frac{Z_{k+2}}{Z_k} \E\brak*{\paren*{\prod_{j=1}^{k-1} \bar V^{-1}(X_j)} \paren*{1+ \bar V (X_{k-1} )  \paren*{\Theta_{1,k} + \Theta_{2,k}} } }  \\
    \label{e:second-recrurrence}
    &\leq \frac{Z_{k+2}}{Z_k}\paren*{\E\brak*{\tilde w_{k-1}^2} + \paren*{\Theta_{1,k} + \Theta_{2, k}} \E\brak*{\tilde w_{k-2}^2}}\,.
\end{align}
To solve this recurrence, we introduce the notations 
\begin{equation}\label{e:EATheta-def}
E_k = \E\brak{\tilde{w}_k^2}\,,\quad A_k = Z_{k+2}/Z_k\,,\quad\text{and}\quad \Theta_k = \Theta_{1,k} + \Theta_{2,k}\,.
\end{equation}
For any $k \geq 2$, \eqref{e:second-recrurrence} implies that
\begin{equation}\label{e:E_k-recurrence}
    E_k \leq A_k E_{k-1} + A_k \Theta_k E_{k-2}.
\end{equation}
We define the normalized sequence 
\begin{equation}\label{e:uk-def}
u_k = E_k \bigg/ \prod_{j=1}^k A_j\,,
\end{equation}
and divide~\eqref{e:E_k-recurrence} by $\prod_{j=1}^k A_j$ to obtain
\begin{equation}
    u_k \leq u_{k-1} + \frac{\Theta_k}{A_{k-1}} u_{k-2}\,.
\end{equation}
Defining $M_k = \max_{0 \leq j \leq k} u_j$, we immediately see that
\begin{equation}
    M_k \leq \paren*{1 + \frac{\Theta_k}{A_{k-1}}} M_{k-1}\,,
\end{equation}
which implies that the sequence is bounded by the product
\begin{equation}\label{e:uK-ub}
    u_K \leq M_1 \prod_{k=2}^K \paren*{1 + \frac{\Theta_k}{A_{k-1}}} \leq M_1 \exp\paren*{\sum_{k=2}^K \frac{\Theta_k}{A_{k-1}}} \,.
\end{equation}
We note that for any~$k$, 
\begin{align}
    A_k^{-1} \overset{\eqref{e:A_k-lb}}{\leq} \bar C_Z\,,
\end{align}
and thus,
\begin{align}\label{e:sum-Thetak-Ak-ub}
    \sum_{k=2}^K \frac{\Theta_k}{A_{k-1}} &\leq \bar C_Z \paren*{\sum_{k=2}^K \Theta_{1, k} + \sum_{k=2}^K \Theta_{2, k}} \\
    &\overset{\eqref{e:Theta1-ub}, \eqref{e:Theta2-ub}}{\leq} \bar C_Z \paren*{\bar C_{\psi_2} C_\gamma \paren*{C_\mathrm{geom} + C_\mathrm{BV}} + \bar C_Z\sum_{k=2}^K S_{v,k}(T)  } \,.
\end{align}
Moreover,
\begin{equation}
    M_1 = \max\set*{u_0, u_1} = \max\set*{1, \E\brak*{\tilde w_1^2}/A_1}\,.
\end{equation}
Using~\eqref{e:first-level-mixing} for~$g=\bar V^{-1}$ and combining this with~$\bar V^{-1} V \overset{\eqref{e:V-gamma-def}}{\leq} 1$, we obtain that \begin{equation}
    \E\brak*{\tilde w_1^2} = P_1^T \tilde r_1^2 (X_0) = P_1^T \bar V^{-1} (X_0) \leq \frac{Z_3}{Z_1} + V(X_0) S_b(T)\,,
\end{equation}
and hence
\begin{equation}\label{e:M1-ub}
    M_1 \overset{\eqref{e:A_k-lb}}{\leq} 1 +  \bar C_ZV(X_0)S_b(T)\,. 
\end{equation}
Using the definition of~$u_K$ in~\eqref{e:uk-def} and the identity
\begin{equation}
    \prod_{k=1}^K A_k = \prod_{k=1}^K \frac{Z_{k+2}}{Z_k} = \frac{Z_{K+1} Z_{K+2}}{Z_1 Z_2}\,,
\end{equation}
and combining this with~\eqref{e:uK-ub}, \eqref{e:sum-Thetak-Ak-ub}, and~\eqref{e:M1-ub}, we obtain
\begin{equation}
    \E\brak{\tilde w_K^2} \leq  \exp\paren*{\bar C_\Theta \paren*{1+\sum_{k=2}^K S_{v,k}(T)}   } \frac{Z_{K+1} Z_{K+2}}{Z_1 Z_2} \paren*{1+\bar C_ZV(X_0)S_b(T)}\,,
\end{equation}
where
\begin{equation}
    \bar C_\Theta \defeq \bar C_Z\max\set*{\bar C_{\psi_2} C_\gamma \paren*{C_\mathrm{geom} + C_\mathrm{BV}}, \bar C_Z    }\,.
\end{equation}
This completes the proof for~\eqref{e:unnormalized-variance-ub}.
\end{proof}

The linear spacing of the inverse temperatures is used crucially in the bound for~$P_k^T\tilde r_k^2$ (see~\eqref{e:PT-Theta12-copied}), as in~\cite[Lemma 5.2]{AgarwalIyerEA26}. Combining that bound with Lemmas~\ref{l:lyapunov-weighted-Linftybds-and-their-sums} and~\ref{l:dEF-small} proves Lemma~\ref{l:Theta1k-Theta2k-bounds}.

\begin{proof}[Proof of Lemma~\ref{l:Theta1k-Theta2k-bounds}]
Throughout the proof, we define~$\bar \nu$ and~$\bar V$ as in~\eqref{e:V-gamma-def} and~\eqref{e:barV-def}, respectively, for the simplicity of the notation.
We first prove~\eqref{e:A_k-lb}.
Indeed, since
\begin{equation}
    K > \frac{1}{\nu} \paren*{ \frac{1}{\underline\epsilon} - \frac{1}{\bar\epsilon}} - 1 \overset{\eqref{e:K-well-def} }{\geq}  \frac{1}{2\nu} \paren*{ \frac{1}{\underline\epsilon} - \frac{1}{\bar\epsilon}}   \,,
\end{equation}
we have
\begin{equation}\label{e:gammabar-gamma-ub}
    \bar \nu \overset{\eqref{e:V-gamma-def}}{\leq} 2\nu\,,
\end{equation}
and combining this with~\eqref{e:Zepsilon-estimate} yields
\begin{align}
    \frac{Z_k}{Z_{k+2}} \overset{\eqref{e:Zepsilon-estimate}}{\leq} C_Z^2 \paren*{\frac{\epsilon_k}{\epsilon_{k+2}}}^\frac{d}{2} \overset{\eqref{e:V-gamma-def}}{=}   C_Z^2(1+ 2\bar\nu\epsilon_k)^\frac{d}{2} &\overset{\eqref{e:gammabar-gamma-ub}}{\leq} C_Z^2(1+ 4\nu)^\frac{d}{2} \\
    &\overset{\nu \leq 1/4}{\leq} 2^\frac{d}{2}C_Z^2 \defeq \bar C_Z\,.
\end{align}
It remains to prove~\eqref{e:PT-Theta12}--\eqref{e:Theta2-ub}. Using the spectral decomposition and~\eqref{e:tilderk-barV}, we obtain
\begin{multline}\label{e:Vinv-SD}
    P_k^T \tilde r_k^2 = P_k^T \bar V^{-1}= \\
    \ip{\bar V^{-1}, \pi_k} + e^{-\lambda_{2, k }T}\ip{\bar V^{-1}, \psi_{2,k}}_{L^2(\pi_k)}\psi_{2,k} + \sum_{i=3}^\infty e^{-\lambda_{i, k}T}\ip{\bar V^{-1}, \psi_{i, k}}_{L^2(\pi_k)}\psi_{i, k}\,.
\end{multline}
Combining this with the facts that
\begin{equation}
\ip*{\bar V^{-1}, \pi_k} = \ip*{\tilde r_k^2, \pi_k} = \frac{1}{Z_k}\ip*{\frac{\tilde \pi_{k+1}}{\tilde \pi_k}  , \tilde \pi_{k+1}} \overset{\tilde r_k = \tilde r_{k+1}}{=} \frac{1}{Z_k}\ip*{\frac{\tilde \pi_{k+2}}{\tilde \pi_{k+1}}  , \tilde \pi_{k+1}} = \frac{Z_{k+2}}{Z_k}\,,
\end{equation}
and similarly
\begin{equation}
    \ip*{\bar V^{-1}, \psi_{2,k}}_{L^2(\pi_k)} = \frac{Z_{k+2}}{Z_k}\ip*{\psi_{2,k}, \pi_{k+2}}\,,
\end{equation}
we obtain
\begin{equation}\label{e:PT-Theta12-copied}
    P_k^T \tilde r_k^2 (x) \leq \frac{Z_{k+2}}{Z_k}\paren*{1 +  \bar V(x)\paren*{\Theta_{1, k}  + \Theta_{2, k}}}\,,
\end{equation}
where
\begin{equation}
    \Theta_{1, k} = \abs*{\ip*{\psi_{2,k}, \pi_{k+2}}} \norm{\psi_{2,k}}_{\bar V}\,,\quad 
    \Theta_{2, k} = \frac{Z_k}{Z_{k+2}} \sum_{i=3}^\infty e^{-\lambda_{i,k}T} \norm*{\psi_{i, k}}_{\bar V}^2 \,, 
\end{equation}
and~$\quad \norm{\cdot}_{\bar V} = \norm{\cdot / \bar V}_{L^\infty(\R^d)}$.
Here, we have used~$\abs*{\ip*{\bar V^{-1}, \psi_{i,k}}_{L^2(\pi_k)}} \leq \norm{\psi_{i,k}}_{\bar V} $ to deduce~$\Theta_{2,k}$. 

We estimate~$\Theta_{1,k}$ and~$\Theta_{2,k}$, separately. Defining~$E_k = E_{\epsilon_k}$ and~$P_{E_k} = P_{E_{\epsilon_k }}$ as in Lemma~\ref{l:dEF-small}, and~$r_{k, k+2} = \pi_{k+2} / \pi_k$, we have
\begin{equation}\label{e:psi2k-pik-bound}
    \abs*{\ip{\psi_{2,k}, \pi_{k+2}}} = \abs*{\ip{\psi_{2,k}, r_{k, k+2} - 1}_{L^2(\pi_k)}} \leq I_1 + I_2\,,
\end{equation}
where
\begin{align}
    I_1 &= \abs*{\ip{(I-P_{E_k}) \psi_{2,k}, r_{k, k+2} - 1}_{L^2(\pi_k)}} \,, \\
    I_2 &= \abs*{\ip{P_{E_k} \psi_{2,k}, r_{k, k+2} - 1}_{L^2(\pi_k)}} \,.
\end{align}
We first note that
\begin{align}
    \label{e:rk-k+2-ub-const}
    r_{k, k+2} = \frac{Z_k}{Z_{k+2}} \exp\paren*{-\paren*{\frac{1}{\epsilon_{k+2}}  -\frac{1}{\epsilon_k} }U}\leq \frac{Z_k}{Z_{k+2}} \overset{\eqref{e:A_k-lb}}{\leq} \bar C_Z\,.
\end{align}
Then, combining this with the Cauchy--Schwarz inequality and~\eqref{e:eigen-residual-exp-small} at~$\gamma = \hat\gamma / 2$ yields 
\begin{equation}\label{e:I1-dEF-bound}
    I_1 \leq \norm{(I-P_{E_k}) \psi_{2,k}}_{L^2(\pi_k)}\paren*{1+ \norm{r_{k, k+2}}_{L^\infty}} \leq C(1+\bar C_Z)\exp\paren*{-\frac{\hat \gamma}{2\epsilon_k}}\,.
\end{equation}
Moreover, using~\eqref{e:PEe-decompose}, we obtain
\begin{align}
    I_2 &= \abs*{\ip{a_{1, \epsilon_k}\one_{\Omega_1} + a_{2, \epsilon_k} \one_{\Omega_2}, \pi_{k+2} - \pi_k}_{L^2(dx)}} \\
    &= \abs*{a_{1, \epsilon_k}\paren*{\pi_{k+2}(\Omega_1) - \pi_k(\Omega_1)} + a_{2, \epsilon_k}\paren*{\pi_{k+2}(\Omega_2) - \pi_k(\Omega_2)}} \\
    &= \abs*{(a_{1, \epsilon_k} - a_{2, \epsilon_k})\paren*{\pi_{k+2}(\Omega_1) - \pi_k(\Omega_1)}}\\
    \label{e:I2-CBV-bound}
    &\leq 2C_m \abs*{\pi_{k+2}(\Omega_1) - \pi_k(\Omega_1)}\,.
\end{align}
Finally, \eqref{e:V-gamma-def} implies that~$\bar V \geq V$ and hence
\begin{equation}\label{e:barV-V-norm-comparison}
\norm{\cdot}_{\bar V} \leq \norm{\cdot}_V    \,.
\end{equation}
 Thus, combining~\eqref{e:second-ef-weighted-norm-unif-ub}, \eqref{e:psi2k-pik-bound}, \eqref{e:I1-dEF-bound}, \eqref{e:I2-CBV-bound}, and~\eqref{e:barV-V-norm-comparison}, we obtain
\begin{equation}
    \Theta_{1,k} \leq \bar C_{\psi_2} C_\gamma \paren*{\exp\paren*{-\frac{\hat \gamma}{2\epsilon_k}} + \abs*{\pi_{k+2}(\Omega_1) - \pi_k(\Omega_1)}}\,,
\end{equation}
where
\begin{equation}
    C_\gamma =  \max\set*{C(1+\bar C_Z), 2C_m}\,.
\end{equation}
For the first term, we have
\begin{align}\label{e:Theta1k-first}
    \sum_{k=2}^K\exp\paren*{-\frac{\hat \gamma}{2\epsilon_k}} &\overset{\eqref{e:V-gamma-def}}{=}  \sum_{k=2}^K\exp\paren*{-\frac{\hat \gamma}{2}\paren*{\frac{1}{\epsilon_1} + \bar \nu (k-1)}} \\
    &\leq \frac{1}{1-\exp\paren*{-\hat\gamma \bar\nu / 2}} \overset{\eqref{e:V-gamma-def}}{\leq} \frac{1}{1-\exp\paren*{-\hat\gamma \nu / 2}} \defeq C_\mathrm{geom}\,,
\end{align}
and for the second term, we have
\begin{equation}\label{e:Theta1k-second}
    \sum_{k=2}^K\abs{\pi_{k+2}(\Omega_1) - \pi_k (\Omega_1)} \leq  2 \int_\kappa^1 \abs{\partial_{\epsilon'}\pi_{\epsilon'}(\Omega_1)}d\epsilon'\,.
\end{equation}
Assumptions~\ref{a:criticalpts}--\ref{a:U-growth-control} and~\cite[Lemma 8.2]{HanIyerEA26}, repeated for~$\R^d$, imply the existence of a constant~$C_\mathrm{BV}$ such that
\begin{equation}\label{e:well-mass-BV}
    2\sup_{\eta \in (0,1)}\int_\eta^1 \abs{\partial_{\epsilon'}\pi_{\epsilon'}(\Omega_1)}d\epsilon' \leq C_\mathrm{BV}\,,
\end{equation}
and therefore combining this with~\eqref{e:Theta1k-first} and~\eqref{e:Theta1k-second} deduces~\eqref{e:Theta1-ub}.
Moreover, using~\eqref{e:A_k-lb}, \eqref{e:barV-V-norm-comparison}, and the definition of~$S_{v,k}$ in~\eqref{e:Svk-def}, we obtain~\eqref{e:Theta2-ub}.
This and~\eqref{e:PT-Theta12-copied} complete the proof for the lemma.
\end{proof}

\section{Proof of Proposition~\ref{p:single-particle-proposition} and Theorem~\ref{t:main-theorem}}\label{s:proof-main}

In this section, we combine the key lemmas of Section~\ref{ss:key-lemmas} to prove Proposition~\ref{p:single-particle-proposition} and Theorem~\ref{t:main-theorem}. For the proposition, we use Lemma~\ref{l:lyapunov-weighted-Linftybds-and-their-sums} to show that the choice of~$T$ in~\eqref{e:single-particle-Tchoice} makes the sums~$S_b(T)$ and~$S_{v,k}(T)$ small, and we then apply Lemmas~\ref{l:bias-bound} and~\ref{l:squared-weight-bound}. The theorem follows from the proposition by bounding the error of the self-normalized estimator~$\mu_N$ in terms of the bias and the variance of a single run.

\begin{proof}[Proof of Proposition~\ref{p:single-particle-proposition}]
    Let~$\nu, \bar\epsilon, \underline\epsilon, \delta$ be as in the statement of the proposition and let the constants~$\bar C_\nu, \tilde C_\psi$ be as in Lemma~\ref{l:lyapunov-weighted-Linftybds-and-their-sums}, $C_B$ as in Lemma~\ref{l:bias-bound}, and~$\bar C_Z$ as in Lemma~\ref{l:squared-weight-bound}. We define
    \begin{equation}\label{e:CbCv-def}
        C_b \defeq \tilde C_{\psi} (\lambda_{2,1}, 1)\bar\epsilon^{-1-\frac{d}{2}}\exp\paren*{\frac{2\bar C_\nu \lambda_{2,1}}{\bar\epsilon}}\quad\text{and}\quad C_v \defeq \tilde C_\psi(\Lambda, 1)\,,
    \end{equation}
    and choose
    \begin{align}
        C_{T,1}(\nu) &= 2\bar C_\nu + \frac{1+\log C_v + d/2}{\Lambda}  +  1   \,,\\
        C_{T,2}(\nu, \bar\epsilon) &=  \frac{1}{\lambda_{2,1}}\max\set*{\log(3C_b C_B), \log(2\bar C_Z C_b), 1}    \,.
    \end{align}
    Suppose $T$ satisfies the bound~\eqref{e:single-particle-Tchoice}.
    Then the bound ensures that
    \begin{align}\label{e:Sb-Sv-T-large}
        T \geq \max \Bigg\{  &\frac{2\bar C_\nu}{\underline\epsilon} + \frac{1}{\Lambda}\log\paren*{\frac{C_v K}{\underline\epsilon^{1+\frac{d}{2}}}  } + 1\,,\quad \frac{1}{\lambda_{2,1}}\log\paren*{\frac{3 C_b C_B V(X_0)}{\delta}}\,,\\
        &\frac{1}{\lambda_{2,1}}\log\paren*{2\bar C_Z C_b V(X_0)}\,,  \quad \frac{\log 2}{3\nu}\Bigg\} \,. 
    \end{align}
    This bound for~$T$ guarantees that~$T$ satisfies~\eqref{e:T-large-for-weighted-sum} with~$\epsilon=\bar\epsilon$ and~$T_0=1$ so applying Lemma~\ref{l:lyapunov-weighted-Linftybds-and-their-sums} with~$\Lambda_1 = \lambda_{2, 1}, T_0=1, \epsilon = \epsilon_1 = \bar\epsilon$, and~$m=2$, we obtain that
    \begin{equation}\label{e:Sb-exp-decay}
        S_b(T) \leq C_b\exp\paren*{-\lambda_{2,1} T}\,,
    \end{equation}
    where~$S_b$ and~$C_b$ are defined in~\eqref{e:SbT-def} and~\eqref{e:CbCv-def}, respectively.
    Similarly, the bound for~$T$ in~\eqref{e:Sb-Sv-T-large} also guarantees that~$T$ satisfies~\eqref{e:T-large-for-weighted-sum} with~$\epsilon = \epsilon_k$ and~$T_0=1$, for any~$k\in \set{1,\ldots, K+1}$, so applying Lemma~\ref{l:lyapunov-weighted-Linftybds-and-their-sums} with~$\Lambda_1 = \Lambda, T_0=1, \epsilon=\epsilon_k$ and~$m=3$, we obtain that
    \begin{equation}\label{e:Svk-exp-decay}
        S_{v,k}(T) \leq C_v\epsilon_k^{-1-\frac{d}{2}}\exp\paren*{-\Lambda\paren*{T-\frac{2\bar C_\nu}{\epsilon_k}}}\,,
    \end{equation}
    where~$S_{v,k}$ and~$C_v$ are defined in~\eqref{e:Svk-def} and~\eqref{e:CbCv-def}, respectively.
    Using these decay estimates for~$S_b$ and~$S_{v,k}$, we prove~\eqref{e:single-particle-bias-bound} and~\eqref{e:single-particle-variance-bound} separately.

    \smallskip

    \noindent\textbf{Bias bound~\eqref{e:single-particle-bias-bound}.}
    We see that~$T$ in~\eqref{e:Sb-Sv-T-large} satisfies~\eqref{e:Tlarge-bias} and hence~\eqref{e:unnormalized-bias-decay} holds.
    Combining~\eqref{e:unnormalized-bias-decay} and~\eqref{e:Sb-exp-decay} and using~\eqref{e:Sb-Sv-T-large} yield that for any bounded test function~$f$,
    \begin{equation}\label{e:unnormalized-bias-after-long-time}
        \abs*{\E\brak*{\tilde w_K f(X_K)} - \frac{Z_{K+1}}{Z_1}\ip*{f, \pi_{K+1}}}\leq \frac{\delta}{3} \frac{Z_{K+1}}{Z_1}  \norm{f}_\infty \,.
    \end{equation}
    In particular, applying~\eqref{e:unnormalized-bias-after-long-time} to~$f=1$, 
    we obtain
    \begin{equation}\label{e:unnormalized-weight-after-long-time}
        \abs*{\E\brak*{\tilde w_K} - \frac{Z_{K+1}}{Z_1}}\leq \frac{\delta}{3} \frac{Z_{K+1}}{Z_1}  \,,
    \end{equation}
    and combining this with $\delta \leq 1$, we get
    \begin{equation}\label{e:weight-lb-after-long-time}
        \E\brak*{\tilde w_K} \geq \frac{2}{3}\frac{Z_{K+1}}{Z_1}\,.
    \end{equation}
    We also see from the triangle inequality that
    \begin{align}
        \abs*{\E\brak*{\tilde w_K f(X_K)} - \E\brak*{\tilde w_K}\ip*{f, \pi_{K+1}}} \leq I_1 + I_2 
    \end{align}
    where
    \begin{align}
        I_1 &= \abs*{\E\brak*{\tilde w_K f(X_K)} - \frac{Z_{K+1}}{Z_1}\ip*{f, \pi_{K+1}}} \overset{\eqref{e:unnormalized-bias-after-long-time}}{\leq} \frac{\delta}{3} \frac{Z_{K+1}}{Z_1}  \norm{f}_\infty \\
        I_2 &= \abs*{\E\brak*{\tilde w_K} - \frac{Z_{K+1}}{Z_1}}\abs*{\ip*{f, \pi_{K+1}}}\overset{\eqref{e:unnormalized-weight-after-long-time}}{\leq} \frac{\delta}{3} \frac{Z_{K+1}}{Z_1} \norm{f}_\infty \,,
    \end{align}
    and dividing this inequality with~\eqref{e:weight-lb-after-long-time} yields~\eqref{e:single-particle-bias-bound}. 

    \noindent\textbf{Variance bound~\eqref{e:single-particle-variance-bound}.}
    We first see that using~\eqref{e:Sb-Sv-T-large} and~\eqref{e:Svk-exp-decay} yields
    \begin{equation}
        S_{v,k}\leq \frac{1}{K}\,, \quad \forall k\in \set*{1, \ldots, K+1}\,,
    \end{equation}
    and using~\eqref{e:Sb-Sv-T-large} and~\eqref{e:Sb-exp-decay} yields
    \begin{equation}
        2\bar C_Z V(X_0) S_b(T) \leq 1\,.
    \end{equation}
    Combining this with~\eqref{e:unnormalized-variance-ub}, we obtain
    \begin{equation}\label{e:weight-squared-ub-after-long-time}
        \E\brak*{\tilde w_K ^2} \leq 2\exp\paren*{2\bar C_\Theta } \frac{Z_{K+1} Z_{K+2}}{Z_1Z_2}\,.
    \end{equation}
    Hence,
    \begin{align}
        \mathrm{Var}\paren*{\frac{\tilde w_K f(X_K)}{\E\brak*{\tilde w_K}}}&\leq \frac{\E\brak*{\tilde w_K^2 f(X_K)^2}}{ \E\brak*{\tilde w_K}^2} \leq \frac{\E\brak*{\tilde w_K^2}}{\E\brak*{\tilde w_K}^2}\norm{f}_\infty^2 \\
        &\overset{\eqref{e:weight-lb-after-long-time}, \eqref{e:weight-squared-ub-after-long-time}}{\leq} \frac{9}{2}\exp\paren*{2\bar C_\Theta} \frac{Z_1 Z_{K+2}}{Z_2Z_{K+1}}  \norm{f}_\infty^2 \overset{\eqref{e:A_k-lb}}{\leq} \frac{9}{2}\bar C_Z\exp\paren*{2\bar C_\Theta}\norm{f}_\infty^2\,,
    \end{align}
    which completes the proof for~\eqref{e:single-particle-variance-bound} with the choice of
    \begin{equation}
        C_w \defeq \tfrac{9}{2}\bar C_Z \exp(2\bar C_\Theta)\,.
    \end{equation}        
\end{proof}

With Proposition~\ref{p:single-particle-proposition} in hand, the proof of Theorem~\ref{t:main-theorem} is almost identical to that of~\cite[Theorem 2.5]{AgarwalIyerEA26}. The only difference is the definition of the negative fractional Sobolev norm~$H^{-s}$, now taken on~$\R^d$ rather than on~$\T^d$. We present the argument here for the reader's convenience.

\begin{proof}[Proof of Theorem~\ref{t:main-theorem}]
Define
\begin{equation}\label{e:R-def}
R \defeq \frac{1}{N} \sum_{i=1}^N \tilde w_K^i .
\end{equation}
From the definition of $w_K^i$ in~\eqref{e:muN-wKi-def}, we have
\[
\frac{1}{N} \tilde w_K^i = R\, w_K^i.
\]
Adding and subtracting the same quantity and applying the triangle inequality yields, for any $x_1,\dots,x_N \in \R^d$,
\begin{equation}\label{e:break-to-I1-I2}
\Bigl| \sum_{i=1}^N w_K^i f(x_i) - \ip{f,\pi_{K+1}} \Bigr|
\le I_1 + I_2,
\end{equation}
where
\begin{align}
I_1 &\defeq
\Bigl| \sum_{i=1}^N w_K^i f(x_i)
- \frac{1}{\E \tilde w_K^1}
  \sum_{i=1}^N R w_K^i f(x_i) \Bigr|, \\
I_2 &\defeq
\Bigl| \frac{1}{\E \tilde w_K^1}
  \sum_{i=1}^N \frac{1}{N} \tilde w_K^i f(x_i)
  - \ip{f,\pi_{K+1}} \Bigr|.
\end{align}

We observe that
\begin{align}
\label{e:I1-bound}
I_1
&= \Bigl| 1 - \frac{R}{\E \tilde w_K^1} \Bigr|
   \Bigl| \sum_{i=1}^N w_K^i f(x_i) \Bigr| \\
&\le
\Bigl| 1 - \frac{R}{\E \tilde w_K^1} \Bigr|
\norm{f}_\infty
= \Bigl| \frac{1}{N} \sum_{i=1}^N
\Bigl( \frac{\tilde w_K^i}{\E \tilde w_K^1} - 1 \Bigr) \Bigr|
\norm{f}_\infty ,
\\
\label{e:I2-bound}
I_2
&=
\Bigl|
\frac{1}{N} \sum_{i=1}^N
\Bigl(
\frac{\tilde w_K^i f(x_i)}{\E \tilde w_K^1}
- \ip{f,\pi_{K+1}}
\Bigr)
\Bigr|.
\end{align}

We recall that for any i.i.d.\ random variables $(Y_i)_{i=1}^N$ in $L^2(\P)$,
\begin{equation}\label{e:y-iid-sum-bias-variance}
\E\brak*{\Bigl( \frac{1}{N} \sum_{i=1}^N Y_i \Bigr)^2}
= \frac{1}{N} \var(Y_1)
+ \bigl( \E Y_1 \bigr)^2.
\end{equation}

Applying~\eqref{e:break-to-I1-I2} with $x_i = X_K^i$ and combining~\eqref{e:I1-bound}, \eqref{e:I2-bound}, and~\eqref{e:y-iid-sum-bias-variance}, we obtain that
    \begin{equation}
        \norm*{\sum_{i=1}^N w_K^i f(X_K^i) - \ip{f, \pi_{K+1}} }_{L^2(\P)} \leq I_3 + I_4\,,
    \end{equation}
    where
    \begin{align}
        I_3 &= \frac{1}{\sqrt{N}}\var\paren*{\frac{\tilde w_K^1 }{ \E\brak*{\tilde w_K^1} }}^\frac{1}{2} \norm{f}_\infty\,,\\
        I_4 &= \frac{1}{\sqrt{N}} \var\paren*{\frac{\tilde w_K^1 f(X_K^1)}{ \E\brak*{\tilde w_K^1} }}^\frac{1}{2} + \abs*{\frac{\E\brak*{\tilde w_K f(X_K)}}{\E\brak*{\tilde w_K}} - \ip*{f, \pi_{K+1}}}\,.
    \end{align}
    Choices of~$T$ and~$N$ in~\eqref{e:multiple-particles-Tchoice} and~\eqref{e:multiple-particles-Nchoice} imply that each of the three terms appearing in~$I_3$ and~$I_4$ is bounded above by~$\delta \norm{f}_\infty / 3$, which completes the proof for~\eqref{e:main-theorem-small-error}. To prove the closeness in Sobolev norm as in~\eqref{e:langevin-ais-convergence-sobolev}, we  first note that for any two probability measures~$\mu$ and~$\nu$ on~$\R^d$,
    \begin{equation}
        \norm{\mu-\nu}_{H^{-s}(\R^d)}^2 \defeq \int_{\R^d} \paren*{1+\abs{x}^2}^{-s}\abs*{\widehat{\mu-\nu} (x)}^2 dx \,,
    \end{equation}
    where for any tempered distribution~$\rho$ on~$\R^d$, $\hat \rho$ is defined as  
    \begin{equation}
        \widehat{\rho}(x) = \int_{\R^d} \exp\paren*{-2\pi i x \cdot y} \rho(dy)\,.
    \end{equation}
    Then,
    \begin{align}
        \E\brak*{\norm*{\mu_N - \pi_{K+1}}_{H^{-s}}^2} &= \int_{\R^d} \paren*{1+\abs{x}^2}^{-s} \E\brak*{ \abs*{\ip*{w_x, \mu_N - \pi_{K+1}}}^2 } \, dx \\
        &\overset{\eqref{e:main-theorem-small-error}}{\leq} 4\delta^2 \int_{\R^d}\paren*{1+\abs{x}^2}^{-s} \,dx = C_s^2\delta^2\,,
    \end{align}
    where~$w_x(y) \defeq \exp\paren*{-2\pi i x \cdot y}$, and~$C_s \defeq 2\paren*{\int_{\R^d} \paren*{1+\abs{x}^2}^{-s} \,dx }^{1/2} < \infty$ because~$s>d/2$. This completes the proof for~\eqref{e:langevin-ais-convergence-sobolev}.

\end{proof}

\section{The multi-well case}\label{s:multi-well}

In this section, we extend our results to potentials with any finite number of wells. We first state the assumptions and the main results, Proposition~\ref{p:single-particle-proposition-multiwell} and Theorem~\ref{t:main-theorem-multiwell}. As we explain below, the arguments of Sections~\ref{s:ef-bounds}--\ref{s:proof-main} carry over except for one step, the uniform bound~\eqref{e:psi-uniform-bound} on the low-lying eigenfunctions near the saddles of~$U$. We reduce this bound to two lemmas. Lemma~\ref{l:lyapunov-uniform-boundedness} derives it from a suitable supersolution, and Lemma~\ref{l:lyapunov-existence} constructs such a supersolution. We prove Lemma~\ref{l:lyapunov-uniform-boundedness} in Section~\ref{ss:proof-uniform-bound} and Lemma~\ref{l:lyapunov-existence} in Section~\ref{ss:proof-lyapunov-existence}. The latter relies on a perturbation of~$U$ near its saddles, Lemma~\ref{l:perturb-prop}, which we construct in Section~\ref{ss:higher-index}.

For the multi-well case we make the following assumptions, which are the natural extensions of Assumptions~\ref{a:criticalpts}--\ref{a:massRatioBound}. Assumption~\ref{a:U-growth-control} is unchanged.

\begin{assumption}\label{a:criticalpts-multiwell}
 The function~$U \in C^6(\R^d,\R)$ has a nondegenerate Hessian at all of its critical points and has exactly~$J\geq 2$ local minima, located at~$x_{\min, j}$, $j=1, \ldots, J$.
\end{assumption}

The basins of attraction~$\Omega_j$, $j=1,\ldots, J$, are defined as in~\eqref{e:basin-def}. We write
\begin{equation}\label{e:S-def}
    S \defeq \set*{x\in\R^d \st DU(x)=0 \text{ and } x \text{ is not a local minimum of } U}
\end{equation}
for the set of the remaining critical points of~$U$. These are the saddles of~$U$, of arbitrary Morse index, together with its local maxima. Note that no assumption on the indices is made. We record for later use that~$S$ is finite. Indeed, by~\eqref{a:DU-blowup} there is~$R_*>0$ with~$\abs{DU}\geq1$ outside~$B(0,R_*)$, so every critical point of~$U$ lies in the compact set~$\overline{B(0,R_*)}$, and by Assumption~\ref{a:criticalpts-multiwell} the critical points are nondegenerate, hence isolated.

\begin{assumption}\label{a:massRatioBound-multiwell}
  There exist~$\kappa \in [0,1/2)$ and a constant~$C_m \geq 1$ such that
   \begin{equation}\label{e:massRatioBound-multiwell}
     \min_{i\in \set{1,\ldots, J}}\inf_{\substack{
       \epsilon\in (\kappa, 1]\\
     }}
      \pi_{\epsilon}(\Omega_i)
	\geq \frac{1}{C_m^2}\,.
   \end{equation}
\end{assumption}

Under these assumptions we obtain the following extensions of Proposition~\ref{p:single-particle-proposition} and Theorem~\ref{t:main-theorem}.

\begin{proposition}[Bias and variance bounds for multi-well potential]\label{p:single-particle-proposition-multiwell}
Make the Assumptions~\ref{a:U-growth-control}, \ref{a:criticalpts-multiwell}, and~\ref{a:massRatioBound-multiwell}.
Then, for all~$\nu \in (0, 1/8)$ and~$\bar \epsilon \in (\kappa, 1]$, there exist constants~$C_w(\nu)
, C_{T, 1}(\nu)$, and~$C_{T,2}(\nu, \bar\epsilon)$ such that  the following holds. 
For any~$\underline\epsilon \in (\kappa, \bar\epsilon/2)$ and~$\delta\in (0,1]$, choose~$K$ and~$T$ as in~\eqref{e:single-particle-Kchoice} and~\eqref{e:single-particle-Tchoice}, respectively, and define the sequence of temperatures~$(\epsilon_k)_{k=1}^{K+1}$ such that
    \begin{equation}
        \bar\epsilon = \epsilon_1 > \epsilon_2 > \cdots > \epsilon_{K+1} = \underline\epsilon\,,
    \end{equation}
    and their inverse temperatures~$\set{1/\epsilon_k}_{k=1}^{K+1}$ are linearly spaced. Let~$\tilde w_K$ and~$X_K$ denote the output of Algorithm~\ref{a:aisLMC} run with these parameters.
    Then for any bounded test function~$f$, we have~\eqref{e:single-particle-bias-bound} and~\eqref{e:single-particle-variance-bound}.
\end{proposition}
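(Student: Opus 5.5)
The plan is to rerun the proof of Proposition~\ref{p:single-particle-proposition} from Section~\ref{s:proof-main} word for word. It used only three ingredients: Lemma~\ref{l:lyapunov-weighted-Linftybds-and-their-sums}, Lemma~\ref{l:bias-bound} and Lemma~\ref{l:squared-weight-bound}. I will check each one in the multi-well setting, with the single eigenfunction $\psi_{2,\epsilon}$ replaced by the block $\psi_{2,\epsilon},\dots,\psi_{J,\epsilon}$ of low-lying eigenfunctions.

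\textbf{Inputs that carry over essentially unchanged.}
\begin{enumerate}
\item Lemma~\ref{l:weighted-eigen-vanish-infty} and Lemma~\ref{l:V-lyapunov} use only Assumption~\ref{a:U-growth-control}, so they hold as stated.
\item The same is true of the eigenvalue counting in Lemma~\ref{l:ef-weighted-sum-Linfty-bds}, via the Cwikel--Lieb--Rozenblum and Lieb--Thirring inequalities.
\item Lemma~\ref{l:bias-bound} is unchanged.
\item The spectral input from \cite{Kolokoltsov00} becomes the following. There is $\Lambda>0$ with $\lambda_{i,\epsilon}\ge\Lambda$ for $i\ge J+1$. The eigenvalues $\lambda_{2,\epsilon},\dots,\lambda_{J,\epsilon}$ are $O(e^{-\gamma/\epsilon})$, with $\gamma$ below the smallest barrier.
\item Lemma~\ref{l:dEF-small} becomes a statement about $E_\epsilon=\mathrm{span}\{\one_{\Omega_j}\}_{j\le J}$ and $F_\epsilon=\mathrm{span}\{1,\psi_{2,\epsilon},\dots,\psi_{J,\epsilon}\}$. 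Its coefficients are bounded by $C_m$ through Assumption~\ref{a:massRatioBound-multiwell}.
\item Lemma~\ref{l:partition-function-estimates} holds with the same proof.
\end{enumerate}

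\textbf{The variance bound.} In Lemma~\ref{l:Theta1k-Theta2k-bounds}, I split the spectral decomposition~\eqref{e:Vinv-SD} into the modes $i\le J$, which go into $\Theta_{1,k}$, and the modes $i\ge J+1$, which go into $\Theta_{2,k}$. Accordingly, $S_{v,k}$ is redefined as a sum from $i=J+1$.

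Each $\Theta_{1,k}$ term with $2\le i\le J$ is bounded as before. The inner product $\abs{\ip{\psi_{i,k},\pi_{k+2}}}$ splits into an exponentially small residual, which is summable geometrically, plus $\sum_j \abs{a_{ij}}\,\abs{\pi_{k+2}(\Omega_j)-\pi_k(\Omega_j)}$. The latter is summable by the BV bound of~\cite[Lemma 8.2]{HanIyerEA26} applied to each $\Omega_j$.

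What this requires is the uniform bound $\norm{\psi_{i,\epsilon}}_V\le \bar C_\psi$ for $2\le i\le J$. This is the analogue of~\eqref{e:second-ef-weighted-norm-unif-ub}, and I refer to it as~\eqref{e:psi-uniform-bound}. With it in hand, the recurrence~\eqref{e:second-recrurrence} and the final assembly in Section~\ref{s:proof-main} go through verbatim. In the choice of $C_{T,2}$, $\lambda_{2,1}$ still controls $S_b$ at the fixed temperature $\bar\epsilon$, which is harmless since $\bar\epsilon$ is fixed.

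\textbf{The main obstacle: the uniform bound near saddles.} As in Lemma~\ref{l:eigenfunctions-weighted-Linfty-bdd}, the maximum principle reduces the problem to balls of radius $C\sqrt\epsilon$ around the critical points. Near the minima, the argument of~\cite{HanIyerEA26} applies. Near a critical point $s\in S$, the local maximum principle gives $\norm{\psi}_{L^\infty(B)}\lesssim \epsilon^{-d/4}\norm{\psi}_{L^2(\Omega_*)}$.

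Converting $L^2(dx)$ to $L^2(\pi_\epsilon)$ costs a factor $e^{(U(s)-\min U)/(2\epsilon)}$. Lemma~\ref{l:dEF-small} only compensates this when $U(s)$ lies below the smallest barrier. For high saddles this fails, exactly as flagged in the introduction.

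My plan for these saddles is to avoid the $L^2$ comparison altogether. I would build a bounded positive supersolution: a function $\Phi\ge 1$ on a neighbourhood $N$ of the high saddles, with $(\mathcal L_\epsilon+\lambda_{i,\epsilon})\Phi\le 0$ in $N$, uniformly in $\epsilon$. The comparison principle then bounds $\abs{\psi_{i,\epsilon}}/\Phi$ by its maximum on $\partial N$, and $\partial N$ lies in regions already controlled, namely near the minima, or where $\abs{DU}$ is large, which is handled by the first reduction.

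To construct $\Phi$, I would take $\Phi=e^{(U-\tilde U)/\epsilon}$-type, or $e^{-\tilde U/(2\epsilon)}$-type, weights built from a perturbation $\tilde U$ of $U$. Near each saddle, $\tilde U$ should have $s$ as a strict local maximum along all directions that matter. This makes $\mathcal L_\epsilon\Phi$ strictly negative of order $\abs{D\tilde U}^2/\epsilon$ away from $s$, which absorbs the exponentially small $\lambda_{i,\epsilon}$, and of order $-1$ in the $\sqrt\epsilon$-core. Here nondegeneracy makes $\Delta\tilde U$ have a definite sign.

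Making this perturbation compatible with saddles of arbitrary Morse index, while keeping $\tilde U$ close to $U$ in $C^2$, is where I expect most of the work. I would first try the scheme of \cite{MenzSchlichting14,Son26}: lower $U$ in the unstable directions by a quadratic cutoff.
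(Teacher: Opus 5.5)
Your reduction to the saddle bound~\eqref{e:psi-uniform-bound} and your variance bookkeeping are fine: the modes $2\le i\le J$ go into $\Theta_{1,k}$ and $S_{v,k}$ starts at $i=J+1$. The gap is in the comparison step meant to prove~\eqref{e:psi-uniform-bound}.

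\textbf{The boundary of $N$ is not controlled.} A bounded $N$ containing the saddles has boundary points outside $\bar S_1$, and there you have no uniform bound on $\psi_{i,\epsilon}$. The ``first reduction'' does not supply one. The maximum principle for $g_{i,\epsilon}=e^{-2\nu U}\psi_{i,\epsilon}$ only shows that $\sup\abs{g_{i,\epsilon}}$ is attained in $A_{\mathrm{small}}$, and $A_{\mathrm{small}}$ contains the saddle balls you are trying to control, so the argument is circular.

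The only a priori bound on such a boundary, say $\set{U=L}$, comes from the local maximum principle and the $L^2(\pi_\epsilon)$ normalization. It is exponentially large, $\abs{\psi_{i,\epsilon}}\le Ce^{(1+\delta)L/(2\epsilon)}$ as in~\eqref{e:ef-bound-around-big-ball}, and a $\Phi\ge1$ bounded uniformly in $\epsilon$ cannot absorb it. Two things are needed at once:
\begin{itemize}
\item the influence of $\set{U=L}$ on the saddle region must be exponentially small;
\item the influence of $\partial S_1$, where $\psi_{i,\epsilon}=O(1)$, must not be amplified.
\end{itemize}
No single weight $e^{G/\epsilon}$ achieves both, because it is exponentially larger at the saddles than on $\partial S_1$. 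Bootstrapping on $\sup\abs{g_{i,\epsilon}}$ only relaxes the first requirement to a fixed factor $2e^{2\nu(L-U(s))}$. Producing such a $\Phi$ for $\mathcal L_\epsilon+\lambda_{i,\epsilon}$ that stays comparable to its values on $\partial S_1$ is again an exit-time estimate.

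\textbf{How the paper closes this.} It uses optional stopping at $\tau=\tau_1\wedge\tau_2$, the exit time from $S_2\setminus\bar S_1$; see~\eqref{e:tau1-tau2-casing}.
\begin{itemize}
\item A strict supersolution $e^{G/\epsilon}$ with $Q(G)\le-\theta\epsilon$ gives $\E_x[e^{\theta\tau}]\le e^{\norm{G}_{\mathrm{osc}}/\epsilon}$ and $\P_x(\tau_2<\tau_1)\le e^{G(x)/\epsilon}$ (Lemma~\ref{l:tau-moment-hitting-prob}).
\item H\"older's inequality with exponent $\theta/\lambda_{i,\epsilon}$, effective because $\lambda_{i,\epsilon}\ll\epsilon$, turns these into $\E_x[e^{\lambda_{i,\epsilon}\tau}]=O(1)$ and $\E_x[e^{\lambda_{i,\epsilon}\tau}\one_{\set{\tau_2<\tau_1}}]\le Ce^{(1-\lambda_{i,\epsilon}/\theta)G(x)/\epsilon}$.
\item With $G=(\frac12+\delta)(\hat U-L)$ and $L>3\max_S U/\delta$, the last factor beats $e^{(1+\delta)L/(2\epsilon)}$. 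The exponent $\frac12+\delta$ must strictly exceed the $\frac12$ in the crude bound.
\end{itemize}

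\textbf{Your construction also fails as stated.}
\begin{itemize}
\item For $\Phi=e^{-\tilde U/(2\epsilon)}$ one has $\epsilon\,\mathcal L_\epsilon\Phi/\Phi=Q(-\tilde U/2)\approx\frac34\abs{DU}^2-\frac{\epsilon}{2}\Delta U>0$ away from critical points, so $\Phi$ is a subsolution.
\item $e^{(U-\tilde U)/\epsilon}$ is constant off the support of the perturbation, where $(\mathcal L_\epsilon+\lambda_{i,\epsilon})\Phi=\lambda_{i,\epsilon}\Phi>0$.
\end{itemize}
The correct sign is $e^{\alpha\hat U/\epsilon}$ with $\alpha=\frac12+\delta$. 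For this weight the gradient part of $Q$ is, up to $O(\delta)$ terms, $-\frac14(\abs{DU}^2-\abs{D\hat P}^2)$. It must therefore have $\abs{D\hat P}<\abs{DU}$ wherever it dominates $\epsilon\Delta\hat U=O(\epsilon)$, that is for $\abs{x-s}\gg\sqrt\epsilon$, which includes part of the support of $\hat P$. This constraint rules out both of your suggestions:
\begin{itemize}
\item Making $s$ a strict local maximum of $\tilde U$ means flipping the stable curvatures, which forces $\abs{D\hat P}>\abs{DU}$ in the stable directions.
\item Lowering $U$ in the unstable directions can change each unstable curvature by less than $\lambda_{u,l}$. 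It only gives $\Delta\tilde U(s)>\sum_i\lambda_i-2\sum_l\lambda_{u,l}$, which need not be negative.
\end{itemize}
What works, in Lemma~\ref{l:perturb-prop}, is to \emph{damp} the stable curvatures, $\lambda_i\mapsto\rho$ with $\rho>0$ small. Then $\Delta\hat U(s)=-\sum_l\lambda_{u,l}+(d-k)\rho<0$, while $\abs{DU}^2-\abs{D\hat P}^2\gtrsim\abs{x-s}^2$ survives. Keeping this inequality on a ball of $\epsilon$-independent radius is why the paper builds $\hat P$ in stable-manifold coordinates, with a logarithmic cutoff in the stable variable.
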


\begin{theorem}[Rapid convergence of Langevin AIS on~$\R^d$ for multi-well potential]
    \label{t:main-theorem-multiwell}
    Make the Assumptions~\ref{a:U-growth-control}, \ref{a:criticalpts-multiwell}, and~\ref{a:massRatioBound-multiwell}. Let~$\nu \in (0,1/8)$ and~$\bar\epsilon \in (\kappa, 1]$, and let~$C_w, C_{T, 1}$ and~$C_{T,2}$ be the constants from Proposition~\ref{p:single-particle-proposition-multiwell}. 
    Then, for any~$\underline\epsilon \in (\kappa, \bar\epsilon/2)$ and~$\delta \in (0,1]$, we choose~$K, T, N$ as in~\eqref{e:multiple-particles-Kchoice}--\eqref{e:multiple-particles-Nchoice}, respectively, 
    and define the sequence of temperatures~$(\epsilon_k)_{k=1}^{K+1}$ such that
    \begin{equation}
        \bar\epsilon = \epsilon_1 > \epsilon_2 > \cdots > \epsilon_{K+1} = \underline\epsilon\,,
    \end{equation}
    and their inverse temperatures~$\set{1/\epsilon_k}_{k=1}^{K+1}$ are linearly spaced. Perform~$N$ independent runs of Algorithm~\ref{a:aisLMC} with these parameters and let~$(X_K^1, \tilde w_K^1)$, \ldots, $(X_K^N, \tilde w_K^N)$ be the resulting points and unnormalized weights. Define the empirical measure~$\mu_N$ as in~\eqref{e:muN-wKi-def}.
    Then for any bounded test function~$f$, we have~\eqref{e:main-theorem-small-error}.
    Consequently, for any~$s > d/2$, we have~\eqref{e:langevin-ais-convergence-sobolev} with the same constant~$C_s$.
\end{theorem}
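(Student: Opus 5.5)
Theorem~\ref{t:main-theorem-multiwell} follows from Proposition~\ref{p:single-particle-proposition-multiwell}, which I take as given, by the same reduction that derived Theorem~\ref{t:main-theorem} from Proposition~\ref{p:single-particle-proposition}. That reduction never uses the number of wells. It only uses the single-run bias bound~\eqref{e:single-particle-bias-bound}, the variance bound~\eqref{e:single-particle-variance-bound}, and the independence of the $N$ runs.

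\textbf{Step 1: split the error.} Fix a bounded $f$ and set $R=\frac1N\sum_i\tilde w_K^i$. As in~\eqref{e:break-to-I1-I2}, I split
\[
\sum_i w_K^i f(X_K^i)-\ip{f,\pi_{K+1}} ,
\]
by the triangle inequality, into two terms. The first is $I_1$, bounded by $\bigl|\frac1N\sum_i(\tilde w_K^i/\E\tilde w_K^1-1)\bigr|\,\norm{f}_\infty$. Here I use that $\sum_i w_K^i=1$ and $w_K^i\ge0$. The second is $I_2$, the empirical mean of the i.i.d.\ variables $Y_i=\tilde w_K^if(X_K^i)/\E\tilde w_K^1-\ip{f,\pi_{K+1}}$.

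\textbf{Step 2: bound each term in $L^2(\P)$.} I apply~\eqref{e:y-iid-sum-bias-variance} to both terms.
\begin{itemize}
\item The mean of $I_1$ vanishes. By~\eqref{e:single-particle-variance-bound} with $f\equiv1$, its $L^2(\P)$ norm is at most $\sqrt{C_w/N}\,\norm{f}_\infty$.
\item For $I_2$, the variance part is likewise at most $\sqrt{C_w/N}\,\norm{f}_\infty$.
\item The bias part of $I_2$ is controlled by~\eqref{e:single-particle-bias-bound}.
\end{itemize}

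\textbf{Step 3: tune the parameters.} The choice~\eqref{e:multiple-particles-Tchoice} replaces $\log(1/\delta)$ by $\log(3/\delta)$ and $U(X_0)$ by $\max_iU(X_0^i)$. It therefore satisfies the hypothesis~\eqref{e:single-particle-Tchoice} of Proposition~\ref{p:single-particle-proposition-multiwell} for every run, with accuracy $\delta/3$. Hence the bias is at most $\delta\norm{f}_\infty/3$. The choice~\eqref{e:multiple-particles-Nchoice}, $N\ge 9C_w/\delta^2$, makes each variance term at most $\delta\norm{f}_\infty/3$. Summing the three terms gives~\eqref{e:main-theorem-small-error}. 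Since the initial points may differ between runs, I should check that $\max_iU(X_0^i)$ is the right quantity to use. It is, because the constants of the proposition do not depend on $X_0$ and the $T$ condition is monotone in $U(X_0)$.

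\textbf{Step 4: Sobolev bound.} I use the Fourier representation of the $H^{-s}(\R^d)$ norm and Fubini. For each frequency $x$ I apply~\eqref{e:main-theorem-small-error} to the real and imaginary parts of $w_x(y)=e^{-2\pi i x\cdot y}$. Each has sup norm at most $1$, which yields the factor $4\delta^2$. Integrating $(1+\abs x)^{-s}$ against this bound is finite precisely because $s>d/2$, and gives~\eqref{e:langevin-ais-convergence-sobolev} with the same $C_s$.

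No step is a real obstacle. The only care needed is checking that the multiple-particle choices of $T$ imply the single-particle hypotheses uniformly over all runs.
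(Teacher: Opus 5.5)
Your proposal is correct and is exactly the paper's argument: the paper derives Theorem~\ref{t:main-theorem-multiwell} from Proposition~\ref{p:single-particle-proposition-multiwell} verbatim as in the proof of Theorem~\ref{t:main-theorem}, via the same split with $R$, the identity~\eqref{e:y-iid-sum-bias-variance}, the $\log(3/\delta)$ and $9C_w/\delta^2$ choices making each of the three terms at most $\delta\norm{f}_\infty/3$, and Fourier--Fubini for the Sobolev bound (where the weight must be $(1+\abs{x}^2)^{-s}$, not $(1+\abs{x})^{-s}$ as you wrote, for $s>d/2$ to suffice). One caveat, shared with the paper: \eqref{e:y-iid-sum-bias-variance} and the normalization by $\E\tilde w_K^1$ need the runs to be identically distributed, i.e.\ a common initial point; if the $X_0^i$ genuinely differ, your monotonicity remark settles the condition on $T$ but not this, and you would also need that every $\E\tilde w_K^i$ is within relative error $O(\delta)$ of $Z_{K+1}/Z_1$, which is~\eqref{e:unnormalized-weight-after-long-time} in the proof of the proposition rather than in its statement, at the cost of worse constants.
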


Proposition~\ref{p:single-particle-proposition-multiwell} and Theorem~\ref{t:main-theorem-multiwell} follow once the key Lemmas~\ref{l:lyapunov-weighted-Linftybds-and-their-sums}--\ref{l:squared-weight-bound} are established in the multi-well setting, and for all but one step the double-well arguments of Sections~\ref{s:ef-bounds}--\ref{s:bias-variance} apply verbatim. The basic properties of the eigenfunctions, the eigenvalues, the partition function and the total variation of~$\epsilon\mapsto\pi_\epsilon(\Omega_j)$ --- that is, the analogues of Lemmas~\ref{l:dEF-small}--\ref{l:partition-function-estimates} and of~\eqref{e:well-mass-BV} --- all persist, and we record them in Lemmas~\ref{l:dEF-small-multiwell}--\ref{l:mass-BV-multiwell} for the reader's convenience. Their proofs are identical to the double-well case and are omitted. We first redefine the energy barrier~$\hat \gamma$ as
\begin{equation}\label{e:gammaHatDef-multiwell}
    \hat \gamma \defeq \min_{j\in\set{1,\ldots, J}} \Delta_j\,,\qquad \Delta_j \defeq \inf_{x\in \partial\Omega_j} \paren*{U(x)-U(x_{\min, j})}\,.
\end{equation}

\begin{lemma}\label{l:dEF-small-multiwell}
    Make the Assumptions~\ref{a:U-growth-control}, \ref{a:criticalpts-multiwell}, and~\ref{a:massRatioBound-multiwell}.    
    For any~$\gamma < \hat \gamma$, there exists~$C_\gamma$ such that for any~$\epsilon\in (\kappa, 1]$, the following holds. 
    Let
    \begin{equation}
        E_\epsilon=\mathrm{span}\set{\one_{\Omega_1},\ldots, \one_{\Omega_J} }\quad\text{and}\quad F_\epsilon=\mathrm{span}\set{1,\psi_{2,\epsilon}, \ldots, \psi_{J, \epsilon}}\,,
    \end{equation}
    and let~$P_{E_\epsilon}, P_{F_\epsilon}$ denote the orthogonal projections onto~$E_\epsilon, F_\epsilon$ in~$L^2(\pi_\epsilon)$, respectively. Then, the distance~$d(E_\epsilon, F_\epsilon)$ between~$E_\epsilon$ and~$F_\epsilon$, defined as in~\eqref{e:dEeFe-def},
    satisfies~\eqref{e:ef-almost-indicators}
    and consequently, for any~$i\in \set{1,\ldots, J}$,
        \begin{equation}\label{e:eigen-residual-exp-small-multiwell}
        \norm{(I-P_{E_\epsilon})\psi_{i,\epsilon}}_{L^2(\pi_\epsilon)} \leq C_\gamma \exp\paren*{-\frac{\gamma}{\epsilon}}\,.
    \end{equation}
    Moreover, for any~$i\in \set{1,\ldots, J}$ there exist constants~$a^{(i)}_{1,\epsilon}, \ldots,  a^{(i)}_{J,\epsilon}$ such that
    \begin{equation}\label{e:PEe-decompose-multiwell}
        P_{E_\epsilon}\psi_{i,\epsilon}
        =
        \sum_{j=1}^J a^{(i)}_{j,\epsilon}\one_{\Omega_j}
        \quad\text{and}\quad
        \abs{a^{(i)}_{j,\epsilon}}
        \leq
        C_m \quad\text{for all } j \in \set{1,\ldots,J}\,.
    \end{equation}
\end{lemma}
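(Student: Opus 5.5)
The plan is to follow the double-well proof of Lemma~\ref{l:dEF-small} in Appendix~\ref{s:appendix}, i.e.\ the Helffer--Sj\"ostrand style comparison between the quasimode space $E_\epsilon$ and the low-lying spectral subspace. The argument rests on the standard estimate for a spectral projection: if $\Pi$ denotes the spectral projection of $-\mathcal L_\epsilon$ onto $[0,\Lambda/2]$, then for any $u\in L^2(\pi_\epsilon)$,
\begin{equation*}
  \norm{(I-\Pi)u}_{L^2(\pi_\epsilon)}^2\le \frac{2}{\Lambda}\,\mathcal E_\epsilon(u,u),
  \qquad
  \mathcal E_\epsilon(u,u)=\epsilon\int\abs{Du}^2\,d\pi_\epsilon .
\end{equation*}
By the multi-well version of Lemma~\ref{l:eigenvalue-properties} (exactly $J$ exponentially small eigenvalues, the rest $\ge\Lambda$), $\Pi$ is the projection onto $F_\epsilon$.

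\textbf{Step 1 (quasimodes).} For each $j$, I take a smooth cutoff $\chi_j$ equal to $1$ on the sublevel component $\set{U\le U(x_{\min,j})+\Delta_j-\eta}\cap\Omega_j$ and supported in $\Omega_j$ minus an $\eta$-neighbourhood of $\partial\Omega_j$. Its gradient lives where $U\ge U(x_{\min,j})+\Delta_j-\eta$, so
\begin{equation*}
  \mathcal E_\epsilon(\chi_j,\chi_j)\lesssim \frac{e^{-(U(x_{\min,j})+\Delta_j-\eta)/\epsilon}}{Z_\epsilon}
  \lesssim \pi_\epsilon(\Omega_j)\,e^{-(\Delta_j-2\eta)/\epsilon}.
\end{equation*}
Here I use the Laplace asymptotics $\int_{\Omega_j}e^{-U/\epsilon}\gtrsim \epsilon^{d/2}e^{-U(x_{\min,j})/\epsilon}$ together with Assumption~\ref{a:massRatioBound-multiwell}. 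In the same way, $\norm{\one_{\Omega_j}-\chi_j}_{L^2(\pi_\epsilon)}\lesssim e^{-(\Delta_j-2\eta)/(2\epsilon)}$, using the growth of $U$ at infinity, \eqref{e:U-coercive}, to control unbounded basins. Choosing $\eta$ small in terms of $\hat\gamma-\gamma$ yields bounds of order $e^{-\gamma/\epsilon}$ after squaring appropriately; I would work with $\gamma$ replaced by $2\gamma$ in the exponent, which is the same bookkeeping as in the double-well appendix. For $\epsilon$ near $1$ (large $\epsilon$), all the constants are bounded and the statement is trivial after enlarging $C_\gamma$.

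\textbf{Step 2 (distance between subspaces).} The normalized indicators $e_j=\one_{\Omega_j}/\pi_\epsilon(\Omega_j)^{1/2}$ are orthonormal in $L^2(\pi_\epsilon)$, and Step~1 combined with the projection estimate gives $\norm{(I-P_{F_\epsilon})e_j}\le Ce^{-\gamma/\epsilon}$. This yields $d(E_\epsilon,F_\epsilon)$ bounded by a constant times $\sqrt J\,e^{-\gamma/\epsilon}$. Since $\dim E_\epsilon=\dim F_\epsilon=J$, the standard symmetry of the gap between equal-dimensional subspaces (valid once the distance is $<1$, which is automatic for small $\epsilon$) gives $d(F_\epsilon,E_\epsilon)=d(E_\epsilon,F_\epsilon)$, which is the quantity in \eqref{e:dEeFe-def}. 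Then \eqref{e:eigen-residual-exp-small-multiwell} follows by applying this bound to the unit vector $\psi_{i,\epsilon}\in F_\epsilon$.

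\textbf{Step 3 (coefficients).} By orthogonality,
\begin{equation*}
  a^{(i)}_{j,\epsilon}=\frac{\ip{\psi_{i,\epsilon},\one_{\Omega_j}}_{L^2(\pi_\epsilon)}}{\pi_\epsilon(\Omega_j)},
\end{equation*}
and Cauchy--Schwarz gives $\abs{a^{(i)}_{j,\epsilon}}\le \pi_\epsilon(\Omega_j)^{-1/2}\le C_m$ by Assumption~\ref{a:massRatioBound-multiwell}.

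The main obstacle is Step~1 in the multi-well geometry: the cutoffs must be built so that $\abs{D\chi_j}$ is supported only at energies at least $U(x_{\min,j})+\Delta_j-\eta$. This is delicate when $\partial\Omega_j$ is unbounded or contains higher-index saddles. I would handle it by intersecting with a large ball, using coercivity of $U$, and by working with the connected component of the sublevel set containing $x_{\min,j}$, whose closure stays inside $\Omega_j$ by the definition of $\Delta_j$.
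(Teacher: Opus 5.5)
Your Step~3, and the last sentence of Step~2 (apply the distance bound to the unit vector $\psi_{i,\epsilon}\in F_\epsilon$, then use Cauchy--Schwarz for $a^{(i)}_{j,\epsilon}$), are exactly the paper's argument. The paper does not prove the distance estimate itself. It quotes \cite[Chapter 8, Proposition 2.2]{Kolokoltsov00}, so the double-well appendix contains no ``bookkeeping'' for you to imitate. Your Steps~1--2 replace that citation by a quasimode argument, and the gap is in the exponent. Your own estimates give
\[
  \norm{(I-P_{F_\epsilon})e_j}_{L^2(\pi_\epsilon)}
  \le \Bigl(\frac{2}{\Lambda}\Bigr)^{1/2}
  \frac{\mathcal E_\epsilon(\chi_j,\chi_j)^{1/2}}{\pi_\epsilon(\Omega_j)^{1/2}}
  +\frac{\norm{\one_{\Omega_j}-\chi_j}_{L^2(\pi_\epsilon)}}{\pi_\epsilon(\Omega_j)^{1/2}}
  \lesssim e^{-(\Delta_j-2\eta)/(2\epsilon)}\,.
\]
The spectral estimate controls the \emph{square} of the residual by the energy, so $\mathcal E_\epsilon(\chi_j,\chi_j)\lesssim\pi_\epsilon(\Omega_j)e^{-(\Delta_j-2\eta)/\epsilon}$ enters only through its square root, and the cutoff error has the same size. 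What you actually prove is therefore $d(E_\epsilon,F_\epsilon)\le C_\gamma e^{-\gamma/(2\epsilon)}$ for $\gamma<\hat\gamma$. ``Squaring appropriately'' is not available, because $d(E_\epsilon,F_\epsilon)$ is linear in these residuals. Replacing $\gamma$ by $2\gamma$ gives the stated bound only for $\gamma<\hat\gamma/2$.

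No sharper argument can close this, because the stated bound is false for $\gamma\in(\hat\gamma/2,\hat\gamma)$. Take $d=1$ and $U(x)=(x^2-1)^2$. This satisfies all the assumptions with $J=2$, $\kappa=0$, $\Omega_1=(-\infty,0)$, $\Omega_2=(0,\infty)$ and $\hat\gamma=1$. Then $\psi_{2,\epsilon}$ is odd, so $\psi_{2,\epsilon}(0)=0$ and $P_{E_\epsilon}\psi_{2,\epsilon}=a(\one_{\Omega_2}-\one_{\Omega_1})$ with $a^2=1-\norm{(I-P_{E_\epsilon})\psi_{2,\epsilon}}_{L^2(\pi_\epsilon)}^2$. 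A test function gives $\lambda_{2,\epsilon}\lesssim e^{-1/\epsilon}$, and $Z_\epsilon\asymp\sqrt\epsilon$, so for small $x>0$,
\[
  \abs{\psi_{2,\epsilon}(x)}^2
  \le\Bigl(\int_0^x\abs{\psi_{2,\epsilon}'}^2e^{-U/\epsilon}\Bigr)\Bigl(\int_0^xe^{U/\epsilon}\Bigr)
  \le\frac{Z_\epsilon\lambda_{2,\epsilon}}{\epsilon}\,x\,e^{1/\epsilon}
  \lesssim\frac{x}{\sqrt\epsilon}\,.
\]
Hence $\abs{\psi_{2,\epsilon}}\le1/4$ on $(0,c\sqrt\epsilon)$. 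At the same time $\abs a\ge1/\sqrt2$ unless $\norm{(I-P_{E_\epsilon})\psi_{2,\epsilon}}_{L^2(\pi_\epsilon)}$ is already of order one. Since $\pi_\epsilon((0,c\sqrt\epsilon))\gtrsim e^{-1/\epsilon}$, this gives $d(E_\epsilon,F_\epsilon)\ge\norm{(I-P_{E_\epsilon})\psi_{2,\epsilon}}_{L^2(\pi_\epsilon)}\gtrsim e^{-1/(2\epsilon)}$.

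So the half exponent your quasimodes produce is the true rate. You should state the lemma and \eqref{e:eigen-residual-exp-small-multiwell} with $e^{-\gamma/(2\epsilon)}$. With that change, your Steps~1--3, together with the gap $\lambda_{J+1,\epsilon}\ge\Lambda$ from Lemma~\ref{l:eigenvalue-properties-multiwell}, form a complete proof. The corrected form still suffices for the bound on $I_1$ in Lemma~\ref{l:Theta1k-Theta2k-bounds}. It does not suffice for \eqref{e:orthogonal-estimate}, where the paper takes $\gamma=3\hat\gamma/4$ to beat the factor $e^{\hat\gamma/\epsilon}$.
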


\begin{lemma}[{\cite[Chapter 8, Proposition 2.2]{Kolokoltsov00}}]\label{l:eigenvalue-properties-multiwell}
    Make the Assumptions~\ref{a:U-growth-control}, \ref{a:criticalpts-multiwell}, and~\ref{a:massRatioBound-multiwell}. Then there exists~$\Lambda>0$ such that, for any~$\epsilon \in (\kappa, 1]$ and~$i\geq J+1$, 
    \begin{equation}
        \lambda_{i, \epsilon} \geq \Lambda\,.
    \end{equation}
    Moreover, for any~$\gamma < \hat\gamma$, there exists~$\tilde C$ such that for any~$\epsilon \in (\kappa, 1]$ and~$i\in \set*{1, \ldots, J}$,
    \begin{equation}
        \lambda_{i,\epsilon} \leq \tilde C \exp\paren*{-\frac{\gamma}{2\epsilon}}\,.
    \end{equation}
\end{lemma}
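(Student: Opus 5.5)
The plan is to prove the two assertions separately, and in each case to split the temperature range into a low-temperature regime $\epsilon\in(\kappa,\epsilon_0]$ and a high-temperature regime $\epsilon\in(\epsilon_0,1]$. Throughout I work with the Schr\"odinger operator $\mathcal H_\epsilon$ from~\eqref{e:schrodinger-def}. Via the ground-state transformation, $\lambda_{i,\epsilon}=\epsilon\,\lambda_i(\mathcal H_\epsilon)$, so $\lambda_{i,\epsilon}$ is the $i$-th eigenvalue of $\epsilon^2\mathcal H_\epsilon=-\epsilon^2\Delta+\tfrac14\abs{DU}^2-\tfrac\epsilon2\Delta U$, divided by $\epsilon$. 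Assumption~\ref{a:U-growth-control} makes the potential confining, so the spectrum is discrete, and the min-max principle applies throughout.

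\textbf{Upper bound for $i\le J$.} I use min-max for the Dirichlet form $\mathcal E_\epsilon(f)=\epsilon\int\abs{Df}^2\,d\pi_\epsilon$ on $L^2(\pi_\epsilon)$. Fix $\gamma<\hat\gamma$ and pick $\gamma'\in(\gamma,\hat\gamma)$. For each $j$, let $\chi_j$ be a smooth cutoff that equals $1$ on the connected component containing $x_{\min,j}$ of $\{U<U(x_{\min,j})+\gamma'\}$. By the definition~\eqref{e:gammaHatDef-multiwell} this component stays inside $\Omega_j$, away from $\partial\Omega_j$. I take $\chi_j$ supported in a slightly larger sublevel component, still contained in $\Omega_j$, so the $\chi_j$ have disjoint supports. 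On $\operatorname{supp} D\chi_j$ we have $U\ge U(x_{\min,j})+\gamma'$.

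I then estimate the two quantities entering the Rayleigh quotient:
\begin{itemize}
\item $\mathcal E_\epsilon(\chi_j)\le C Z_\epsilon^{-1}e^{-(U(x_{\min,j})+\gamma')/\epsilon}$;
\item $\norm{\chi_j}^2_{L^2(\pi_\epsilon)}\ge cZ_\epsilon^{-1}\epsilon^{d/2}e^{-U(x_{\min,j})/\epsilon}$, by a Laplace estimate near $x_{\min,j}$.
\end{itemize}
Since the supports are disjoint, the $\chi_j$ are orthogonal and the forms are diagonal on their span. The Rayleigh quotient on this $J$-dimensional span is therefore at most $C\epsilon^{-d/2}e^{-\gamma'/\epsilon}\le \tilde C e^{-\gamma/\epsilon}$, which is stronger than the stated bound $\tilde Ce^{-\gamma/(2\epsilon)}$. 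For $\epsilon\in(\epsilon_0,1]$ the same test functions give a Rayleigh quotient bounded uniformly, and enlarging $\tilde C$ covers this range.

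\textbf{Lower bound for $i\ge J+1$, small $\epsilon$.} Here I follow the harmonic approximation, as in~\cite[Chapter 8]{Kolokoltsov00} or Helffer--Sj\"ostrand. Take an IMS partition of unity $\sum_c\theta_c^2+\theta_\infty^2=1$, with $\theta_c$ supported in balls of radius $\epsilon^{2/5}$ around each critical point $c$, and $\theta_\infty$ supported away from all critical points.
\begin{itemize}
\item On $\operatorname{supp}\theta_\infty$, we have $\abs{DU}^2\ge c\,\epsilon^{4/5}$ near the critical points and $\abs{DU}^2\ge c$ farther out. Assumption~\eqref{a:DU^2-dominates-D^2U} controls $\epsilon\Delta U$ relative to $\abs{DU}^2$ at infinity. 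Together these give $\epsilon^2\mathcal H_\epsilon\ge c\,\epsilon^{4/5}$ there, that is, $\lambda\gtrsim\epsilon^{-1/5}$.
\item Near each critical point, $\epsilon^2\mathcal H_\epsilon$ agrees up to $O(\epsilon^{6/5})$ with the harmonic oscillator $-\epsilon^2\Delta+\tfrac14\abs{D^2U(c)x}^2-\tfrac\epsilon2\tr D^2U(c)$. Its spectrum is $\epsilon\big(\sum_k\abs{\mu_k}(n_k+\tfrac12)-\tfrac12\sum_k\mu_k\big)$, where $\mu_k$ are the Hessian eigenvalues.
\item At a minimum, the lowest level is $0$ and the next is $\ge\epsilon\min\mu_k$. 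At a saddle or maximum, the lowest level is already $\ge\epsilon\sum_{\mu_k<0}\abs{\mu_k}>0$.
\end{itemize}
The IMS localization error is $O(\epsilon^2\epsilon^{-4/5})=o(\epsilon)$. Hence $\epsilon^2\mathcal H_\epsilon\ge\epsilon\Lambda_0$ on the orthogonal complement of a $J$-dimensional space, namely the localized ground states at the minima. Min-max then gives $\lambda_{J+1,\epsilon}\ge\Lambda_0/2$ for $\epsilon\le\epsilon_0$.

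\textbf{Lower bound, $\epsilon\in[\epsilon_0,1]$, and the main obstacle.} This is the step I expect to be the main obstacle, because for fixed $\epsilon$ there is no a priori reason for $\lambda_{J+1,\epsilon}$ to be positive. I would first try the following route. Use only that $\lambda_{J+1,\epsilon}\ge\lambda_{2,\epsilon}>0$ for each $\epsilon$, which holds because $\pi_\epsilon$ has positive density, so the kernel of $\mathcal L_\epsilon$ is the constants. Then show that $\epsilon\mapsto\lambda_{2,\epsilon}$ is continuous on $[\epsilon_0,1]$, so its positive minimum serves as $\Lambda$ in this range. For continuity, I unitarily conjugate to $L^2(dx)$ and compare the quadratic forms of $\epsilon^2\mathcal H_\epsilon$ for nearby $\epsilon$. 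The difference is $(\epsilon^2-\epsilon'^2)\abs{D\phi}^2-\tfrac{\epsilon-\epsilon'}2\Delta U\,\phi^2$, and by~\eqref{a:DU-exp-growth-at-most} this is form-bounded by $C\abs{\epsilon-\epsilon'}$ times the form itself plus a constant. The min-max principle then gives continuity of each eigenvalue in $\epsilon$.
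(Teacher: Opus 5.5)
Your proof is correct, but it follows a different route from the paper. The paper gives no argument: it imports the statement from \cite[Chapter 8, Proposition 2.2]{Kolokoltsov00} and asserts that it carries over to $\R^d$ under Assumption~\ref{a:U-growth-control}. You instead give a self-contained proof in three pieces, and each of them checks out.
\begin{enumerate}
\item The upper bound uses a Rayleigh quotient built from cutoffs of the components of $\{U<U(x_{\min,j})+\gamma'\}$. These components sit compactly inside the disjoint basins because $\gamma'<\hat\gamma\le\Delta_j$, and each contains only the minimum $x_{\min,j}$.
\item For small $\epsilon$, you use Simon's IMS harmonic approximation at scale $\epsilon^{2/5}$, with localization error $O(\epsilon^{6/5})=o(\epsilon)$.
\item On $[\epsilon_0,1]$, you combine a relative form bound with compactness, which gives Lipschitz continuity of each eigenvalue in $\epsilon$.
\end{enumerate}
What your route buys: the upper bound comes out as $\tilde C e^{-\gamma/\epsilon}$, stronger than stated, and it does not use Assumption~\ref{a:massRatioBound-multiwell}. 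The continuity step also makes explicit the uniformity over the whole range $(\kappa,1]$, whereas a semiclassical result like the cited one is asymptotic as $\epsilon\to0$. What the citation buys is brevity, and the same proposition also supplies the eigenfunction localization of Lemma~\ref{l:dEF-small-multiwell}.

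Two small corrections. First, the step you flag as the main obstacle is not one: $\lambda_{J+1,\epsilon}\ge\lambda_{2,\epsilon}>0$ by discreteness of the spectrum and connectedness of $\R^d$, as you then use yourself. The only real content there is the continuity of $\epsilon\mapsto\lambda_{2,\epsilon}$. Second, the relative form bound on $[\epsilon_0,1]$ needs \eqref{a:DU^2-dominates-D^2U}, not only \eqref{a:DU-exp-growth-at-most}. It is what guarantees $\tfrac14|DU|^2-\tfrac{\epsilon}{2}\Delta U\ge\tfrac18|DU|^2-C$ for all $\epsilon\le1$, and this is what lets you control $\int|DU|^2\phi^2$ and $\int|D\phi|^2$ by the form and identify the form domains.
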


\begin{lemma}\label{l:partition-function-estimates-multiwell}
    Make the Assumptions~\ref{a:U-growth-control}, \ref{a:criticalpts-multiwell}, and~\ref{a:massRatioBound-multiwell}. Then there exists a constant~$C_Z(d,U, C_m)$ such that, for any~$\epsilon \in (\kappa, 1]$ and any~$j\in\set{1, \ldots, J}$,
    \begin{equation}
        C_Z^{-1}\epsilon^\frac{d}{2} \leq Z_\epsilon \leq C_Z \epsilon^\frac{d}{2}\exp\paren*{-\frac{U(x_{\min, j})}{\epsilon}}\,.
    \end{equation}
\end{lemma}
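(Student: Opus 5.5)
The statement to prove is Lemma~\ref{l:partition-function-estimates-multiwell}, the two-sided bound
\[
  C_Z^{-1}\epsilon^{d/2} \leq Z_\epsilon \leq C_Z \epsilon^{d/2}\exp\paren*{-U(x_{\min,j})/\epsilon}
  \qquad\text{for every } j .
\]
I would argue as in the double-well Lemma~\ref{l:partition-function-estimates}, combining a Laplace-type lower bound near the global minimum with the mass Assumption~\ref{a:massRatioBound-multiwell} for the upper bound.

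\emph{Lower bound.} After the normalization $U(x_{\min,1})=0$, the Hessian at $x_{\min,1}$ is nondegenerate. Taylor's theorem therefore gives $U(x)\le C|x-x_{\min,1}|^2$ on the unit ball around $x_{\min,1}$. Restricting the integral to the ball of radius $\sqrt\epsilon$, we get
\[
Z_\epsilon \ge \int_{B(x_{\min,1},\sqrt\epsilon)} e^{-C|x-x_{\min,1}|^2/\epsilon}\,dx \ge e^{-C}\,\abs{B(0,1)}\,\epsilon^{d/2},
\]
uniformly for $\epsilon\in(0,1]$.

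\emph{Upper bound.} Fix $j$ and use $Z_\epsilon = \tilde\pi_\epsilon(\Omega_j)/\pi_\epsilon(\Omega_j)\le C_m^2\,\tilde\pi_\epsilon(\Omega_j)$ for $\epsilon\in(\kappa,1]$. It then suffices to show
\[
\int_{\Omega_j} e^{-U/\epsilon}\,dx \le C\epsilon^{d/2}e^{-U(x_{\min,j})/\epsilon}.
\]
I would split $\Omega_j$ into three pieces.
\begin{itemize}
\item[(a)] A small ball $B(x_{\min,j},r)$, on which $U\ge U(x_{\min,j})+c|x-x_{\min,j}|^2$ by nondegeneracy. Gaussian integration gives $C\epsilon^{d/2}e^{-U(x_{\min,j})/\epsilon}$.
\item[(b)] The compact remainder $\Omega_j\cap B(0,R)\setminus B(x_{\min,j},r)$. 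Here $U\ge U(x_{\min,j})+\eta$ for some $\eta>0$, because $x_{\min,j}$ is the unique minimizer of $U$ on $\overline{\Omega_j}$. The contribution is at most $C e^{-(U(x_{\min,j})+\eta)/\epsilon}\le C'\epsilon^{d/2}e^{-U(x_{\min,j})/\epsilon}$.
\item[(c)] The exterior $\abs{x}>R$, handled with the coercivity estimate~\eqref{e:U-coercive}. Taking $R$ large so that $D_1R-D_2\ge U(x_{\min,j})+1$, the contribution is bounded by $e^{-(U(x_{\min,j})+1)/(2\epsilon)}\int e^{-(D_1|x|-D_2)/(2\epsilon)}dx$ type terms. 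These are again $O(\epsilon^{d/2}e^{-U(x_{\min,j})/\epsilon})$ after absorbing powers of $\epsilon$ into the exponential gap.
\end{itemize}

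The main obstacle is step (b): showing that $U$ on $\overline{\Omega_j}$ exceeds $U(x_{\min,j})$ by a fixed margin away from $x_{\min,j}$. The gradient flow only decreases $U$ along trajectories to $x_{\min,j}$, so $U\ge U(x_{\min,j})$ on $\Omega_j$. Strictness up to the boundary needs care: boundary points flow to saddles $s\in S$, and their values satisfy $U(s)>U(x_{\min,j})$ since $s$ is not a minimum lying on the flow. If this margin argument gets messy, I would instead drop the restriction to $\Omega_j$ and bound $\int_{\R^d}e^{-U/\epsilon}$ directly near every minimum. Choosing $j$ with the smallest value $U(x_{\min,j})$ suffices for the bound with $j=1$, and for other $j$ the $\Omega_j$ restriction is genuinely needed.
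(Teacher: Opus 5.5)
Your route is the paper's. The paper also writes $Z_\epsilon=\tilde\pi_\epsilon(\Omega_j)/\pi_\epsilon(\Omega_j)\le C_m^2\,\tilde\pi_\epsilon(\Omega_j)$ for the upper bound and uses the well of the global minimum for the lower bound. The only difference is that it quotes the Laplace asymptotics $\int_{\Omega_i}e^{-U/\epsilon}\,dx\sim(2\pi\epsilon)^{d/2}e^{-U(x_{\min,i})/\epsilon}/\sqrt{\det D^2U(x_{\min,i})}$, plus continuity on $[\epsilon_0,1]$, whereas you prove the needed one-sided bounds by hand. Your lower bound via the ball of radius $\sqrt\epsilon$, and your steps (a) and (c), are fine.

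The step that is not yet a proof is the margin in (b), which is exactly the hypothesis the paper leaves implicit in its Laplace-method citation. Knowing that a point of $\partial\Omega_j$ flows to some $s\in S$ that is not a local minimum does not by itself exclude $U(s)=U(x_{\min,j})$. As you note, your fallback only treats the global minimum, so for $j\neq 1$ the argument currently rests on this unproved claim.

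The missing idea is to count the energy dissipated along a flow line; this also makes the boundary analysis unnecessary.
\begin{itemize}
\item Take $r$ so small that, in addition to (a), $|DU|\ge c>0$ on the annulus $A=\{r/4\le|x-x_{\min,j}|\le r/2\}$. This is possible because nondegenerate critical points are isolated.
\item Let $y\in\Omega_j$ with $|y-x_{\min,j}|\ge r$. The trajectory of $\dot y_t=-DU(y_t)$ from $y$ converges to $x_{\min,j}$. It therefore passes from the outer to the inner sphere of $A$ along an arc of length at least $r/4$ contained in $A$.
\item Since $\frac{d}{dt}U(y_t)=-|DU(y_t)|^2=-|DU(y_t)|\,|\dot y_t|$, the energy drops by at least $cr/4$ along that arc. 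Moreover $U(y_t)$ is nonincreasing with limit $U(x_{\min,j})$.
\end{itemize}
Hence $U\ge U(x_{\min,j})+cr/4$ on all of $\Omega_j\setminus B(x_{\min,j},r)$. This is the margin $\eta$ you need in (b). It is obtained directly on $\Omega_j$, without passing to $\overline{\Omega_j}$, using compactness, or analyzing the saddles.
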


Repeating the proof of~\cite[Proposition 10.3(4)]{HanIyerEA26} on~$\R^d$ gives the following.

\begin{lemma}\label{l:mass-BV-multiwell}
    Make the Assumptions~\ref{a:U-growth-control}, \ref{a:criticalpts-multiwell}, and~\ref{a:massRatioBound-multiwell}. Then, there exists a constant~$C_\mathrm{BV}$ such that
    \begin{equation}
        \max_{j\in \set{1, \ldots, J}}\sup_{\eta\in (0,1)} \int_\eta^1 \abs{\partial_{\epsilon'}\pi_{\epsilon'}(\Omega_j)}\,d\epsilon' \leq C_\mathrm{BV}\,.
    \end{equation}
\end{lemma}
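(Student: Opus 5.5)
The plan is to repeat the argument of \cite[Proposition 10.3(4)]{HanIyerEA26} (equivalently, the double-well estimate~\eqref{e:well-mass-BV} from \cite[Lemma 8.2]{HanIyerEA26}), checking that compactness of~$\T^d$ enters only through integrability. We differentiate in the temperature. Writing~$\beta = 1/\epsilon'$, we have
\begin{equation}
  \partial_{\epsilon'} \pi_{\epsilon'}(\Omega_j)
  = \frac{1}{\epsilon'^2}\Bigl(\ip{U\one_{\Omega_j}, \pi_{\epsilon'}} - \pi_{\epsilon'}(\Omega_j)\ip{U, \pi_{\epsilon'}}\Bigr)
  = \frac{1}{\epsilon'^2}\,\mathrm{Cov}_{\pi_{\epsilon'}}\bigl(U, \one_{\Omega_j}\bigr)\,.
\end{equation}
Differentiation under the integral is justified on~$\R^d$ by the coercivity~\eqref{e:U-coercive}, which makes~$U^k e^{-U/\epsilon'}$ integrable, uniformly for~$\epsilon'$ in compact subsets of~$(0,1]$.

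It therefore suffices to show that~$\abs{\mathrm{Cov}_{\pi_{\epsilon'}}(U,\one_{\Omega_j})} \leq C\epsilon'^2$ for small~$\epsilon'$ up to an integrable error; on~$[\epsilon_0,1]$ the derivative is trivially bounded. For small~$\epsilon'$ we perform a Laplace expansion on each basin. Split~$\R^d$ into small balls~$B(x_{\min,i}, r)$ around the minima and the remainder. On the remainder, $U \geq \min_i U(x_{\min,i}) + c$ except near the global-minimum level, so its contribution to both~$Z_{\epsilon'}$ and the moments is~$O(e^{-c/\epsilon'})$ relative to the main terms. Here the tail~$\abs{x}\to\infty$ is controlled by~\eqref{e:U-coercive}, and the lower bound on~$Z_{\epsilon'}$ comes from Lemma~\ref{l:partition-function-estimates-multiwell}.

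Near each minimum, the nondegenerate Hessian gives the standard expansions
\begin{equation}
  \int_{B(x_{\min,i},r)} e^{-U/\epsilon'}\,dx = m_i\,\epsilon'^{d/2}e^{-U_i/\epsilon'}\bigl(1+a_i\epsilon'+O(\epsilon'^2)\bigr)\,,
\end{equation}
where~$U_i = U(x_{\min,i})$, with a matching expansion for the integral against~$U-U_i$, whose leading term is~$\tfrac d2\epsilon'$. The regularity~$U\in C^6$ suffices for these expansions.

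With these expansions in hand, write~$p_i(\epsilon')$ for the basin masses. The covariance equals
\begin{equation}
  \sum_i p_i(\delta_{ij}-p_j)\bigl(U_i + \tfrac d2\epsilon' + O(\epsilon'^2)\bigr)\,,
\end{equation}
up to exponentially small terms. The crucial point is that the~$U_i$-terms do not cancel when the well depths differ. In that case, however, the weights~$p_i \propto e^{-(U_i-U_1)/\epsilon'}$ make the factor~$p_i(1-p_i)(U_i-U_1)/\epsilon'^2$ integrable in~$\epsilon'$: substituting~$s = \Delta/\epsilon'$ gives~$\int e^{-s}\,ds/\Delta$-type bounds. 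When the depths are equal, only the~$\tfrac d2\epsilon'$ and~$O(\epsilon'^2)$ terms remain. Their~$\epsilon'$-linear parts again combine through~$\delta_{ij}-p_j$, with~$p_i$ converging to limits at rate~$O(\epsilon')$, so the covariance is~$O(\epsilon'^2)$ and~$\partial_{\epsilon'}\pi_{\epsilon'}(\Omega_j)$ is bounded.

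The main obstacle I expect is organizing this cleanly for an arbitrary number of wells with mixed depths. The approach I would try is to show that each~$p_i(\epsilon')$ is a finite sum of functions of the form~$\epsilon' \mapsto g(\epsilon')\,e^{-c/\epsilon'}$ with~$g$ smooth and bounded. Each such function is a piecewise-monotone function with boundedly many pieces and bounded values. The total variation then follows, and it is independent of~$\eta$.
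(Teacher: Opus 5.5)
The paper proves this only by pointing to \cite[Proposition~10.3(4)]{HanIyerEA26} repeated on $\R^d$. Your main line supplies a correct, self-contained argument. It consists of $\partial_\epsilon\pi_\epsilon(\Omega_j)=\epsilon^{-2}\mathrm{Cov}_{\pi_\epsilon}(U,\one_{\Omega_j})$, coercivity~\eqref{e:U-coercive} to differentiate under the integral and control tails, and the basinwise Laplace expansion $\mu_i\defeq\ip{U\one_{\Omega_i},\pi_\epsilon}/\pi_\epsilon(\Omega_i)=U_i+\tfrac d2\epsilon+O(\epsilon^2)$.

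Two corrections are needed.

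\begin{enumerate}
\item In the equal-depth case, the $\tfrac d2\epsilon$ terms cancel exactly because the coefficient $\tfrac d2$ is the same for every well. The rate at which the $p_i$ converge plays no role: if the linear coefficients differed, the covariance would be of order $\epsilon$ regardless.
\item Your final paragraph should be discarded. It is false that $g(\epsilon)e^{-c/\epsilon}$ with $g$ smooth and bounded on $(0,1]$ is piecewise monotone with boundedly many pieces, or even of bounded variation. Take $c=0$ and $g(\epsilon)=\epsilon\sin(1/\epsilon)$. What controls the variation is $\int\abs{g'}$, which is the Laplace estimate again, not boundedness.
\end{enumerate}

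More to the point, mixed depths are not an obstacle and need no extra organization. The basins exhaust $\R^d$ up to the null union of stable manifolds of the points of~$S$. So, with $p_i=\pi_\epsilon(\Omega_i)$, you have the exact identity
\[
  \mathrm{Cov}_{\pi_\epsilon}(U,\one_{\Omega_j})=p_j\sum_{i=1}^J p_i\,(\mu_j-\mu_i)\,.
\]
Pairs with $U_i=U_j$ contribute $O(\epsilon^2)$. For $\Delta_{ij}\defeq\abs{U_i-U_j}>0$, the two-sided Laplace bounds on the basin integrals give $p_ip_j\le Ce^{-\Delta_{ij}/\epsilon}$, while $\abs{\mu_j-\mu_i}\le\Delta_{ij}+C\epsilon^2$. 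Hence, for small $\epsilon$,
\[
  \abs{\partial_\epsilon\pi_\epsilon(\Omega_j)}\le C+C\sum_{i\colon\Delta_{ij}>0}\frac{\Delta_{ij}}{\epsilon^2}\,e^{-\Delta_{ij}/\epsilon}\,.
\]
Since $\int_0^1\Delta\,\epsilon^{-2}e^{-\Delta/\epsilon}\,d\epsilon=e^{-\Delta}\le1$, this gives the bound uniformly in $\eta$ and for any pattern of well depths. In fact, Assumption~\ref{a:massRatioBound-multiwell} is not needed for this lemma.
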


With Lemmas~\ref{l:dEF-small-multiwell}--\ref{l:mass-BV-multiwell} at hand, the only step that does not carry over directly is the uniform-in-$\epsilon$ bound, at small temperatures, on the eigenfunctions~$\psi_{i, \epsilon}$ with~$i=2, \ldots, J$ associated with the low-lying eigenvalues, namely the step labeled `\textbf{The case~$i=2$, small~$\epsilon \in (\kappa, \tilde\epsilon]$}' in the proof of Lemma~\ref{l:eigenfunctions-weighted-Linfty-bdd}. The remainder of this section is devoted to recovering that bound.

To see why, note that in the multi-well case the barrier~$\hat \gamma$ becomes~\eqref{e:gammaHatDef-multiwell} and~\eqref{e:eigen-residual-exp-small-multiwell}, the analogue of~\eqref{e:eigen-residual-exp-small}, still holds, but the estimate~\eqref{e:orthogonal-estimate} may fail for a saddle~$s$ such that
\begin{equation}
  U(s) - \max_{i\in \set{1,\ldots, J}} U(x_{\min, i}) - 2 \hat \gamma  > 0\,.
\end{equation}
This cannot happen in the double-well case, since there
\begin{equation}
    U(s)-\max_{i\in\set{1,2}}U(x_{\min, i}) \overset{\eqref{e:Upositive}}{=} U(0)-U(x_{\min, 2}) \overset{\eqref{e:gammaHatDef}}{=} \hat\gamma\,,    
\end{equation}
but it can happen in the multi-well case when some saddle lies far above the smallest barrier~$\hat\gamma$.

Consequently, it suffices to prove that there exist~$\tilde \epsilon \in (0,1]$ and~$C_{\psi, 2}>0$ such that
\begin{equation}\label{e:psi-uniform-bound}
    \sup_{\epsilon \in (\kappa, \tilde\epsilon]} \max_{i\in \set{1, \ldots, J}}\norm{\psi_{i, \epsilon}}_{L^\infty(\tilde S_1)} \leq C_{\psi, 2} \,, 
\end{equation}
where~$S$ is the set of non-minimum critical points of~$U$, as in Assumption~\ref{a:criticalpts-multiwell}, and
\begin{equation}\label{e:tildeS1-def}
    \tilde S_1 \defeq \bigcup_{s\in S} B\paren*{s, C_A\sqrt{\epsilon}}\,.
\end{equation}
Indeed, given~\eqref{e:psi-uniform-bound}, the rest of the proof of Lemma~\ref{l:eigenfunctions-weighted-Linfty-bdd} goes through, and the proofs of the key Lemmas~\ref{l:lyapunov-weighted-Linftybds-and-their-sums}--\ref{l:squared-weight-bound} remain unchanged. Together, these prove Proposition~\ref{p:single-particle-proposition-multiwell} and Theorem~\ref{t:main-theorem-multiwell}.

We state two lemmas which together yield~\eqref{e:psi-uniform-bound}. The first converts a suitable supersolution into the desired bound, and the second constructs such a supersolution. To state them we introduce, for a~$C^2$ function~$G$ on~$\R^d$, the functional
\begin{equation}
    Q(G) \defeq \epsilon \Delta G - DU \cdot DG + \abs{DG}^2\,.
\end{equation}
The quantity~$Q(G)$ is normalized so that~$\mathcal L_\epsilon \exp(G/\epsilon) = \epsilon^{-1}Q(G)\exp(G/\epsilon)$ (see~\eqref{e:w-lyapunov-identity} below). We further note that~\cite[Lemma 9.8]{HanIyerEA26} extends verbatim from the compact setting~$\T^d$ to~$\R^d$, so that there exist~$C, r, \tilde\epsilon> 0$ with
\begin{equation}\label{e:ef-bound-around-minima}
    \sup_{\epsilon \in (\kappa, \tilde\epsilon]}\max_{i \in \set*{1, \ldots, J}}    \norm{\psi_{i, \epsilon}}_{L^\infty( S_1  )} \leq C\,,
\end{equation}
where
\begin{equation}\label{e:S1-def}
    S_1 \defeq \bigcup_{j \in \set{1, \ldots, J}}  B(x_{\min, j}, r)\,.
\end{equation}

\begin{lemma}\label{l:lyapunov-uniform-boundedness}
    Suppose there exist constants~$\theta > 0$, $\delta \in (0,1/2)$, $L > 3\max_{s\in S} U(s) / \delta$ and~$C_G > 0$ such that, for all sufficiently small~$\epsilon > 0$, there exist a smooth function~$G$ and a smooth open set~$S_2$, both of which may depend on~$\epsilon$, satisfying
    \begin{gather}
    \label{e:S123-inclusion}
        S_1 \cup \tilde S_1  \subset S_2 \subset S_3 \,,\\
    \label{e:QG-property}
        Q(G) \leq -\theta \epsilon \quad\text{on}\quad S_2 \setminus \bar S_1\quad\text{and}\quad G = 0\quad\text{on}\quad \partial S_2\,,\\
    \label{e:G-small}
        \max_{\tilde S_1} G \leq \paren*{\frac{1}{2}+\delta} \paren*{\max_S U - L} + C_G\epsilon\,,\\
    \label{e:G-osc-bound}
        \norm{G}_{\mathrm{osc}} \leq C_G\,.
    \end{gather}
    Here~$S_1$ and~$\tilde S_1$ are defined as in~\eqref{e:S1-def} and~\eqref{e:tildeS1-def} respectively,
    \begin{align}\label{e:S3-def}
        S_3 \defeq \set*{U < L}\,,
    \end{align}
    and~$\norm{G}_{\mathrm{osc}}$ denotes the oscillation of~$G$ over~$\bar S_2 \setminus S_1$,
    \begin{equation}\label{e:osc-def}
        \norm{G}_{\mathrm{osc}} \defeq \sup_{\bar S_2 \setminus S_1} G - \inf_{\bar S_2 \setminus S_1} G\,.
    \end{equation}
    Then, there exists~$C_{\psi, 2}$ such that for all sufficiently small~$\epsilon$,
    \begin{equation}\label{e:ef-uniform-bound-around-saddle}
        \max_{i\in \set{1, \ldots, J}}\norm{\psi_{i, \epsilon}}_{L^\infty(\tilde S_1)} \leq C_{\psi, 2} \,.
    \end{equation}
\end{lemma}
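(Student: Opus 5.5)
Set $w \defeq \exp(G/\epsilon)$, so that $\mathcal L_\epsilon w = \epsilon^{-1}Q(G)\,w \leq -\theta w$ on $S_2\setminus\bar S_1$ by~\eqref{e:QG-property}. The idea is to use $w$, suitably scaled, as a barrier for the eigenfunction $\psi_{i,\epsilon}$, $i\leq J$, on the annular region $D\defeq S_2\setminus\bar S_1$. There $\psi_{i,\epsilon}$ solves $(\mathcal L_\epsilon+\lambda_{i,\epsilon})\psi_{i,\epsilon}=0$ with $\lambda_{i,\epsilon}$ exponentially small by Lemma~\ref{l:eigenvalue-properties-multiwell}. For small $\epsilon$ we have $\lambda_{i,\epsilon}<\theta$, so $(\mathcal L_\epsilon+\lambda_{i,\epsilon})w\leq-(\theta-\lambda_{i,\epsilon})w<0$ and $w$ is a strict positive supersolution on $D$. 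The generalized maximum principle (dividing by $w$) then gives $\abs{\psi_{i,\epsilon}}\leq M w$ on $D$ whenever $\abs{\psi_{i,\epsilon}}\leq Mw$ on $\partial D\subset\partial S_1\cup\partial S_2$. The proof therefore reduces to bounding $\psi_{i,\epsilon}/w$ on the two boundary pieces and then reading off the bound on $\tilde S_1$.

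On $\partial S_1$, \eqref{e:ef-bound-around-minima} gives $\abs{\psi_{i,\epsilon}}\leq C$. On $\partial S_2$ we have $w=1$, and we need a bound on $\abs{\psi_{i,\epsilon}}$ there, which is possibly exponentially large in $1/\epsilon$. Since $S_2\subset S_3=\set{U<L}$, I would apply the local maximum principle exactly as in the case $i\geq3$ of Lemma~\ref{l:eigenfunctions-weighted-Linfty-bdd}, on balls of radius comparable to $\epsilon$ inside a slightly larger sublevel set. Combining this with the $L^2(\pi_\epsilon)$ normalization and~\eqref{e:Zepsilon-estimate} yields $\abs{\psi_{i,\epsilon}}\leq C\epsilon^{-d/2}\exp(L/(2\epsilon))$ on $\bar S_2$. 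The factor $\tfrac12$ comes from taking square roots; possibly one needs $L$ replaced by $L+o(1)$, which is absorbed since $L$ is strict.

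Taking $M\defeq \max\bigl(C e^{-\min_{\partial S_1}G/\epsilon},\,C\epsilon^{-d/2}e^{L/(2\epsilon)}\bigr)$, we then get on $\tilde S_1\setminus \bar S_1$
\begin{equation}
  \abs{\psi_{i,\epsilon}} \leq M e^{\max_{\tilde S_1}G/\epsilon}\,.
\end{equation}
For the first term of $M$, \eqref{e:G-osc-bound} gives $e^{(\max_{\tilde S_1} G-\min_{\partial S_1}G)/\epsilon}\leq e^{C_G/\epsilon}$, which is not good enough. So I would instead normalize $G$ so that the $\partial S_1$ contribution is controlled by a comparison with the constant supersolution. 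More precisely, I would use the barrier $C+M' w$, noting that constants are supersolutions up to $\lambda_{i,\epsilon}$, which is handled by replacing $w$ with $w+\eta$ and using $\theta$. With $M'=C\epsilon^{-d/2}e^{L/(2\epsilon)}$ this gives $\abs{\psi_{i,\epsilon}}\leq C + M'e^{\max_{\tilde S_1}G/\epsilon}$. By~\eqref{e:G-small} the exponent is at most $\bigl[\tfrac L2+(\tfrac12+\delta)(\max_S U-L)\bigr]/\epsilon+C_G$. This equals $\bigl[(\tfrac12+\delta)\max_SU-\delta L\bigr]/\epsilon+C_G$, which is at most $-c/\epsilon$ because $L>3\max_SU/\delta$ gives $\delta L>3\max_S U\geq(\tfrac12+\delta)\max_S U+\max_S U$, at least when $\max_SU>0$ (otherwise use the strict margin directly). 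The polynomial factor $\epsilon^{-d/2}$ is then absorbed, giving $\abs{\psi_{i,\epsilon}}\leq C_{\psi,2}$ on $\tilde S_1$ (points of $\tilde S_1\cap \bar S_1$ are already covered by~\eqref{e:ef-bound-around-minima}).

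The main obstacle I expect is the barrier comparison with the two-piece boundary data. The sign of $\psi_{i,\epsilon}$ is indefinite, and a constant is only a supersolution for $\mathcal L_\epsilon+\lambda_{i,\epsilon}$ up to the tiny error $\lambda_{i,\epsilon}C$. I would first try the comparison function $C(1+\lambda_{i,\epsilon}\theta^{-1}) + M'w$, using $\mathcal L_\epsilon w\leq-\theta w$ and $w\geq e^{-C_G/\epsilon}\cdot e^{\inf G/\epsilon}$. If the error term is not dominated this way, I would fall back on using $\lambda_{i,\epsilon}\leq e^{-\gamma/(2\epsilon)}$ with $\gamma$ close to $\hat\gamma$, together with~\eqref{e:G-osc-bound}, to absorb it.
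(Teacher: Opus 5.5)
You flagged the main obstacle correctly, and it is a genuine gap: neither of your proposed fixes closes it. The rest of your barrier framework is sound. That covers the strict positive supersolution $w=e^{G/\epsilon}$ on $D=S_2\setminus\bar S_1$, comparison after dividing by $w$, the local maximum principle on $\partial S_2$, and the exponent count on $\tilde S_1$.

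Write $\lambda=\lambda_{i,\epsilon}$. For $\Phi=A+M'w$, or $A(1+\lambda/\theta)+M'w$, or anything built from $w+\eta$, to satisfy $(\mathcal L_\epsilon+\lambda)\Phi\leq0$, you need $\lambda A\leq(\theta-\lambda)M'w$ at every point of $D$. But $w$ is smallest near $\partial S_1$. Take your $M'\approx\epsilon^{-d/2}e^{L/(2\epsilon)}$ and $G\approx(\tfrac12+\delta)(U(x_{\min,j})-L)$ near the minima, as in Lemma~\ref{l:lyapunov-existence}. Then $M'w$ there is of order $\exp([(\tfrac12+\delta)U(x_{\min,j})-\delta L]/\epsilon)$, so $\lambda$ would have to decay faster than roughly $e^{-2\max_SU/\epsilon}$.

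Enlarging $M'$ does not help. Absorbing the error forces $M'\gtrsim\lambda A\,e^{-\min_{\bar D}G/\epsilon}$, and then the barrier on $\tilde S_1$ is at least $\lambda A\,e^{(\max_{\tilde S_1}G-\min_{\bar D}G)/\epsilon}$. The hypotheses give only $\lambda\lesssim e^{-\hat\gamma/(4\epsilon)}$ and an $O(1)$ bound $C_G$ on the oscillation, with no relation between the two. In the construction, $\max_{\tilde S_1}G-\min_{\bar D}G$ is essentially $(\tfrac12+\delta)(\max_SU-\min_jU(x_{\min,j}))$. This is far above $\hat\gamma/2$ exactly when a saddle or local maximum sits high above the smallest barrier, which is the case this section exists to handle. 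So your fallback (``$\gamma$ close to $\hat\gamma$ together with~\eqref{e:G-osc-bound}'') cannot work, since a loss of size $e^{C_G/\epsilon}$ is not absorbable in general.

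The missing idea is a positive supersolution of $\mathcal L_\epsilon+\lambda$ that is \emph{nearly constant}, to carry the $\partial S_1$ data, and $G$ itself provides one. Set $p\defeq\lambda/\theta\in(0,1)$ and $v\defeq w^{p}=e^{pG/\epsilon}$. The identity $Q(pG)=pQ(G)-p(1-p)\abs{DG}^2$ gives $\mathcal L_\epsilon v=\epsilon^{-1}Q(pG)\,v\leq-\lambda v$ on $D$. By~\eqref{e:G-osc-bound}, the ratio of $v$ at any two points of $\bar S_2\setminus S_1$ is at most $e^{\lambda C_G/(\theta\epsilon)}\to1$. Now take $\Phi\defeq C\,v/\min_{\partial S_1}v+M'w$. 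It satisfies $(\mathcal L_\epsilon+\lambda)\Phi\leq0$ on $D$ and dominates $\abs{\psi_{i,\epsilon}}$ on $\partial S_1\cup\partial S_2$. On $\tilde S_1$ it is at most $Ce^{\lambda C_G/(\theta\epsilon)}+M'e^{\max_{\tilde S_1}G/\epsilon}$, which your exponent count shows is bounded, and your comparison argument then finishes.

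This is the analytic counterpart of the paper's route. The paper writes $\psi_{i,\epsilon}(x)=\E_x\brak*{e^{\lambda\tau}\psi_{i,\epsilon}(X_\tau)}$ by optional stopping at the exit time $\tau$ of $D$. It bounds the $\partial S_1$ term by H\"older, $\E_x\brak*{e^{\lambda\tau}}\leq\E_x\brak*{e^{\theta\tau}}^{\lambda/\theta}\leq e^{\lambda\norm{G}_{\mathrm{osc}}/(\theta\epsilon)}$. It bounds the $\partial S_2$ term by $\P_x\brak*{\tau_2<\tau_1}\leq e^{G(x)/\epsilon}$ (Lemma~\ref{l:tau-moment-hitting-prob}), combined with H\"older again. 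The repaired barrier argument even avoids the factor $1-\lambda/\theta$ that the paper loses in the exponent.
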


\begin{lemma}\label{l:lyapunov-existence}
    Make the Assumptions~\ref{a:U-growth-control}, \ref{a:criticalpts-multiwell}, and~\ref{a:massRatioBound-multiwell}. Then there exist constants~$\theta > 0$, $\delta \in (0,1/2)$, $L > 3\max_{s\in S} U(s) / \delta$ and~$C_G > 0$ such that, for all sufficiently small~$\epsilon > 0$, there is a function~$\hat U \in C^3(\R^d, \R)$, depending on~$\epsilon$, for which
    \begin{equation}\label{e:G-S2-def}
        G \defeq \paren*{\frac{1}{2}+\delta} \paren*{\hat U - L} \quad\text{and}\quad S_2 \defeq \set*{\hat U < L}
    \end{equation}
    satisfy~\eqref{e:S123-inclusion}, \eqref{e:QG-property}, \eqref{e:G-small}, and~\eqref{e:G-osc-bound}.
\end{lemma}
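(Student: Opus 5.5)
The plan is to take $\hat U$ equal to $U$ away from the saddles and to modify it only in balls of radius $O(\sqrt\epsilon)$ around each $s\in S$. The modification keeps the negative Hessian directions and flattens the positive ones, so that each saddle becomes an effective local maximum for the operator $Q$. Write $a\defeq \tfrac12+\delta$ and $G=a(\hat U-L)$. Then
\begin{equation}
  Q(G)=a\epsilon\Delta\hat U-a\,DU\cdot D\hat U+a^2\abs{D\hat U}^2 .
\end{equation}
Where $\hat U=U$ this equals $-a(\tfrac12-\delta)\abs{DU}^2+a\epsilon\Delta U$. It is therefore $\leq -\theta\epsilon$ wherever $\abs{DU}^2\geq c$ and $\epsilon$ is small. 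By~\eqref{a:DU-blowup} and nondegeneracy, this holds on $\set{U<L+1}$ minus $S_1$ minus balls $B(s,\rho)$ with $\rho$ fixed. It also holds on the annular regions $\set{K\sqrt\epsilon\leq\abs{x-s}\leq\rho}$ for large fixed $K$: there $\abs{DU}^2\gtrsim\abs{x-s}^2\geq K^2\epsilon$, which dominates $\epsilon\Delta U$. The trouble is only inside $B(s,K\sqrt\epsilon)$, where $\Delta U(s)$ may be positive. At a nondegenerate local maximum $\Delta U<0$, so no modification is needed there.

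For a saddle $s$, diagonalize $D^2U(s)=\mathrm{diag}(\mu_1,\dots,\mu_d)$ in coordinates $y=x-s$, and fix target curvatures $\nu_i$. For $\mu_i<0$ take $\nu_i=\mu_i$. For $\mu_i>0$ take $\nu_i=\eta\in(0,\mu_i/a)$, with $\eta$ so small that $\sum_i\nu_i<0$. Since $a<1$, this is possible as soon as the saddle has at least one negative direction. Set $q(y)=\tfrac12\sum_{\mu_i>0}(\mu_i-\eta)y_i^2$ and
\begin{equation}
  \hat U=U-\chi_\epsilon(y)\,q(y),
\end{equation}
where $\chi_\epsilon=1$ on $\abs{y}\leq K\sqrt\epsilon$ and $\chi_\epsilon=0$ outside $K'\sqrt\epsilon$. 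Inside the inner ball, a Taylor expansion with $U\in C^6$ gives
\begin{equation}
  Q(G)=a\epsilon\sum_i\nu_i+a\sum_i\nu_i(a\nu_i-\mu_i)y_i^2+O(\abs{y}^3).
\end{equation}
Each coefficient $\nu_i(a\nu_i-\mu_i)$ is negative, because $a\nu_i-\mu_i=(a-1)\mu_i>0$ when $\mu_i<0$, and $a\eta-\mu_i<0$ when $\mu_i>0$. Hence $Q(G)\leq -\theta\epsilon$ there, since $\abs{y}^3\lesssim\epsilon^{3/2}$. The perturbation changes $U$ by $O(K'^2\epsilon)$ and $\hat U\in C^3$ by construction.

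The main obstacle is the transition region $K\sqrt\epsilon\leq\abs y\leq K'\sqrt\epsilon$. There $D\hat U=DU-\chi_\epsilon Dq-q\,D\chi_\epsilon$, and the last term is of the same order $\abs y$ as the others. I would take $\chi_\epsilon$ to be a logarithmic cutoff, namely $\chi_\epsilon(y)=\phi\bigl(\log(\abs y/(K\sqrt\epsilon))/M\bigr)$ with $K'=Ke^M$ and $M$ large. Then $\abs{q\,D\chi_\epsilon}\leq (C/M)\abs y$. Up to $O(\abs y/M+\abs y^2)$ errors, $D\hat U$ is then the gradient of a quadratic form with intermediate curvatures $\mu_i-\chi(\mu_i-\eta)$, which lie between $\eta$ and $\mu_i$. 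The sign condition $\nu(a\nu-\mu)<0$ holds on that whole interval. So $-aDU\cdot D\hat U+a^2\abs{D\hat U}^2\leq -c\abs y^2$ uniformly, and the term $a\epsilon\Delta\hat U=O(\epsilon)$ (note $\abs{D^2\chi_\epsilon}\,q=O(1)$) is absorbed because $\abs y^2\geq K^2\epsilon$. Verifying this uniform negativity carefully, with the constants $M$, $K$, $\theta$ and $\delta$ chosen in that order, is where I expect the real work.

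Finally I would check the remaining requirements.
\begin{itemize}
  \item Fix $\delta$ small, then $L>3\max_S U/\delta$, and take $S_2=\set{\hat U<L}$, so that $G=0$ on $\partial S_2$.
  \item Since $\hat U=U$ near $\set{U=L}$, we get $S_2=S_3$ up to a smooth boundary, and the boundary is smooth because $\abs{DU}>0$ there.
  \item $S_1\cup\tilde S_1\subset S_2$ holds because $L$ exceeds all critical values.
  \item \eqref{e:G-small} follows from $\hat U\leq U(s)+O(\epsilon)\leq\max_S U+C\epsilon$ on $\tilde S_1$.
  \item \eqref{e:G-osc-bound} holds with $C_G=a(L-\min U+1)$.
\end{itemize}
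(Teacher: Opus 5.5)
Your proposal is correct. Its architecture is the paper's: an $O(\epsilon)$-amplitude perturbation supported in $O(\sqrt\epsilon)$-balls around each saddle, which flattens the stable curvatures until $\Delta\hat U(s)<0$; a logarithmic cutoff, so that the cutoff-gradient term is only a small multiple of $|x-s|$; and a region-by-region check of $Q(G)\le-\theta\epsilon$.

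The implementation is simpler in two respects. The paper builds $\hat P$ from the nearest-point projection onto the local stable manifold, which requires the stable manifold theorem and $C^4$ regularity of that projection. It also multiplies a logarithmic cutoff in the stable variable by an ordinary cutoff in the unstable one. You instead use linear Hessian eigencoordinates and a single radial logarithmic cutoff. The paper remarks that the stable-manifold construction is what gives its gradient bound on an $\epsilon$-independent ball, but your computation indicates the linear coordinates already suffice. In them, $D(\chi_\epsilon q)$ is exact, and the only other error is the $O(|y|^2)$ Taylor remainder of $DU$, which is negligible on a support of radius $O(\sqrt\epsilon)$. Because you treat $q\,D\chi_\epsilon$ as an $O(|y|/M)$ error rather than folding it into modified eigenvalues, you also need neither $\phi'\le 0$ nor a separate unstable cutoff.

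Your sign criterion, $\nu_i(a\nu_i-\mu_i)<0$ for all intermediate curvatures (with your $a=\tfrac12+\delta$), is the paper's criterion in different form. The paper writes $Q(G)=(\tfrac12+\delta)\epsilon\Delta\hat U-\tfrac14(|DU|^2-|D\hat P|^2)-\delta\,D\hat U\cdot D\hat P+\delta^2|D\hat U|^2$ and needs $|DU|^2-|D\hat P|^2\ge c|x-s|^2$. Per eigendirection, $|DU|^2-|D\hat P|^2=\nu(2\mu-\nu)y^2$, so at $\delta=0$ the two conditions coincide. What the paper's packaging buys is modularity: Lemma~\ref{l:perturb-prop} isolates the support, the $C^3$ bounds and the gradient lower bound from the verification of~\eqref{e:QG-property}. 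Yours buys a shorter, elementary proof.

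A few bookkeeping points on your side:
\begin{itemize}
\item $S_1\subset S_2$ needs $L>\max_{\bar S_1}U$, not merely $L$ above the critical values, as in~\eqref{e:L-choice}.
\item $C_G$ must dominate the constants in both~\eqref{e:G-small} and~\eqref{e:G-osc-bound}.
\item $\delta$, or a uniform range such as $\delta\le\tfrac14$, should be fixed before $M$ and $K$, since the sign margins depend on $a$.
\end{itemize}
Finally, note that $q\ge0$ gives $\hat U\le U$. Hence $S_2=S_3$ exactly, and~\eqref{e:G-small} follows from the Taylor bound on $U$ alone.
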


With these two lemmas in hand we can already close the argument for the
multi-well case. The remainder of this section is devoted to proving them.

\begin{proof}[Proof of Proposition~\ref{p:single-particle-proposition-multiwell}
and Theorem~\ref{t:main-theorem-multiwell}]
Lemma~\ref{l:lyapunov-existence} produces constants~$\theta>0$,
$\delta\in(0,1/2)$, $L>3\max_{s\in S}U(s)/\delta$ and~$C_G>0$ and, for all
sufficiently small~$\epsilon$, a function~$G$ and a smooth open set~$S_2$
satisfying the hypotheses~\eqref{e:S123-inclusion}, \eqref{e:QG-property},
\eqref{e:G-small} and~\eqref{e:G-osc-bound} of
Lemma~\ref{l:lyapunov-uniform-boundedness}. That lemma therefore applies and
yields~\eqref{e:ef-uniform-bound-around-saddle} for all sufficiently
small~$\epsilon$. After decreasing~$\tilde\epsilon$ if necessary, this is
precisely~\eqref{e:psi-uniform-bound}.

    As explained in the paragraph preceding~\eqref{e:psi-uniform-bound}, this
    bound is the only ingredient of the key
    Lemmas~\ref{l:lyapunov-weighted-Linftybds-and-their-sums}--\ref{l:squared-weight-bound}
    that does not already carry over to the multi-well setting. With it in hand,
    the small-temperature step for~$i=2$ in the proof of
    Lemma~\ref{l:eigenfunctions-weighted-Linfty-bdd} is complete for
    every~$i \in \set{1,\ldots,J}$, and the remaining arguments of
    Sections~\ref{s:ef-bounds}--\ref{s:bias-variance} apply verbatim, with
    Lemmas~\ref{l:dEF-small}--\ref{l:partition-function-estimates}
    and~\eqref{e:well-mass-BV} replaced by
    Lemmas~\ref{l:dEF-small-multiwell}--\ref{l:mass-BV-multiwell}. This proves
    Proposition~\ref{p:single-particle-proposition-multiwell}, and
    Theorem~\ref{t:main-theorem-multiwell} follows from it exactly as
    Theorem~\ref{t:main-theorem} follows from
    Proposition~\ref{p:single-particle-proposition}.
\end{proof}

\subsection{Proof of Lemma~\ref{l:lyapunov-uniform-boundedness}}\label{ss:proof-uniform-bound}
In this subsection, we prove Lemma~\ref{l:lyapunov-uniform-boundedness}. The idea is to represent~$\psi_{i,\epsilon}$ near the saddles by the optional stopping theorem, applied at the first time the Langevin diffusion reaches either the neighborhood~$S_1$ of the minima, where~$\psi_{i,\epsilon}$ is bounded by~\eqref{e:ef-bound-around-minima}, or the boundary of~$S_2$, where~$\psi_{i,\epsilon}$ may be large. The supersolution~$G$ controls both the exponential moment of this time and the probability of reaching~$\partial S_2$ first. These two estimates are stated in the following lemma, whose proof we give at the end of this subsection.
\begin{lemma}\label{l:tau-moment-hitting-prob}
    Let~$\theta > 0$, and suppose that for all sufficiently small~$\epsilon > 0$ there exist a smooth function~$G$ and a smooth bounded open set~$S_2$, both of which may depend on~$\epsilon$, such that~$S_1 \subset S_2$ and~\eqref{e:QG-property} holds, where~$S_1$ is defined as in~\eqref{e:S1-def}.
    Fix such an~$\epsilon$, let~$X_t = X^\epsilon_t$ be the Langevin diffusion~\eqref{e:langevin} at temperature~$\epsilon$, and write~$\P_x$ and~$\E_x$ for the probability measure and the expectation under which~$X_0 = x$ almost surely.
    Let
    \begin{align}
        \label{e:tau1-def}
        \tau_1 &\defeq \inf\set*{t\geq 0\st X_t \in \partial S_1}\,,\\
        \label{e:tau2-def}
        \tau_2 &\defeq \inf\set*{t\geq 0\st X_t \in \partial S_2}\,,\\
        \label{e:tau-def}
        \tau &\defeq \inf\set*{t\geq 0\st X_t \not\in S_2\setminus \bar S_1} = \tau_1 \wedge \tau_2\,.   
    \end{align}
    Then, for any~$x \in S_2 \setminus \bar S_1$, 
    \begin{align}
        \label{e:tau-moment}
        \E_x\brak*{\exp\paren*{\theta \tau}} &\leq \exp\paren*{\frac{1}{\epsilon}\norm{G}_\mathrm{osc}}\,,\\
        \label{e:tau-hitting-prob}
        \P_x\brak*{\tau_2 < \tau_1} &\leq \exp\paren*{\frac{1}{\epsilon}G(x)} \,.
    \end{align}    
    Here~$\norm{G}_{\mathrm{osc}}$ is the oscillation~\eqref{e:osc-def} of~$G$ over~$\bar S_2 \setminus S_1$.
\end{lemma}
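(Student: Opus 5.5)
The plan is to use Itô's formula (Dynkin) applied to the exponential functional $w \defeq \exp(G/\epsilon)$. By the normalization of $Q$ stated before Lemma~\ref{l:lyapunov-uniform-boundedness}, we have
\begin{equation}\label{e:w-lyapunov-identity}
  \mathcal L_\epsilon \exp(G/\epsilon) = \epsilon^{-1} Q(G)\exp(G/\epsilon).
\end{equation}
This identity follows directly from the chain rule:
\begin{itemize}
\item $D e^{G/\epsilon} = \epsilon^{-1} DG\, e^{G/\epsilon}$;
\item $\epsilon\Delta e^{G/\epsilon} = (\Delta G + \epsilon^{-1}\abs{DG}^2) e^{G/\epsilon}$.
\end{itemize}
Adding the drift term $-DU\cdot D e^{G/\epsilon}$ and factoring out $\epsilon^{-1}e^{G/\epsilon}$ gives exactly $\epsilon^{-1}Q(G)$. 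By the hypothesis \eqref{e:QG-property}, on $S_2\setminus\bar S_1$ we therefore get $\mathcal L_\epsilon w \le -\theta w$.

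\textbf{Step 1: the supermartingale.} Consider the process $M_t \defeq e^{\theta (t\wedge\tau)} w(X_{t\wedge\tau})$. Itô's formula gives
\[
dM_t = e^{\theta t}(\theta w + \mathcal L_\epsilon w)(X_t)\,dt + \text{(martingale)} \quad\text{for } t<\tau,
\]
and the drift term is $\le 0$. The local martingale part is a true martingale because $G$ is smooth on the bounded closure $\bar S_2$, so $DG$ is bounded there. Hence $M_t$ is a supermartingale and
\[
\E_x\brak*{e^{\theta(t\wedge\tau)} w(X_{t\wedge\tau})} \le w(x).
\]
The point $X_{t\wedge\tau}$ lies in $\bar S_2\setminus S_1$, so $w(X_{t\wedge\tau})\ge \exp(\epsilon^{-1}\inf_{\bar S_2\setminus S_1}G)$. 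Dividing by this lower bound gives
\[
\E_x e^{\theta(t\wedge\tau)} \le \exp\paren*{\epsilon^{-1}\paren*{G(x)-\inf G}} \le \exp\paren*{\epsilon^{-1}\norm{G}_{\mathrm{osc}}}.
\]
Letting $t\to\infty$ by monotone convergence yields \eqref{e:tau-moment}. In particular $\tau<\infty$ almost surely.

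\textbf{Step 2: the hitting probability.} Now drop the factor $e^{\theta t}$, which is $\ge 1$, to obtain $\E_x w(X_{t\wedge\tau})\le w(x)$. Since $\tau<\infty$ a.s. and $w$ is bounded on $\bar S_2$, letting $t\to\infty$ by dominated convergence gives $\E_x w(X_\tau)\le w(x)$. On the event $\{\tau_2<\tau_1\}$ we have $X_\tau\in\partial S_2$, where $G=0$ and hence $w=1$; elsewhere $w\ge 0$. Therefore
\[
\P_x(\tau_2<\tau_1)\le \E_x w(X_\tau) \le e^{G(x)/\epsilon},
\]
which is \eqref{e:tau-hitting-prob}.

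\textbf{Main obstacle.} The only delicate point is justifying optional stopping. This needs $X_t$ not to explode before $\tau$, which holds by Assumption~\ref{a:U-growth-control} (nonexplosion of the Langevin diffusion), together with boundedness of $G$ and $DG$ on $\bar S_2$. One can sidestep the issue entirely by localizing with the exit times of slightly shrunken smooth domains and passing to the limit. A degenerate case arises if $x$ is such that $\tau=0$; this cannot happen for $x$ in the open set $S_2\setminus\bar S_1$.
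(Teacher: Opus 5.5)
Your proof is correct. Step~1 matches the paper: it uses the supermartingale $e^{\theta(t\wedge\tau)}w(X_{t\wedge\tau})$ with $w=\exp(G/\epsilon)$, bounds $w$ below by $\min_{\bar S_2\setminus S_1}w$, and passes to the limit by monotone convergence.

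For the hitting bound~\eqref{e:tau-hitting-prob} you take a different, though closely related, route.

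\begin{itemize}
\item \textbf{The paper's route.} It introduces $h(x)=\P_x[\tau_2<\tau_1]$ and uses that $h$ solves the Dirichlet problem $\mathcal L_\epsilon h=0$ on $S_2\setminus\bar S_1$, with $h=1$ on $\partial S_2$ and $h=0$ on $\partial S_1$. It then applies the elliptic minimum principle to $w-h$.
\item \textbf{Your route.} You stay probabilistic and apply optional stopping to the bounded supermartingale $w(X_{t\wedge\tau})$. This uses $\tau<\infty$ almost surely, which Step~1 gives, together with $w=1$ on $\partial S_2$ and $w\geq 0$ elsewhere.
\end{itemize}

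Your version reuses the supermartingale you already built. It does not need $h$ to be a classical solution that is continuous up to $\partial S_1\cup\partial S_2$, and that boundary regularity is where the smoothness of $S_1$ and $S_2$ is used in the paper's argument. The paper's version is a one-line comparison once that Dirichlet characterization of $h$ is taken for granted. Both express the same comparison, once analytically and once through the supermartingale property.

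Incidentally, the nonexplosion issue you flag as the main obstacle does not arise. Before $\tau$ the path stays in the bounded set $S_2$, where $DU$ is Lipschitz, so the localization is automatic.
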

\begin{proof}[Proof of Lemma~\ref{l:lyapunov-uniform-boundedness}]
    Fix~$i\in \set*{1, \ldots, J}$. Throughout the proof we write~$\lambda$ and~$\psi$ in place of~$\lambda_{i, \epsilon}$ and~$\psi_{i, \epsilon}$.
    
    Since~$\psi_{1,\epsilon} \equiv 1$, we may assume~$i \geq 2$, so that~$\lambda > 0$.

    Let~$\theta, \delta, L, C_G$ be as in the statement of the lemma. By Lemma~\ref{l:eigenvalue-properties-multiwell} applied with~$\gamma=\hat\gamma/2$,
    \begin{equation}\label{e:ev-exp-small}
        \lambda \leq \tilde C\exp\paren*{-\frac{\hat\gamma}{4\epsilon}}
    \end{equation}
    for all~$\epsilon \in (\kappa,1]$. Consequently, for all sufficiently small~$\epsilon$,
    \begin{equation}\label{e:lambda-small}
        \lambda < \frac{1}{3}\frac{\delta}{1+2\delta}\theta\,,
    \end{equation}
    and in particular~$\theta/\lambda > 1$. Fix such an~$\epsilon$, let~$G$ and~$S_2$ be as in the statement of the lemma, and define~$\tau_1, \tau_2$ and~$\tau$ as in~\eqref{e:tau1-def}, \eqref{e:tau2-def} and~\eqref{e:tau-def}. The set~$S_2 \subset S_3$ is bounded, so all the hypotheses of Lemma~\ref{l:tau-moment-hitting-prob} hold, and the estimates~\eqref{e:tau-moment}--\eqref{e:tau-hitting-prob} are available. Combining them with H\"older's inequality, applied with exponent~$\theta/\lambda > 1$, yields that for any~$x\in S_2 \setminus \bar S_1$,
    \begin{align}
        \label{e:tau-moment-2}
        \E_x\brak*{\exp\paren*{\lambda \tau}} &\leq \E_x\brak*{\exp\paren*{\theta \tau}}^\frac{\lambda}{\theta} \leq \exp\paren*{ \frac{\lambda\norm{G}_{\mathrm{osc}}}{\epsilon \theta}   }\,,\\
        \E_x\brak*{\exp\paren*{\lambda \tau} \one_{\set{\tau_2 < \tau_1}}} &\leq \E_x\brak*{\exp\paren*{\theta \tau}}^\frac{\lambda}{\theta} \P_x\brak*{\tau_2 < \tau_1}^{1-\frac{\lambda}{\theta}} \\
        \label{e:tau-hitting-prob2}
        &\leq \exp\paren*{ \frac{\lambda\norm{G}_{\mathrm{osc}}}{\epsilon \theta} + \paren*{1-\frac{\lambda}{\theta}} \frac{G(x)}{\epsilon}  }\,.
    \end{align}
    Since~$\mathcal L_\epsilon \psi = -\lambda\psi$, It\^o's formula shows that
    \begin{equation}
        M_t \defeq \exp\paren*{\lambda (t\wedge \tau)}\, \psi(X_{t\wedge \tau})\,, \qquad t \geq 0\,,
    \end{equation}
    is a local martingale under~$\P_x$. Moreover~$S_3$ is bounded, so~$\norm{\psi}_{L^\infty(S_3)}<\infty$, and therefore
    \begin{equation}
        \sup_{t\geq0}\abs{M_t} \leq \exp(\lambda \tau) \norm{\psi}_{L^\infty(S_3)} \in L^1(\P_x) 
    \end{equation}
    by~\eqref{e:tau-moment-2}. Hence~$(M_t)_{t\geq0}$ is a uniformly integrable martingale. Since~$\tau<\infty$ almost surely by~\eqref{e:tau-moment}, applying the optional stopping theorem at~$\tau$ and treating the events~$\set{\tau_1 < \tau_2}$ and~$\set{\tau_2 < \tau_1}$ separately, we obtain
    \begin{equation}\label{e:tau1-tau2-casing}
        \abs{\psi(x)} = \abs*{\E_x\brak{\exp(\lambda \tau) \psi(X_\tau)}} \leq \E_x\brak{\exp(\lambda\tau)}\max_{\partial S_1} \abs{\psi} + \E_x\brak{\exp(\lambda \tau) \one_{\set{\tau_2 < \tau_1}}} \max_{\partial S_2} \abs{\psi}\,. 
    \end{equation}    
    We next claim that there exists a constant~$C > 0$ such that, for all sufficiently small~$\epsilon > 0$, 
    \begin{equation}
        \label{e:ef-bound-around-big-ball}
        \max_{\partial S_2} \abs{\psi} \leq C\exp\paren*{\frac{L}{2\epsilon}(1+\delta)}\,.
    \end{equation}
    We defer the proof of the claim and first show how it completes the proof of the lemma. Combining it with~\eqref{e:ef-bound-around-minima}, \eqref{e:tau-moment-2}--\eqref{e:tau-hitting-prob2} and~\eqref{e:tau1-tau2-casing} yields that for any~$x \in S_2 \setminus \bar S_1$, 
    \begin{equation}
        \abs{\psi(x)} \leq C \exp\paren*{ \frac{\lambda \norm{G}_{\mathrm{osc}}}{\epsilon\theta}} + C\exp\paren*{ \frac{\lambda\norm{G}_{\mathrm{osc}}}{\epsilon \theta} + \paren*{1-\frac{\lambda}{\theta}} \frac{G(x)}{\epsilon}  + \frac{L}{2\epsilon}(1+\delta)}\,.
    \end{equation}
    For sufficiently small~$\epsilon$, $S_1$ and~$\tilde S_1$ are disjoint so that~$\tilde S_1 \subset S_2 \setminus \bar S_1$, and thus, together with~\eqref{e:G-small},
    \begin{equation}\label{e:psi-bound-I1I2}
        \norm{\psi}_{L^\infty(\tilde S_1)} \leq I_1 + I_2\,, 
    \end{equation}
    where
    \begin{align}
        I_1 &=  C \exp\paren*{ \frac{\lambda \norm{G}_{\mathrm{osc}}}{\epsilon\theta}}\,,\\ 
        I_2 &= C\exp\paren*{ \frac{\lambda\norm{G}_{\mathrm{osc}}}{\epsilon \theta} + I_3 + C_G}\,,\\
        I_3 &= \frac{1}{\epsilon}\paren*{ \paren*{1-\frac{\lambda}{\theta}} \paren*{\frac{1}{2}+\delta} \paren*{\max_S U - L}  + \frac{L}{2}(1+\delta)}\,.
    \end{align}
    Since~$\norm{G}_\mathrm{osc} \leq C_G$ by~\eqref{e:G-osc-bound}, the bound~\eqref{e:ev-exp-small} gives~$\lambda\norm{G}_\mathrm{osc}/(\epsilon\theta)\to0$ as~$\epsilon\to0$, so that for sufficiently small~$\epsilon$,
    \begin{equation}\label{e:I1-I2-bounds}
        I_1 \leq 2C \quad\text{and}\quad I_2 \leq 2C\exp\paren*{C_G + I_3}\,.
    \end{equation}
    Moreover, splitting the terms of~$I_3$ according to whether or not they contain~$L$, and using~$\delta < 1/2$ together with~$\max_S U \geq 0$, we obtain
    \begin{equation}\label{e:I3-I4-ineq}
         I_3 \leq \frac{1}{\epsilon}\paren*{\max_S U-LI_4}\,,
    \end{equation}
    where
    \begin{equation}
        I_4 = \paren*{\frac{1}{2}+\delta}\paren*{1-\frac{\lambda}{\theta}}- \frac{1}{2}\paren*{1+\delta} = \frac{1}{2}\paren*{\delta - (1+2\delta)\frac{\lambda}{\theta}}\,.
    \end{equation}
    By~\eqref{e:lambda-small}, this gives
    \begin{equation}
        I_4 > \frac{\delta}{3}\,.
    \end{equation}
    Combining this with~\eqref{e:I3-I4-ineq} and using the assumption that~$L > 3\max_S U /\delta$, we get that for all sufficiently small~$\epsilon$,
    \begin{equation}
        I_3 \leq \frac{1}{\epsilon}\paren*{\max_S U - \frac{\delta}{3}L} < 0\,.
    \end{equation}
    This, together with~\eqref{e:psi-bound-I1I2} and~\eqref{e:I1-I2-bounds}, entails
    \begin{equation}
        \norm{\psi}_{L^\infty(\tilde S_1)} \leq 2C\paren*{1 + \exp\paren*{C_G}}\,,
    \end{equation}
    and since~$i\in \set{1, \ldots, J}$ was arbitrary, this completes the proof for~\eqref{e:ef-uniform-bound-around-saddle}.

    It remains to prove the claim~\eqref{e:ef-bound-around-big-ball}. 
    We first fix~$x \in \partial S_2$. By~\eqref{e:S123-inclusion}, $\partial S_2 \subset \bar S_3$, and hence~$U(x) \leq L$. Define
    \begin{equation}
        \tilde S_3 = \set*{y \in \R^d \st \dist(y, \bar S_3) < 1}\,.    
    \end{equation}
    For any~$\epsilon \in (0,1)$ we have~$B(x,\epsilon) \subset \tilde S_3$, and~$\tilde S_3$ is an open set independent of~$\epsilon$. We may therefore apply the local maximum principle~\cite[Corollary 9.21]{GilbargTrudinger01} with
    \begin{gather}
        R=\epsilon\,,\quad B_R = B(x, R/2)\,,\quad
        \Omega_*=B(x, R)\,,\quad
        \lambda_*=\Lambda_*=\epsilon\,,\quad \gamma_*=1\,,
        \\
        n_*=p_*=2\,,\quad
        \mathcal L=\mathcal L_\epsilon\,,\quad
        \nu_*=\norm{U}_{C^2(\tilde S_3)}^2/\epsilon^2 \,,\quad
        u=\psi_{i,\epsilon}\,,\quad
        f=-\lambda_{i,\epsilon}\psi_{i,\epsilon}\,.
    \end{gather}
    Then, we get
    \begin{align}
        \abs{\psi_{i, \epsilon}(x)}^2 &\leq C\paren*{\epsilon^{-d} + \lambda_{i, \epsilon}^2 }\int_{\Omega_*}\psi_{i,\epsilon}^2 \, dx\\
        \label{e:mp-near-big-ball}
        &\leq C\max_{\Omega_*}\pi_\epsilon^{-1}\paren*{\epsilon^{-d} + \lambda_{i, \epsilon}^2}  \int_{\Omega_*} \psi_{i, \epsilon}^2 \pi_\epsilon \, dx\,.
    \end{align}
    Using~$U \leq U(x) + \norm{U}_{C^1(\tilde S_3)}\epsilon \leq L + \norm{U}_{C^1(\tilde S_3)}\epsilon$ on~$\Omega_*$, we obtain
    \begin{equation}\label{e:max-pi-inverse-near-big-ball}
        \max_{\Omega_*} \pi_\epsilon^{-1} \leq CZ_\epsilon\exp\paren*{\frac{L}{\epsilon}} \leq C\epsilon^{\frac{d}{2}}\exp\paren*{\frac{L}{\epsilon}}\,.
    \end{equation}
    Setting~$\epsilon$ sufficiently small to satisfy
    \begin{equation}
        \epsilon^\frac{d}{2}\paren*{\epsilon^{-d} + \lambda_{i, \epsilon}^2} \overset{\eqref{e:ev-exp-small}}{\leq} 2\epsilon^{-\frac{d}{2}} \leq \exp\paren*{\frac{\delta L}{\epsilon}}\,,
    \end{equation}
    and combining this with~\eqref{e:mp-near-big-ball} and~\eqref{e:max-pi-inverse-near-big-ball} yields~\eqref{e:ef-bound-around-big-ball}. This completes the proof.
\end{proof}

\begin{proof}[Proof of Lemma~\ref{l:tau-moment-hitting-prob}]
    Let~$G, \theta, S_1, S_2$ be as in the statement of the lemma, and set~$w \defeq \exp(G/\epsilon)$. Since~$Dw = \epsilon^{-1}w\,DG$ and~$\Delta w = \epsilon^{-1}w\,\Delta G + \epsilon^{-2}w\abs{DG}^2$, a direct computation gives
    \begin{equation}\label{e:w-lyapunov-identity}
        \mathcal L_\epsilon w = \frac{1}{\epsilon} Q(G)\, w\,.
    \end{equation}
    In particular, \eqref{e:QG-property} implies
    \begin{equation}\label{e:w-lyapunov}
        \paren*{\mathcal L_\epsilon + \theta} w \leq 0 \quad\text{on}\quad S_2\setminus \bar S_1\,,
    \end{equation}
    and hence~$\big(\exp(\theta (t \wedge \tau))\, w(X_{t\wedge \tau})\big)_{t \geq 0}$ is a supermartingale. Consequently, for any~$x \in S_2 \setminus \bar S_1$ and any~$t\geq0$,
    \begin{equation}
        \paren*{\min_{\bar S_2\setminus S_1} w} \E_x\brak*{\exp\paren*{\theta (t \wedge \tau)}} \leq \E_x\brak*{\exp\paren*{\theta (t \wedge \tau)}\, w\paren*{X_{t \wedge \tau}}  }\leq w(x) \leq \max_{\bar S_2\setminus S_1} w\,.
    \end{equation}
    Rearranging, letting~$t \to \infty$ and invoking monotone convergence yields~\eqref{e:tau-moment}. 

    To prove~\eqref{e:tau-hitting-prob} we compare~$w$ with the function~$h(x) \defeq \P_x\brak*{\tau_2 <\tau_1}$, which solves
    \begin{equation}\label{e:h-dirichlet}
        \mathcal L_\epsilon h = 0 \quad\text{on}\quad S_2\setminus \bar S_1\,,\quad h = 1 \quad\text{on}\quad \partial S_2 \,,\quad\text{and}\quad h = 0 \quad\text{on}\quad \partial S_1\,.
    \end{equation}
    Combining~\eqref{e:QG-property}, \eqref{e:w-lyapunov}, and the fact that~$\theta, w > 0$, we obtain
    \begin{equation}
        \mathcal L_\epsilon w \leq 0 \quad\text{on}\quad S_2\setminus \bar S_1\,,\quad w = 1 \quad\text{on}\quad \partial S_2 \,,\quad\text{and}\quad w > 0 \quad\text{on}\quad \partial S_1\,.
    \end{equation}
    Since~$\mathcal L_\epsilon$ has no zeroth-order term, the minimum principle applied to~$w-h$ on~$S_2\setminus\bar S_1$ gives
    \begin{equation}
        \min_{\bar S_2\setminus S_1} (w-h) \geq \min_{\partial S_1 \cup \partial S_2} (w-h) \geq 0\,,
    \end{equation}
    which completes the proof for~\eqref{e:tau-hitting-prob}. 
\end{proof}

\subsection{Proof of Lemma~\ref{l:lyapunov-existence}}\label{ss:proof-lyapunov-existence}
We briefly describe the idea behind the proof of Lemma~\ref{l:lyapunov-existence}. If every saddle of~$U$ happened to be a local maximum, or merely satisfied~$\Delta U < 0$, then~$G = (\tfrac12+\delta)(U-L)$ would already satisfy~\eqref{e:QG-property}, since away from the critical points the term~$-\abs{DU}^2$ dominates, while near a critical point the sign of~$\epsilon\Delta U$ decides the matter. Under Assumption~\ref{a:criticalpts-multiwell}, however, $\Delta U$ may well be positive at a saddle, since the stable Hessian eigenvalues can outweigh the unstable ones. We therefore subtract from~$U$ a perturbation~$\hat P$, supported in balls of radius~$O(\sqrt\epsilon)$ around the saddles, which damps the stable directions until the trace of the Hessian becomes negative. The perturbed potential~$\hat U = U - \hat P$ then behaves at each saddle as if that saddle were a local maximum. The perturbation has amplitude~$O(\epsilon)$, so that it does not disturb the estimates of Lemma~\ref{l:lyapunov-uniform-boundedness}, and it is built from the local stable manifold rather than from the Hessian eigencoordinates, which is what allows the gradient lower bound of Lemma~\ref{l:DU-DhatP-difference-lb-gen} to hold on a ball of~$\epsilon$-independent radius. Similar perturbations appear in~\cite{MenzSchlichting14} and~\cite{Son26}.

\begin{lemma}\label{l:perturb-prop}
    There exist constants~$C_P, r_P, \theta > 0$ such that, for any~$a>0$ and all sufficiently small~$\epsilon > 0$, there is a perturbation~$\hat P \in C^3(\R^d, \R)$ with the following properties.
    \begin{enumerate}
        \item Define
        \begin{equation}\label{e:hatU-def}
            \hat U = U - \hat P\,.
        \end{equation}
        Then, for any saddle~$s\in S$, 
        \begin{equation}\label{e:P-DP-D^2P-at-saddle}
            \hat U(s) = U(s)\,,\quad D\hat U(s) = 0 \,,\quad\text{and}\quad \Delta \hat U (s) < -4\theta \,.
        \end{equation}
        \item The perturbation is supported locally around the saddles. Specifically,
        \begin{equation}
            \supp (\hat P) \subset \bigcup_{s\in S} B\paren*{s,C_Pa\sqrt{\epsilon}}\,.
        \end{equation}
        \item Derivatives of~$\hat P$ satisfy
        \begin{equation}\label{e:D^iP-norm}
            \norm{D^i\hat P}_{L^\infty(\R^d)} \leq C_P\paren*{a\sqrt{\epsilon}}^{2-i}\,,\qquad 0\leq i\leq 3\,. 
        \end{equation}
        \item The gradients obey the uniform lower bound
        \begin{equation}\label{e:DU-DP-unif-lb}
            \abs{DU(x)}^2 -  \abs{D\hat P(x)}^2 \geq C_P^{-1}\abs{x-s}^2 \,,\quad\forall s\in S \quad\text{and}\quad \forall x \in B(s, r_P)\,.
        \end{equation}
    \end{enumerate}
\end{lemma}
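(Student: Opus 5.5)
The construction is local at each saddle, and the pieces are glued with disjoint supports, since $S$ is finite and its points are separated. Fix $s\in S$, translate so that $s=0$, and write $H=D^2U(0)$. Let $E^u$ and $E^s$ be the unstable and stable eigenspaces of $H$, with $\dim E^u=k\geq1$. The index is at least one because $s$ is not a minimum. If $\Delta U(0)<0$ already, no perturbation is needed. Otherwise we damp the stable directions.

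By the stable manifold theorem for the gradient flow, which applies since $U\in C^6$, there is a $C^5$ change of coordinates $\Phi$ near $0$ with $D\Phi(0)=I$. In the coordinates $y=(y^u,y^s)=\Phi(x)$ the local stable manifold is $\{y^u=0\}$ and the local unstable manifold is $\{y^s=0\}$. We then set
\[
\hat P(x)=\chi\!\Big(\tfrac{|y|}{a\sqrt\epsilon}\Big)\,\tfrac{1}{2}\,y^s\!\cdot A\,y^s,\qquad y=\Phi(x),
\]
where $\chi$ is a smooth cutoff equal to $1$ on $[0,1]$ and supported in $[0,2]$. The matrix $A$ is positive semidefinite on $E^s$, chosen so that $H|_{E^s}-A$ has eigenvalues $\mu_j-\alpha_j$ with $0<\alpha_j<\mu_j$ and $\operatorname{tr}(H-A)<-4\theta$. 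For instance take $\alpha_j=(1-\eta)\mu_j$ with $\eta$ small, so that the stable eigenvalues of $\hat U$ become $\eta\mu_j$ and the trace is dominated by the negative unstable eigenvalues.

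With this choice, properties (1)–(3) are routine. Since $\hat P$ vanishes to second order at $0$, we get $\hat U(s)=U(s)$ and $D\hat U(s)=0$. Near $0$ the cutoff equals $1$, so $D^2\hat P(0)=A$, which gives $\Delta\hat U(0)=\operatorname{tr}(H-A)<-4\theta$. The support lies in $\{|y|\leq 2a\sqrt\epsilon\}\subset B(s,C_Pa\sqrt\epsilon)$. For the derivative bounds: on the support the quadratic factor is $O(a^2\epsilon)$, and each derivative of the cutoff costs $(a\sqrt\epsilon)^{-1}$, which gives $\|D^i\hat P\|_\infty\leq C_P(a\sqrt\epsilon)^{2-i}$ for $i\leq3$. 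The $C^3$ regularity of $\hat P$ uses that $\Phi$ is at least $C^3$. We take $\theta$ as the minimum over the finitely many saddles. The constant $C_P$ must not depend on $a$, and indeed the cutoff scale cancels in every estimate. "Sufficiently small $\epsilon$" is needed so that $2a\sqrt\epsilon$ is below the radius of the chart.

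The main obstacle is (4), the bound on $|DU|^2-|D\hat P|^2$ on a ball of $\epsilon$-independent radius $r_P$. Off $\operatorname{supp}\hat P$ it follows from nondegeneracy, namely $|DU(x)|\geq c|x-s|$ near $s$. On the support, the key is that $\hat P$ only sees $y^s$. We expand the gradient in the adapted coordinates, $\partial_{y^s}U=H^s y^s+O(|y|^2)$ and $\partial_{y^u}U=H^u y^u+O(|y|^2)$. The cross terms are controlled because both invariant manifolds are straightened.

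The gradient of the quadratic factor is $Ay^s$, and $|Ay^s|\leq(1-\eta)|H^s y^s|$. So the pointwise comparison reads
\[
|DU|^2-|D\hat P|^2\geq |H^uy^u|^2+\big(1-(1-\eta)^2\big)|H^sy^s|^2-\text{errors}.
\]
There are two error terms. The first comes from the cutoff derivative: it is $\chi'\cdot|y^s|^2/(a\sqrt\epsilon)$, which is $O(|y|)$ times a constant with the wrong dependence. We handle it by choosing the cutoff profile so that $|\chi'(r)|\,r$ is small (a slowly varying, logarithmic-type cutoff), or by shrinking $A$ in the transition annulus. The second is the cubic remainder, which is absorbed for $|y|\leq r_P$ small. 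The resulting lower bound $c|y|^2\geq C_P^{-1}|x-s|^2$ holds uniformly in $\epsilon$. I expect the cutoff-derivative term to be the delicate point, and I would first try the logarithmic cutoff, which keeps $|D\chi|\cdot|y|$ below any prescribed $\eta'$.
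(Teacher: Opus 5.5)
Your proposal is correct, and its mechanism is the paper's. You subtract a quadratic form that damps the stable Hessian eigenvalues (your $A=(1-\eta)H^s$ in place of the paper's $K=H_s-\rho\Pi_s$), localize it at scale $a\sqrt\epsilon$, and use a logarithmic cutoff, which you rightly single out as necessary, so that $r\lvert\chi'(r)\rvert\le\|\chi_0'\|_\infty/\log R$ is small in (4).

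The implementation differs in a way that makes your route slightly leaner.
You work in a full $C^5$ chart $\Phi$ with $D\Phi(0)=I$ and use a single radial cutoff in $\lvert\Phi(x)\rvert$. Since $\lvert y\rvert$ controls every direction, the cutoff-derivative term has norm at most $\tfrac12\|A\|\,\lvert y^s\rvert\,\|\chi_0'\|_\infty/\log R$. Hence $\lvert D_y(\hat P\circ\Phi^{-1})\rvert\le(1-\eta/2)\lvert H^sy^s\rvert$ once $R$ is large, and the Jacobian $D\Phi=I+O(\lvert x\rvert)$ costs only $O(\lvert x\rvert^3)$.
The paper instead splits $x=\xi(x)+(x-\xi(x))$ using the nearest-point projection onto $W^s(0)$, whose regularity it imports from Foote and Leobacher--Steinicke. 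Its logarithmic cutoff in $\lvert\xi(x)\rvert$ does not localize in the normal variable. It therefore needs a second, ordinary cutoff in $\lvert x-\xi(x)\rvert$ at a larger scale $\tilde R\gg R$, plus a separate bound $\tilde\lambda_u\le\lambda_{u,\min}/2$ for the term that second cutoff produces in the unstable directions.

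One correction to your commentary: straightening the invariant manifolds is not what controls the cross terms. Your estimates use only $D\Phi(0)=I$, which gives $D^2(U\circ\Phi^{-1})(0)=H$. So they go through verbatim with $\Phi=\mathrm{id}$, i.e.\ with $y^s=\Pi_sx$. The paper's estimates likewise use only $\xi(0)=0$, $D\xi(0)=\Pi_s$, and bounds on a few derivatives of $\xi$.
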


We momentarily postpone the proof of Lemma~\ref{l:perturb-prop} and show that these properties of the perturbation entail Lemma~\ref{l:lyapunov-existence}.

\begin{proof}[Proof of Lemma~\ref{l:lyapunov-existence}]
    Let~$C_P$, $r_P$ and~$\theta$ be the constants of Lemma~\ref{l:perturb-prop}. This~$\theta$ is the constant asserted in Lemma~\ref{l:lyapunov-existence}. Set
    \begin{equation}\label{e:C1-def}
        C_1 \defeq \norm{U}_{C^2(\cup_{s\in S} B(s,r_P))} + C_P\,,
    \end{equation}
    and let~$\eta \in (0,C_P)$ be the constant in~\eqref{e:negative-lap-near-saddle} below, which depends only on~$\theta$ and~$C_P$. The remaining constants are chosen once and for all, in the order~$\delta$, $L$, $a$, each depending only on~$U$ and on the constants chosen before it, and afterwards~$\epsilon$ is taken sufficiently small, depending on all of them. First, we fix~$\delta \in (0,1/4)$ so small that
    \begin{equation}\label{e:delta-choice}
        \max_{s\in S} U(s) + 1 < \frac{3}{\delta}\max_{s\in S} U(s) \quad\text{and}\quad \delta C_PC_1 + \delta^2C_1^2 \leq \frac{1}{8C_P}\,.
    \end{equation}
    Next, we fix~$L > 3\max_{s\in S} U(s)/\delta$ so large that
    \begin{equation}\label{e:L-choice}
        S_1 \cup \bigcup_{s\in S} B(s,r_P) \subset \set*{U < L} = S_3\,.
    \end{equation}
    Then, with the constant~$C_L \defeq \sup_{S_3} \Delta U + dC_P$, which depends on~$L$ but not on~$a$ or~$\epsilon$, we fix~$a > 0$ so large that
    \begin{equation}\label{e:a-choice}
        C_Pa > C_A \quad\text{and}\quad \frac{\eta^2a^2}{8C_P} \geq C_L + \theta\,.
    \end{equation}
    Finally, we define~$\hat P$ and~$\hat U$ as in Lemma~\ref{l:perturb-prop} with this~$a$, and set~$G$ and~$S_2$ as in~\eqref{e:G-S2-def}.
    
    We first prove~\eqref{e:S123-inclusion}.
    Using Taylor expansion for~$U$ on the perturbation region
    \begin{equation}
        W_P \defeq \bigcup_{s\in S} B\paren*{s,C_Pa\sqrt{\epsilon}}\,,
    \end{equation}
    we obtain that for all sufficiently small~$\epsilon$,
    \begin{equation}\label{e:U-on-Wp-ub}
        U \leq \max_{s\in S} U(s) + \norm{U}_{C^2(\cup_{s\in S} B(s,1))}
        C_P^2 a^2 \epsilon \leq \max_{s\in S} U(s) + 1 \quad\text{on}\quad W_P\,.
    \end{equation}
    Moreover, using~\eqref{e:P-DP-D^2P-at-saddle}, \eqref{e:D^iP-norm}, and Taylor expansion for~$\hat U$ on~$W_P$ yields that for all sufficiently small~$\epsilon$, 
    \begin{equation}\label{e:hatU-on-Wp-ub}
        \hat U \leq \max_{s\in S} U(s) + \paren*{\norm{U}_{C^2(\cup_{s\in S} B(s,1))} + C_P    }C_P^2a^2\epsilon \leq \max_{s\in S} U(s) + 1 \quad\text{on}\quad W_P\,.
    \end{equation}
    By~\eqref{e:delta-choice} and the choice of~$L$, we have~$\max_{s\in S} U(s) + 1 < 3\max_{s\in S} U(s)/\delta < L$. Hence~\eqref{e:U-on-Wp-ub} and~\eqref{e:hatU-on-Wp-ub} imply that~$U < L$ and~$\hat U < L$ on~$W_P$, respectively, and hence,
    \begin{equation}
        S_2 = \paren*{\set*{\hat U < L} \cap W_P}  \cup \paren*{\set*{\hat U < L} \setminus W_P} \subset W_P \cup \set*{U < L} = \set*{U < L} = S_3\,.
    \end{equation}
    Furthermore, $C_Pa > C_A$ by~\eqref{e:a-choice}, so that~$W_P \supset \tilde S_1$, and~$S_1 \cap W_P = \emptyset$ once~$a\sqrt{\epsilon}$ is sufficiently small. Since also~$S_1 \subset \set{U < L}$ by~\eqref{e:L-choice}, we obtain
    \begin{equation}
        S_2 = \paren*{\set*{\hat U < L} \cap W_P}  \cup \paren*{\set*{\hat U < L} \setminus W_P} = W_P \cup \paren*{\set*{U < L} \setminus W_P} \supset S_1 \cup \tilde S_1\,,
    \end{equation}
    which completes the proof for~\eqref{e:S123-inclusion}. Moreover, $S_2$ is a smooth bounded open set. Indeed, $S_2 \subset S_3$ is bounded. Since~$\hat U < L$ on~$\bar W_P$, the boundary~$\partial S_2$ lies outside~$\bar W_P$, where~$\hat U = U$, so that near~$\partial S_2$ the set~$S_2$ coincides with~$\set{U<L}$. Finally, $DU \neq 0$ on~$\set{U = L}$, because every critical point of~$U$ lies in~$S_1 \cup \bigcup_{s\in S}B(s,r_P) \subset \set{U<L}$ by~\eqref{e:L-choice}.

    We now prove~\eqref{e:QG-property}. Clearly, the continuity of~$\hat U$ implies~$G=0$ on~$\partial S_2$, the second part of the property. To prove the first property, we directly compute~$Q(G)$ with~$G$ and~$\hat U$ defined in~\eqref{e:G-S2-def} and~\eqref{e:hatU-def} and obtain
    \begin{equation}\label{e:QG-decompose-I1I2}
        Q(G) = I_1 - I_2\,, 
    \end{equation}
    where
    \begin{align}
        I_1 &= \paren*{\frac{1}{2}+\delta} \epsilon \Delta \hat U \,,\\
        I_2 &= \frac{1}{4}\paren*{\abs{DU}^2 - \abs{D\hat P}^2} + \delta D\hat U \cdot D\hat P - \delta^2 \abs{D\hat U}^2\,.
    \end{align}
    We claim that
    for any~$s\in S$ and~$x\in B(s, r_P)$,
    \begin{equation}\label{e:I2-claim}
        I_2 \geq (8C_P)^{-1}\abs{x-s}^2 \,.
    \end{equation}
    Deferring the proof of this claim for now, we complete the proof of~\eqref{e:QG-property} as follows. The estimate~\eqref{e:D^iP-norm} implies that for all sufficiently small~$\epsilon$, $\norm{\Delta \hat U}_{C^1(B(s,r_P))} \leq C_P(a\sqrt{\epsilon})^{-1}$ and hence, combining this with~\eqref{e:P-DP-D^2P-at-saddle}, we obtain that for some small constant~$\eta \in (0,C_P)$, independent of~$a$ and~$\epsilon$, it holds that
    \begin{equation}\label{e:negative-lap-near-saddle}
        \Delta \hat U \leq -2\theta \quad\text{on}\quad \bigcup_{s\in S} B(s,\eta a\sqrt{\epsilon})\,.
    \end{equation}
    Choosing~$\epsilon$ sufficiently small such that~$C_Pa\sqrt{\epsilon} < r_P$ (and thus~$W_P \subset \cup_{s\in S} B(s, r_P)$) and combining~\eqref{e:QG-decompose-I1I2}, \eqref{e:I2-claim}, and~\eqref{e:negative-lap-near-saddle} yield
    \begin{equation}\label{e:Q-lyapunov-near-saddle}
        Q(G) \leq -\theta\epsilon \quad\text{on}\quad \bigcup_{s\in S} B(s,\eta a\sqrt{\epsilon})\,.
    \end{equation}

    Moreover, $\bigcup_{s\in S}B(s,r_P) \subset S_3$ by~\eqref{e:L-choice}, and~$\abs{\Delta\hat P} \leq dC_P$ by~\eqref{e:D^iP-norm} with~$i=2$, so that~$\Delta \hat U = \Delta U - \Delta\hat P \leq C_L$ on~$\bigcup_{s\in S}B(s,r_P)$. Thus
    \begin{equation}\label{e:Q-lyapunov-far-saddle}
        Q(G) \overset{\eqref{e:I2-claim}}{\leq} \paren*{C_L - (8C_P)^{-1}\eta^2a^2}\epsilon \quad\text{on}\quad \bigcup_{s\in S} \paren*{B(s,r_P)\setminus B(s, \eta a\sqrt{\epsilon})}\,.
    \end{equation}
    Hence, by~\eqref{e:a-choice}, we obtain
    \begin{equation}
        Q(G) \leq -\theta \epsilon \quad\text{on}\quad \bigcup_{s\in S} \paren*{B(s,r_P)\setminus B(s, \eta a\sqrt{\epsilon})}\,.
    \end{equation}
    Finally, on~$S_2 \setminus \paren*{\cup_{s}B(s,r_P) \cup \bar S_1}$, $\hat U = U$ and~$\hat P =  0$. Therefore,
    \begin{equation}
        Q(G) = \paren*{\frac{1}{2}+\delta} \epsilon \Delta U - \paren*{\frac{1}{4}-\delta^2}\abs{DU}^2\,.
    \end{equation}
    Again, $\Delta U \leq C_L$ on~$S_3$. Moreover, every critical point of~$U$ lies in the open set~$S_1 \cup \bigcup_{s\in S}B(s,r_P)$, so by compactness of~$\set{U \leq L}$ there is a constant~$c>0$, depending on~$L$ but not on~$a$ or~$\epsilon$, such that
    \begin{equation}
        \abs{DU} \geq c \quad\text{on}\quad \set{U<L} \setminus \paren*{\cup_{s}B(s,r_P) \cup S_1}\,.
    \end{equation}
    Since~$\delta < 1/4$, this yields that for all sufficiently small~$\epsilon$,
    \begin{equation}\label{e:Q-lyapunov-out-crpts}
        Q(G) \leq C_L\epsilon - \frac{3}{16}c^2 \leq -\frac{3}{32}c^2 \quad\text{on}\quad S_2 \setminus \paren*{\cup_{s}B(s,r_P) \cup \bar S_1}\,.
    \end{equation}
    Combining~\eqref{e:Q-lyapunov-near-saddle}, \eqref{e:Q-lyapunov-far-saddle}, and~\eqref{e:Q-lyapunov-out-crpts} completes the proof for the first property in~\eqref{e:QG-property}.

    We now prove the claim~\eqref{e:I2-claim}. We note that~\eqref{e:P-DP-D^2P-at-saddle} implies that for all~$s\in S$, $D\hat P(s) = 0$ and, together with~\eqref{e:D^iP-norm}, 
    \begin{equation}
        \abs{D\hat P}\leq C_P\abs{x-s}\,, \quad \abs{D\hat U} \leq C_1\abs{x-s} \quad\text{on}\quad B(s,r_P)\,,
    \end{equation}
    with~$C_1$ as in~\eqref{e:C1-def}. Combining this with~\eqref{e:DU-DP-unif-lb} and~\eqref{e:delta-choice}, we obtain on~$B(s,r_P)$
    \begin{equation}
        I_2 \geq \paren*{\frac{1}{4C_P} - \delta C_PC_1 - \delta^2C_1^2}\abs{x-s}^2 \geq \frac{1}{8C_P}\abs{x-s}^2\,,
    \end{equation}
    which is~\eqref{e:I2-claim}.

    Finally, we verify~\eqref{e:G-small} and~\eqref{e:G-osc-bound} with the~$\epsilon$-independent constant
    \begin{equation}
        C_G \defeq \paren*{\frac{1}{2}+\delta} \max\set*{\paren*{\norm{U}_{C^2(\cup_{s\in S} B(s,1))} + C_P}C_P^2a^2,\; L + 1}\,.
    \end{equation}
    Since~$\tilde S_1 \subset W_P$, the bound~\eqref{e:hatU-on-Wp-ub} gives~$\hat U \leq \max_{s\in S}U(s) + C_G\epsilon/(\frac{1}{2}+\delta)$ on~$\tilde S_1$, which is~\eqref{e:G-small}. For~\eqref{e:G-osc-bound}, note that~$\hat U \leq L$ on~$\bar S_2$, so that~$G \leq 0$ there, while~\eqref{e:D^iP-norm} with~$i=0$ gives~$\hat U \geq U - C_Pa^2\epsilon \geq -1$ on~$\R^d$ for all sufficiently small~$\epsilon$, since~$U \geq 0$. Hence~$G \geq -(\frac{1}{2}+\delta)(L + 1)$, and so~$\norm{G}_{\mathrm{osc}} \leq C_G$.
\end{proof}

\subsection{Proof of Lemma~\ref{l:perturb-prop}}\label{ss:higher-index}

In this subsection we prove Lemma~\ref{l:perturb-prop}. The construction below works at a nondegenerate critical point of any Morse index, so no restriction is placed on the indices of the points of~$S$.

We construct
\begin{equation}
    \hat P = \sum_{s\in S}\hat P^{(s)}\,,
\end{equation}
where, for each~$s \in S$, there exist constants~$C_s, r_s>0$ such that, for any~$a > 0$ and all sufficiently small~$\epsilon >0$, the perturbation~$\hat P^{(s)} \in C^3(\R^d, \R)$ satisfies
\begin{gather}
    \label{e:per-saddle-at-s-gen}
    \hat P^{(s)}(s) = 0\,,\quad D\hat P^{(s)}(s) = 0\,,\quad  \Delta U (s) - \Delta \hat P^{(s)}(s) < 0\,,\\
    \label{e:per-saddle-support-gen}
    \supp \hat P^{(s)} \subset B(s, C_sa\sqrt{\epsilon})\,,\quad   \norm{D^i  \hat P^{(s)}}_{L^\infty(\R^d)} \leq C_s(a\sqrt{\epsilon})^{2-i}\,,\quad 0\leq i\leq 3\,,\\
    \label{e:per-saddle-lb-gen}
    \abs{DU(x)}^2 - \abs{D\hat P^{(s)}(x)}^2 \geq C_s^{-1}\abs{x-s}^2\,, \quad \forall x \in B(s, r_s)\,.
\end{gather}
Then we take
\begin{equation}
 C_P = \max_{s\in S} C_s\,,\quad r_P = \min\set*{\min_{s\in S}r_s,\ \frac{1}{4}\min_{s\neq s'}\abs{s-s'}}\,, \quad \text{and}\quad \theta = \frac{1}{8}\min_{s\in S} \paren*{\Delta \hat P^{(s)}(s) - \Delta U(s)}\,,
\end{equation}
which is possible since~$S$ is finite, and we decrease~$\epsilon$ until~$C_Pa\sqrt\epsilon < r_P$. Then the supports of the~$\hat P^{(s)}$ are pairwise disjoint, and on~$B(s, r_P)$ only~$\hat P^{(s)}$ is nonzero, so that~$\hat P = \hat P^{(s)}$ there. Hence~\eqref{e:per-saddle-at-s-gen}--\eqref{e:per-saddle-lb-gen} give Lemma~\ref{l:perturb-prop}. In particular, $\Delta\hat U(s) = \Delta U(s) - \Delta\hat P^{(s)}(s) \leq -8\theta < -4\theta$.

Fix~$s\in S$. After a translation we may assume~$s=0$. Let~$k \in \set{1,\ldots,d}$ be the Morse index of~$0$, that is, the number of negative eigenvalues of the Hessian~$H(0) \defeq D^2U(0)$. It suffices to prove the following two lemmas, for then~$C_s \defeq \max\set{C_0, c_0^{-1}}$ and~$r_s \defeq r_0$ satisfy~\eqref{e:per-saddle-at-s-gen}--\eqref{e:per-saddle-lb-gen}.

\begin{lemma}\label{l:hatP0-basic-prop-gen}
    There exists a constant~$C_0>0$ such that, for any~$a > 0$ and all sufficiently small~$\epsilon > 0$, there is a perturbation~$\hat P^{(0)}\in C^3(\R^d, \R)$ satisfying
    \begin{gather}
        \label{e:hatP-derivatives-eval-at0-gen}
        \hat P^{(0)}(0) = 0\,,\quad D\hat P^{(0)}(0) = 0\,,\quad  \Delta U (0) - \Delta \hat P^{(0)}(0) < 0\,,\\
        \label{e:hatP-support-C3norm-gen}
        \begin{gathered}
        \supp \hat P^{(0)} \subset B(0, C_0a\sqrt{\epsilon})\,,\\
        \norm{D^i  \hat P^{(0)}}_{L^\infty(\R^d)} \leq C_0(a\sqrt{\epsilon})^{2-i}\,,\quad 0\leq i\leq 3\,.
        \end{gathered}
    \end{gather}
\end{lemma}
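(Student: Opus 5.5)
The plan is to build $\hat P^{(0)}$ as a rescaled bump: a quadratic form in suitable coordinates, cut off at scale $a\sqrt\epsilon$. Concretely, let $\chi\in C_c^\infty(\R^d)$ be a fixed cutoff with $\chi\equiv1$ on $B(0,1)$, $\supp\chi\subset B(0,2)$ and $0\le\chi\le1$. Let $q(x)=\tfrac12 x^T A x$ be a fixed quadratic form, to be specified below. We then set
\begin{equation}
  \hat P^{(0)}(x) \defeq (a\sqrt\epsilon)^2\, q\paren*{\frac{x}{a\sqrt\epsilon}}\,\chi\paren*{\frac{x}{a\sqrt\epsilon}} = q(x)\,\chi\paren*{\frac{x}{a\sqrt\epsilon}}\,.
\end{equation}
Since $q$ is homogeneous of degree two, the two expressions coincide. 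The scaling $\hat P^{(0)}(x)=\rho^2 F(x/\rho)$, with $\rho=a\sqrt\epsilon$ and $F=q\chi\in C_c^\infty$, immediately gives $\supp\hat P^{(0)}\subset B(0,2\rho)$ and $\norm{D^i\hat P^{(0)}}_{L^\infty}=\rho^{2-i}\norm{D^iF}_{L^\infty}$. Hence \eqref{e:hatP-support-C3norm-gen} holds with $C_0\defeq\max\set{2,\max_{i\le3}\norm{D^iF}_{L^\infty}}$, which depends only on $A$ and $\chi$, not on $a$ or $\epsilon$. Smoothness ($C^\infty\subset C^3$) is automatic.

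Next we check the values at the origin. Because $\chi\equiv1$ near $0$, we have $\hat P^{(0)}=q$ on $B(0,\rho)$. This gives $\hat P^{(0)}(0)=0$, $D\hat P^{(0)}(0)=0$ and $\Delta\hat P^{(0)}(0)=\tr A$. It remains to choose $A$ with $\tr A>\Delta U(0)=\tr H(0)$. The simplest choice, $A=(\abs{\tr H(0)}+1)I$ for instance, already satisfies \eqref{e:hatP-derivatives-eval-at0-gen}.

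This lemma is therefore routine; the real constraint is the companion gradient bound \eqref{e:per-saddle-lb-gen}, which must hold on a ball of $\epsilon$-independent radius. That is the obstacle, and it dictates how $A$ should be chosen. On $B(0,\rho)$ we have $D\hat P^{(0)}=Ax$. Outside that ball there are additional terms from $D\chi$, of size $\abs{x}^2/\rho\lesssim\abs{x}$. We therefore need $\abs{Ax}$, together with these cutoff corrections, to be strictly dominated by $\abs{DU(x)}\approx\abs{H(0)x}$. So I would take $A$ acting only on the stable (positive) eigendirections of $H(0)$, say $A=(1-\eta)H_+$ where $H_+$ is the positive part of $H(0)$. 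This damps the stable curvature until $\tr(H(0)-A)=\tr H_- + \eta\tr H_+<0$ for small $\eta$, which is possible since the Morse index satisfies $k\ge1$.

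For this choice, the cutoff corrections are the genuinely delicate point. They would be handled by replacing the radial cutoff with one adapted to the stable directions (or to the local stable manifold, as the paper indicates). The aim is that $\abs{D\hat P^{(0)}}\le(1-\eta/2)\abs{P_+DU}$ in an $O(1)$ neighbourhood. For the present lemma itself, however, any fixed $A$ with $\tr A>\tr H(0)$ suffices, and the scaling argument above completes the proof.
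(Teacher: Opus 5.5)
Your argument is a correct proof of the lemma exactly as stated, but it takes a genuinely different route from the paper. You are free to pick any function, so you take a quadratic with $\tr A>\tr H(0)$ and a radial cutoff. The exact identity $\hat P^{(0)}(x)=a^2\epsilon\,F\paren*{x/(a\sqrt\epsilon)}$ then gives the support and derivative bounds at once, and in fact for every $\epsilon>0$.

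The paper instead verifies these properties for the specific function $\hat P^{(0)}(x)=P(\xi(x),x-\xi(x))$ of \eqref{e:hatP-def-gen}. The nearest-point projection $\xi$ onto the stable manifold is nonlinear, so there is no exact scaling there. The paper therefore:
\begin{itemize}
\item rescales to the $(a,\epsilon)$-independent $\tilde P$ and applies the chain rule, using $C^3$ bounds on $\xi$ that do not depend on $a$ or $\epsilon$;
\item takes $\epsilon$ small, to keep the support inside $B(0,r_0)$;
\item obtains the trace condition from $D\xi(0)=\Pi_s$, which yields $D^2\hat P^{(0)}(0)=K$ and $\tr(H(0)-K)=-\sum_l\lambda_{u,l}+\rho(d-k)<0$.
\end{itemize}
Your scaling identity would apply verbatim if $\xi$ were replaced by the linear projection $\Pi_s$.

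What the paper's route buys is that the same function also satisfies Lemma~\ref{l:DU-DhatP-difference-lb-gen}. That lemma is stated for ``$\hat P^{(0)}$ as in Lemma~\ref{l:hatP0-basic-prop-gen}'', and the two lemmas must hold for one function to give \eqref{e:per-saddle-at-s-gen}--\eqref{e:per-saddle-lb-gen}. Your isotropic choice $A=(\abs{\tr H(0)}+1)I$ violates that gradient bound on $B(0,a\sqrt\epsilon)$, for instance whenever some eigenvalue of $H(0)$ has modulus below $\abs{\tr H(0)}+1$. So inside the paper's argument your proof settles only the easy half.

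You correctly identify the repair: damp only the stable directions, so that your $(1-\eta)H_+$ plays the role of $K=H_s-\rho\Pi_s$. Your worry about the radial cutoff is also justified. On the transition annulus it produces a radial term of order $\norm{A}\,\norm{D\chi}_{L^\infty}\abs{x}$, with both stable and unstable components, and this term need not be dominated by $\abs{DU}$. The paper resolves exactly this point with two cutoffs:
\begin{itemize}
\item a logarithmic cutoff in $y$, so that $\zeta_1\abs{\chi_1'(\zeta_1)}\le\norm{\chi_0'}_{L^\infty}/\log R$ is small;
\item a cutoff in $z$ at the larger scale $\tilde R\gg R$, so that the unstable coefficient satisfies $0\le\tilde\lambda_u\le\lambda_{u,\min}/2$.
\end{itemize}
Your sketch leaves this part open.
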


\begin{lemma}\label{l:DU-DhatP-difference-lb-gen}
    Let~$\hat P^{(0)}$ be as in Lemma~\ref{l:hatP0-basic-prop-gen}. Then there exist~$r_0 > 0$ and~$c_0>0$ such that for any~$a>0$ and all sufficiently small~$\epsilon>0$,
    \begin{equation}
        \abs{DU(x)}^2 - \abs{D\hat P^{(0)}(x)}^2 \geq c_0\abs{x}^2\,, \quad \forall x \in B(0, r_0)\,.
    \end{equation}
\end{lemma}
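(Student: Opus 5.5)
The plan is to prove the bound by comparing, near the saddle, the linear parts of~$DU$ and~$D\hat P^{(0)}$, with quadratic Taylor remainders absorbed by taking~$r_0$ small. The properties listed in Lemma~\ref{l:hatP0-basic-prop-gen} alone are not enough. They only give~$\abs{D\hat P^{(0)}(x)}\leq C_0\abs{x}$ on the support, and~$C_0$ need not be smaller than the constant in~$\abs{DU(x)}\geq c\abs{x}$. So I will use the structure of the construction in the proof of Lemma~\ref{l:hatP0-basic-prop-gen}, namely that~$\hat P^{(0)}$ only acts in the stable directions and is a strict contraction of the stable part of~$U$.

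Concretely, let~$y=(y_s,y_u)$ be coordinates near~$0$ adapted to the Hessian~$H(0)$, or to the local stable manifold, which coincide to first order. Here~$y_s$ carries the positive eigenvalues~$\Lambda_s$ and~$y_u$ the negative eigenvalues~$\Lambda_u$ (in absolute value), with~$y(x)=x+O(\abs{x}^2)$. I take the perturbation to be of the form
\begin{equation*}
  \hat P^{(0)}=\tfrac12(1-\eta)\,g(q)\,,\qquad q=y_s\cdot\Lambda_s y_s\,,\qquad g(q)=(a\sqrt\epsilon)^2h\bigl(q/(a\sqrt\epsilon)^2\bigr)\,.
\end{equation*}
The profile~$h$ is~$C^3$ with~$h(t)=t$ near~$0$, compact support, and~$\abs{h'}\leq1$: it rises and then falls slowly back to~$0$. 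The parameter~$\eta\in(0,1)$ is small enough that~$(1-\eta)\tr\Lambda_s>\tr\Lambda_s-\tr\Lambda_u$, which gives the Laplacian condition at~$0$. The chain rule then gives the exact identity
\begin{equation*}
  D\hat P^{(0)}=(1-\eta)\,g'(q)\,(Dy_s)^{T}\Lambda_s y_s\,,\qquad\text{so}\qquad \abs{D\hat P^{(0)}}\leq(1-\eta)\abs{\Lambda_s y_s}\bigl(1+C\abs{x}\bigr)\,.
\end{equation*}
This bound holds uniformly in~$a$ and~$\epsilon$, because the cutoff only enters through~$\abs{g'}\leq1$.

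The remaining steps are as follows. First, Taylor expansion gives~$DU(x)=H(0)x+O(\abs{x}^2)$. In the~$y$ coordinates this yields
\begin{equation*}
  \abs{DU}^2\geq\abs{\Lambda_s y_s}^2+\abs{\Lambda_u y_u}^2-C\abs{x}^3\,.
\end{equation*}
Second, subtracting the square of the bound on~$\abs{D\hat P^{(0)}}$ gives
\begin{equation*}
  \abs{DU}^2-\abs{D\hat P^{(0)}}^2\geq\bigl(1-(1-\eta)^2\bigr)\lambda_{\min}^2\abs{y_s}^2+\lambda_{\min}^2\abs{y_u}^2-C\abs{x}^3\,,
\end{equation*}
where~$\lambda_{\min}$ is the smallest absolute eigenvalue of~$H(0)$. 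Third, since~$\abs{y}\geq\abs{x}/2$ near~$0$, the right-hand side is at least~$c\abs{x}^2-C\abs{x}^3$. Choosing~$r_0$ small, independently of~$a$ and~$\epsilon$, gives the claim with~$c_0=c/2$.

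The main obstacle is the cutoff region, where~$\abs{x}\sim a\sqrt\epsilon$. A naive product cutoff~$\chi(x/(a\sqrt\epsilon))\,q$ produces a gradient term~$q\,D\chi\sim\abs{x}$ that is not controlled by~$(1-\eta)\abs{\Lambda_s y_s}$. This is why I cut off through the profile~$g$ acting on~$q$ itself, so that~$D\hat P^{(0)}$ is always a scalar multiple of size at most~$1-\eta$ of the stable gradient. I expect the delicate point to be checking that this choice is compatible with the support and~$C^3$ bounds in~\eqref{e:hatP-support-C3norm-gen}. The bound on~$\abs{D\hat P^{(0)}}$ itself only uses~$\abs{g'}\leq1$, so it holds uniformly in~$a$ and~$\epsilon$. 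The support and~$C^3$ bounds are the part of the argument that is sensitive to~$a\sqrt\epsilon$.
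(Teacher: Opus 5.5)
There is a genuine gap. It sits in the unstable directions, not in the stable cutoff you identify as the main obstacle.

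Your $\hat P^{(0)}=\tfrac12(1-\eta)g(q)$ depends on $x$ only through $y_s$. So it is nonzero at points with $0<\abs{y_s}\lesssim a\sqrt\epsilon$ and arbitrary $y_u$ in the chart, i.e.\ on a neighborhood of the entire unstable submanifold $\set{y_s=0}$. Consequently:
\begin{itemize}
\item it is not supported in $B(0,C_0a\sqrt\epsilon)$;
\item it cannot be extended by zero to a $C^3$ function on $\R^d$ without a further cutoff;
\item it is therefore not a perturbation ``as in Lemma~\ref{l:hatP0-basic-prop-gen}'', and that localization is used later, e.g.\ through $W_P$ in the proof of Lemma~\ref{l:lyapunov-existence}.
\end{itemize}
Any admissible $\hat P^{(0)}$ must also be cut off in $y_u$, say by a factor $\chi_0(\abs{y_u}^2/(\tilde Ra^2\epsilon))$. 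That factor adds to $D\hat P^{(0)}$ the term $(1-\eta)\,h(t)\,\chi_0'(\cdot)\,(Dy_u)^Ty_u/\tilde R$, with $t=q/(a^2\epsilon)$. This term points in the unstable directions and has the same order $\abs{y_u}$ as the unstable part of $DU$. It is not controlled by $\abs{y_s}$. Hence your key bound $\abs{D\hat P^{(0)}}\leq(1-\eta)\abs{\Lambda_sy_s}(1+C\abs{x})$ is false for any localized perturbation. Nothing in your argument stops this term from exceeding $\lambda_{u,\min}\abs{y_u}$.

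The missing idea is to put the unstable cutoff on a larger scale.
\begin{itemize}
\item Choosing $\tilde R\geq2\norm{\chi_0'}_{L^\infty}\max h/\lambda_{u,\min}$ makes the extra term at most $\tfrac12\lambda_{u,\min}\abs{y_u}$.
\item The stable and unstable components of $D\hat P^{(0)}$ are orthogonal up to a relative error $O(\abs{x})$. This gives $\abs{D\hat P^{(0)}}^2\leq(1-\eta)^2\abs{\Lambda_sy_s}^2+\tfrac14\lambda_{u,\min}^2\abs{y_u}^2+C\abs{x}^3$.
\item After that, your three steps go through.
\end{itemize}
This is exactly the role of $\chi_2(\zeta_2)$, $\zeta_2=\abs{z}^2/(a^2\epsilon)$, and of the choice of $\tilde R$ in \eqref{e:R-tildeR-choice-gen}, which yields $0\leq\tilde\lambda_u\leq\lambda_{u,\min}/2$ in \eqref{e:unstable-eigen-perturb-gen}.

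In the stable directions your device is a genuinely different and valid alternative. The paper cuts off spherically in $y=\xi(x)$ on a logarithmic scale, so that $\zeta_1\abs{\chi_1'(\zeta_1)}\leq\norm{\chi_0'}_{L^\infty}/\log R$ is small and $\abs{\tilde\lambda_i}\leq\lambda_i-\rho$. You instead apply a profile with $\abs{h'}\leq1$ to the quadratic form $q$ itself. This keeps the stable part of $D\hat P^{(0)}$ exactly parallel to $(Dy_s)^T\Lambda_sy_s$ and needs no logarithmic scale.

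The statement, however, concerns the paper's $\hat P^{(0)}$. If you substitute your own construction, you must also verify Lemma~\ref{l:hatP0-basic-prop-gen} for it. Once the unstable cutoff is included this is routine, since then $\hat P^{(0)}=a^2\epsilon\,\tilde P(y(x)/(a\sqrt\epsilon))$ for a fixed compactly supported $C^3$ function $\tilde P$.
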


We label the eigenvalues of~$H(0)$ as
\begin{equation}\label{e:eigenvalues-gen}
    -\lambda_{u,k} \leq \cdots \leq -\lambda_{u,1} < 0 < \lambda_1 \leq \cdots \leq \lambda_{d-k}\,,
\end{equation}
let~$n_1(0), \ldots, n_k(0)$ be orthonormal eigenvectors for~$-\lambda_{u,1},\ldots,-\lambda_{u,k}$, and let~$t_1(0), \ldots, t_{d-k}(0)$ be orthonormal eigenvectors for~$\lambda_1,\ldots,\lambda_{d-k}$. We write~$\lambda_{u,\min} \defeq \lambda_{u,1}$ for the smallest of the~$\lambda_{u,l}$. By the spectral theorem,
\begin{equation}
  H(0) = H_u + H_s\,, \qquad
  H_u \defeq -\sum_{l=1}^{k}\lambda_{u,l}\, n_l(0)n_l(0)^T\,, \qquad
  H_s \defeq \sum_{i=1}^{d-k} \lambda_i\, t_i(0)t_i(0)^T\,,
\end{equation}
and we write
\begin{equation}
    \Pi_s \defeq \sum_{i=1}^{d-k} t_i(0)t_i(0)^T\,, \qquad \Pi_u \defeq I - \Pi_s = \sum_{l=1}^{k} n_l(0)n_l(0)^T
\end{equation}
for the orthogonal projections onto the stable and unstable eigenspaces of~$H(0)$.

\emph{Local maxima.} If~$k = d$, then~$0$ is a local maximum, $H_s = 0$, and we simply take~$\hat P^{(0)} \defeq 0$. Then~\eqref{e:hatP-support-C3norm-gen} is trivial, and~\eqref{e:hatP-derivatives-eval-at0-gen} holds because~$\Delta U(0) = \tr H(0) = -\sum_{l=1}^d \lambda_{u,l} < 0$. Moreover, a Taylor expansion of~$DU$ at~$0$ gives~$\abs{DU(x)}^2 = \abs{H(0)x}^2 + O(\abs{x}^3) \geq \lambda_{u,\min}^2\abs{x}^2 - C\abs{x}^3$, so that Lemma~\ref{l:DU-DhatP-difference-lb-gen} holds with~$c_0 = \lambda_{u,\min}^2/2$ and~$r_0$ sufficiently small. \emph{From now on we assume~$1 \leq k \leq d-1$}, so that the stable eigenspace~$\Pi_s\R^d$ has dimension~$d-k\geq 1$.

We now collect the preliminary results needed for the construction. By the stable manifold theorem (see, e.g.,~\cite{Perko96}), applied to the~$C^5$ vector field~$-DU$, the local stable manifold~$W^s(0)$ of the gradient flow at~$0$ is a~$C^5$ embedded disc of dimension~$d-k$ whose tangent space at~$0$ is the stable eigenspace of~$H(0)$, that is,
\begin{equation}\label{e:T0Ws-gen}
    T_0W^s(0) = \Pi_s\R^d\,.
\end{equation}
Since~$W^s(0)$ is a~$C^5$ submanifold of~$\R^d$ (replacing it by a smaller closed disc around~$0$ if necessary), there exists~$r_0 > 0$ such that every~$x \in B(0,r_0)$ has a unique nearest point~$\xi(x) \in W^s(0)$. Moreover, by~\cite{Foote84} and~\cite[Theorem~2]{LeobacherSteinicke21}, the nearest-point projection~$\xi$ satisfies
\begin{equation}\label{e:d-xi-regularity-gen}
  \xi \in C^4\paren*{B(0,r_0); W^s(0)}\,.
\end{equation}
Furthermore, at a point of a~$C^2$ submanifold the differential of the nearest-point projection is the orthogonal projection onto the tangent space at that point~\cite[Theorem~C]{LeobacherSteinicke21}. Since~$\xi(0) = 0$, this together with~\eqref{e:T0Ws-gen} gives
\begin{equation}\label{e:dxi-is-projection-gen}
  D\xi(0) = \Pi_s\,, \qquad\text{and hence}\qquad D(I - \xi)(0) = \Pi_u\,.
\end{equation}

Now, we are ready to construct~$\hat P^{(0)}$.
\begin{definition}
Let
\begin{equation}\label{e:kappa-choice-K-def-gen}
    \rho < \min\set*{\frac{1}{2(d-k)}\sum_{l=1}^{k}\lambda_{u,l},\ \frac{1}{2}\lambda_1}\quad\text{and}\quad K\defeq H_s -  \rho \Pi_s\,,
\end{equation}
and fix a smooth cutoff function~$\chi_0: \R \to [0, 1]$ such that
\begin{equation}\label{e:cutoff-prop-gen}
    \chi_0 = 1 \quad\text{on}\quad (-\infty, 1/2]\,,\quad \chi_0 = 0 \quad\text{on}\quad [1,\infty)\,, \quad\text{and}\quad \chi_0' \leq 0 \quad\text{on}\quad \R\,.
\end{equation}
Then, fix~$R, \tilde R > 1$ such that
\begin{equation}\label{e:R-tildeR-choice-gen}
    \log R > 2\frac{\lambda_{d-k}}{\lambda_1}\norm{\chi_0'}_{L^\infty}\quad\text{and}\quad \tilde R > \frac{2  \lambda_{d-k} \norm{\chi_0'}_{L^\infty} R}{\lambda_{u,\min}}\,,
\end{equation}
and, for any given~$a, \epsilon > 0$, define~$P \colon \R^d \times \R^d \to \R$ by
\begin{multline}\label{e:P-def-gen}
    P(y,z) = \frac{1}{2}y^T K y\, \chi_1(\zeta_1)\, \chi_2(\zeta_2) \quad\text{where}\quad \zeta_1 \defeq \frac{\abs{y}^2}{a^2\epsilon}\,,\quad \zeta_2 \defeq \frac{\abs{z}^2}{a^2\epsilon}\,,\\
    \chi_1(\zeta_1) \defeq
    \begin{cases}
        \chi_0\paren*{\frac{\log \zeta_1}{\log R}} & \zeta_1 > 0\,,\\
        1 & \zeta_1 = 0\,,
    \end{cases}
    \quad\text{and}\quad
    \chi_2(\zeta_2) \defeq \chi_0\paren*{\frac{\zeta_2}{\tilde R}}\,.
\end{multline}
Finally, define
\begin{equation}\label{e:hatP-def-gen}
    \hat P^{(0)} (x) =
    \begin{cases}
    P(\xi(x), x-\xi(x) )  & x\in B(0, r_0)\,,\\
    0 & x \not\in B(0,r_0)\,.
    \end{cases}
\end{equation}
\end{definition}

Note that~$\chi_1 = 1$ on~$[0, R^{1/2}]$, so that~$P$ is smooth, and that~$K$ is nonnegative-definite with eigenvalues~$0$ (on~$\Pi_u\R^d$) and
\begin{equation}\label{e:K-eigenvalues-gen}
    \lambda_i- \rho \overset{\eqref{e:kappa-choice-K-def-gen}}{\in} \paren*{\frac{1}{2}\lambda_1, \lambda_{d-k}}\,, \qquad 1 \leq i \leq d-k\,.
\end{equation}

We now prove Lemma~\ref{l:hatP0-basic-prop-gen}. The argument is essentially that of~\cite[Lemmas 3.5--3.6]{Son26}, and we include it for the reader's convenience.

\begin{proof}[Proof of Lemma~\ref{l:hatP0-basic-prop-gen}]
    Throughout the proof, we write~$y = \xi(x)$ and~$z=x-\xi(x)$ for simplicity of notation. Moreover, for any given~$a > 0$, we assume~$\epsilon$ is sufficiently small to satisfy
    \begin{equation}\label{e:epsilon-small-in-supp-gen}
        \paren*{R^\frac{1}{2}+\tilde R^\frac{1}{2}}a\sqrt{\epsilon} < \frac{r_0}{2}\,.
    \end{equation}

    We first prove~\eqref{e:hatP-support-C3norm-gen}. By the definition of~$P$,
    \begin{equation}
        \supp P \subset \set*{(y,z) \in \R^{2d}\st \abs{y} \leq R^\frac{1}{2}a\sqrt{\epsilon}\,,\quad \abs{z} \leq \tilde R^\frac{1}{2}a\sqrt{\epsilon}}\,,
    \end{equation}
    and hence, combining this with the triangle inequality~$\abs{x} \leq \abs{y} + \abs{z}$ and~\eqref{e:epsilon-small-in-supp-gen} yields
    \begin{equation}
        \supp \hat P^{(0)} \subset  B\paren*{0, \paren*{R^\frac{1}{2}+\tilde R^\frac{1}{2}}a\sqrt{\epsilon}}\,.
    \end{equation}
    In particular the support of~$\hat P^{(0)}$ is a compact subset of the open ball~$B(0,r_0)$, so~$\hat P^{(0)} \in C^4(\R^d, \R) \subset C^3(\R^d,\R)$ by~\eqref{e:d-xi-regularity-gen}. This proves the first part of~\eqref{e:hatP-support-C3norm-gen} with any~$C_0 \geq R^{1/2}+\tilde R^{1/2}$.
    Moreover, if we define the rescaled function
    \begin{equation}
        \tilde P (\hat y, \hat z) \defeq \frac{1}{a^2\epsilon} P \paren*{a\sqrt{\epsilon} \hat y, a\sqrt{\epsilon}\hat z} = \frac{1}{2}\hat y^T K \hat y\, \chi_1(\abs{\hat y}^2)\, \chi_2(\abs{\hat z}^2)\,, \qquad \hat y, \hat z \in \R^d\,,
    \end{equation}
    then~$\tilde P$ does not depend on~$a$ and~$\epsilon$, and it is smooth with compact support, so its~$C^3$ norm is a constant. Writing~$\hat P^{(0)}$ in terms of~$\tilde P$ via the identity
    \begin{equation}
        \hat P^{(0)} (x) = a^2\epsilon\, \tilde P\paren*{\frac{y}{a\sqrt{\epsilon}}, \frac{z}{a\sqrt{\epsilon}}}\,, \qquad x\in B(0, r_0)\,,
    \end{equation}
    differentiating both sides~$i\leq 3$ times with respect to~$x$ by the chain rule, and using that~$y$ and~$z$ have~$C^3$ norms on~$B(0, r_0)$ independent of~$a$ and~$\epsilon$ by~\eqref{e:d-xi-regularity-gen}, we see that each term carries a factor~$a^2\epsilon\,(a\sqrt\epsilon)^{-j}$ with~$1 \leq j\leq i$ (or~$j=0$ when~$i=0$). Since~$a\sqrt\epsilon \leq 1$ for small~$\epsilon$, this gives the second part of~\eqref{e:hatP-support-C3norm-gen}, after increasing~$C_0$.

    To prove~\eqref{e:hatP-derivatives-eval-at0-gen}, we first note that~$\xi(0)=0 \in W^s(0)$ and combining this with the regularity of~$\xi$ in~\eqref{e:d-xi-regularity-gen}, we obtain that for any~$x\in B(0,r_0)$,
    \begin{equation}\label{e:yz-O(x)-gen}
        \abs{y} =  \abs{\xi(x) - \xi(0)}\leq \norm{\xi}_{C^1(B(0, r_0))} \abs{x}\,, \quad \abs{z} = \abs{x-y} \leq \paren*{1+\norm{\xi}_{C^1(B(0,r_0))}}\abs{x}\,.
    \end{equation}
    Since~$\chi_1 = 1$ on~$[0,R^{1/2}]$ and~$\chi_2 = 1$ on~$[0, \tilde R/2]$, we get that for some constant~$C=C(R, \tilde R, \xi)>0$ and any~$x \in B(0, Ca\sqrt{\epsilon})$, $\chi_1(\zeta_1)=\chi_2(\zeta_2)=1$, and consequently, on~$B(0, Ca\sqrt{\epsilon})$,
    \begin{multline}
        \hat P^{(0)} (x) = \frac{1}{2}y^T K y\,,\quad D\hat P^{(0)}(x)^T = y^T K D\xi(x) \,,\quad\text{and}\quad \\
         D^2\hat P^{(0)}(x) = D\xi(x)^T K D\xi(x) + \sum_{i=1}^d \paren*{K\xi(x)}_i D^2\xi_i(x)\,.
    \end{multline}
     Using~$y(0)=\xi(0)=0$, \eqref{e:dxi-is-projection-gen}, $\Pi_s^T = \Pi_s$, and~$K\Pi_s=\Pi_sK=K$, we get
     \begin{equation}
         \hat P^{(0)}(0) = 0\,, \quad D\hat P^{(0)}(0) = 0\,,\quad\text{and}\quad D^2\hat P^{(0)}(0) = K\,.
     \end{equation}
     This implies
     \begin{align}
         \Delta U(0) - \Delta \hat P^{(0)}(0) = \tr\paren*{H(0) - D^2\hat P^{(0)}(0)} &\overset{\eqref{e:kappa-choice-K-def-gen}}{=} \tr\paren*{H_u + \rho \Pi_s} \\
         &= -\sum_{l=1}^{k}\lambda_{u,l} + \rho (d-k) \overset{\eqref{e:kappa-choice-K-def-gen}}{<} -\frac{1}{2}\sum_{l=1}^{k}\lambda_{u,l}\,.
     \end{align}
     This completes the proof of~\eqref{e:hatP-derivatives-eval-at0-gen}.
\end{proof}

The proof of Lemma~\ref{l:DU-DhatP-difference-lb-gen} follows the strategy of~\cite[Lemma 3.7]{Son26}. Up to a negligible error, $y=\xi(x)$ and~$z=x-\xi(x)$ are the projections of~$x$ onto the stable and unstable eigenspaces of~$H(0)$, and~$\rho$ in~\eqref{e:kappa-choice-K-def-gen} is chosen so that the difference between~$\abs{DU}^2$ and~$\abs{D\hat P^{(0)}}^2$ survives in every stable and unstable direction. Unlike in~\cite{Son26}, however, the cutoff in~$y$ has to be taken on a logarithmic scale, as~$\chi_1$ in~\eqref{e:P-def-gen}. This keeps~$\zeta_1\abs{\chi_1'(\zeta_1)} \leq \norm{\chi_0'}_{L^\infty}/\log R$ small, so that cutting off~$P$ in the stable directions does not destroy the lower bound there (see~\eqref{e:stable-eigen-perturb-gen} below). We therefore give the proof in full.

\begin{proof}[Proof of Lemma~\ref{l:DU-DhatP-difference-lb-gen}]
    Throughout the proof, we write~$y=\xi(x)$ and~$z=x-\xi(x)$ as in the proof of Lemma~\ref{l:hatP0-basic-prop-gen}. We also use the~$O$ notation to denote a quantity bounded by a constant multiple of another, uniformly in~$a$ and~$\epsilon$. That is, $v = O(w)$ means~$\abs{v} \leq C \abs{w}$ for some constant~$C$ independent of~$a$ and~$\epsilon$.

    We begin by applying a Taylor expansion to~$DU$ at~$x=0$ to obtain
    \begin{equation}\label{e:DU-decompose-stable-unstable-gen}
        DU(x) = H(0)x + O(\abs{x}^2) = H_s x + H_u x + O(\abs{x}^2) \quad\text{on}\quad B(0, r_0)\,.
    \end{equation}
    Similarly, differentiating~$\hat P^{(0)}$ on~$B(0, r_0)$,
    we obtain
    \begin{equation}\label{e:DhatP-decompose-gen}
        D\hat P^{(0)}(x) =  (Dy(x))^T M_y y  + (Dz(x))^T M_z z  \quad\text{on}\quad B(0, r_0)\,,
    \end{equation}
    where
    \begin{align}
        M_y &= K \chi_1(\zeta_1)\chi_2(\zeta_2) + \frac{y^T K y}{a^2\epsilon} \chi_1'(\zeta_1)\chi_2 (\zeta_2) I_d\,,\\
        M_z &= \frac{y^T K y}{a^2\epsilon} \chi_1(\zeta_1)\chi_2'(\zeta_2) I_d\,,
    \end{align}
    and~$\zeta_1, \zeta_2$ are defined as in~\eqref{e:P-def-gen}.
    To evaluate~$D\hat P^{(0)}$ in~\eqref{e:DhatP-decompose-gen}, we use the Taylor expansions of~$Dy$ and~$Dz$ at~$x=0$,
    \begin{align}
        \label{e:Dy-Ps-gen}
        Dy(x) = D\xi(x) &\overset{\eqref{e:dxi-is-projection-gen}}{=} \Pi_s + O(\abs{x})\quad\text{on}\quad B(0, r_0)\,,\\
        \label{e:Dz-Pu-gen}
        Dz(x) = I_d-D\xi(x) &\overset{\eqref{e:dxi-is-projection-gen}}{=} \Pi_u + O(\abs{x})\quad\text{on}\quad B(0, r_0)\,.
    \end{align}
    We also note that, by~\eqref{e:K-eigenvalues-gen}, $y^TKy \leq \lambda_{d-k}\abs{y}^2 = \lambda_{d-k}a^2\epsilon\,\zeta_1$, and that~$\chi_1(\zeta_1) = 0$ for~$\zeta_1 \geq R$. Hence
    \begin{align}
        \label{e:My-compute-gen}
        \abs{M_y} &\leq \abs{K} + \lambda_{d-k}\, \zeta_1\abs{\chi_1'(\zeta_1)} \overset{\eqref{e:P-def-gen}}{\leq} \abs{K} +  \frac{\lambda_{d-k}}{\log R}\norm{\chi_0'}_{L^\infty} = O(1)\,,\\
        \label{e:Mz-compute-gen}
        \abs{M_z} &\leq \lambda_{d-k}\, \zeta_1\chi_1(\zeta_1)\norm{\chi_2'}_{L^\infty} \leq \lambda_{d-k} \norm{\chi_0'}_{L^\infty}\frac{R}{\tilde R} = O(1)\,.
    \end{align}

    Combining~\eqref{e:DhatP-decompose-gen}, \eqref{e:Dy-Ps-gen}, \eqref{e:Dz-Pu-gen}, \eqref{e:My-compute-gen} and~\eqref{e:Mz-compute-gen}, and using the Taylor expansions of~$y$ and~$z$ at~$x=0$,
    \begin{align}
        y(x) = Dy(0)x + O(\abs{x}^2) \overset{\eqref{e:dxi-is-projection-gen}}{=} \Pi_s x + O(\abs{x}^2) \quad\text{on}\quad B(0, r_0)\,,\\
        z(x) = Dz(0)x + O(\abs{x}^2) \overset{\eqref{e:dxi-is-projection-gen}}{=} \Pi_u x + O(\abs{x}^2) \quad\text{on}\quad B(0, r_0)\,,
    \end{align}
    together with~$\Pi_sM_y \Pi_s= M_y\Pi_s$ (since~$K$ commutes with~$\Pi_s$) and~$\Pi_u M_z \Pi_u= M_z \Pi_u$ (since~$M_z$ is a multiple of the identity),
    we obtain that on~$B(0, r_0)$,
    \begin{equation}\label{e:DhatP-eigen-decompose-gen}
        D\hat P^{(0)}(x) =  \Pi_s M_y y + \Pi_u M_z z + O(\abs{x}^2) = M_y \Pi_s x + M_z \Pi_u x +O(\abs{x}^2)\,.
    \end{equation}
    In particular, using the definition of~$K$ in~\eqref{e:kappa-choice-K-def-gen} yields
    \begin{equation}\label{e:DhatP-eigen-decompose2-gen}
        M_y \Pi_s x + M_z \Pi_u x = \sum_{i=1}^{d-k} \tilde \lambda_i \ip{x, t_i(0)} t_i(0) - \tilde \lambda_u \sum_{l=1}^{k}\ip{x, n_l(0)}n_l(0)\,,
    \end{equation}
    where
    \begin{align}
        \tilde \lambda_i &= \paren*{\lambda_i - \rho}\chi_1(\zeta_1)\chi_2(\zeta_2) + \frac{y^T K y}{a^2\epsilon} \chi_1'(\zeta_1)\chi_2(\zeta_2)\,, \qquad 1 \leq i \leq d-k\,,\\
        \tilde \lambda_u &= -\frac{y^T K y}{a^2\epsilon} \chi_1(\zeta_1) \chi_2'(\zeta_2)\,.
    \end{align}
    Note that a single scalar~$\tilde\lambda_u$ serves all~$k$ unstable directions, because~$M_z$ is a multiple of the identity.
    We note that~$\chi_1'\leq 0$ and~$K$ is nonnegative-definite, so that
    \begin{equation}
        \tilde \lambda_i \leq \lambda_i - \rho\,.
    \end{equation}
    We also see that
    \begin{equation}
        \frac{y^T K y}{a^2\epsilon}\abs{\chi_1'(\zeta_1)} \chi_2(\zeta_2) \leq \lambda_{d-k}\, \zeta_1\abs{\chi_1'(\zeta_1)} \leq \lambda_{d-k} \frac{\norm{\chi_0'}_{L^\infty}}{\log R} \overset{\eqref{e:R-tildeR-choice-gen}}{\leq} \frac{\lambda_1}{2}\,,
    \end{equation}
    and hence
    \begin{equation}
        \tilde \lambda_i \geq - \frac{\lambda_1}{2}\,.
    \end{equation}
    This implies that
    \begin{equation}\label{e:stable-eigen-perturb-gen}
        \abs{\tilde \lambda_i} \leq \max\set*{\frac{\lambda_1}{2}, \lambda_i - \rho}\overset{\eqref{e:kappa-choice-K-def-gen}}{=}\lambda_i - \rho\,.
    \end{equation}
    Similarly, since~$\chi_2' \leq 0$,
    \begin{equation}\label{e:unstable-eigen-perturb-gen}
        0\leq \tilde\lambda_u \leq \lambda_{d-k}\, \zeta_1\chi_1(\zeta_1) \norm{\chi_2'}_{L^\infty} \leq \lambda_{d-k} \norm{\chi_0'}_{L^\infty} \frac{R}{\tilde R} \overset{\eqref{e:R-tildeR-choice-gen}}{\leq} \frac{\lambda_{u,\min}}{2} \,.
    \end{equation}

    Finally, by~\eqref{e:DU-decompose-stable-unstable-gen},
    \begin{equation}
        H_sx + H_ux = \sum_{i=1}^{d-k} \lambda_i \ip{x, t_i(0)} t_i(0) - \sum_{l=1}^{k}\lambda_{u,l}\ip{x, n_l(0)}n_l(0)\,.
    \end{equation}
    Using this together with~\eqref{e:DU-decompose-stable-unstable-gen}, \eqref{e:DhatP-eigen-decompose-gen}, \eqref{e:DhatP-eigen-decompose2-gen}, \eqref{e:stable-eigen-perturb-gen}, and~\eqref{e:unstable-eigen-perturb-gen}, and the orthonormality of~$t_1(0),\ldots,t_{d-k}(0),n_1(0),\ldots,n_k(0)$, and decreasing~$r_0$ if necessary, we obtain that on~$B(0, r_0)$,
    \begin{align}
        \abs{DU(x)}^2 - \abs{D\hat P^{(0)}(x)}^2 &\geq \sum_{i=1}^{d-k} \paren*{\lambda_i^2 - \tilde \lambda_i^2} \abs{\ip{x, t_i(0)}}^2\\
        &\qquad\qquad + \sum_{l=1}^{k}\paren*{\lambda_{u,l}^2 - \tilde\lambda_u^2} \abs{\ip{x, n_l(0)}}^2 + O(\abs{x}^3)\\
        &\geq c(\rho, \lambda_1, \lambda_{u,\min}) \abs{x}^2 + O(\abs{x}^3)\\
        &\geq \frac{1}{2}c(\rho, \lambda_1, \lambda_{u,\min}) \abs{x}^2\,,
    \end{align}
    where
    \begin{equation}
        c(\rho, \lambda_1, \lambda_{u,\min}) \defeq \min\set*{\rho\lambda_1, \frac{3}{4}\lambda_{u,\min}^2 }\,.
    \end{equation}
    Here we used~$\lambda_i^2 - \tilde\lambda_i^2 \geq \lambda_i^2 - (\lambda_i - \rho)^2 = \rho(2\lambda_i - \rho) \geq \rho\lambda_1$ and~$\lambda_{u,l}^2 - \tilde\lambda_u^2 \geq \lambda_{u,l}^2 - \frac{1}{4}\lambda_{u,\min}^2 \geq \frac{3}{4}\lambda_{u,\min}^2$. This proves the lemma with~$c_0 = \frac{1}{2}c(\rho,\lambda_1,\lambda_{u,\min})$.
\end{proof}

\appendix
\section{Deferred proofs}\label{s:appendix}

In this appendix we prove Lemmas~\ref{l:dEF-small} and~\ref{l:partition-function-estimates}.

\begin{proof}[Proof of Lemma~\ref{l:partition-function-estimates}]
By the Laplace method (see, e.g.,~\cite[Proposition B2]{Kolokoltsov00}), for each~$i\in \set{1,2}$,
    \begin{equation}
        \lim_{\epsilon \to 0} (2\pi\epsilon)^{-\frac{d}{2}} \exp\paren*{\frac{U(x_{\min, i})}{\epsilon} } \sqrt{\det D^2 U(x_{\min, i})}    \int_{\Omega_i}\exp\paren*{-\frac{U}{\epsilon}}\, dx = 1\,.
    \end{equation}
    In particular, there exists~$\epsilon_0\in(0,1]$ such that the expression inside the limit lies between~$1/2$ and~$3/2$ for all~$\epsilon\in(0,\epsilon_0]$. For~$\epsilon\in[\epsilon_0,1]$ the same expression is a continuous and positive function of~$\epsilon$, and hence it is bounded above and below by positive constants there as well. Consequently, there exist constants~$C_{Z, 1}(d,U)$ and~$C_{Z, 2}(d,U)$ such that, for all~$\epsilon \in (\kappa,1]$,
    \begin{align}
        Z_\epsilon &=    \paren*{1 + \frac{1-\pi_\epsilon(\Omega_2)}{\pi_\epsilon(\Omega_2)}} \int_{\Omega_2} \exp\paren*{-\frac{U}{\epsilon}}\,dx  \\
        &\overset{\eqref{e:massRatioBound}}{\leq} C_{Z,2}(1+C_m^2)\epsilon^\frac{d}{2}\exp\paren*{-\frac{U(x_{\min, 2})}{\epsilon}}\,,\\
        Z_\epsilon &\geq \int_{\Omega_1}\exp\paren*{-\frac{U}{\epsilon}} \, dx \overset{U(x_{\min, 1})= 0}{\geq} C_{Z,1}\epsilon^\frac{d}{2}\,,
    \end{align}
    which completes the proof for~\eqref{e:Zepsilon-estimate}.
\end{proof}

\begin{proof}[Proof of Lemma~\ref{l:dEF-small}]
    The estimate~\eqref{e:ef-almost-indicators} is immediate from~\cite[Chapter 8, Proposition 2.2]{Kolokoltsov00}, which holds on~$\R^d$ under Assumption~\ref{a:U-growth-control} (see the discussion on p.~247(B) of that reference). The bound~\eqref{e:eigen-residual-exp-small} then follows from~\eqref{e:ef-almost-indicators} together with the fact that~$\psi_{2,\epsilon}$ is normalized in~$L^2(\pi_\epsilon)$.
    We also see that since~$P_{E_\epsilon}$ is the projection onto~$E_\epsilon$,
    \begin{equation}
        P_{E_\epsilon} \psi_{2,\epsilon} =  a_{1,\epsilon}\one_{\Omega_1} + a_{2,\epsilon}\one_{\Omega_2}\,,
    \end{equation}
    where
    \begin{equation}
        a_{i, \epsilon} = \frac{1}{\pi_\epsilon(\Omega_i)}\int_{\Omega_i} \psi_{2, \epsilon}\,  \pi_\epsilon \,dx \,, \qquad i\in\set{1,2}\,.
    \end{equation}
    Using the Cauchy--Schwarz inequality, we obtain
    \begin{equation}
        \abs{a_{i, \epsilon}} \leq \pi_\epsilon\paren*{\Omega_i}^{-\frac{1}{2}}\overset{\eqref{e:massRatioBound}}{\leq} C_m\,,
    \end{equation}
    which completes the proof for~\eqref{e:PEe-decompose}.
\end{proof}

\section*{Acknowledgments}
The author thanks Gautam Iyer for his guidance and many helpful discussions.

\bibliographystyle{halpha-abbrv}
\bibliography{gautam-refs1,gautam-refs2,preprints, references} 
\end{document}